\newcommand{\ZZ}{\mathbb{Z}}
\newcommand{\CC}{\mathbb{C}}
\newcommand{\NN}{\mathbb{N}}
\newcommand{\wt}{\mathrm{wt}}
\newcommand{\cl}{\mathrm{cl}}
\newcommand{\ffbox}[1]{
\setbox9=\hbox{$\scriptstyle\overline{1}$}
\framebox[20pt][c]{\rule{0mm}{\ht9}${\scriptstyle #1}$}
}
\newcommand{\QQ}{\mathbb{Q}}
\newcommand{\Glie}{\mathfrak{g}}
\newcommand{\Hlie}{\mathfrak{h}}
\newcommand{\U}{\mathcal{U}}
\newtheorem{thm}{Theorem}[section]
\newtheorem{defi}[thm]{Definition}
\newtheorem{cor}[thm]{Corollary}
\newtheorem{prop}[thm]{Proposition}
\newtheorem{lem}[thm]{Lemma}
\newtheorem{rem}[thm]{Remark}
\newtheorem{ex}[thm]{Example}
\title{Quantum extremal loop weight modules and monomial crystals}
\author[Mathieu Mansuy]{Mathieu Mansuy}
\address{Univ. Paris-Diderot-Paris 7, IMJ - PRG CNRS UMR 7586, B\^at. Sophie Germain, Case 7012, 75205 Paris Cedex 13, FRANCE}
\email{mansuy@math.jussieu.fr}
\begin{document}

\begin{abstract} In this paper we construct a new family of representations for the quantum toroidal algebra $\U_q(sl_{n+1}^{tor})$, which are $\ell$-extremal in the sense of Hernandez \cite{hernandez_quantum_2009}. We construct extremal loop weight modules associated to level $0$ fundamental weights $\varpi_\ell$ when $n= 2r+1$ is odd and $\ell = 1, r+1$ or $n$. To do it, we relate monomial realizations of level 0 extremal fundamental weight crystals with integrable representations of $\U_q(sl_{n+1}^{tor})$, and we introduce promotion operators for the level 0 extremal fundamental weight crystals. By specializing the quantum parameter, we get finite-dimensional modules of quantum toroidal algebras at roots of unity. In general, we give a conjectural process to construct extremal loop weight modules from monomial realizations of crystals.
\end{abstract}

\maketitle

\tableofcontents

\section{Introduction}

Let us consider a finite-dimensional simple Lie algebra $\Glie$ and its associated quantum affine algebra $\U_q(\hat{\Glie})$. Beck and Drinfeld \cite{beck_braid_1994, drinfeld_new_1988} proved that $\U_q(\hat{\Glie})$ has two realizations: first as the quantized enveloping algebra of the affine Lie algebra $\hat{\Glie}$ and second as the Drinfeld quantum affinization of the quantum group $\U_q(\Glie)$.

The representation theory of the quantum affine algebras has been intensively studied (see, among others, \cite{akasaka_finite-dimensional_1997, beck_crystal_2004, chari_quantum_1991, chari_quantum_1995, frenkel_combinatorics_2001, frenkel_$q$-characters_1999, lusztig_introduction_1993, nakajima_quiver_2001}). Kashiwara \cite{kashiwara_crystal_1994} has defined a class of integrable representations $V(\lambda)$ of these algebras, called extremal weight modules, parametrized by an integrable weight $\lambda$ and with crystal basis $\mathcal{B}(\lambda)$. When $\lambda$ is dominant, $V(\lambda)$ is the simple integrable module of highest weight $\lambda$. But in general $V(\lambda)$ is not simple and it is neither of highest weight nor of lowest weight. These representations were the subject of numerous papers (see \cite{beck_crystal_2002, beck_crystal_2004, hernandez_level_2006, kashiwara_crystal_1994, kashiwara_level-zero_2002, naito_path_2003, naito_path_2006, nakajima_extremal_2004}) and are particularly important because they have finite-dimensional quotients for some special weight $\lambda$. Kashiwara has proved in this way the existence of crystal bases for the finite-dimensional fundamental representations of $\U_q(\hat{\Glie})$ (for a special choice of the spectral parameter).

The quantum affine algebra $\U_q(\hat{\Glie})$ is also a quantum Kac-Moody algebra and thus can be affinized again by the Drinfeld quantum affinization process. One gets a toroidal (or double affine) quantum algebra $\U_q(\Glie^{tor})$ which is not a quantum Kac-Moody algebra anymore and can not be affinized again by this process (it can be viewed as ``the terminal object'' in this construction). These algebras were first introduced by Ginzburg-Kapranov-Vasserot in type A \cite{ginzburg_langlands_1995} and then in the general context \cite{jing_quantum_1998, nakajima_quiver_2001}. In type A, they are in Schur-Weyl duality with elliptic Cherednik algebras \cite{varagnolo_schur_1996}.

The representation theory of these algebras has been intensively studied (see for example \cite{feigin_quantum_2011, feigin_quantum_2011-1, feigin_quantum_2012,  feigin_representations_2013, hernandez_representations_2005, hernandez_quantum_2009, hernandez_algebra_2011, miki_representations_2000, varagnolo_double-loop_1998} and references therein). In the spirit of works of Kashiwara, Hernandez \cite{hernandez_quantum_2009} proposed the definition of extremal loop weight modules for $\U_q(\Glie^{tor})$. The main motivation is to construct finite-dimensional representations of the quantum toroidal algebra at roots of unity. He constructs the first example of such a module for $\U_q(sl_{4}^{tor})$ which is neither of $\ell$-highest weight nor of $\ell$-lowest weight. This module is generated by an $\ell$-weight vector of $\ell$-weight an analogue of the level $0$ fundamental weight $\varpi_1 = \Lambda_1-\Lambda_0$. By specializing the quantum parameter $q$ at roots of unity, he obtains finite-dimensional representations of the quantum toroidal algebra at roots of unity.

In the present paper, we construct a new family of extremal loop weight modules for the quantum toroidal algebra $\U_q(sl_{n+1}^{tor})$\footnote{After this paper appeared on the arXiv, the constructions in \cite{feigin_representations_2013} were
brought to our attention by H. Nakajima. Some of the representations constructed in this
paper (the $V(Y_{1,0}Y_{0,1}^{-1})$) are also defined in \cite{feigin_representations_2013} from another point of view and are called \textit{vector representations} there.}: we define extremal loop weight modules associated to the level $0$ fundamental weight $\varpi_\ell = \Lambda_\ell - \Lambda_0$ when $n=2r+1$ is odd and $\ell = 1, r+1$ or $n$ (Theorem \ref{thmmod}). We call them the extremal fundamental loop weight modules. This construction is based on the monomial realizations of level 0 extremal fundamental weight crystals $\mathcal{B}(\varpi_\ell)$. We relate these monomial crystals with integrable representations of $\U_q(sl_{n+1}^{tor})$ by studying their combinatorics: we introduce promotion operators for $\mathcal{B}(\varpi_\ell)$ ($1 \leq \ell \leq n$). We describe them in terms of monomials. These operators play an important role in our work: on the one hand, at the level of crystals, they are used to check that these monomial crystals are closed when $\ell = 1, r+1$ or $n$ (see Definition \ref{defclocry} for this notion, related to the theory of $q$--characters). On the other hand, at the level of representations, they enable us to define the action of the quantum toroidal algebra. We show that the representations we constructed are irreducible and, as modules over the horizontal quantum affine subalgebra, they are isomorphic to the fundamental extremal weight modules $V(\varpi_\ell)$. We give explicit formulas for the action from the associated monomial crystal. By specializing the quantum parameter $q$ at roots of unity, we get new irreducible finite-dimensional representations of the quantum toroidal algebra at roots of unity. When $\ell$ is not equal to $1, r+1$ or $n$, the corresponding monomial crystals are not closed and it is not possible to make the same construction. We give a conjectural process to define other extremal loop weight modules in this situation: as an example, we construct an extremal loop weight module of $\U_q(sl_{4}^{tor})$ associated to the weight $2 \varpi_1$.\\

Let us describe the methods used in this paper in more detail. In \cite{kashiwara_realizations_2003, nakajima_$t$-analogs_2003}, Kashiwara and Nakajima have defined a crystal $\mathcal{M}$, called the monomial crystal, whose vertices are Laurent monomials. They determined monomial realizations of crystals of finite type. These results have been extended in \cite{hernandez_level_2006} to the level 0 extremal weight $\U_q(\hat{sl}_{n+1})$-crystals $\mathcal{B}(\varpi_\ell)$ ($1 \leq \ell \leq n$): if $n = 2r+1$ is odd, it is isomorphic to a sub-$\U_q(\hat{sl}_{n+1})$-crystal $\mathcal{M}_\ell$ of $\mathcal{M}$.

The monomials occurring in these realizations of crystals can be interpreted at the level of representation theory. In fact Frenkel and Reshetikhin \cite{frenkel_$q$-characters_1999} defined a correspondence between $\ell$-weights (eigenvalues of the Cartan subalgebra for the Drinfeld realization) and these monomials. Motivated by these facts, Hernandez \cite{hernandez_quantum_2009} used the monomial $\U_q(\hat{sl}_{4})$-crystal $\mathcal{M}_1$ to construct an integrable representation of $\U_q(sl_{4}^{tor})$ whose $\ell$-weights are the monomials occurring in this crystal. He defined in this way the first example of extremal loop weight modules for $\U_q(sl_{4}^{tor})$. We use the same technical feature in this paper. We propose to relate the monomial $\U_q(\hat{sl}_{n+1})$-crystals $\mathcal{M}_\ell$ (where $n = 2r+1$ is supposed to be odd) with integrable representations of $\U_q(sl_{n+1}^{tor})$.

Let us outline the main steps of the construction of extremal fundamental loop weight modules associated to $\mathcal{M}_\ell$. It is based on the combinatorial study of these crystals. The cyclic symmetry of the Dynkin diagram of type $A_n^{(1)}$ has a counterpart at the level of crystals. Actually, these symmetry properties are already known for the $\U_q(sl_{n+1})$-crystals of finite type, and translated into the existence of promotion operators (see \cite{bandlow_uniqueness_2010, fourier_kirillov-reshetikhin_2009, okado_existence_2008, schilling_combinatorial_2008, shimozono_affine_2002} and references therein). Here we introduce promotion operators for the level 0 extremal fundamental weight crystals $\mathcal{B}(\varpi_\ell)$ ($1 \leq \ell \leq n$). We improve these operators in the monomial realizations $\mathcal{M}_\ell$ of \cite{hernandez_level_2006}. In particular, we get a new description of these monomial crystals.

A monomial set is not in general the set of $\ell$-weights of an integrable representation. In fact, it must satisfy combinatorial properties related to the theory of $q$-characters (see \cite{frenkel_combinatorics_2001, frenkel_$q$-characters_1999}). This leads us to introduce the notion of closed monomial set (Definition \ref{defclocry}). It gives a necessary condition for a set to be the set of $\ell$-weights of an integrable representation. Finally, we determine when the monomial crystal $\mathcal{M}_\ell$ is closed, using promotion operators: this is the case if and only if $\ell = 1, r+1$ or $n$ (Theorem \ref{proclocry}).

When $\mathcal{M}_\ell$ is closed, we construct an associated integrable $\U_q(sl_{n+1}^{tor})$-module whose the set of $\ell$-weights consists of monomials occurring in $\mathcal{M}_\ell$. For that, we paste together some finite-dimensional representations of the various vertical quantum affine subalgebas of $\U_q(sl_{n+1}^{tor})$. The existence of promotion operators for $\mathcal{M}_\ell$ involves that it defines a $\U_q(sl_{n+1}^{tor})$-module structure. Furthermore we check that the representations obtained in this way do satisfy the definition of extremal loop weight modules. They are irreducible, isomorphic to the level 0 fundamental extremal representations $V(\varpi_\ell)$ as modules over the horizontal quantum affine subalgebra. Moreover the action of the quantum toroidal algebra on them is explicitly known, given from the associated crystal. Finally by specializing the quantum parameter $q$ at roots of unity, we get finite-dimensional representations of the quantum toroidal algebra at roots of unity.

When the monomial crystal $\mathcal{M}_\ell$ is not closed, there is no integrable representation of $\U_q(sl_{n+1}^{tor})$ whose set of $\ell$-weights consists of monomials occurring in it. The idea is to consider instead of $\mathcal{M}_\ell$ a closed crystal containing it and to apply the preceding methods to this crystal. We treat an example of such a construction: we define a representation of $\U_q(sl_{4}^{tor})$ which satisfies the definition of extremal loop weight modules.

Let us now describe briefly the organization of this paper.

In Section 2 we recall the definitions of quantum affine algebras $\U_q(\hat{sl}_{n+1})$ and quantum toroidal algebras $\U_q(sl_{n+1}^{tor})$ and we briefly review their representation theory. In particular one defines the extremal weight modules and the extremal loop weight modules. Section 3 is devoted to the study of monomial crystals. We recall its definition and we introduce the notion of closed monomial set (Definition \ref{defclocry}). We introduce promotion operators for the level 0 fundamental extremal weight crystals. As a consequence, we determine when $\mathcal{M}_\ell$ is closed (Theorem \ref{proclocry}). In Section 4 we construct a new family of representations of $\U_q(sl_{n+1}^{tor})$ (the extremal fundamental loop weight modules) when $n$ is odd and $\mathcal{M}_\ell$ is closed (Theorem \ref{thmmod}). We check that these representations satisfy the definition of extremal loop weight modules (Theorem \ref{thmelm}) and we give formulas for the action (Theorem \ref{thmactmod}). We get finite-dimensional representations of the quantum toroidal algebra at roots of unity by specializing the quantum parameter $q$ at roots of unity (Theorem \ref{thmmodunit}). In Section 5 we treat an example where the considered monomial crystal is not closed. We construct a representation of $\U_q(sl_{4}^{tor})$ associated to the level 0 weight $2 \varpi_1$. In Section 6 other possible developments and applications of these results are discussed.\\

\textbf{Acknowledgements:} I am grateful to my advisor David Hernandez for suggesting me this problem, for his encouragement and precious comments. I would like to thank Anne Schilling and Cedric Lecouvey for their comments on the promotion operators and for pointing me out some references. I want to thank Hiraku Nakajima for interesting questions about a first version of this work. I thank Alexandre Bouayad for accurate discussions and numerous observations, Xin Fang and Dragos Fratila for useful discussions. Finally the referee deserves thanks for careful reading and many precious comments.

\section{Background}

We recall the main definitions and general properties about the representation theory of quantum affine algebras and quantum toroidal algebras of type $A$.

\subsection{Cartan matrix} \nocite{kac_infinite-dimensional_1990} Let $C=(C_{i,j})_{0\leq i,j\leq n}$ be a
Cartan matrix of type $A_n^{(1)}$ ($n \geq 2$),
$$C = \begin{pmatrix}
2 & -1 & 0 & \cdots & 0 & -1 \\ 
-1 & 2 & \ddots & \ddots & & 0 \\ 
0 & \ddots & \ddots & \ddots & \ddots & \vdots \\ 
\vdots & \ddots & \ddots & \ddots & \ddots & 0 \\ 
0 & & \ddots & \ddots & 2 & -1 \\ 
-1 & 0 & \cdots & 0 & -1 & 2
\end{pmatrix}. $$

\begin{rem}
The case $n=1$ is not studied in the article and is particular. In this case, the Cartan matrix is
$$\begin{pmatrix}
2 & -2 \\ 
-2 & 2
\end{pmatrix}$$
and involves $-2$. Furthermore, the quantum toroidal algebra $\U_q(sl_{2}^{tor})$ requires a special definition with different possible choices of the quantized Cartan matrix (see \cite[Remark 4.1]{hernandez_algebra_2011}).
\end{rem}

\noindent Set $I=\{0,\dots,n\}$ and $I_0 = \{1, \dots, n\}$. In particular, $(C_{i,j})_{i,j \in I_0}$ is the Cartan matrix of finite type $A_n$. In the following, $I$ will be often identified with the set $\ZZ / (n+1) \ZZ$. Consider the vector space of dimension $n+2$
 $$\Hlie = \QQ h_0 \oplus \QQ h_1 \oplus \dots \oplus \QQ h_n \oplus \QQ d$$
\noindent and the linear functions $\alpha_i$ (the simple roots), $\Lambda_i$ (the fundamental weights) on $\Hlie$ given by ($i,j \in I$)
\begin{eqnarray*}
\alpha_i (h_j)= C_{j,i}, & \alpha_i(d)= \delta_{0,i},\\
\Lambda_i(h_j) = \delta_{i,j}, & \Lambda_i(d)= 0.
\end{eqnarray*}

Denote by $\Pi=\{\alpha_0,\dots,\alpha_n\}\subset \Hlie^*$ the set of
simple roots and $\Pi^{\vee}=\{h_0,\dots, h_n\}\subset \Hlie$ the set of simple coroots. Let $P =\{\lambda \in\Hlie^* \mid \text{$\lambda(h_i)\in\ZZ$ for any $i\in I$}\}$ be the weight lattice and $P^+=\{\lambda \in P \mid \text{$\lambda(h_i)\geq 0$ for any $i\in I$}\}$ the semigroup of dominant weights. Let $Q=\bigoplus_{i\in I} \ZZ \alpha_i\subset P$ (the root lattice) and $Q^+=\sum_{i\in I}\NN \alpha_i\subset Q$. For $\lambda,\mu\in \Hlie^*$, write $\lambda \geq \mu$ if $\lambda-\mu\in Q^+$.

Set $\Hlie_0 = \QQ h_1 \oplus \dots \oplus \QQ h_n$ and $\Pi_0 = \{\alpha_1,\dots,\alpha_n\}$, $\Pi_0^{\vee}=\{h_1,\dots, h_n\}$. $(\Hlie_0, \Pi_0, \Pi_0^\vee)$ is a realization of $(C_{i,j})_{i,j \in I_0}$ (see \cite{kac_infinite-dimensional_1990}). We define as above the associated weight lattice $P_0$, its subset $P_0^+$ of dominant weights, and the root lattice $Q_0$.

Denote by $W$ the affine Weyl group: it is the subgroup of $GL(\Hlie^*)$ generated by the simple reflections $s_i\in GL(\Hlie^*)$ defined by $s_i(\lambda)=\lambda-\lambda(h_i)\alpha_i$ ($i\in I$). The Weyl group of finite type $W_0$ is the subgroup of $W$ generated by the $s_i$ with $i \in I_0$.

Let $c= h_0 + \dots + h_n$ and $\delta = \alpha_0 + \dots + \alpha_n$. We have
$$\{\omega \in P \mid \omega(h_i)=0 \text{ for all } i \in I \}=\QQ \delta.$$
Put $P_{\cl}=P/\QQ \delta$ and denote by $\cl : P \rightarrow P_{\cl}$ the canonical projection. Denote by $P^0=\{\lambda\in P\mid \lambda(c)=0\}$ the set of level $0$ weights.

\subsection{Quantum affine algebra $\U_q(\hat{sl}_{n+1})$} In this article $q = e^{t} \in\CC^*$ ($t \in \CC$) is not a root of unity and is fixed. For $l\in\ZZ, r\geq 0, m\geq m'\geq 0$ we set
$$[l]_q=\frac{q^l-q^{-l}}{q-q^{-1}}\in\ZZ[q^{\pm 1}],\  [r]_q!=[r]_q[r-1]_q\dots[1]_q,\ \begin{bmatrix}m\\m'\end{bmatrix}_q=\frac{[m]_q!}{[m-m']_q![m']_q!}.$$

\begin{defi} The quantum affine algebra $\U_q(\hat{sl}_{n+1})$ is the $\CC$-algebra with generators $k_h$ $(h\in \Hlie)$, $x_i^{\pm}$ $(i\in I)$ and relations
\begin{equation*}k_hk_{h'}=k_{h+h'}\text{ , }k_0=1,\end{equation*}
\begin{equation*}k_hx_j^{\pm}k_{-h}=q^{\pm \alpha_j(h)}x_j^{\pm},\end{equation*}
\begin{equation*}[x_i^+,x_j^-]=\delta_{i,j}\frac{k_i-k_{i}^{-1}}{q-q^{-1}},\end{equation*}
\begin{equation*}
(x_i^{\pm})^{(2)}x_{i+1}^{\pm} - x_i^{\pm}x_{i+1}^{\pm}x_i^{\pm} + x_{i+1}^{\pm}(x_i^{\pm})^{(2)} = 0.\end{equation*}
\end{defi}

\noindent Here we use the notation $k_i^{\pm 1} = k_{\pm h_i}$ and for all $r \geq 0$ we set $(x_i^\pm)^{(r)} = \frac{(x_i^\pm)^r}{[r]_q!}$. One defines a coproduct on $\U_q(\hat{sl}_{n+1})$ by setting
$$\Delta(k_h)=k_h\otimes k_h,$$
$$\Delta(x_i^+)=x_i^+\otimes 1 + k_i^+\otimes x_i^+\text{ , }\Delta(x_i^-)=x_i^-\otimes k_i^- + 1\otimes x_i^-.$$

Let $\U_q(\hat{sl}_{n+1})'$ be the subalgebra of $\U_q(\hat{sl}_{n+1})$ generated by $x_{i}^{\pm}$ and $k_h$ ($h\in \sum \QQ h_{i}$). This has $P_\cl$ as a weight lattice.

For $J \subset I$ denote by $\U_q(\hat{sl}_{n+1})_{J}$ the subalgebra of $\U_q(\hat{sl}_{n+1})$ generated by the $x_i^{\pm}, k_{p h_i}$ for $i\in J, p \in \QQ$. If $J=I_0$, $\U_q(\hat{sl}_{n+1})_{I_0}$ is the quantum group of finite type associated to the data $(\Hlie_0, \Pi_0, \Pi_0^\vee)$, also denoted by $\U_q(sl_{n+1})$. In particular, a $\U_q(\hat{sl}_{n+1})$-module has a structure of $\U_q(sl_{n+1})$-module. If $J= \{i\}$ with $i \in I$, $\U_q(\hat{sl}_{n+1})_{J}$ is isomorphic to $\U_{q}(sl_2)$ and denoted by $\U_i$. So a $\U_q(\hat{sl}_{n+1})$-module has also a structure of $\U_{q}(sl_2)$-module.

Let $\U_q(\hat{sl}_{n+1})^+$ (resp. $\U_q(\hat{sl}_{n+1})^-$, $\U_q(\Hlie)$) be the subalgebra of $\U_q(\hat{sl}_{n+1})$ generated by the $x_i^+$ (resp. the $x_i^-$, the $k_h$). We have a triangular decomposition of $\U_q(\hat{sl}_{n+1})$ (see \cite{lusztig_introduction_1993}): 

\begin{thm} We have an isomorphism of vector spaces
$$\U_q(\hat{sl}_{n+1}) \simeq \U_q(\hat{sl}_{n+1})^- \otimes \U_q(\Hlie) \otimes \U_q(\hat{sl}_{n+1})^+.$$
\end{thm}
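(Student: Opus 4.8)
The plan is to show that the multiplication map
$$m : \U_q(\hat{sl}_{n+1})^- \otimes \U_q(\Hlie) \otimes \U_q(\hat{sl}_{n+1})^+ \longrightarrow \U_q(\hat{sl}_{n+1})$$
is a linear isomorphism; write $\U^\pm$ and $\U^0$ for the three factors. Surjectivity is elementary. Filter $\U_q(\hat{sl}_{n+1})$ by the number of occurrences of the generators $x_i^\pm$ in a monomial. The relation $k_h x_j^\pm k_{-h} = q^{\pm \a_j(h)} x_j^\pm$ lets one move every Cartan factor $k_h$ past any $x_j^\pm$ at the cost of a scalar, without changing the number of $x$-letters, so all $k_h$ can be collected in the middle; and the relation $[x_i^+, x_j^-] = \delta_{i,j}(k_i - k_i^{-1})/(q - q^{-1})$ lets one move every $x^+$ to the right of every $x^-$, the correction terms having strictly fewer $x$-letters and hence lying in the image of $m$ by induction. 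Thus every monomial is a combination of ordered monomials $(\text{product of } x^-)(\text{product of } k_h)(\text{product of } x^+)$, and $m$ is onto.

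For injectivity it suffices to know two things: that $\U^0$ is the group algebra of the lattice $\sum \ZZ h_i \oplus \ZZ d$, which is immediate since the $k_h$ satisfy only $k_h k_{h'} = k_{h+h'}$ and $k_0 = 1$; and that $\U^+$ (resp. $\U^-$) has a PBW-type basis, i.e. the quantum Serre relations generate all relations among the $x_i^+$ (resp. $x_i^-$). Granting the latter, one passes to a Verma module: for generic $\lambda \in \Hlie^*$ form $M(\lambda) = \U_q(\hat{sl}_{n+1}) \big/ \bigl( \sum_i \U_q(\hat{sl}_{n+1}) x_i^+ + \sum_h \U_q(\hat{sl}_{n+1})(k_h - q^{\lambda(h)}) \bigr)$ with cyclic vector $v_\lambda$. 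The composite
$$\U^- \otimes \U^0 \otimes \U^+ \xrightarrow{\ m\ } \U_q(\hat{sl}_{n+1}) \longrightarrow \mathrm{End}(M(\lambda)), \qquad u \longmapsto (\,\cdot\, v_\lambda),$$
sends an ordered monomial $u_- u_0 u_+$ to $(\text{scalar}) \cdot u_- v_\lambda$, and since $M(\lambda)$ is free of rank one over $\U^-$ and the weight grading separates the $u_-$, distinct ordered monomials go to linearly independent operators. Hence $m$ is injective, and combined with surjectivity it is an isomorphism.

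The main obstacle is the linear independence of the ordered PBW monomials in $\U^+$ and $\U^-$, equivalently the assertion that the quantum Serre relations are the only relations among the $x_i^\pm$. This is precisely the PBW theorem for the positive part of a quantum Kac–Moody algebra: $\U^+$ is Lusztig's algebra $\mathbf{f}$ attached to the Cartan matrix $C$ of type $A_n^{(1)}$, which is known to possess a PBW basis (one could instead verify this directly with Bergman's diamond lemma, resolving the finitely many overlap ambiguities produced by the Serre relations together with the commutation rules above, but that merely reproves the known result). I would therefore take the structure of $\mathbf{f}$ and the freeness of $M(\lambda)$ over $\U^-$ as input from \cite{lusztig_introduction_1993}; the genuine content of the present statement is then the reduction carried out in the two paragraphs above, and there is no circularity provided the $\mathbf{f}$-theory is cited rather than re-derived from the triangular decomposition itself.
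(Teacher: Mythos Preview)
The paper does not prove this statement at all: it is quoted as background with a reference to Lusztig's book \cite{lusztig_introduction_1993}, and no argument is given. Your outline is a reasonable sketch of the standard proof, and your decision to import the PBW theorem for $\mathbf{f}$ (and hence the structure of $\U^\pm$) from Lusztig is exactly how the paper itself treats the matter.

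Two small technical comments on your write-up. First, in this paper the generators $k_h$ are indexed by all $h$ in the $\QQ$-vector space $\Hlie = \QQ h_0 \oplus \cdots \oplus \QQ h_n \oplus \QQ d$, not just by the lattice $\sum \ZZ h_i \oplus \ZZ d$; so $\U^0$ is the group algebra of $(\Hlie,+)$ rather than of a lattice. This does not affect the argument. Second, and more substantively: defining $M(\lambda)$ as a quotient of $\U_q(\hat{sl}_{n+1})$ and then \emph{asserting} that it is free of rank one over $\U^-$ is circular, since that freeness is equivalent to the triangular decomposition you are proving. The non-circular route is to construct the $\U_q(\hat{sl}_{n+1})$-action directly on the vector space $\U^-$ (with $x_i^-$ acting by left multiplication, $k_h$ by the weight grading, and $x_i^+$ by an explicit formula), verify by hand that the defining relations hold, and then freeness over $\U^-$ is manifest. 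This is what Lusztig actually does, and since you are already citing him for the hard step there is no real problem; but as written your Verma-module paragraph reads as if freeness were an independent input, which it is not.
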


\subsection{Representations of $\U_q(\hat{sl}_{n+1})$} For $V$ a representation of $\U_q(\hat{sl}_{n+1})$ and $\nu\in P$, the weight space $V_{\nu}$ of $V$ is
$$V_\nu = \{v\in V|k_h \cdot v = q^{\nu(h)}v,\forall h\in \Hlie\}.$$
Set $\wt(V) = \{\nu \in P | V_\nu \neq \{0\} \}.$

For $\lambda\in P$, a representation $V$ is said to be of highest weight $\lambda$ if there is $v\in V_\lambda$ such that for all $i\in I, x_i^+ \cdot v = 0$ and $\U_q(\hat{sl}_{n+1}) \cdot v = V$. Furthermore there is a unique simple highest weight module of highest weight $\lambda$.

\begin{defi}\label{defint} A representation  $V$ is said to be integrable if
\begin{enumerate}
\item[(i)] it admits a weight space decomposition $V = \bigoplus_{\nu\in P} V_\nu$,
\item[(ii)] all the $x_i^\pm$ ($i \in I$) are locally nilpotent.
\end{enumerate}
\end{defi}

\begin{rem}\label{remintmod} This definition differs from the one given in \cite{hernandez_quantum_2009}. In fact it is required in addition in \cite{hernandez_quantum_2009} that the representation $V$ satisfies
\begin{enumerate}
\item[(iii)] $V_\nu$ is finite-dimensional for any $\nu \in P$,
\item[(iv)] $V_{\nu \pm N\alpha_i} = \{0\}$ for all $\nu\in P$, $N>>0$, $i\in I$.
\end{enumerate}
These conditions are implied by the previous ones for the highest weight modules.
\end{rem}

\begin{thm}\cite{lusztig_introduction_1993}
The simple highest weight module of highest weight $\lambda$ is integrable if and only if $\lambda$ is dominant. We denote it $V(\lambda)$ ($\lambda \in P^{+}$).
\end{thm}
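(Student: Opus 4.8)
The plan is to prove both implications separately, relying on the triangular decomposition and the representation theory of $\U_q(sl_2)$.

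\medskip

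\noindent\textbf{Necessity.} Suppose the simple highest weight module $V(\lambda)$ is integrable, with highest weight vector $v\in V_\lambda$. Fix $i\in I$ and consider the $\U_i \cong \U_q(sl_2)$-submodule generated by $v$. Since $x_i^+\cdot v = 0$, the vector $v$ is a highest weight vector for $\U_i$ of $\U_i$-weight $q^{\lambda(h_i)}$. By hypothesis $x_i^-$ acts locally nilpotently, so $(x_i^-)^{N}\cdot v = 0$ for some $N\geq 1$; choosing $N$ minimal, the standard $\U_q(sl_2)$-computation of the commutator $[x_i^+,(x_i^-)^N]$ on $v$ forces $\lambda(h_i) = N-1 \geq 0$. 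Since this holds for every $i\in I$, we conclude $\lambda\in P^+$.

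\medskip

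\noindent\textbf{Sufficiency.} Conversely, suppose $\lambda\in P^+$. Using the triangular decomposition $\U_q(\hat{sl}_{n+1}) \simeq \U_q(\hat{sl}_{n+1})^-\otimes\U_q(\Hlie)\otimes\U_q(\hat{sl}_{n+1})^+$, one sees that $V(\lambda) = \U_q(\hat{sl}_{n+1})^-\cdot v$ and that $V(\lambda)$ admits a weight space decomposition with weights in $\lambda - Q^+$, each weight space finite-dimensional; this gives condition (i) of Definition \ref{defint}. For condition (ii), local nilpotency of each $x_i^-$ on $v$ follows from the $\U_q(sl_2)$-theory: the submodule $\U_i\cdot v$ is the finite-dimensional simple $\U_q(sl_2)$-module of highest weight $\lambda(h_i)$, so $(x_i^-)^{\lambda(h_i)+1}\cdot v = 0$. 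To propagate local nilpotency of all $x_j^\pm$ to the whole module, I would argue that the set of vectors on which every $x_j^\pm$ acts nilpotently is a $\U_q(\hat{sl}_{n+1})$-submodule — this uses the quantum Serre relations and the commutation relations to show that if $x_j^\pm$ is locally nilpotent on $w$ then it is locally nilpotent on $x_k^\pm\cdot w$ and $k_h\cdot w$ — and since this submodule contains $v$ and $V(\lambda)$ is simple, it is all of $V(\lambda)$.

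\medskip

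\noindent The main obstacle is the propagation argument in the sufficiency direction: showing that local nilpotency is preserved under the action of the generators requires the $q$-analogue of the classical Kac-type lemma, combining the quantum Serre relations with an induction on weight; the Serre relation bookkeeping (controlling how the nilpotency degree grows) is the technical heart. Everything else reduces to rank-one $\U_q(sl_2)$ computations. I would note that this is a standard result of Lusztig \cite{lusztig_introduction_1993} and, in a write-up aimed at this paper's audience, simply cite it rather than reproduce the full argument.
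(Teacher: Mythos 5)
The paper offers no proof of this statement: it is quoted verbatim from Lusztig's book and used as a black box, so there is nothing internal to compare your argument against. Your sketch is the standard textbook proof and is essentially correct. The necessity direction is fine: minimality of $N$ with $(x_i^-)^N\cdot v=0$ together with $[x_i^+,(x_i^-)^N]\cdot v=[N]_q[\lambda(h_i)-N+1]_q(x_i^-)^{N-1}\cdot v$ and the fact that $q$ is not a root of unity forces $\lambda(h_i)=N-1\geq 0$. In the sufficiency direction there is one spot where the logic is stated in the wrong order: you assert that $\U_i\cdot v$ \emph{is} the finite-dimensional simple $\U_q(sl_2)$-module of highest weight $\lambda(h_i)$ and deduce $(x_i^-)^{\lambda(h_i)+1}\cdot v=0$, but a priori $\U_i\cdot v$ could be a rank-one Verma module. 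The correct order is to first check that $(x_i^-)^{\lambda(h_i)+1}\cdot v$ is killed by every $x_j^+$ (trivially for $j\neq i$, by the rank-one commutator identity for $j=i$), so it generates a proper submodule and vanishes by simplicity of $V(\lambda)$; finite-dimensionality of $\U_i\cdot v$ is then a consequence. With that repaired, your propagation step (the set of vectors on which $x_i^\pm$ acts locally nilpotently is a submodule, via the quantum Serre relations, hence all of $V(\lambda)$) is exactly the standard Kac-type argument, and local nilpotency of the $x_i^+$ comes for free from the weights lying in $\lambda-Q^+$. Your closing remark is the right call: in this paper one would simply cite \cite{lusztig_introduction_1993}, as the author does.
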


For an integrable representation $V$ of $\U_q(\hat{sl}_{n+1})$ with finite-dimensional weight spaces, one defines the usual character
$$\chi(V) = \sum_{\nu \in P} \dim(V_{\nu}) e(\nu) \in \prod_{\nu \in P} \ZZ e(\nu).$$

Similar definitions hold for the quantum group $\U_q(sl_{n+1})$. In this case, the integrable simple highest weight modules are parametrized by $P_0^+$ and denoted by $V_0(\lambda)$ ($\lambda \in P_0^+$). Further they are finite-dimensional (see \cite{lusztig_introduction_1993, rosso_representations_1991}). Let $\mathcal{C}$ be the category of integrable finite-dimensional representations of $\U_q(sl_{n+1})$ and $\mathcal{R}$ its Grothendieck ring.

\begin{thm}\cite{lusztig_introduction_1993, rosso_representations_1991} 
The category $\mathcal{C}$ is a semi-simple tensor category and the simple objects of $\mathcal{C}$ are the $(V_0(\lambda))_{\lambda\in P_0^+}$. Furthermore $\chi$ induces a ring morphism
$$\chi : \mathcal{R} \rightarrow \bigoplus_{\nu \in P_0} \ZZ e(\nu) $$
where the product at the right hand side is defined by $e(\mu) e(\nu) = e(\mu + \nu)$.
\end{thm}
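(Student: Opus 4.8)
The plan is to establish the statement in three steps: first classify the simple objects of $\mathcal{C}$, then show that $\mathcal{C}$ is closed under tensor products and is semisimple, and finally deduce the stated properties of $\chi$.

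For the classification I would argue as follows. Let $V$ be a nonzero object of $\mathcal{C}$. By Definition~\ref{defint} it admits a weight decomposition, and since it is finite-dimensional the set $\wt(V) \subset P_0$ is finite, so it contains an element $\lambda$ maximal for the dominance order, so that $\lambda + \alpha_i \notin \wt(V)$ for every $i \in I_0$. Then $x_i^+ \cdot V_\lambda \subseteq V_{\lambda + \alpha_i} = 0$ for all $i \in I_0$, so any nonzero $v \in V_\lambda$ is a highest weight vector; restricting the action to $\U_i \cong \U_q(sl_2)$ and using local nilpotency of the $x_i^\pm$ together with the representation theory of $\U_q(sl_2)$ forces $\lambda(h_i) \geq 0$, i.e. $\lambda \in P_0^+$. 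If $V$ is simple then $V = \U_q(sl_{n+1}) \cdot v$ is a highest weight module of highest weight $\lambda$, hence a quotient of the universal one and therefore isomorphic to its unique simple quotient $V_0(\lambda)$. Conversely, each $V_0(\lambda)$ with $\lambda \in P_0^+$ is finite-dimensional and integrable — the quantum analogue of the classical finite-type highest weight theory, for which I would cite \cite{lusztig_introduction_1993, rosso_representations_1991} — hence lies in $\mathcal{C}$, and $V_0(\lambda) \cong V_0(\mu)$ implies $\lambda = \mu$ since $\lambda$ is recovered as the unique maximal weight. This identifies the simple objects with the family $(V_0(\lambda))_{\lambda \in P_0^+}$.

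Next I would check that $\mathcal{C}$ is a semisimple tensor category. The tensor product coming from the coproduct $\Delta$ (part of the standard Hopf algebra structure on $\U_q(\hat{sl}_{n+1})$, restricted to $\U_q(sl_{n+1})$) preserves $\mathcal{C}$: one has $(V \otimes W)_\nu = \bigoplus_{\mu + \mu' = \nu} V_\mu \otimes W_{\mu'}$, so $V \otimes W$ is finite-dimensional with a weight decomposition, and local nilpotency of each $x_i^\pm$ on $V \otimes W$ follows by expanding $\Delta\big((x_i^\pm)^N\big)$ via the $q$-binomial formula and invoking nilpotency on the factors. For semisimplicity I would run the quantum analogue of Weyl's complete reducibility theorem. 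Since $q$ is not a root of unity, a suitable central (quantum Casimir) element acts on $V_0(\lambda)$ by a scalar of the form $q^{(\lambda,\,\lambda + 2\rho)}$ (for the standard invariant form, $\rho$ half the sum of the positive roots of type $A_n$), and these scalars are pairwise distinct for distinct $\lambda \in P_0^+$; decomposing any object of $\mathcal{C}$ into generalized eigenspaces of this element reduces the splitting of a short exact sequence $0 \to V' \to V \to V'' \to 0$ to the case where $V'$ and $V''$ are isotypic of a single type $V_0(\lambda)$, which one settles by a direct highest-weight-vector argument (equivalently, by the deformation argument: $\dim \mathrm{Ext}^1$ between Weyl modules is constant in $q$ and vanishes at $q = 1$ by the classical theorem). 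I expect this complete-reducibility step to be the main obstacle; the remaining two steps are essentially formal bookkeeping with weights and the coproduct.

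Finally, for the character map: $\chi$ is additive on short exact sequences by the weight-space count above, so it descends to $\mathcal{R}$, and $\chi(V \otimes W) = \chi(V)\,\chi(W)$ because $\dim (V \otimes W)_\nu = \sum_{\mu + \mu' = \nu} \dim V_\mu \cdot \dim W_{\mu'}$, which is precisely the multiplication rule $e(\mu)e(\mu') = e(\mu + \mu')$ on $\bigoplus_{\nu \in P_0} \ZZ e(\nu)$. Since $\mathcal{R}$ is, by the previous step, the free $\ZZ$-module on the classes $[V_0(\lambda)]$, the assignment $[V_0(\lambda)] \mapsto \chi(V_0(\lambda))$ extends uniquely to a ring morphism $\chi : \mathcal{R} \to \bigoplus_{\nu \in P_0}\ZZ e(\nu)$; it is moreover injective, since $\chi(V_0(\lambda)) = e(\lambda) + \sum_{\mu < \lambda} \big(\dim V_0(\lambda)_\mu\big)\, e(\mu)$ exhibits the images of the simple classes as $\ZZ$-linearly independent.
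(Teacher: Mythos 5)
The paper offers no proof of this statement: it is quoted as a known theorem with a citation to Lusztig and Rosso, so there is nothing internal to compare your argument against. Your outline is the standard textbook proof (highest-weight classification of simples via integrability, complete reducibility via the quantum Casimir, multiplicativity of $\chi$ from the coproduct), and it is correct in substance; the only genuine inaccuracy is the claim that the Casimir scalars $q^{(\lambda,\lambda+2\rho)}$ are pairwise distinct over $\lambda\in P_0^+$. They are not: already in type $A_n$ the weights $\Lambda_1$ and $\Lambda_n$ are exchanged by the diagram automorphism and give the same scalar. What the linkage argument actually provides, and all that the complete-reducibility proof needs, is that $(\lambda,\lambda+2\rho)\neq(\mu,\mu+2\rho)$ whenever $\lambda>\mu$ are distinct \emph{comparable} dominant weights, since then $(\lambda,\lambda+2\rho)-(\mu,\mu+2\rho)=(\lambda-\mu,\lambda+\mu+2\rho)>0$; extensions between incomparable highest weights are killed directly by the highest-weight-vector argument (a preimage of the highest weight vector of the quotient is annihilated by all $x_i^+$ because $\mu+\alpha_i$ is not a weight of the sub). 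With that correction your reduction to isotypic pieces should be replaced by the $\mathrm{Ext}^1$-vanishing case analysis, and the rest of your sketch (tensor stability via $\Delta$ and the $q$-binomial expansion of $\Delta((x_i^\pm)^N)$, additivity and multiplicativity of $\chi$, injectivity from the triangularity of $\chi(V_0(\lambda))$) is fine.
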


We do not recall here the theory of crystal bases of quantum groups, we just refer to \cite{kashiwara_crystal_1994, kashiwara_bases_2002, kashiwara_level-zero_2002}. Let us remind only that for $\lambda \in P^+$, the $\U_q(\hat{sl}_{n+1})$-module $V(\lambda)$ has a crystal basis $\mathcal{B}(\lambda)$. In the same way we denote by $\mathcal{B}_0(\lambda)$ the crystal basis of the $\U_q(sl_{n+1})$-module $V_0(\lambda)$ ($\lambda \in P_0^+$). When we want to distinguish crystals of $\U_q(\hat{sl}_{n+1})$, $\U_q(\hat{sl}_{n+1})_{J}$ with $J \subset I$ and $\U_q(\hat{sl}_{n+1})'$, we call it respectively a $P$-crystal or $I$-crystal, a $J$-crystal and a $P_{\cl}$-crystal.

\subsection{Extremal weight modules} In this section we recall the definition and some properties of extremal weight modules for the quantum affine algebra $\U_q(\hat{sl}_{n+1})$ given by Kashiwara \cite{kashiwara_crystal_1994, kashiwara_level-zero_2002}. All of these hold for general quantum Kac-Moody algebras and in particular for $\U_q(sl_{n+1})$.

\begin{defi} For an integrable $\U_q(\hat{sl}_{n+1})$-module $V$ and $\lambda\in P$, a vector $v\in V_{\lambda}$ is called extremal of weight $\lambda$ if there are vectors $\{v_w\}_{w\in W}$ such that $v_{\text{Id}}=v$ and
$$x_i^{\pm} \cdot v_w=0 \text{ and }(x_i^{\mp})^{(\pm w(\lambda)(h_i))} \cdot v_w=v_{s_i(w)} \text{ if } \pm w(\lambda)(h_i)\geq 0.$$ \end{defi}

\noindent Note that if the vector $v$ is extremal of weight $\lambda$, then for $w\in W$, $v_{w}$ is extremal of weight $w(\lambda)$.

\begin{rem} The definition of extremal vector can be rewritten as follows (see \cite{kashiwara_crystal_1994}): for an integrable $\U_q(\hat{sl}_{n+1})$-module $V$, a weight vector $v$ of weight $\lambda$ is called $i$-extremal if $x_i^{+} \cdot v = 0$ or $x_i^{-} \cdot v = 0$. In this case we set $S_i(v) = (x_i^-)^{(\lambda(h_i))} \cdot v$ or $S_i(v) = (x_i^+)^{(-\lambda(h_i))} \cdot v$ respectively. Then a weight vector $v$ is extremal if, for any $l \geq 0$, $S_{i_1} \circ \cdots \circ S_{i_l} (v)$ is $i$-extremal for any $i, i_1, \cdots, i_l \in I$. We set in that case $$W \cdot v = \{S_{i_1} \circ \cdots \circ S_{i_l} (v) v \vert \ l \in \NN, \ i_1, \cdots, i_l \in I \}.$$
\end{rem}

The notion of extremal elements in a crystal $\mathcal{B}$ can be defined in the same way.

\begin{defi} For $\lambda\in P$, the extremal weight module $V(\lambda)$ of extremal weight $\lambda$ is the $\U_q(\hat{sl}_{n+1})$-module generated by a vector $v_{\lambda}$ with the defining relations that $v_{\lambda}$ is extremal of weight $\lambda$.\end{defi}

\begin{ex} If $\lambda$ is dominant, $V(\lambda)$ is the simple highest weight module of highest weight $\lambda$. \end{ex}

\begin{thm} \cite{kashiwara_crystal_1994} For $\lambda\in P$, the module $V(\lambda)$ is integrable and has a crystal basis $\mathcal{B}(\lambda)$.\end{thm}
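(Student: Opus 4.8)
The plan is to follow Kashiwara's original strategy for extremal weight modules of quantum Kac--Moody algebras, since the statement $V(\lambda)$ is integrable and admits a crystal basis $\mathcal{B}(\lambda)$ is exactly \cite{kashiwara_crystal_1994} specialized to $\U_q(\hat{sl}_{n+1})$. First I would reduce to the case where $\lambda$ is dominant or antidominant in a suitable sense: given an arbitrary $\lambda \in P$, there is $w \in W$ with $w(\lambda)$ either dominant or, more precisely, one studies the $W$-orbit of $\lambda$ and picks a representative on which the analysis is simplest. The key point is that $V(\lambda)$ only depends on the $W$-orbit of $\lambda$ up to isomorphism, via the braid-group-type action implemented by the operators $S_i$ appearing in the remark following the definition of extremal vectors; so the structural statements (integrability, existence of a crystal basis) transfer along the orbit.

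The heart of the argument is to build $\mathcal{B}(\lambda)$ by a limiting/stabilization procedure. For $\lambda$ dominant this is just $\mathcal{B}(\lambda)$ for the simple highest weight module, already available from the earlier theorem on crystal bases of $V(\lambda)$, $\lambda \in P^+$. For general $\lambda$, I would realize $V(\lambda)$ as a subquotient or a suitable colimit of tensor products of the form $V(\Lambda) \otimes V(\Lambda')^{\vee}$-type modules (or more concretely of modules $V(\lambda_1) \otimes \cdots$ built from dominant pieces), and transport the crystal structure. Concretely, the steps are: (1) show $V(\lambda)$ has a weight space decomposition with $\wt(V(\lambda)) \subset \lambda + Q$ and that each $x_i^{\pm}$ acts locally nilpotently — local nilpotency follows because the generator $v_\lambda$ is extremal, hence killed by $x_i^+$ or $x_i^-$ after applying the appropriate divided power, and this property propagates to $\U_q(\hat{sl}_{n+1}) \cdot v_\lambda$ using the $\U_i \cong \U_q(sl_2)$ subalgebras and the integrability of finite-dimensional $\U_q(sl_2)$-modules; (2) establish that $V(\lambda)$, as a $\U_q(sl_2)$-module via each $\U_i$, decomposes into finite-dimensional pieces, which gives condition (ii) of Definition \ref{defint}; (3) construct a crystal lattice $L(\lambda)$ and basis $B(\lambda)$ at $q = 0$ compatibly with the $\U_i$-actions, using the lower global basis / Kashiwara operator machinery, and check the axioms of a crystal basis; (4) identify $\mathcal{B}(\lambda) = B(\lambda)$ with the abstract extremal crystal generated by one extremal element of weight $\lambda$.

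The main obstacle is step (3): constructing the crystal basis for a module that is \emph{neither} highest nor lowest weight, so the usual inductive construction via the $+$-part or $-$-part of the algebra does not directly apply. Kashiwara's resolution is to use a presentation of $V(\lambda)$ as an inductive limit of modules that \emph{are} of highest weight on the relevant truncation, or equivalently to embed $V(\lambda)$ into an infinite tensor product and take a stable component; making the crystal lattice and the Kashiwara operators compatible with these limits, and checking that the limit is independent of the choices, is the technically delicate part. I would handle it by: fixing an expression and using the commutation of the Kashiwara operators $\tilde{e}_i, \tilde{f}_i$ with the maps in the inductive system, invoking the already-known crystal bases for the dominant building blocks, and verifying polarizability (existence of a bilinear form) to pin down the lattice. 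Once the lattice and $q=0$ basis are in place and shown to be stable under all $\tilde{e}_i, \tilde{f}_i$, the crystal axioms are routine, and integrability in the sense of Definition \ref{defint} is immediate from the $\U_i$-decomposition. Since this is precisely Theorem 3.3 of \cite{kashiwara_crystal_1994}, in the paper itself one would simply cite it; the sketch above indicates how the proof goes in case a self-contained argument is wanted.
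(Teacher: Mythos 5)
This theorem is not proved in the paper at all: it is imported verbatim from \cite{kashiwara_crystal_1994}, so there is no in-paper argument to compare your sketch against. Judged against Kashiwara's actual proof, your outline is essentially faithful: integrability does follow from the fact that the extremal generator $v_\lambda$ is annihilated by suitable powers of both $x_i^{+}$ and $x_i^{-}$ (since $v_\lambda$ and $v_{s_i}$ are both $i$-extremal), propagated to the whole cyclic module via the Serre relations; and the crystal basis is obtained by realizing $\U_q(\hat{sl}_{n+1})a_\lambda$ (the relevant piece of the modified quantized enveloping algebra) as a limit of tensor products $V(\mu)\otimes V(-\nu)$ with $\mu,\nu$ dominant and $\mu-\nu=\lambda$, transporting the known crystal lattices of the dominant building blocks, and using the bilinear form to pin down the global structure --- exactly the stabilization-plus-polarizability route you describe in step (3).

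One caveat is worth flagging. Your opening reduction ``pick $w\in W$ so that $w(\lambda)$ is dominant or antidominant'' cannot work in the case this paper actually cares about: for a level-zero weight such as $\varpi_\ell=\Lambda_\ell-\Lambda_0$ the $W$-orbit contains no dominant and no antidominant representative (the orbit is $\{\varpi_\ell+k\delta \text{ shifted by } W_0\}$, all of level zero), so the isomorphism $V(\lambda)\simeq V(w\lambda)$ simplifies nothing there. Kashiwara's construction is uniform in $\lambda$ precisely because it goes through $\tilde{\U}_q$ rather than through any such reduction; the $W$-invariance is a consequence, not an input. With that step deleted (or demoted to a remark), your sketch is an accurate account of the cited proof.
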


Set $ \lambda = \varpi_{\ell} $, where $1 \leq \ell \leq n$ and $\varpi_{\ell}$ is the level $0$ fundamental weight $ \varpi_{\ell}=\Lambda_{\ell}- \Lambda_{0}$.

\begin{thm}\cite{kashiwara_level-zero_2002}\label{thmkasem} Let $1 \leq \ell \leq n$.
\begin{enumerate}
\item[(i)] $V(\varpi_\ell)$ is an irreducible $\U_q(\hat{sl}_{n+1})$-module.
\item[(ii)] Any non-zero integrable $\U_q(\hat{sl}_{n+1})$-module generated by an extremal weight vector of weight $\varpi_\ell$ is isomorphic to $V(\varpi_\ell)$.
\end{enumerate}
\end{thm}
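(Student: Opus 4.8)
The plan is to deduce Theorem \ref{thmkasem} from Kashiwara's general theory of extremal weight modules, specialized to the weights $\varpi_\ell = \Lambda_\ell - \Lambda_0$, which are the minuscule-type level $0$ fundamental weights. The two assertions are closely linked: (ii) is essentially a universality statement, and (i) follows once one checks that $V(\varpi_\ell)$ has no proper nonzero submodule generated by the $W$-orbit of $v_{\varpi_\ell}$. First I would recall from \cite{kashiwara_level-zero_2002} the structure of the crystal $\mathcal{B}(\varpi_\ell)$: by Kashiwara's level-zero theory, $\mathcal{B}(\varpi_\ell)$ is a connected crystal, and every element of $\mathcal{B}(\varpi_\ell)$ is extremal. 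Connectedness of the crystal is the key input, since a connected crystal all of whose vertices are extremal forces the corresponding module to be generated (over $\U_q(\hat{sl}_{n+1})$) by any single extremal vector, up to the subtlety of the $\ZZ[z,z^{-1}]$-module structure coming from the degree operator.

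For part (i), the point is that $\varpi_\ell$ is a level $0$ weight with $\varpi_\ell(h_i) \in \{0, \pm 1\}$ for all $i \in I$, so the stabilizer of $\cl(\varpi_\ell)$ in $W$ and the structure of the $W$-orbit are controlled explicitly; in particular $W \cdot \varpi_\ell$ is, modulo $\delta$, a single $W_0$-type orbit, and $V(\varpi_\ell)_{\varpi_\ell}$ is one-dimensional. One then invokes Kashiwara's result (\cite{kashiwara_level-zero_2002}) that $V(\varpi_\ell)$ is a free module over the commutative ring $\CC[z_\ell^{\pm 1}]$ generated by a ``loop rotation'' central-like element, and that the quotient by $(z_\ell - 1)$ is irreducible; more directly, one uses that $V(\varpi_\ell) \cong V(\varpi_\ell)$ has a global crystal basis with connected crystal graph, and any submodule would have to be a sub-crystal-basis, hence correspond to a sub-crystal, which by connectedness is everything. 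I would write this as: suppose $0 \neq N \subsetneq V(\varpi_\ell)$ is a submodule; passing to crystal bases at $q \to 0$ (which exist by the previous theorem), $N$ inherits a crystal $\mathcal{B}_N \subset \mathcal{B}(\varpi_\ell)$ stable under all $\tilde e_i, \tilde f_i$, contradicting connectedness unless $\mathcal{B}_N = \mathcal{B}(\varpi_\ell)$, whence $N = V(\varpi_\ell)$.

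For part (ii), let $V$ be a nonzero integrable module generated by an extremal vector $v$ of weight $\varpi_\ell$. By the defining property of the extremal weight module $V(\varpi_\ell)$ as the universal such object, there is a surjective $\U_q(\hat{sl}_{n+1})$-morphism $\varphi : V(\varpi_\ell) \twoheadrightarrow V$ sending $v_{\varpi_\ell} \mapsto v$. Since $V \neq 0$, $\varphi$ is nonzero, so $\ker \varphi$ is a proper submodule of $V(\varpi_\ell)$; by part (i), $\ker \varphi = 0$, so $\varphi$ is an isomorphism. The one genuine subtlety to address is that "generated by an extremal vector of weight $\varpi_\ell$" must be interpreted with the full extremal condition (existence of the compatible family $\{v_w\}_{w \in W}$), so that the universal property of $V(\varpi_\ell)$ applies verbatim; I would state this explicitly.

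The main obstacle is part (i): establishing irreducibility of $V(\varpi_\ell)$ genuinely requires Kashiwara's analysis of level-zero extremal weight modules at fundamental weights — the fact that the crystal $\mathcal{B}(\varpi_\ell)$ is connected and that the weight multiplicities force no room for a proper submodule. Rather than reprove this, I would cite \cite{kashiwara_level-zero_2002} for the connectedness and the structure of $\mathcal{B}(\varpi_\ell)$ (it is the crystal whose combinatorics in the monomial realization is studied in Section 3 of this paper), and assemble the short argument above. Part (ii) is then a formal consequence of the universal property together with (i), and requires no further input.
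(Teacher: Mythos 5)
First, a point of comparison: the paper does not prove Theorem \ref{thmkasem} at all — it is imported verbatim from \cite{kashiwara_level-zero_2002} — so the only question is whether your assembly of Kashiwara's ingredients is sound. Your derivation of (ii) from (i) is correct and is the standard one: $V(\varpi_\ell)$ is by its very definition (generators and the extremality relations) universal among integrable modules generated by an extremal vector of weight $\varpi_\ell$, so any nonzero such $V$ receives a surjection from $V(\varpi_\ell)$, whose kernel is a proper submodule and hence zero by (i).

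The gap is in your argument for (i). The step ``passing to crystal bases at $q\to 0$, a submodule $N$ inherits a subcrystal $\mathcal{B}_N\subset\mathcal{B}(\varpi_\ell)$, contradicting connectedness'' is not justified: an arbitrary submodule of a module with a global crystal basis need not be compatible with that basis (i.e.\ need not be spanned by a subset of it), and for level-zero extremal weight modules this is precisely the delicate point. A concrete way to see that the argument proves too much: $\mathcal{B}(\varpi_\ell)$ is just as connected when regarded as a $P_{\cl}$-crystal, and yet $V(\varpi_\ell)$ is \emph{not} irreducible over $\U_q(\hat{sl}_{n+1})'$ — it contains the proper nonzero submodules $(z_\ell-c)V(\varpi_\ell)$ for every $c\in\CC^{*}$, and these are exactly the submodules that are not compatible with the global basis (modulo $q$ the operator $z_\ell-c$ sends a basis element $b$ to $z_\ell b-cb$, which is not a basis element). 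So ``connected crystal $\Rightarrow$ no proper submodule'' cannot be the mechanism. What actually saves irreducibility over the full algebra $\U_q(\hat{sl}_{n+1})$ is the $\delta$-grading coming from $d$ together with the structure $V(\varpi_\ell)\simeq W(\varpi_\ell)_{\mathrm{aff}}$ and the irreducibility of the finite-dimensional $\U_q(\hat{sl}_{n+1})'$-module $W(\varpi_\ell)$: a $P$-weight submodule is forced to be homogeneous for the $z$-grading, is then pinned down by the irreducibility of each graded piece as one moves around with the $x_{i}^{\pm}$, and finally must contain the one-dimensional extremal weight space $V(\varpi_\ell)_{\varpi_\ell+s\delta}$ for some $s$, hence the whole $W$-orbit of $v_{\varpi_\ell}$. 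This is the content of the theorem quoted immediately after Theorem \ref{thmkasem} in the paper, and it is the input your sketch is missing; citing connectedness of $\mathcal{B}(\varpi_\ell)$ alone does not substitute for it.
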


Let $w$ be an element of $W$ such that $w(\varpi_\ell) = \varpi_\ell + \delta$. Such an element exists and is not unique (see \cite{kashiwara_level-zero_2002}). It defines a $\U_q(\hat{sl}_{n+1})'$-automorphism (also called $P_{\cl}$-automorphism in the following) of the restricted $\U_q(\hat{sl}_{n+1})'$-module $V(\varpi_\ell)$, which sends $v$ to $v_{w}$. It is of weight $\delta$, and denoted by $z_\ell$. Let us define the $\U_q(\hat{sl}_{n+1})'$-module
$$W(\varpi_\ell) = V(\varpi_\ell)/(z_\ell - 1) V(\varpi_\ell).$$
Then we have

\begin{thm} \cite{kashiwara_level-zero_2002} Let $1 \leq \ell \leq n$.
\begin{enumerate}
\item[(i)] $W(\varpi_{\ell})$ is a finite-dimensional irreducible $\U_q(\hat{sl}_{n+1})'$-module.
\item[(ii)] For any $\mu \in \wt(V(\varpi_{\ell}))$,
$$W(\varpi_{\ell})_{\cl(\mu)}\simeq V(\varpi_{\ell})_\mu.$$
\item[(iii)] $V(\varpi_{\ell})$ is isomorphic to $W(\varpi_{\ell})_\mathrm{aff}$ as a $\U_q(\hat{sl}_{n+1})$-module.
\end{enumerate}
\end{thm}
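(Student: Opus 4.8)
The plan is to construct the finite-dimensional module $W(\varpi_\ell)$ explicitly as a weight-space quotient of $V(\varpi_\ell)$, deduce finiteness and irreducibility from the structure of $V(\varpi_\ell)$, and then recognize $V(\varpi_\ell)$ as the affinization $W(\varpi_\ell)_{\mathrm{aff}}$. First I would recall that, by Theorem \ref{thmkasem}(i), $V(\varpi_\ell)$ is an irreducible $\U_q(\hat{sl}_{n+1})$-module, and that as a $\U_q(\hat{sl}_{n+1})'$-module it carries the weight-$\delta$ automorphism $z_\ell$ coming from a Weyl group element $w$ with $w(\varpi_\ell)=\varpi_\ell+\delta$. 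The key structural input is that $z_\ell$ acts on the restricted module without eigenvalue, shifting weights by $\delta$; hence the $\U_q(\hat{sl}_{n+1})'$-weight spaces of $V(\varpi_\ell)$ — which are indexed by $\cl(\mu)$ for $\mu\in\wt(V(\varpi_\ell))$ — are grouped into $z_\ell$-orbits, each orbit being a free $\CC[z_\ell^{\pm1}]$-module of finite rank (finite because each $P_{\cl}$-weight space of $V(\varpi_\ell)$ is finite-dimensional, this being an integrable module with the extra conditions of Remark \ref{remintmod}).

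For part (ii), the idea is that $W(\varpi_\ell)_{\cl(\mu)} = V(\varpi_\ell)_\mu \big/ (z_\ell-1)V(\varpi_\ell)\cap V(\varpi_\ell)_{\cl(\mu)}$, and since $z_\ell$ maps $V(\varpi_\ell)_\mu$ isomorphically onto $V(\varpi_\ell)_{\mu+\delta}$, in each $z_\ell$-orbit of weights the quotient by $(z_\ell-1)$ identifies all the pieces $V(\varpi_\ell)_{\mu+k\delta}$ ($k\in\ZZ$) with a single copy. Concretely one picks $\mu$ as a representative, and the composite $V(\varpi_\ell)_\mu \hookrightarrow V(\varpi_\ell) \twoheadrightarrow W(\varpi_\ell)$ lands in $W(\varpi_\ell)_{\cl(\mu)}$ and is shown to be a linear isomorphism by constructing the inverse via the $z_\ell$-action (every class in $W(\varpi_\ell)_{\cl(\mu)}$ has a unique representative in $V(\varpi_\ell)_\mu$, because modulo $(z_\ell-1)$ one can slide any vector of weight $\mu+k\delta$ back to weight $\mu$). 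From this, part (i) follows: $W(\varpi_\ell)$ has finitely many $P_{\cl}$-weights, each finite-dimensional, so it is finite-dimensional; and irreducibility is inherited — any nonzero $\U_q(\hat{sl}_{n+1})'$-submodule $N\subset W(\varpi_\ell)$ pulls back to a nonzero $z_\ell$-stable $\U_q(\hat{sl}_{n+1})'$-submodule of $V(\varpi_\ell)$ containing $(z_\ell-1)V(\varpi_\ell)$, which by the irreducibility of $V(\varpi_\ell)$ as a full $\U_q(\hat{sl}_{n+1})$-module (Theorem \ref{thmkasem}(i)) and the fact that the full algebra is generated by $\U_q(\hat{sl}_{n+1})'$ together with enough Cartan elements must be all of $V(\varpi_\ell)$.

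For part (iii), I would use the general formalism of affinization of a $\U_q(\hat{sl}_{n+1})'$-module: given the finite-dimensional $W(\varpi_\ell)$, its affinization $W(\varpi_\ell)_{\mathrm{aff}}$ is $W(\varpi_\ell)\otimes\CC[z^{\pm1}]$ with the $\U_q(\hat{sl}_{n+1})$-action reconstructed using the grading by $z$, and it comes with a canonical surjection onto $W(\varpi_\ell)$. The claim is that the natural map $V(\varpi_\ell)\to W(\varpi_\ell)_{\mathrm{aff}}$, built from the quotient map and the $z_\ell$-grading of $V(\varpi_\ell)$, is an isomorphism; injectivity and surjectivity both follow by comparing $P$-weight spaces, using part (ii) to match $V(\varpi_\ell)_\mu$ with $W(\varpi_\ell)_{\cl(\mu)}\otimes z^{k}$ where $\mu = \mu_0 + k\delta$ and $\mu_0$ is the chosen representative. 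One must also check that this identification is compatible with the $\U_q(\hat{sl}_{n+1})$-action and not merely with the underlying graded vector spaces; this is where the precise definition of $z_\ell$ as a $P_{\cl}$-automorphism (rather than just a vector space map) is used, since it guarantees that $z_\ell$ intertwines the action up to the weight-$\delta$ shift, which is exactly the compatibility encoded in the affinization construction.

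The main obstacle I anticipate is part (iii): verifying that the affinization $W(\varpi_\ell)_{\mathrm{aff}}$, a priori only defined as an abstract $\U_q(\hat{sl}_{n+1})$-module with a prescribed $z$-grading, actually coincides with $V(\varpi_\ell)$ on the nose requires showing that the extremal-vector defining relations of $V(\varpi_\ell)$ are both satisfied in $W(\varpi_\ell)_{\mathrm{aff}}$ (so one gets a map $V(\varpi_\ell)\to W(\varpi_\ell)_{\mathrm{aff}}$) and exhausted by it (so the map is injective) — and the injectivity half is delicate precisely because $V(\varpi_\ell)$ is not a highest-weight module, so one cannot argue by a one-dimensional top weight space. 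The clean route is to invoke part (ii) to get equality of characters (graded dimensions) between $V(\varpi_\ell)$ and $W(\varpi_\ell)_{\mathrm{aff}}$, together with a surjection between them, forcing it to be an isomorphism; this reduces everything to the finite-dimensionality and weight-space computations already established.
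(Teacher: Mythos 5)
This theorem is imported verbatim from \cite{kashiwara_level-zero_2002}; the paper gives no proof of it, so there is no in-paper argument to compare against and I am judging your sketch on its own terms. Your treatment of (ii) and (iii) is essentially correct: the sliding relation $v\equiv z_\ell^{-k}v \pmod{(z_\ell-1)V(\varpi_\ell)}$ gives surjectivity of $V(\varpi_\ell)_\mu\to W(\varpi_\ell)_{\cl(\mu)}$, injectivity follows because a nonzero $P$-weight vector cannot lie in $(z_\ell-1)V(\varpi_\ell)$ (write it as $(z_\ell-1)u$, decompose $u$ into finitely many weight components and use that $z_\ell$ is invertible of weight $\delta$), and (iii) then reduces to (ii) by matching $P$-weight spaces, exactly as you describe. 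You do, however, silently use two nontrivial inputs that deserve at least a citation: that the $P$-weight spaces of $V(\varpi_\ell)$ are finite-dimensional, and that $\cl(\wt(V(\varpi_\ell)))$ is a \emph{finite} set (this needs the estimate $\wt(V(\lambda))\subset\mathrm{conv}(W\lambda)$, whose classical projection meets $\cl(\varpi_\ell+Q)$ in finitely many points); without the latter, "finitely many $P_{\cl}$-weights, each finite-dimensional" is an assertion, not a deduction.

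The genuine gap is the irreducibility step in (i). Writing $\pi\colon V(\varpi_\ell)\to W(\varpi_\ell)$ for the quotient map, the preimage $\tilde N=\pi^{-1}(N)$ of a submodule $N$ is only a $\U_q(\hat{sl}_{n+1})'$-submodule: since it contains $(z_\ell-1)V(\varpi_\ell)$, which mixes the $\delta$-degrees, it is not stable under $k_d$ and so is not a $\U_q(\hat{sl}_{n+1})$-submodule. Theorem \ref{thmkasem}(i) therefore says nothing about $\tilde N$, and "the full algebra is generated by $\U_q(\hat{sl}_{n+1})'$ together with enough Cartan elements" does not help, because $\tilde N$ need not be stable under those extra Cartan elements. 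Indeed $(z_\ell-1)V(\varpi_\ell)$ is itself a proper nonzero $\U_q(\hat{sl}_{n+1})'$-submodule of $V(\varpi_\ell)$, which shows that the principle you invoke --- a $\U_q(\hat{sl}_{n+1})'$-submodule of an irreducible $\U_q(\hat{sl}_{n+1})$-module is zero or everything --- is simply false in this situation. The argument can be repaired, but only by routing it through (ii): since $q$ is not a root of unity, the $P_{\cl}$-weight spaces of $W(\varpi_\ell)$ are separated by the Cartan action, so $N$ contains a nonzero $P_{\cl}$-weight vector $\bar v$; by (ii) it lifts to a genuine $P$-weight vector $v\in V(\varpi_\ell)_\mu\cap\tilde N$; then $\U_q(\hat{sl}_{n+1})\cdot v=\U_q(\hat{sl}_{n+1})'\cdot v\subset\tilde N$ (because $k_d$ acts on the weight vector $v$ by a scalar), and this is a nonzero $\U_q(\hat{sl}_{n+1})$-submodule of $V(\varpi_\ell)$, hence everything, whence $N=W(\varpi_\ell)$. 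As written, your proof of (i) is missing this idea.
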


\noindent Here $M_{\mathrm{aff}}$ is the affinization of an integrable $\U_q(\hat{sl}_{n+1})'$-module $M$: this is the $\U_q(\hat{sl}_{n+1})$-module with a weight space decomposition $M_{\mathrm{aff}} = \bigoplus_{\nu \in P} (M_{\mathrm{aff}})_\nu$ defined by
$$(M_{\mathrm{aff}})_\nu = M_{\cl(\nu)}$$
and with the obvious action of $x_i^{\pm}$. Note also that we have an isomorphism of $\U_q(\hat{sl}_{n+1})'$-modules
$$M_{\mathrm{aff}} \simeq \CC[z, z^{-1}] \otimes M$$
where $x_i^{\pm}$ act on the right hand side by $z^{\pm \delta_{i,0}}x_{i}^{\pm}$. In the same way one defines the affinization $\mathcal{B}_{\mathrm{aff}}$ of a $P_{\cl}$-crystal $\mathcal{B}$. For an integrable $\U_q(\hat{sl}_{n+1})'$-module $M$ with associated $P_{\cl}$-crystal $\mathcal{B}$, its affinization $M_{\mathrm{aff}}$ has a $P$-crystal $\mathcal{B}_{\mathrm{aff}}$.

\subsection{Quantum toroidal algebra $\U_q(sl_{n+1}^{tor})$} In this section, we recall the definition and the main properties of the quantum toroidal algebra $\U_q(sl_{n+1}^{tor})$ (without central charge).

\begin{defi}\label{defqta} \cite{ginzburg_langlands_1995} The quantum toroidal algebra $\U_q(sl_{n+1}^{tor})$ is the $\CC$-algebra with generators $x_{i,r}^{\pm}$ $(i\in I, r\in\ZZ)$, $k_h$ $(h\in \Hlie)$, $h_{i,m}$ $(i\in I, m\in\ZZ-\{0\})$ and the following relations $(i, j \in I, r, r', r_1, r_2 \in\ZZ, m \in \ZZ-\{0\})$
\begin{equation*}k_hk_{h'}=k_{h+h'}\text{ , }k_0=1\text{ , }[k_{h},h_{j,m}]=0\text{ , }[h_{i,m},h_{j,m'}]=0,\end{equation*}
\begin{equation*}k_{h}x_{j,r}^{\pm}k_{-h}=q^{\pm \alpha_j(h)}x_{j,r}^{\pm},\end{equation*}
\begin{equation}\label{actcartld}[h_{i,m},x_{j,r}^{\pm}]=\pm \frac{1}{m}[mC_{i,j}]_qx_{j,m+r}^{\pm},\end{equation}
\begin{equation*}[x_{i,r}^+,x_{j,r'}^-]= \delta_{ij}\frac{\phi^+_{i,r+r'}-\phi^-_{i,r+r'}}{q-q^{-1}},\end{equation*}
\begin{equation*}x_{i,r+1}^{\pm}x_{j,r'}^{\pm}-q^{\pm C_{ij}}x_{j,r'}^{\pm}x_{i,r+1}^{\pm}=q^{\pm C_{ij}}x_{i,r}^{\pm}x_{j,r'+1}^{\pm}-x_{j,r'+1}^{\pm}x_{i,r}^{\pm},\end{equation*}
\begin{eqnarray*}
\begin{array}{c}
x_{i,r_1}^{\pm}x_{i,r_2}^{\pm}x_{i \pm 1,r'}^{\pm} - (q+q^{-1}) x_{i,r_1}^{\pm}x_{i \pm 1,r'}^{\pm}x_{i,r_2}^{\pm} + x_{i \pm 1,r'}^{\pm}x_{i,r_1}^{\pm}x_{i,r_2}^{\pm} = \\
- x_{i,r_2}^{\pm}x_{i,r_1}^{\pm}x_{i \pm 1,r'}^{\pm} + (q+q^{-1}) x_{i,r_2}^{\pm}x_{i \pm 1,r'}^{\pm}x_{i,r_1}^{\pm} - x_{i \pm 1,r'}^{\pm}x_{i,r_2}^{\pm}x_{i,r_1}^{\pm},
\end{array}
\end{eqnarray*}
and $[x_{i,r_1}^{\pm}, x_{j,r_2}^{\pm}] = 0$ if $i \neq j, j \pm 1$. Here for all $i\in I$ and $m\in\ZZ$, $\phi_{i,m}^{\pm}\in \U_q(sl_{n+1}^{tor})$ is determined by the formal power series in $\U_q(sl_{n+1}^{tor})[[z]]$ $($resp. in $\U_q(sl_{n+1}^{tor})[[z^{-1}]])$
$$\phi_i^\pm(z) = \underset{m\geq 0}{\sum}\phi_{i,\pm m}^{\pm}z^{\pm m}=k_{i}^{\pm 1}\exp \left(\pm(q-q^{-1})\underset{m'\geq 1}{\sum}h_{i,\pm m'}z^{\pm m'}\right)$$
and $\phi_{i,m}^+=0$ for $m<0$, $\phi_{i,m}^-=0$ for $m>0$.
\end{defi}

There is an algebra morphism $\U_q(\hat{sl}_{n+1})\rightarrow \U_q(sl_{n+1}^{tor})$ defined by $k_h\mapsto k_h$ , $x_i^{\pm}\mapsto x_{i,0}^{\pm}$ ($h\in \Hlie, i\in I$). Its image is called the horizontal quantum affine subalgebra of $\U_q(sl_{n+1}^{tor})$ and is denoted by $\U_q^{h}(sl_{n+1}^{tor})$. In particular, a $\U_q(sl_{n+1}^{tor})$-module $V$ has also a structure of $\U_q(\hat{sl}_{n+1})$-module. We denote by $\mathrm{Res}(V)$ the restricted $\U_q(\hat{sl}_{n+1})$-module obtained from $V$.

As it is said above, the quantum affine algebra $\U_q(\hat{sl}_{n+1})'$ has another realization in terms of Drinfeld generators \cite{beck_braid_1994, drinfeld_new_1988}: this is the $\CC$-algebra with generators $x_{i,r}^{\pm}$ ($i \in I_0,$ $r \in \ZZ$), $k_h$ ($h \in \Hlie_0$), $h_{i,m}$ ($i \in I_0, m \in \ZZ-\{0\}$) and the same relations as in Definition \ref{defqta}. It is isomorphic to the subalgebra $ \U_{q}^{v}(sl_{n+1}^{tor}) $ of $\U_q(sl_{n+1}^{tor})$ generated by the $x_{i,r}^{\pm}$, $k_{h}$, $h_{i,m}$ ($i\in I_{0}, r\in\ZZ, h \in \Hlie_0, m\in\ZZ-\{0\}$).  $ \U_{q}^{v}(sl_{n+1}^{tor}) $ is called the vertical quantum affine subalgebra of $\U_q(sl_{n+1}^{tor})$.

For all $j \in I$, set $I_{j} = I - \{j\}$ and define the subalgebra $ \U_{q}^{v,j}(sl_{n+1}^{tor}) $ of $ \U_{q}(sl_{n+1}^{tor}) $ generated by the $x_{i,r}^{\pm}$, $k_{h}$, $h_{i,m}$ ($i\in I_{j}, r\in\ZZ, h \in \bigoplus_{i \in I_j} \QQ h_i, m\in\ZZ-\{0\}$). In particular $ \U_{q}^{v,0}(sl_{n+1}^{tor}) $ is the vertical quantum affine subalgebra $ \U_{q}^{v}(sl_{n+1}^{tor}) $ of $\U_q(sl_{n+1}^{tor})$. All the $\U_{q}^{v,j}(sl_{n+1}^{tor})$ for various $j \in I$ are isomorphic: in fact let $ \theta $ be the automorphism of the Dynkin diagram of type $ A_{n}^{(1)} $ corresponding to the rotation such that $ \theta(k) = k + 1 $, where $ I $ is identified to the set $ \ZZ / (n+1) \ZZ $. It defines an automorphism $\theta_\Hlie$ of $\Hlie$ by sending $h_i, d$ to $h_{\theta(i)}, d$ ($i \in I$). For all $j \in J$, let $\theta^{(j)}$ be the automorphism of $\U_q(sl_{n+1}^{tor})$ which sends $ x_{i, r}^{\pm} $, $ k_{h} $, $ h_{i,m} $ to $ x_{\theta^{j}(i), r}^{\pm} $, $ k_{\theta_\Hlie^{j}(h)} $, $ h_{\theta^{j}(i),m} $ respectively (where $ i \in I $, $h \in \Hlie$, $ r \in \ZZ$, $m\in \ZZ-\{0\}$). It gives by restriction an isomorphism of algebras between $ \U_{q}^{v}(sl_{n+1}^{tor}) $ and $ \U_{q}^{v,j}(sl_{n+1}^{tor}) $, still denoted by $\theta^{(j)}$ in the following. If $V$ is a $\U_q(\hat{sl}_{n+1})'$-module, we denote by $V^{(j)}$ the induced $\U_{q}^{v,j}(sl_{n+1}^{tor})$-module.

For $i\in I$, the subalgebra $ \hat{\U}_i $ generated by the $x_{i,r}^{\pm}, h_{i,m}, k_{p h_{i}}$ ($ r \in \ZZ$, $ m \in \ZZ - \{0\}$, $p \in \QQ $) is isomorphic to $ \U_{q}(\hat{sl}_{2})' $.

We have a triangular decomposition of $\U_q(sl_{n+1}^{tor})$.

\begin{thm}\label{dtrian} \cite{miki_representations_2000, nakajima_quiver_2001} We have an isomorphism of vector spaces
$$\U_q(sl_{n+1}^{tor})\simeq \U_q(sl_{n+1}^{tor})^-\otimes\U_q(\hat{\Hlie})\otimes\U_q(sl_{n+1}^{tor})^+,$$ 
where $\U_q(sl_{n+1}^{tor})^{\pm}$ (resp. $\U_q(\hat{\Hlie})$) is generated by the $x_{i,r}^{\pm}$ (resp. the $k_h$, the $h_{i,m}$).
\end{thm}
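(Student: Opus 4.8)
The plan is to show that the multiplication map
$$m\colon \U_q(sl_{n+1}^{tor})^-\otimes\U_q(\hat{\Hlie})\otimes\U_q(sl_{n+1}^{tor})^+\longrightarrow\U_q(sl_{n+1}^{tor})$$
(well defined since the three factors are subalgebras by construction) is a linear isomorphism, treating surjectivity by an elementary straightening argument and injectivity by a Poincar\'e--Birkhoff--Witt type statement; the latter is where essentially all the difficulty lies.

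For surjectivity I would prove that every monomial in the generators $x_{i,r}^{\pm},k_h,h_{i,m}$ is a linear combination of \emph{ordered} monomials of the shape (a word in the $x_{i,r}^-$)(a word in the $k_h,h_{i,m}$)(a word in the $x_{i,r}^+$). Only three families of relations from Definition \ref{defqta} are needed: $k_hx_{j,r}^{\pm}k_{-h}=q^{\pm\alpha_j(h)}x_{j,r}^{\pm}$ and \eqref{actcartld} push every Cartan letter $k_h,h_{i,m}$ to the left of all $x^+$-letters and to the right of all $x^-$-letters --- note that \eqref{actcartld} rewrites $h_{i,m}x_{j,r}^{\pm}$ as $x_{j,r}^{\pm}h_{i,m}$ plus a term in which the Cartan letter is replaced by a fresh $x^{\pm}$-letter of the \emph{same} sign --- while $[x_{i,r}^+,x_{j,r'}^-]=\delta_{ij}(\phi^+_{i,r+r'}-\phi^-_{i,r+r'})/(q-q^{-1})$ moves each $x^+$-letter past each $x^-$-letter, the correction lying in $\U_q(\hat{\Hlie})$. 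A double induction, first on the number of Cartan letters and then on the number of ``$x^+$ before $x^-$'' inversions, shows the rewriting terminates, so the ordered monomials span $\U_q(sl_{n+1}^{tor})$ and $m$ is onto. The $q$-commutation relation with an $r$-shift and the cubic Serre-type relations are not used at this stage; their role is to reduce the spanning set to a basis.

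Injectivity is the main obstacle: it asserts that the ordered monomials are linearly independent, equivalently that passing to the Drinfeld-type presentation of Definition \ref{defqta} causes no collapse among $x^{\pm}$-monomials, i.e.\ a PBW theorem for $\U_q(sl_{n+1}^{tor})$. Here is the route I would follow. Since all $k_h$ and $h_{i,m}$ commute, $\U_q(\hat{\Hlie})$ is the group algebra on the $k_h$ tensored with the polynomial algebra on the $h_{i,m}$ and has an obvious monomial basis, so it is enough to fix spanning sets $\{u_\alpha^{\pm}\}$ of $\U_q(sl_{n+1}^{tor})^{\pm}$ and show the products $u_\alpha^-\,u^0\,u_\beta^+$ are independent. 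To see the independence of the $u_\alpha^-$, take a generic character $\mu$ of $\U_q(\hat{\Hlie})$, extend it to $\U_q(\hat{\Hlie})\cdot\U_q(sl_{n+1}^{tor})^+$ with $\U_q(sl_{n+1}^{tor})^+$ acting by $0$, and form $M(\mu)=\U_q(sl_{n+1}^{tor})\otimes_{\U_q(\hat{\Hlie})\cdot\U_q(sl_{n+1}^{tor})^+}\CC_\mu$; by the straightening argument $M(\mu)$ is spanned by $\{u_\alpha^-\otimes 1\}$. If one identifies $M(\mu)$ concretely enough --- via the vertex/Fock-space representations available in type $A$, or by a flat-deformation comparison over $\CC[q^{\pm 1}]$ with the negative half of the enveloping algebra of the classical toroidal Lie algebra $sl_{n+1}^{tor}$, whose triangular decomposition is the ordinary PBW theorem --- so that $\dim M(\mu)_\nu$ equals the number of $u_\alpha^-$ of the right degree, then $\{u_\alpha^-\otimes 1\}$ is a basis of $M(\mu)$ and the $u_\alpha^-$ are independent in $\U_q(sl_{n+1}^{tor})^-$. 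A mirror construction with a lowest-weight induced module does the same for $\U_q(sl_{n+1}^{tor})^+$, and for $\mu$ generic $\U_q(\hat{\Hlie})$ acts freely on the relevant module; assembling the three facts gives the linear independence of all $u_\alpha^-\,u^0\,u_\beta^+$, hence injectivity of $m$. The genuinely hard step is this last concrete identification of $M(\mu)$ --- the a priori lower bound on its graded dimensions --- which is the PBW theorem established in \cite{miki_representations_2000, nakajima_quiver_2001}; I would invoke it rather than reprove it here.
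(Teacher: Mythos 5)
The paper gives no proof of this theorem: it is quoted verbatim from \cite{miki_representations_2000, nakajima_quiver_2001}, so there is no in-text argument to compare yours against, and your decision to defer the hard injectivity input to those same references is consistent with what the paper itself does. Your surjectivity argument is the right one, but the induction as you order it does not terminate: the relation $[x_{i,r}^+,x_{j,r'}^-]=\delta_{ij}(\phi^+_{i,r+r'}-\phi^-_{i,r+r'})/(q-q^{-1})$ replaces two $x$-letters by an element of $\U_q(\hat{\Hlie})$, so the correction term has \emph{more} Cartan letters than the word you started from, and an induction whose primary variable is the number of Cartan letters breaks at the first $x^+x^-$ inversion. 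The primary variable must be the total number of $x$-letters, which strictly drops there; within a fixed number of $x$-letters, the relations $k_hx_{j,r}^{\pm}k_{-h}=q^{\pm \alpha_j(h)}x_{j,r}^{\pm}$ and (\ref{actcartld}) never increase the Cartan count, so the lexicographic order (number of $x$-letters, number of Cartan letters, number of inversions) does terminate.

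On injectivity, two remarks. Your construction of $M(\mu)$ as $\U_q(sl_{n+1}^{tor})\otimes_{\U_q(\hat{\Hlie})\cdot\U_q(sl_{n+1}^{tor})^+}\CC_\mu$ is mildly circular: to know that ``$\mu$ extended by zero'' is a well-defined character of $\U_q(\hat{\Hlie})\cdot\U_q(sl_{n+1}^{tor})^+$ you already need the vector-space decomposition of that subalgebra as $\U_q(\hat{\Hlie})\otimes\U_q(sl_{n+1}^{tor})^+$, which is part of what is being proved; one should instead define $M(\mu)$ as the quotient of $\U_q(sl_{n+1}^{tor})$ by the left ideal generated by the $x_{i,r}^+$ and the $k_h-\mu(k_h)$, $h_{i,m}-\mu(h_{i,m})$, the issue then being nonvanishing and graded dimensions. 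More substantively, the proofs in the cited references (and in Hernandez's treatment of general quantum affinizations) do not pass through PBW bases of $\U_q(sl_{n+1}^{tor})^{\pm}$ --- which are delicate objects in the toroidal setting --- but first establish the decomposition for the algebra presented without the cubic Serre-type relations, where the straightening argument alone essentially suffices, and then check that the two-sided ideal generated by the Serre elements is compatible with the decomposition by showing that the Serre elements of one sign commute with the generators of the opposite sign. That mechanism proves exactly the statement at hand without ever exhibiting a basis of $\U_q(sl_{n+1}^{tor})^{\pm}$, whereas your route additionally requires the graded-dimension lower bound that you acknowledge you must import wholesale.
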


\subsection{Representations of $\U_q(sl_{n+1}^{tor})$}

\begin{defi} A representation $V$ of $\U_q(sl_{n+1}^{tor})$ is said to be integrable if $\mathrm{Res}(V)$ is integrable as a $\U_q(\hat{sl}_{n+1})$-module.
\end{defi}

\begin{defi} A representation $V$ of $\U_q(sl_{n+1}^{tor})$ is said to be of $\ell$-highest weight if there is $v\in V$ such that 
\begin{enumerate}
\item[(i)] $V = \U_q(sl_{n+1}^{tor})^- \cdot v$, 
\item[(ii)] $\U_q(\hat{\Hlie}) \cdot v=\CC v$, 
\item[(iii)] for any $i\in I, r\in\ZZ$, $x_{i,r}^+ \cdot v=0$.
\end{enumerate}
\end{defi}

For $\gamma \in \mathrm{Hom}(\U_q(\hat{\Hlie}), \CC)$ an algebra morphism, by Theorem \ref{dtrian} we have a corresponding Verma module $M(\gamma)$ and a simple representation $V(\gamma)$ which are $\ell$-highest weight. Then we have:

\begin{thm}\label{cond} \cite{miki_representations_2000, nakajima_quiver_2001} The simple representations $V(\gamma)$ of $\U_q(sl_{n+1}^{tor})$ are integrable if there is $(\lambda ,(P_i)_{i \in I})\in P^+ \times (1+u\mathbb{C}[u])^{I}$ satisfying $\gamma(k_h) = q^{\lambda(h)}$ and for $i\in I$ the relation in $\mathbb{C}[[z]]$ (resp. in $\mathbb{C}[[z^{-1}]]$)
$$\gamma(\phi_i^\pm(z))=q^{\text{deg}(P_i)}\frac{P_i(zq^{-1})}{P_i(zq)}.$$
\end{thm}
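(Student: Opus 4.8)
The plan is to establish the integrability of $V(\gamma)$ by reducing to the rank-one subalgebras $\hat{\U}_i \simeq \U_q(\hat{sl}_2)'$ and to the known classification of finite-dimensional $\U_q(\hat{sl}_2)$-modules via Drinfeld polynomials. The hypothesis gives us, for each $i \in I$, a polynomial $P_i \in 1 + u\CC[u]$ such that $\gamma(\phi_i^\pm(z)) = q^{\deg(P_i)} P_i(zq^{-1})/P_i(zq)$, together with $\gamma(k_h) = q^{\lambda(h)}$ for some $\lambda \in P^+$. First I would recall that the simple $\ell$-highest weight module $V(\gamma)$ is, by construction from the triangular decomposition in Theorem \ref{dtrian}, a quotient of the Verma module $M(\gamma)$ by its unique maximal proper submodule, so it suffices to produce \emph{one} integrable $\ell$-highest weight module with highest $\ell$-weight $\gamma$; its simple quotient will then automatically be $V(\gamma)$ and integrable, since a quotient of an integrable module is integrable.

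Next I would construct such a module concretely. For each $i \in I_0$, let $V_0(P_i)$ denote the finite-dimensional simple $\U_q(\hat{sl}_2)$-module (in the Drinfeld realization) with Drinfeld polynomial $P_i$; these exist by Chari--Pressley. One pastes these together: using the vertical quantum affine subalgebra $\U_q^v(sl_{n+1}^{tor})$ and, more importantly, the fact that the assignment $x_{i,r}^\pm, h_{i,m}, k_{ph_i}$ generates a $\U_q(\hat{sl}_2)'$ for each $i\in I$, one builds a $\U_q(sl_{n+1}^{tor})^+$-highest weight vector $v$ on which $\U_q(\hat{\Hlie})$ acts by $\gamma$, and considers $M = \U_q(sl_{n+1}^{tor})^- \cdot v$. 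The task is to check $\mathrm{Res}(M)$ is integrable, i.e.\ that $M$ admits a $P$-weight decomposition and every $x_{i,0}^\pm$ is locally nilpotent. The weight decomposition follows from the $k_h$-action and the root-grading of $\U_q(sl_{n+1}^{tor})^-$. For local nilpotency, the standard argument is: the condition $\gamma(\phi_i^\pm(z)) = q^{\deg P_i} P_i(zq^{-1})/P_i(zq)$ with $\lambda(h_i) = \deg P_i \geq 0$ means that $v$ generates, under the rank-one subalgebra $\hat\U_i$, a module isomorphic to (a highest $\ell$-weight submodule of) a tensor product of $(n+1)$-point evaluation modules for $\U_q(\hat{sl}_2)'$, which is finite-dimensional; hence $x_{i,0}^+$ and $x_{i,0}^-$ act locally nilpotently on a set of generators of $M$ as a module over the rest of the algebra, and one propagates this to all of $M$ using the defining relations (the commutation relations \eqref{actcartld} and the Serre-type relations let one control $(x_{i,0}^\pm)^N$ acting on $x_{j,r}^\mp \cdot w$).

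**The main obstacle** I expect is precisely this last propagation step — showing that local nilpotency of $x_{i,0}^\pm$ on the cyclic generator (or on weight vectors of a given weight) forces it on all of $M$. In the quantum affine case this is Lemma-level (Chari--Pressley, Beck), but in the toroidal setting the Weyl group $W_0$ no longer acts, so one cannot simply say ``weight strings are finite by $\mathfrak{sl}_2$-representation theory applied to each weight.'' Instead one argues: the $\ell$-weights of $M$ lie in a finite set (they are governed by the monomials in the $\phi_i^\pm$-eigenvalues obtained from the $P_i$, which form a finite multiset by the theory of $q$-characters referenced via Theorem \ref{cond}'s hypotheses), and for each $P$-weight $\nu$, $M_\nu$ is spanned by vectors obtained from $v$ by finitely many $x_{j,r}^-$; a dimension/degree count bounds $\dim M_\nu$, and then $(x_{i,0}^\pm)|_{M_\nu \to M_{\nu \pm \alpha_i}}$ being a map between finite-dimensional spaces with the $\hat\U_i$-module structure forces a finite weight string. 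I would cite \cite{miki_representations_2000, nakajima_quiver_2001} for the structural input and \cite{hernandez_representations_2005} for the $q$-character bookkeeping that guarantees finiteness of the $\ell$-weight set, and conclude that $\mathrm{Res}(V(\gamma))$ satisfies Definition \ref{defint}, hence $V(\gamma)$ is integrable.
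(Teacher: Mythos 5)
The paper does not prove this statement; it is quoted from \cite{miki_representations_2000, nakajima_quiver_2001}, so your sketch can only be judged on its own merits. Your overall strategy (reduce to the rank-one subalgebras $\hat{\U}_i\simeq \U_q(\hat{sl}_2)'$, invoke the Chari--Pressley classification of finite-dimensional $\U_q(\hat{sl}_2)$-modules by Drinfeld polynomials, then propagate local nilpotency) is indeed the standard one, but both key steps have genuine gaps. First, the construction in your second paragraph is vacuous: an abstract $\U_q(sl_{n+1}^{tor})^+$-highest weight vector $v$ on which $\U_q(\hat{\Hlie})$ acts by $\gamma$, together with $M=\U_q(sl_{n+1}^{tor})^-\cdot v$, is nothing but the Verma module $M(\gamma)$, which is never integrable ($x_{i,0}^-$ acts freely on $v$). ``Pasting together'' the modules $V_0(P_i)$ is not a construction, and for quantum toroidal algebras there is no ordinary coproduct with which to tensor rank-one or fundamental pieces. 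Relatedly, the assertion that $v$ generates under $\hat{\U}_i$ a (submodule of a) finite-dimensional tensor product of evaluation modules does not follow from the form of the highest $\ell$-weight alone: the $\U_q(\hat{sl}_2)'$-Verma module has the same highest $\ell$-weight and is infinite-dimensional. The standard repair is to work directly inside the simple module $V(\gamma)$: if $K_i$ denotes the maximal proper $\hat{\U}_i$-submodule of $\hat{\U}_i\cdot v$, then $x_{j,r}^+ K_i=0$ for $j\neq i$ (since $[x_{j,r}^+,x_{i,s}^-]=0$) and $\hat{\U}_i^+K_i\subseteq K_i$, so $\U_q(sl_{n+1}^{tor})\cdot K_i=\U_q(sl_{n+1}^{tor})^-\cdot K_i$ misses the weight $\lambda$ and is a proper submodule; simplicity then forces $\hat{\U}_i\cdot v$ to be a quotient of the finite-dimensional simple $\U_q(\hat{sl}_2)'$-module with Drinfeld polynomial $P_i$, whence $(x_{i,0}^-)^{N_i}\cdot v=0$.

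Second, your proposed resolution of the ``propagation'' obstacle is circular. Finiteness of the $\ell$-weight multiset and finite-dimensionality of the weight spaces are \emph{consequences} of integrability (this is exactly what the paper records after Theorem \ref{cond}), not inputs one may assume; and before integrability is established the weight spaces are not finite-dimensional (the weight space of $M(\gamma)$ at $\lambda-\alpha_i$ is spanned by the infinitely many vectors $x_{i,r}^-\cdot v$, $r\in\ZZ$), so a ``dimension/degree count'' of $M_\nu$ is unavailable. The correct propagation is the one you only mention in passing: the Serre-type relations of Definition \ref{defqta} imply that $\mathrm{ad}(x_{i,0}^-)$ acts locally nilpotently on the generators $x_{j,r}^-$ of $\U_q(sl_{n+1}^{tor})^-$, and the standard Kac--Moody lemma (nilpotency on a cyclic vector plus local ad-nilpotency on the algebra implies local nilpotency on the whole cyclic module) then gives local nilpotency of $x_{i,0}^-$ on $V(\gamma)=\U_q(sl_{n+1}^{tor})^-\cdot v$; no Weyl group action and no dimension bound is needed. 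Local nilpotency of $x_{i,0}^+$ is automatic from $\wt(V(\gamma))\subseteq \lambda-Q^+$. With these two corrections your outline becomes the proof given in the cited references.
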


\noindent The polynomials $P_i$ are called Drinfeld polynomials and the representation $V(\gamma)$ is then denoted by $V(\lambda, (P_i)_{i \in I})$. Such a representation is also integrable in the sense of \cite{hernandez_quantum_2009}, i.e. $V(\lambda, (P_i)_{i \in I})$ satisfies conditions (iii) and (iv) of Remark \ref{remintmod}.

The Kirillov-Reshetikhin module associated to $k \geq 0$, $a \in \CC^{\ast}$ and $0 \leq \ell \leq n$, is the simple integrable representation of weight $k \Lambda_\ell$ with the $n$--tuple
$$P_i(u) = \left\lbrace \begin{array}{l} (1-ua)(1-uaq^{2}) \cdots (1-uaq^{2(k-1)}) \ \mathrm{for} \ i = \ell, \\ 1 \ \mathrm{for} \ i \neq \ell. \end{array} \right.$$
\noindent If $k = 1$, it is also called fundamental module.

Consider an integrable representation $V$ of $\U_q(sl_{n+1}^{tor})$. As the subalgebra $\U_q(\hat{\Hlie})$ is commutative, we have a decomposition of the weight spaces $V_{\nu}$ in simultaneous generalized eigenspaces
$$V_\nu = \bigoplus_{(\nu, \gamma) \in P \times \mathrm{Hom}(\U_q(\hat{\Hlie}), \CC)} V_{(\nu, \gamma)},$$
where $V_{(\nu, \gamma)} = \{x \in V / \exists p \in \NN, \forall i \in I, \forall m \geq 0, (\phi_{i, \pm m}^{\pm} - \gamma(\phi_{i, \pm m}^{\pm}))^{p} \cdot x = 0 \}$. If $V_{(\nu, \gamma)} \neq \{0\}$, $(\nu, \gamma)$ is called an $\ell$-weight of $V$.

\begin{defi}
A $\U_q(sl_{n+1}^{tor})$-module $V$ is weighted if the Cartan subalgebra $\U_q(\hat{\Hlie})$ acts on $V$ by diagonalizable operators. The module $V$ is thin if it is weighted and the joint spectrum is simple.
\end{defi}

The terminology is different in \cite{ feigin_quantum_2011, feigin_quantum_2011-1, feigin_quantum_2012, feigin_representations_2013}: a thin module is called tame.

\begin{defi}\cite{frenkel_$q$-characters_1999, hernandez_representations_2005, nakajima_quiver_2001} The $q$--character of an integrable representation $V$ of $\U_q(sl_{n+1}^{tor})$ with finite-dimensional $\ell$-weight spaces is defined by the formal sum
$$\chi_q(V) = \sum_{(\nu, \gamma) \in P \times \mathrm{Hom}(\U_q(\hat{\Hlie}), \CC)} \dim(V_{(\nu, \gamma)}) e(\nu, \gamma).$$
\end{defi}

\noindent Furthermore if the weight spaces of $V$ are finite-dimensional we have
$$\chi(\mathrm{Res}(V)) = \beta(\chi_q(V)),$$
where $\mathrm{Res}(V)$ still denotes the restricted $\U_q(\hat{sl}_{n+1})$-module obtained from $V$ and 
$$\beta : \prod_{(\nu, \gamma) \in P \times \mathrm{Hom}(\U_q(\hat{\Hlie}), \CC)}  \ZZ e(\nu, \gamma) \rightarrow \prod_{\nu \in P}  \ZZ e(\nu)$$
is $\ZZ$-linear such that $\beta(e(\nu, \gamma)) = e(\nu)$ for all $(\nu, \gamma) \in P \times \mathrm{Hom}(\U_q(\hat{\Hlie}), \CC)$.

\begin{prop}\cite{frenkel_$q$-characters_1999, hernandez_representations_2005, nakajima_quiver_2001}
Let $V$ be an integrable representation of $\U_q(sl_{n+1}^{tor})$. An $\ell$-weight $(\nu, \gamma) \in P \times \mathrm{Hom}(\U_q(\hat{\Hlie}), \CC)$ of $V$ satisfies the property
\begin{enumerate}
\item[(i)] there exist polynomials $Q_i(z), R_i(z) \in \CC[z]$ ($i \in I$) of constant term 1 such that in $\CC[[z]]$ (resp. in $\CC[[z^{-1}]]$):
\begin{equation}\label{lwr}
\sum_{m \geq 0} \gamma(\phi_{i, \pm m}^{\pm}) z^{\pm m} = q^{\deg(Q_i) - \deg(R_i)} \dfrac{Q_i(zq^{-1}) R_i(zq)}{Q_i(zq) R_i(zq^{-1})}.
\end{equation}
\end{enumerate}
At more, if $V$ has a finite composition series $L_0 = \{0\} \subset L_1 \subset L_2 \subset \dots \subset L_k = V$ such that $L_{j+1}/L_j \simeq V(\lambda_j, (P_i^j)_{i \in I})$ where the roots of $P_i^j$ are in $q^{\ZZ}$ for all $i \in I$, $0 \leq j \leq k-1$, then
\begin{enumerate}
\item[(ii)] there exist $\omega \in P^{+}$, $\alpha \in Q^{+}$ satisfying $\nu = \omega - \alpha$,
\item[(iii)] the zeros of the polynomials $Q_i(z), R_i(z)$ are in $q^{\ZZ}$.
\end{enumerate}
\end{prop}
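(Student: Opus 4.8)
The plan is to reduce (i) to a statement about $\U_q(\hat{sl}_2)'$ by restriction, and to derive (ii) and (iii) from the structure of $\ell$-highest weight modules together with the theory of $q$-characters. For (i), I would fix $i\in I$ and a nonzero $v$ in the generalized $\ell$-weight space $V_{(\nu,\gamma)}$ and work inside the subalgebra $\hat{\U}_i\simeq\U_q(\hat{sl}_2)'$. By Definition \ref{defqta} the generators $x_{i,r}^{\pm}$, $h_{i,m}$, $k_{ph_i}$ satisfy exactly the Drinfeld relations of $\U_q(\hat{sl}_2)'$ (the node $i$ has $C_{i,i}=2$ and those relations involve no other node), and $x_{i,0}^{\pm}$ act locally nilpotently on $V$ because $V$ is integrable. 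This places us in the $\hat{sl}_2$ situation treated in \cite{frenkel_$q$-characters_1999} (see also \cite{hernandez_representations_2005, nakajima_quiver_2001}): choosing a linear form $v^{\ast}$ on $V$ with $v^{\ast}(v)=1$ which vanishes on the other weight spaces and on a complement of $\CC v$ in $V_\nu$, one shows — using the quadratic relations among the $x_{i,r}^{+}$ and among the $x_{i,r}^{-}$ — that the correlation functions $v^{\ast}\big(x_{i,r_1}^{+}\cdots x_{i,r_a}^{+}\,x_{i,s_1}^{-}\cdots x_{i,s_b}^{-}\,v\big)$ are expansions of rational functions whose poles are confined to prescribed $q$-shifted loci; local nilpotency of $x_{i,0}^{\pm}$ bounds $a$, $b$ and the degrees of the numerators, and evaluating $[x_{i,r}^{+},x_{i,s}^{-}]\cdot v=(q-q^{-1})^{-1}\big(\gamma(\phi_{i,r+s}^{+})-\gamma(\phi_{i,r+s}^{-})\big)v$ then forces $\sum_{m\geq 0}\gamma(\phi_{i,m}^{+})z^{m}$ and $\sum_{m\geq 0}\gamma(\phi_{i,-m}^{-})z^{-m}$ to be the expansions at $0$ and $\infty$ of a single rational function of the shape $q^{\deg Q_i-\deg R_i}\,Q_i(zq^{-1})R_i(zq)/\big(Q_i(zq)R_i(zq^{-1})\big)$ with $Q_i,R_i\in\CC[z]$ of constant term $1$. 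Running this over all $i\in I$ produces \eqref{lwr}.

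For (ii), I would use that by Theorem \ref{cond} each subquotient $L_{j+1}/L_j\simeq V(\lambda_j,(P_i^j)_{i\in I})$ is $\ell$-highest weight, generated by a vector of weight $\lambda_j\in P^{+}$; since $x_{i,r}^{-}$ has weight $-\alpha_i$ for every $r$, all weights of this subquotient lie in $\lambda_j-Q^{+}$. Taking $k_h$-eigenspaces is exact, so weight multiplicities add along the composition series, and $V_{(\nu,\gamma)}\neq\{0\}$ (hence $V_\nu\neq\{0\}$) forces $\nu$ to be a weight of some $L_{j+1}/L_j$; thus $\nu=\omega-\alpha$ with $\omega:=\lambda_j\in P^{+}$ and $\alpha\in Q^{+}$.

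For (iii), I would first note that an $\ell$-weight of $V$ is an $\ell$-weight of some subquotient $L_{j+1}/L_j$: if $0\neq x\in V_{(\nu,\gamma)}$, take $j$ minimal with $x\in L_{j+1}$, so that $x+L_j$ is a nonzero generalized $\ell$-weight vector of $L_{j+1}/L_j$ of $\ell$-weight $(\nu,\gamma)$. Hence $(\nu,\gamma)$ is an $\ell$-weight of some $V(\lambda_j,(P_i^j)_{i\in I})$. Identifying $\ell$-weights with Laurent monomials in variables $Y_{i,a}$ as in \cite{frenkel_$q$-characters_1999}, the highest $\ell$-weight of $V(\lambda_j,(P_i^j)_{i\in I})$ corresponds by Theorem \ref{cond} to a monomial in the $Y_{i,a}$ with each $a$ a root of some $P_i^j$ up to a fixed power of $q$, hence $a\in q^{\ZZ}$ by hypothesis; and by the $q$-character theory of integrable representations of quantum affinizations \cite{frenkel_$q$-characters_1999, hernandez_representations_2005, nakajima_quiver_2001} every monomial occurring in $\chi_q(V(\lambda_j,(P_i^j)_{i\in I}))$ is obtained from the highest one by multiplying by monomials $A_{k,b}^{-1}$, where $A_{k,b}=Y_{k,bq}Y_{k,bq^{-1}}Y_{k-1,b}^{-1}Y_{k+1,b}^{-1}$ (indices taken mod $n+1$). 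Starting from $b\in q^{\ZZ}$ one never leaves $q^{\ZZ}$, so every $Y$-variable occurring in the monomial of $(\nu,\gamma)$ is indexed by a point of $q^{\ZZ}$; equivalently, the zeros of $Q_i$ and $R_i$ lie in $q^{\ZZ}$.

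The reductions in (ii) and the first step of (iii) are formal. The substance is the $\hat{sl}_2$ input to (i) — pinning down the rational $Q/R$ shape of the Drinfeld--Cartan eigenvalue from the current relations and local nilpotency — and, for the second step of (iii), the closure of the set of $\ell$-weights of an $\ell$-highest weight module under the operations $m\mapsto m A_{k,b}^{-1}$, which rests on the same $q$-character machinery and is a precursor of the notion of closed monomial set in Definition \ref{defclocry}. I expect the $\hat{sl}_2$ rationality to be the main obstacle: establishing it in full (rather than quoting it) requires a careful bootstrap with the Drinfeld current relations.
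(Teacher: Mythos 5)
The paper does not prove this proposition: it is quoted verbatim from the cited references \cite{frenkel_$q$-characters_1999, hernandez_representations_2005, nakajima_quiver_2001}, so there is no in-paper argument to compare against. Your sketch is a faithful reconstruction of the standard proofs in those references, and the two reductions you call ``formal'' are indeed correct and complete at the level of detail given: a weight (resp.\ generalized $\ell$-weight) of $V$ is a weight (resp.\ $\ell$-weight) of some composition factor, the weights of an $\ell$-highest weight module lie in $\lambda_j-Q^+$ by the triangular decomposition of Theorem \ref{dtrian}, and $\lambda_j\in P^+$ by the hypothesis together with the notational convention following Theorem \ref{cond}.

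Two points deserve flagging. First, in (i) you treat $v$ as an honest eigenvector when you evaluate $[x_{i,r}^+,x_{i,s}^-]\cdot v$, but $V_{(\nu,\gamma)}$ is only a \emph{generalized} eigenspace; the standard fix is a filtration of $V_{(\nu,\gamma)}$ on whose graded pieces the $\phi_{i,\pm m}^{\pm}$ act by the scalars $\gamma(\phi_{i,\pm m}^{\pm})$, and since the desired conclusion only concerns $\gamma$ this costs nothing, but it should be said. You are also right that the $\hat{sl}_2$ rationality is where all the substance lies: the paper's Definition \ref{defint} does not assume finite-dimensional weight spaces, so the $\hat{\U}_i$-submodule generated by $v$ need not be finite-dimensional, and the bound on the degrees really does have to come from local nilpotency of $x_{i,0}^{\pm}$ via the current relations, exactly as in \cite{hernandez_representations_2005}. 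Second, in (iii) the passage from ``the monomial $m_{(\nu,\gamma)}$ involves only variables $Y_{i,l}$ with $l\in\ZZ$'' back to ``the zeros of $Q_i$ and $R_i$ lie in $q^{\ZZ}$'' is slightly lossy, since the monomial only records $u_{i,l}-v_{i,l}$; the clean statement is that the rational function $\gamma(\phi_i^{\pm}(z))$ has all zeros and poles in $q^{\ZZ}$, which pins down $Q_i$ and $R_i$ up to a common factor that may be taken with roots in $q^{\ZZ}$. Neither point is a gap in the mathematics, only in the bookkeeping.
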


If $V$ is a Kirillov-Reshetikhin module, one reduces to the case where the defining parameter $a$ is in $q^{\ZZ}$ by twisting the action by the automorphisms $t_b$ of $\U_q(sl_{n+1}^{tor})$ given by ($b \in \CC^{\ast}$)
$$t_b(x_{i,r}^{\pm}) = b^{r}x_{i,r}^{\pm}, \ t_b(h_{i,m}^{\pm}) = b^{m}h_{i,m}^{\pm}, \ t_b(k_{h}) = k_{h}.$$

Consider formal variables $ Y_{i,l}^{\pm 1} $, $ e^{\nu} $ ($ i \in I $, $ l \in \ZZ$, $ \nu \in P $) and let $ A $ be the group of monomials of the form $ m = e^{\omega(m)} \prod_{i \in I, l \in \ZZ} Y_{i,l}^{u_{i,l}(m)} $ where $ u_{i,l}(m) \in \ZZ $, $ \omega(m) \in P $ are such that
$$\sum_{l \in \ZZ} u_{i,l}(m) = \omega(m)(h_i).$$

\noindent For example, $ e^{\pm \Lambda_{i}}Y_{i,l}^{\pm 1} \in A$ and $ A_{i,l} = e^{\alpha_{i}} Y_{i, l-1} Y_{i, l+1} Y_{i-1,l}^{-1}Y_{i+1,l}^{-1} \in A$. A monomial $ m $ is said to be $J$-dominant ($ J \subset I $) if for all $ j \in J $ and $ l\in \ZZ $ we have $ u_{j,l}(m) \geq 0 $. An $ I $-dominant monomial is said to be dominant.

\begin{rem}\label{remweight}
Let us fix a monomial $m \in A$ and consider monomials $m'$ which are products of $m$ with various $A_{i,l}^{\pm 1}$'s ($i \in I, l \in \ZZ$). By \cite[Remark 2.1]{hernandez_level_2006}, $\omega(m')$ is uniquely determined by $\omega(m)$ and $u_{i,l}(m')$. So in the following when we are in this situation, the term $e^{\omega(m')}$ will be safely omitted.
\end{rem}

Let $V$ be an integrable $\U_q(sl_{n+1}^{tor})$-module such that for all $\ell$-weight $(\nu, \gamma)$ of $V$, the roots of the associated polynomials  $Q_i(z)$ and $R_i(z)$ are in $q^{\ZZ}$ for all $i \in I$. For $(\nu, \gamma) \in P \times \mathrm{Hom}(\U_q(\hat{\Hlie}), \CC)$ an $\ell$-weight of $V$, one defines the monomial $m_{(\nu, \gamma)} = e^{\nu} \prod_{i \in I, l \in \ZZ} Y_{i,l}^{u_{i,l}-v_{i,l}}$ where
$$Q_{i}(z) = \prod_{l \in \ZZ} (1-zq^{l})^{u_{i,l}} \ \mathrm{and} \ R_{i}(z) = \prod_{l \in \ZZ} (1-zq^{l})^{v_{i,l}}.$$
We denote $V_{(\nu, \gamma)} = V_{m_{(\nu, \gamma)}}$. We rewrite the $q$--character of an integrable representation $V$ with finite-dimensional $\ell$-weight spaces by the formal sum
$$\chi_q(V) = \sum_{m} \dim(V_{m}) m \in \ZZ[[e^{\nu}, Y_{i,l}^{\pm 1}]]_{\nu \in P, i \in I, l \in \ZZ}.$$
Let us denote by $\mathcal{M}(V)$ the set of monomials occurring in $\chi_q(V)$.

By this correspondence between $\ell$-weights and monomials due to Frenkel-Reshetikhin \cite{frenkel_$q$-characters_1999}, the $I$-tuple of Drinfeld polynomials with zeros in $q^{\ZZ}$ are identified with the dominant monomials. In particular for a dominant monomial $m$, one denotes by $V(m)$ the simple module of $\ell$-highest weight $m$. For example $V(e^{k \Lambda_\ell}Y_{\ell, s}Y_{\ell, s+2} \dots Y_{\ell, s+2(k-1)})$ is the Kirillov-Reshetikhin module associated to $k \geq 0$, $a = q^{s} \in \CC^{\ast}$ ($s \in \ZZ$) and $\ell \in I$, and $V(e^{\Lambda_\ell}Y_{\ell, s})$ is the fundamental module associated to $a = q^{s} \in \CC^{\ast}$ ($s \in \ZZ$) and $\ell \in I$.\\

Similar results hold for the quantum affine algebra $\U_q(\hat{sl}_{n+1})'$ due to Chari-Pressley \cite{chari_guide_1994}. In this case, the simple integrable representations are finite-dimensional and denoted $V_0((P_i)_{i \in I_0})$ in the following. Note that the weights $\lambda \in P_0$ can be omitted here because they are completely determined by the Drinfeld polynomials $(P_i)_{i \in I_0}$,
$$\lambda = \deg(P_1) \Lambda_1 + \dots + \deg(P_n) \Lambda_n.$$
In the same way if $V$ is a Kirillov-Reshetikhin module of $\U_q(\hat{sl}_{n+1})'$, its $\ell$-weights can be only considered as elements of $\mathrm{Hom}(\U_q(\hat{\Hlie}_0), \CC)$ (where $\U_q(\hat{\Hlie}_0)$ is the subalgebra of $\U_q(\hat{sl}_{n+1})'$ generated by $k_h$ ($h \in \Hlie_0$) and $ h_{i,m}$ ($i \in I_0, m \in \ZZ-\{0\}$)). They still satisfy relations (\ref{lwr}). By twisting the action on $V$ by an automorphism $t_b$ of $\U_q(\hat{sl}_{n+1})'$ for some $b \in \CC^{\ast}$, it can be parametrized as above by Laurent monomials in $\ZZ[Y_{i,l}^{\pm 1}]_{i \in I_0, l \in \ZZ}$. The weight $\omega(m) \in P_0$ of a monomial $m \in \ZZ[Y_{i,l}^{\pm 1}]_{i \in I_0, l \in \ZZ}$ can be omitted here because it is completely determined by the $u_{i,l}(m)$ ($i \in I_0, l \in \ZZ$).

Let us recall that the Kirillov-Reshetikhin modules $V_0(Y_{\ell, s}Y_{\ell, s+2} \dots Y_{\ell, s+2(k-1)})$ ($k\geq 0,$ $s \in \ZZ,$ $\ell \in I_0$) over $\U_q(\hat{sl}_{n+1})'$ can be obtained from the $\U_q(sl_{n+1})$-modules $V_0(k \Lambda_\ell)$ as follows: there exist evaluation morphisms $\mathrm{ev}_a : \U_q(\hat{sl}_{n+1})' \rightarrow \U_q(sl_{n+1})$ ($a \in \CC^{\ast}$), which send $x_{i,0}, k_h$ on $x_i, k_h$ respectively ($i \in I_0, h \in \Hlie_0$). So the $\U_q(\hat{sl}_{n+1})'$-module $V_0(Y_{\ell, s}Y_{\ell, s+2} \dots Y_{\ell, s+2(k-1)})$ is obtained by pulling back the action of $\U_q(sl_{n+1})$ on $V_0(k \Lambda_\ell)$ by $\mathrm{ev}_a$ for some $a \in \CC^{\ast}$. In particular, $V_0(Y_{\ell, s}Y_{\ell, s+2} \dots Y_{\ell, s+2(k-1)})$ is irreducible as a $\U_q(sl_{n+1})$-module, isomorphic to $V_0(k \Lambda_\ell)$.

We have defined irreducible finite-dimensional $\U_q(\hat{sl}_{n+1})'$-modules $W(\varpi_\ell)$ ($\ell \in I_0$) in Section 2.4. One can determine them in term of the Drinfeld realization. For that we need the following additional result.

\begin{lem}\cite{nakajima_extremal_2004}
Let $v$ be a vector of an integrable $\U_q(\hat{sl}_{n+1})'$-module of weight \linebreak $\lambda \in \cl(P^0)$ such that for all $i \in I_0$, $\lambda(h_i) \geq 0$. Then the following conditions are equivalent:
\begin{enumerate}
\item[(i)] $v$ is an extremal vector,
\item[(ii)] $x_{i,r}^+ \cdot v = 0$ for all $i \in I_0, r \in \ZZ$.
\end{enumerate}
\end{lem}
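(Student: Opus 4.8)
The plan is to establish the two implications separately, the main tool being Beck's combinatorial description \cite{beck_braid_1994} of the passage between the Drinfeld and Kac--Moody realizations of $\U_q(\hat{sl}_{n+1})'$, together with Lusztig's braid operators $T_w$ ($w\in W$) acting on $V$. The starting observation is that, since $\lambda$ is $I_0$-dominant and of level $0$, we have $\lambda(h_i)\geq 0$ for $i\in I_0$ and $\lambda(h_0)=-\sum_{i\in I_0}\lambda(h_i)\leq 0$; and that, by the reformulation of extremality recalled above, only the generators $x_{i,0}^{\pm}$ ($i\in I_0$) and $x_0^{\pm}$ actually enter the definition of an extremal vector.

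For $(i)\Rightarrow(ii)$, I would first note that for $i\in I_0$ the restriction of $V$ to $\U_i\simeq\U_q(sl_2)$ is integrable and $v$ is $i$-extremal: either $x_i^{+}\cdot v=0$, or $x_i^{-}\cdot v=0$, in which case $v$ is a lowest weight vector of its $\U_i$-string so $\lambda(h_i)\leq 0$, forcing $\lambda(h_i)=0$ and a trivial string, and again $x_i^{+}\cdot v=0$. This gives $x_{i,0}^{+}\cdot v=0$. To reach $x_{i,r}^{+}\cdot v=0$ for all $r$, I would use Beck's formula $x_{i,r}^{+}=\varepsilon\,T_{\widehat\omega_i}^{-r}(x_{i,0}^{+})$, with $\varepsilon$ an invertible scalar and $\widehat\omega_i\in W$ a translation: since $v$ is extremal, $T_{\widehat\omega_i}^{r}\cdot v$ is an invertible scalar times an extremal vector of weight $\widehat\omega_i^{r}(\lambda)\equiv\lambda\pmod{\ZZ\delta}$, still $I_0$-dominant at the classical level, hence annihilated by $x_{i,0}^{+}$ by the previous step; applying $T_{\widehat\omega_i}^{-r}$ then gives $x_{i,r}^{+}\cdot v=\varepsilon\,T_{\widehat\omega_i}^{-r}\bigl(x_{i,0}^{+}\cdot T_{\widehat\omega_i}^{r}\cdot v\bigr)=0$.

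For $(ii)\Rightarrow(i)$, condition (ii) means that $v$ is killed by the subalgebra $\U^{v,+}$ of $\U_q^v(sl_{n+1}^{tor})$ generated by the positive Drinfeld currents $x_{j,r}^{+}$ ($j\in I_0$), since any monomial in these currents has its rightmost factor annihilate $v$. Writing $\alpha_0=\delta-\theta$ with $\theta=\alpha_1+\cdots+\alpha_n$ the highest finite root, Beck's description places $x_0^{-}$ (of classical weight $\theta$ and loop degree $-1$) inside $\U^{v,+}$, so $x_0^{-}\cdot v=0$; combined with $x_{i,0}^{+}\cdot v=0$ this makes $v$ an $i$-extremal vector for every $i\in I$, so the vectors $v_w:=S_{i_k}\circ\cdots\circ S_{i_1}(v)$ can be formed for reduced words $w=s_{i_k}\cdots s_{i_1}$. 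I would then prove by induction on $\ell(w)$ that $v_w$ is well defined, of weight $w(\lambda)$, and annihilated by $T_w(\U^{v,+})$, i.e. killed by every real root vector $x_\beta$ with $w^{-1}(\beta)$ in the monoid generated by $\{\alpha_j:j\in I_0\}\cup\{-\alpha_0\}$; the inductive step comes from the compatibility of $T_{s_j}$ with $S_j$ on a $j$-extremal vector and the commutation of $T_{s_j}$ with the root vectors. Specializing this annihilation property to $\beta=\pm\alpha_i$ and invoking the $\U_i$-string dichotomy at each node shows that every $v_w$ is $i$-extremal for all $i\in I$, hence $v$ is extremal.

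The hard part, in both directions, is the behaviour at the affine node $0$: the identity $x_{i,r}^{+}=\varepsilon\,T_{\widehat\omega_i}^{-r}(x_{i,0}^{+})$ and the fact that the annihilation conditions on $v$ survive each simple reflection both rest on Beck's explicit PBW and braid-group description of $\U_q(\hat{sl}_{n+1})'$ and on the precise commutation of the operators $T_i$ with the root vectors, in particular with $x_0^{\pm}$. Everything localized at the classical nodes $i\in I_0$, by contrast, reduces to elementary $\U_q(sl_2)$-representation theory, so the whole argument amounts to transporting the node-$0$ data correctly through the braid group action.
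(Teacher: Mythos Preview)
The paper does not give its own proof of this lemma; it is quoted from \cite{nakajima_extremal_2004} and used as a black box. Your outline via Beck's braid-group realization of the Drinfeld generators and the compatibility between Lusztig's operators $T_w$ and the operators $S_i$ on $i$-extremal vectors is the standard route and is essentially Nakajima's argument, so there is nothing to compare against in the present paper.

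Two points in your write-up deserve tightening. First, your description of the weight monoid of $\U^{v,+}$ as generated by $\{\alpha_j:j\in I_0\}\cup\{-\alpha_0\}$ is inaccurate: since $\U^{v,+}$ is generated by all $x_{j,r}^+$ of weight $\alpha_j+r\delta$ with $r\in\ZZ$, the relevant monoid is $(Q_0^+\setminus\{0\})+\ZZ\delta$ together with $0$, not just the half with non-positive $\delta$-part. This does not damage the argument, because what you actually use is only that for every $w\in W$ and every $i\in I$ one of $w^{-1}(\pm\alpha_i)$ has classical part in $\Phi_{\mathrm{fin}}^+$, which is automatic for real roots. Second, the induction in $(ii)\Rightarrow(i)$ hides a step you should make explicit: before you can form $v_{s_jw}=S_j(v_w)$ and invoke $T_j(v_w)\propto S_j(v_w)$, you must first deduce $j$-extremality of $v_w$ from the annihilation hypothesis at stage $w$ (one of $x_j^\pm$ lies in $T_w(\U^{v,+})$, using that real root spaces are one-dimensional so the braid image $T_w^{-1}(x_j^\pm)$ coincides with the root vector you built inside $\U^{v,+}$). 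With these clarifications your sketch is complete.
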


\noindent As a direct consequence of these results, $W(\varpi_\ell)$ is isomorphic to a fundamental representation $V_0((1-\delta_{\ell,i} au)_{i \in I_0})$ for a special choice of the spectral parameter $a \in \CC^{\ast}$ (see \cite[Remark 3.3]{nakajima_extremal_2004} for an expression of it). In particular for this spectral parameter, one deduces that $V_0((1-\delta_{\ell,i} au)_{i \in I_0})$ has a crystal basis.

Let $\mathcal{C}_l$ be the category of finite-dimensional $\U_q(\hat{sl}_{n+1})'$-modules (of type 1) and $\mathcal{R}_l$ its Grothendieck ring. Recall that $\mathcal{C}_l$ is an abelian monoidal category, not semi-simple, with as simple objects the $V_0((P_i)_{i \in I_0})$ and $\mathcal{R}_l$ is the polynomial ring over $\ZZ$ in the classes $[V_0((1-\delta_{\ell,i} au)_{i \in I_0})]$ ($\ell \in I_0, a \in \CC^{\ast}$) (see \cite{chari_guide_1994, frenkel_$q$-characters_1999}). As in \cite{hernandez_cluster_2010}, we consider $\mathcal{C}_{l, \ZZ}$ the full subcategory of $\mathcal{C}_l$ whose objects $V$ satisfy
\begin{itemize}
\item[]for every composition factor $S$ of $V$, the roots of the Drinfeld polynomials $(P_i(u))_{i \in I_0}$ belong to $q^{\ZZ}$.
\end{itemize}
This is also an abelian monoidal category, not semi-simple and the Grothendieck ring $\mathcal{R}_{l,\ZZ}$ of $\mathcal{C}_{l,\ZZ}$ is the subring of $\mathcal{R}_l$ generated by the classes $[V_0(Y_{\ell, s})]$ with $\ell \in I_0, s \in \ZZ$ (see \cite{frenkel_combinatorics_2001}).

\begin{thm}\cite{frenkel_$q$-characters_1999}
$\chi_q$ induces a ring morphism $\chi_q : \mathcal{R}_{l,\ZZ} \rightarrow \ZZ[Y_{i,l}^{\pm 1}]_{i \in I_0, l \in \ZZ}$, called morphism of $q$--characters. Furthermore we have the following commutative diagram
\begin{eqnarray*}
\xymatrix{
\mathcal{R}_{l, \ZZ} \ar[rr]^{\chi_q} \ar[d]_{\mathrm{Res}} & & \ZZ[Y_{i,l}^{\pm 1}]_{i \in I_0, l \in \ZZ} \ar[d]^\beta \\
\mathcal{R} \ar[rr]^\chi & & \bigoplus_{\nu \in P_0} \ZZ e(\nu)
}
\end{eqnarray*}
where the ring morphism $\mathrm{Res} : \mathcal{R}_{l, \ZZ} \rightarrow \mathcal{R}$ is the restriction and $\beta : \ZZ[Y_{i,l}^{\pm 1}]_{i \in I_0, l \in \ZZ} \rightarrow \bigoplus_{\nu \in P_0} \ZZ e(\nu)$ is defined by $\beta(m) = e(\omega(m))$.
\end{thm}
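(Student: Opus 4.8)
The plan is to verify in turn the three assertions contained in the statement: that the rule $V\mapsto\sum_{m}\dim(V_m)\,m$ descends to a well-defined additive map $\chi_q\colon\mathcal{R}_{l,\ZZ}\to\ZZ[Y_{i,l}^{\pm 1}]_{i\in I_0,l\in\ZZ}$, that this map is multiplicative (hence a ring morphism), and that the square commutes. For the first point, fix $V\in\mathcal{C}_{l,\ZZ}$: the commutative subalgebra $\U_q(\hat{\Hlie}_0)$ acts on the finite-dimensional space $V$ and decomposes it into finitely many generalized joint eigenspaces $V_\gamma$. By the proposition recalled above, combined with the defining property of $\mathcal{C}_{l,\ZZ}$ that the roots of the Drinfeld polynomials — hence of the polynomials $Q_i,R_i$ attached to each occurring $\gamma$ — lie in $q^{\ZZ}$, every such $\gamma$ corresponds to a genuine Laurent monomial $m_\gamma\in\ZZ[Y_{i,l}^{\pm 1}]_{i\in I_0,l\in\ZZ}$ (the $e^{\nu}$ factor being dropped as in Remark~\ref{remweight}), so $\chi_q(V)$ is a Laurent polynomial in the $Y_{i,l}^{\pm1}$. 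First I would observe that for a short exact sequence $0\to V'\to V\to V''\to 0$ in $\mathcal{C}_{l,\ZZ}$ each $V_\gamma$ is an extension of $V''_\gamma$ by $V'_\gamma$, whence $\dim V_\gamma=\dim V'_\gamma+\dim V''_\gamma$; thus $\chi_q$ is additive on exact sequences and factors through $\mathcal{R}_{l,\ZZ}$.

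For multiplicativity: since $\chi_q$ is additive and $[V\otimes W]=[V]\cdot[W]$ in $\mathcal{R}_{l,\ZZ}$, it remains to show $\chi_q(V\otimes W)=\chi_q(V)\,\chi_q(W)$ for all $V,W\in\mathcal{C}_{l,\ZZ}$. The key input is the behaviour of the loop-Cartan currents $\phi_i^{\pm}(z)$ of the Drinfeld realization of $\U_q(\hat{sl}_{n+1})'$ under the Hopf coproduct $\Delta$: one shows that on $V\otimes W$, modulo the span of the vectors whose first tensor factor has strictly higher weight, $\Delta\big(\phi_i^{\pm}(z)\big)$ acts as $\phi_i^{\pm}(z)\otimes\phi_i^{\pm}(z)$; equivalently, on each weight space of $V\otimes W$ the operators $\phi_{i,m}^{\pm}$ are block upper-triangular for the filtration by the weight of the first factor, with diagonal block the (exponential) product of $\phi_{i,m}^{\pm}$ acting on $V$ and on $W$. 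Consequently the generalized $\ell$-weight of a joint generalized eigenvector $v\otimes w$ of $V\otimes W$ is the product of those of $v$ and $w$, and counting dimensions over the refined decomposition yields $\mathcal{M}(V\otimes W)$ with the correct multiplicities, i.e. $\chi_q(V\otimes W)=\chi_q(V)\chi_q(W)$; by additivity $\chi_q$ is then a ring morphism. I expect this triangularity statement for $\Delta(\phi_i^{\pm}(z))$ to be the main obstacle: $\Delta$ is the Hopf coproduct on the Chevalley generators, not the Drinfeld coproduct, so one must reconstruct the $h_{i,m}$ inside the Drinfeld presentation and control the off-diagonal ($x^{+}\otimes x^{-}$-type) corrections, using the explicit coproduct on $x_{i,0}^{\pm}$ together with the commutation of the $h_{i,m}$ with the $x_{j,r}^{\pm}$ analogous to~(\ref{actcartld}).

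For the commuting square: given $V\in\mathcal{C}_{l,\ZZ}$, writing $\nu_\gamma=\omega(m_\gamma)$ and noting $(\mathrm{Res}\,V)_\nu=\bigoplus_{\gamma:\,\nu_\gamma=\nu}V_\gamma$, one computes
$$\beta(\chi_q(V))=\sum_{\gamma}\dim(V_\gamma)\,e(\omega(m_\gamma))=\sum_{\nu\in P_0}\dim\big((\mathrm{Res}\,V)_\nu\big)\,e(\nu)=\chi(\mathrm{Res}(V)),$$
so $\beta\circ\chi_q$ and $\chi\circ\mathrm{Res}$ agree on classes of modules, hence on all of $\mathcal{R}_{l,\ZZ}$; as $\mathrm{Res}$ and $\beta$ are ring morphisms the square commutes. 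Beyond the statement, $\chi_q$ is moreover injective: ordering dominant monomials by the partial order induced by the dominance order on weights, the simple module $V_0((P_i)_{i\in I_0})$ contributes its highest $\ell$-weight, namely the dominant monomial $m_P$ attached to $(P_i)_{i\in I_0}$, with multiplicity one, and all other monomials of $\chi_q(V_0((P_i)))$ are strictly smaller; hence the transition matrix from the classes $\chi_q([V_0((P_i))])$ to the monomials is unitriangular, and $\chi_q$ is injective.
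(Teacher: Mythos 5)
The paper does not prove this statement: it is quoted verbatim from Frenkel--Reshetikhin \cite{frenkel_$q$-characters_1999} as background, so there is no internal proof to compare against; your proposal has to be judged against the standard argument of the cited reference, which is exactly the route you take.

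Two of your three steps are complete and correct. Additivity on short exact sequences (hence factorization through $\mathcal{R}_{l,\ZZ}$) follows as you say from the behaviour of generalized joint eigenspaces of the commuting family $\U_q(\hat{\Hlie}_0)$ under sub/quotient, and the identification of each $\ell$-weight with a Laurent monomial uses precisely the defining condition of $\mathcal{C}_{l,\ZZ}$ together with the proposition on the form of $\gamma(\phi_i^\pm(z))$, as you note. The commutativity of the square is also correctly reduced to the identity $(\mathrm{Res}\,V)_\nu=\bigoplus_{\gamma:\,\omega(m_\gamma)=\nu}V_\gamma$, which is just the refinement of the weight decomposition by the $\ell$-weight decomposition.

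The genuine gap is in the multiplicativity step, and you have flagged it yourself: the entire content of the theorem is the assertion that, with respect to a filtration of $V\otimes W$ by the weight of one tensor factor, $\Delta(\phi_i^{\pm}(z))$ is triangular with diagonal blocks $\phi_i^{\pm}(z)\otimes\phi_i^{\pm}(z)$. You state this as something ``one shows'' and then explain why it is hard (the coproduct is given on Chevalley generators, not Drinfeld generators, so the $\phi_{i,m}^\pm$ must be located inside the Drinfeld presentation and the cross terms of type $\U x^{-}\otimes \U x^{+}$ must be controlled), but you do not prove it. This is Lemma~1 of \cite{frenkel_$q$-characters_1999} and it is where essentially all the work lies; without it your argument establishes only that $\chi_q$ is a well-defined additive map making the square commute, not that it is a ring morphism. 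A minor further caution: once you have the triangularity lemma, you should fix which tensor factor's weight you filter by consistently with the direction of the off-diagonal terms ($\U x^{+}\otimes\U x^{-}$ versus $\U x^{-}\otimes\U x^{+}$), since that determines whether the filtration is preserved; this is routine but worth stating. Your closing remark on injectivity via unitriangularity of the transition matrix in the dominance order is correct but not part of the statement.
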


One does not have expression of $q$--character of a representation in general. But explicit formulas exist for the fundamental modules and the Kirillov-Reshetikhin modules over $\U_q(\hat{sl}_{n+1})'$ and $\U_q(sl_{n+1}^{tor})$, given in terms of tableaux \cite{hernandez_algebra_2011, nakajima_$t$-analogs_2003}.

\subsection{Extremal loop weight modules}

We recall the notion of extremal loop weight modules for $ \mathcal{U}_{q}(sl_{n+1}^{tor}) $. The main motivation is the construction of finite-dimensional representations of the quantum toroidal algebra as in the theory of Kashiwara, but at roots of unity in this case.

\begin{defi}\label{defelm}\cite{hernandez_quantum_2009} An extremal loop weight module of $ \mathcal{U}_{q}(sl_{n+1}^{tor}) $ is an integrable representation $ V $ such that there is an $\ell$-weight vector $ v \in V $ satisfying

\begin{enumerate}
\item[(i)] $ \mathcal{U}_{q}(sl_{n+1}^{tor}) \cdot v = V $,
\item[(ii)] $ v $ is extremal for $ \mathcal{U}_{q}^{h}(sl_{n+1}^{tor}) $,
\item[(iii)] $ \mathcal{U}_{q}^{v, j}(sl_{n+1}^{tor}) \cdot w $ is finite-dimensional for all $ w \in V $ and $ j \in I $.
\end{enumerate} \end{defi}

\begin{ex}
If $m$ is dominant, the simple $\ell$-highest weight module $V(m)$ of $\ell$-highest weight $m$ is an extremal loop weight module.
\end{ex}

\noindent An example of such a representation which is neither of $\ell$-highest weight nor of $\ell$-lowest weight is given in \cite{hernandez_quantum_2009}. The goal of this article is to construct a new family of extremal loop weight modules, called \textit{extremal fundamental loop weight modules}.

\section{Study of the monomial crystals $\mathcal{M}(e^{\varpi_\ell}Y_{\ell,0} Y_{0, d_\ell}^{-1})$}

We will relate in our paper the monomial realizations $\mathcal{M}_\ell$ of level 0 extremal fundamental weight crystals $\mathcal{B}(\varpi_\ell)$ ($1 \leq \ell \leq n$) of $ \U_{q}(\hat{sl}_{n+1}) $ with integrable representations of $\U_q(sl_{n+1}^{tor})$. In this section, we study the combinatorics of these monomial realizations, the main point being the use of promotion operators for level 0 extremal fundamental weight crystals introduced below. This is the first step of the construction of integrable modules associated to $\mathcal{M}_\ell$.

In the first part, one gives the definition of the monomial $\U_q(\hat{sl}_{n+1})$-crystal $\mathcal{M}$ \cite{kashiwara_realizations_2003, nakajima_$t$-analogs_2003}. This definition holds when the considered Cartan matrix has no odd cycle. So it does not work for $\U_q(\hat{sl}_{n+1})$  when $n$ is even, and we have to assume that $n = 2r+1$ ($r \geq 1$) is odd until the end of the article. Following \cite{hernandez_level_2006}, we recall the monomial realization of $\mathcal{B}(\varpi_\ell)$ ($1 \leq \ell \leq n)$: it is isomorphic to the sub-$ \U_{q}(\hat{sl}_{n+1}) $-crystal $\mathcal{M}_\ell = \mathcal{M}(e^{\varpi_\ell}Y_{\ell,0}Y_{0, d_\ell}^{-1})$ of $\mathcal{M}$ generated by the monomial $e^{\varpi_\ell}Y_{\ell,0}Y_{0, d_\ell}^{-1}$ (with $d_\ell $ equal to $ \min(\ell, n+1-\ell)$). Furthermore we define the notions of $q$-closed monomial set and of monomial set closed by the Kashiwara operators, respectively related to the theory of $q$-characters and to the combinatorics of crystals.

The monomial crystals $\mathcal{M}_\ell$ are already studied in \cite{hernandez_level_2006}: the monomials occurring in these crystals are explicitly given for $1\leq \ell \leq n$ and their automorphisms $z_\ell$ are described in terms of monomials. We recall these results in the second part.

In the third part, we introduce promotion operators for level 0 extremal fundamental weight crystals $\mathcal{B}(\varpi_\ell)$ ($1 \leq \ell \leq n$). The promotion operators were introduced in \cite{shimozono_affine_2002} for the Young tableaux realization of the finite $\U_q(sl_{n+1})$-crystals $\mathcal{B}_0(k \Lambda_\ell)$ ($k \in \NN^{\ast}$, $1 \leq \ell \leq n$) and studied in numerous papers (see \cite{bandlow_uniqueness_2010, fourier_kirillov-reshetikhin_2009, okado_existence_2008, schilling_combinatorial_2008, shimozono_affine_2002} and references therein). It is the counterpart at the level of crystals of the cyclic symmetry of the Dynkin diagram of type $A_n^{(1)}$. After recalling these definitions, we extend the promotion operators for the level 0 extremal fundamental weight crystals $\mathcal{B}(\varpi_\ell)$. Finally we specify the promotion operator of $\mathcal{B}(\varpi_\ell)$ in its monomial realization $\mathcal{M}_\ell$.

In the last part, we use promotion operators to obtain a new description of $\mathcal{M}_\ell$. In particular, we improve results given in \cite{hernandez_level_2006} for these crystals. Furthermore we determine the $\ell \in I_0$ for which the monomial crystals $\mathcal{M}_\ell$ are closed (Theorem \ref{proclocry}): this is the case if and only if $\ell=1, r+1$ or $n$.

\subsection{Monomial crystals} In this section we define the monomial crystal $\mathcal{M}$ of \linebreak $\U_q(\hat{sl}_{n+1})$ when $n = 2r+1$ is supposed to be odd, following \cite{kashiwara_realizations_2003, nakajima_$t$-analogs_2003}. Monomial realizations of the crystals $\mathcal{B}(\lambda)$ with $\lambda \in P$, in particular of $\mathcal{B}(\varpi_\ell)$ ($1 \leq \ell \leq n$), are studied in \cite{hernandez_level_2006, kashiwara_realizations_2003, nakajima_$t$-analogs_2003}. They are obtained as subcrystals of $\mathcal{M}$ generated by a monomial.  We recall these results here. Finally we introduce new notions of $q$-closed monomial set and of monomial set closed by the Kashiwara operators.

As we have said above, the definition of the monomial crystal $\mathcal{M}$ requires that the considered Cartan matrix $C$ is without odd cycle. So we assume that $n = 2r+1$ ($r \geq 1$) is odd until the end of the article. In particular there is a function $s : I \rightarrow \{0,1\}, i \mapsto s_i$ such that $C_{i,j} = -1$ implies $s_i + s_j = 1$.

Consider the subgroup $\mathcal{M} \subset A$ defined by
$$\mathcal{M} = \{ m \in A \mid u_{i,l}(m) = 0 \text{ if } l \equiv s_i +1 \text{ mod } 2 \}.$$
Following \cite{kashiwara_realizations_2003, nakajima_$t$-analogs_2003}, let us define $ \mathrm{wt} : \mathcal{M} \rightarrow P $ and $ \varepsilon_{i}, \varphi_{i}, p_{i}, q_{i} : \mathcal{M} \rightarrow \ZZ \cup \{ \infty \} \cup \{- \infty \} $ for $ i \in I $ by ($ m \in \mathcal{M} $)

\begin{center}
$\mathrm{wt}(m) = \omega(m),$ \\
$\varphi_{i,L}(m) = \displaystyle{\sum_{l \leq L}} u_{i,l}(m), \: \varphi_{i}(m) = \mathrm{max} \{ \varphi_{i,L}(m) | L \in \ZZ \} \geq 0,$ \\
$\varepsilon_{i,L}(m) = - \displaystyle{\sum_{l \geq L}} u_{i,l}(m), \: \varepsilon_{i}(m) = \mathrm{max} \{ \varepsilon_{i,L}(m) | L \in \ZZ \} \geq 0, $ \\
$\begin{array}{rcl}
 p_{i}(m) & = & \mathrm{max}\{L \in \ZZ | \varepsilon_{i,L}(m) = \varepsilon_{i}(m) \} \\ 
 & = & \mathrm{max}\left\lbrace L \in \ZZ \left| \displaystyle{\sum_{l < L}} u_{i,l}(m) = \varphi_{i}(m) \right. \right\rbrace ,
\end{array} $ \\
$\begin{array}{rcl}
 q_{i}(m) & = & \mathrm{min}\{L \in \ZZ | \varphi_{i,L}(m) = \varphi_{i}(m) \} \\ 
 & = & \mathrm{min}\left\lbrace L \in \ZZ \left| - \displaystyle{\sum_{l > L}} u_{i,l}(m) = \varepsilon_{i}(m) \right. \right\rbrace .
\end{array}$
\end{center}

\noindent Then we define $ \tilde{e}_{i} , \tilde{f}_{i} : \mathcal{M} \rightarrow \mathcal{M} \cup \{0 \} $ for $ i \in I $ by
\begin{eqnarray*}
\tilde{e}_{i}\cdot m & = & \left\lbrace 
\begin{array}{ll}
0 & \mathrm{if} \: \varepsilon_{i}(m) = 0, \\
m A_{i, p_{i}(m) -1} & \mathrm{if} \: \varepsilon_{i}(m) > 0,
\end{array} \right. \\
\tilde{f}_{i} \cdot m & = & \left\lbrace 
\begin{array}{ll}
0 & \mathrm{if} \: \varphi_{i}(m) = 0, \\
m A_{i, q_{i}(m) +1}^{-1} & \mathrm{if} \: \varphi_{i}(m) > 0.
\end{array}
  \right.
\end{eqnarray*}

\begin{thm}\cite{kashiwara_realizations_2003, nakajima_$t$-analogs_2003}
$ (\mathcal{M}, \mathrm{wt}, \varepsilon_{i}, \varphi_{i}, \tilde{e}_{i}, \tilde{f}_{i}) $ is a $\U_q(\hat{sl}_{n+1})$--crystal, called the monomial crystal.
\end{thm}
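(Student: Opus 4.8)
The plan is to verify directly that the data $(\mathcal{M}, \mathrm{wt}, \varepsilon_i, \varphi_i, \tilde{e}_i, \tilde{f}_i)$ satisfy the axioms of a $\U_q(\hat{sl}_{n+1})$-crystal, namely: (a) $\varphi_i(m) - \varepsilon_i(m) = \mathrm{wt}(m)(h_i)$; (b) $\mathrm{wt}(\tilde{e}_i m) = \mathrm{wt}(m) + \alpha_i$ and $\mathrm{wt}(\tilde{f}_i m) = \mathrm{wt}(m) - \alpha_i$ whenever these are nonzero; (c) $\varepsilon_i(\tilde{e}_i m) = \varepsilon_i(m) - 1$, $\varphi_i(\tilde{e}_i m) = \varphi_i(m) + 1$ (and the dual statements for $\tilde{f}_i$); (d) $\tilde{f}_i m = m'$ if and only if $m = \tilde{e}_i m'$; and (e) if $\varphi_i(m) = -\infty$ then $\tilde{e}_i m = \tilde{f}_i m = 0$ (this last being vacuous here since all quantities are finite integers $\geq 0$ by construction — note $\varphi_i(m)$ is a finite sum because each monomial involves only finitely many $Y_{i,l}$). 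First I would observe that the well-definedness of $\tilde{e}_i, \tilde{f}_i$ as maps into $\mathcal{M} \cup \{0\}$ uses the hypothesis that $C$ has no odd cycle: the function $s$ exists, and one checks that $m A_{i,p_i(m)-1}$ and $m A_{i,q_i(m)+1}^{-1}$ again lie in $\mathcal{M}$, i.e. the parity condition $u_{j,l} = 0$ for $l \equiv s_j + 1 \bmod 2$ is preserved. This is where the shift by $\pm 1$ in $p_i(m) - 1$ and $q_i(m) + 1$ matters: $A_{i,l}$ modifies $u_{i,l-1}, u_{i,l+1}, u_{i\pm1,l}$, and one needs $l-1 \equiv l+1 \equiv s_i \bmod 2$ and $l \equiv s_{i\pm 1} \bmod 2$, which forces $l \equiv s_i + 1 \bmod 2$; since $p_i(m), q_i(m)$ are the extremal indices one must check that $p_i(m) - 1$ and $q_i(m)+1$ have the right parity, which follows because $u_{i,l}(m)$ vanishes at the wrong-parity indices and hence $p_i(m), q_i(m)$ are at the correct parity.

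Next I would carry out the numerical identities. For (a): $\varphi_i(m) - \varepsilon_i(m) = \max_L \sum_{l \leq L} u_{i,l}(m) - \max_L(-\sum_{l \geq L} u_{i,l}(m))$; since the partial sums of a finitely-supported sequence eventually stabilize at the total sum $\sum_l u_{i,l}(m) = \omega(m)(h_i) = \mathrm{wt}(m)(h_i)$, writing $S = \sum_l u_{i,l}(m)$ and $T_L = \sum_{l \leq L} u_{i,l}(m)$ one has $\max_L T_L = \varphi_i(m)$ and $\max_L(-(S - T_{L-1})) = -S + \max_L T_{L-1} = -S + \varphi_i(m)$, so $\varphi_i - \varepsilon_i = S$, as desired. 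For (b): multiplying by $A_{i,l}^{\pm 1}$ changes $\omega$ by $\pm \alpha_i$ by the definition $A_{i,l} = e^{\alpha_i}\cdots$. For (c) and (d), I would argue combinatorially in terms of the "profile" $L \mapsto T_L$: the operator $\tilde{f}_i$ inserts, at the position $q_i(m)+1$ just past where the profile first reaches its maximum, the pattern of $A_{i,q_i(m)+1}^{-1}$, which decreases $u_{i,q_i(m)}$ by $1$ and $u_{i,q_i(m)+2}$ by $1$ while leaving other $u_{i,\cdot}$ fixed — wait, more precisely $A_{i,l}^{-1}$ contributes $-1$ to $u_{i,l-1}$, $-1$ to $u_{i,l+1}$, $+1$ to $u_{i\pm1,l}$; so $\tilde f_i$ lowers the profile $T_L$ by exactly $1$ for $q_i(m) \leq L \leq q_i(m)+1$ and by $2$... one recomputes: with $l = q_i(m)+1$, $u_{i,l-1} = u_{i,q_i(m)}$ drops by $1$ and $u_{i,l+1}=u_{i,q_i(m)+2}$ drops by $1$, so $T_L$ is unchanged for $L < q_i(m)$, drops by $1$ for $q_i(m) \leq L \leq q_i(m)+1$, and drops by $2$ for $L \geq q_i(m)+2$. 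Hmm, that's not quite the standard picture, so I would instead verify these identities by the direct bookkeeping of how $p_i, q_i, \varepsilon_i, \varphi_i$ transform, matching against the known $\U_q(sl_2)$-crystal structure; the cleanest route is to reduce to $i$ fixed and treat $\mathcal{M}$ restricted to the $i$-string as a tensor product of elementary $\U_q(sl_2)$-crystals indexed by the relevant positions $l$, where the signed multiplicities $u_{i,l}(m)$ play the role of the $\pm$-pattern, and invoke the tensor product rule for $\tilde{e}_i,\tilde{f}_i$ on $\mathrm{sgn}$-sequences.

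The main obstacle I anticipate is precisely item (d), the inverse relation $\tilde{f}_i m = m' \iff m = \tilde{e}_i m'$, together with the consistency of $p_i$ and $q_i$ under the operators: one must show that after applying $\tilde f_i$ the new value $p_i(\tilde f_i m) = q_i(m)+1$ and $\varepsilon_i(\tilde f_i m) = \varepsilon_i(m)+1$, so that applying $\tilde e_i$ multiplies by $A_{i,p_i(\tilde f_i m)-1} = A_{i,q_i(m)}$... but $\tilde f_i$ multiplied by $A_{i,q_i(m)+1}^{-1}$, not $A_{i,q_i(m)}^{-1}$, so these do not obviously cancel — the resolution is that $A_{i,l}A_{i,l'}^{-1}$ for $l \neq l'$ is generally nontrivial, so one genuinely needs the "signed word" reformulation: encode the $i$-string of $m$ by reading off, for $l$ ranging over the correct-parity integers in increasing order, a block of $u_{i,l}(m)$ symbols $+$ if $u_{i,l}(m)>0$ or $|u_{i,l}(m)|$ symbols $-$ if $u_{i,l}(m)<0$; then cancel adjacent $-+$ pairs; $\tilde f_i$ acts on the leftmost surviving $-$ (equivalently, the position $q_i(m)$), $\tilde e_i$ on the rightmost surviving $+$ (position $p_i(m)$), and the effect of multiplication by $A_{i,l}^{\pm1}$ on this word is exactly the standard bracketing move. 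Once this dictionary is set up, all of (a)–(e) follow from the elementary $\U_q(sl_2)$ combinatorics of $\pm$-words, and I would cite \cite{kashiwara_realizations_2003, nakajima_$t$-analogs_2003} for the case-analysis details rather than reproduce them in full. I would conclude by noting that the no-odd-cycle hypothesis is used exactly twice: once to produce $s$ (hence to make $\mathcal{M}$ well-defined and stable under $\tilde e_i, \tilde f_i$), and once implicitly in that the $A_{i,l}$ modifying $u_{i\pm1,l}$ never collide with the $i$-string positions, decoupling the $i$-actions enough for the tensor-product argument.
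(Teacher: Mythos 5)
The paper offers no proof of this statement---it is imported verbatim from \cite{kashiwara_realizations_2003, nakajima_$t$-analogs_2003}---so there is nothing internal to compare against; your proposal must be judged on its own. Its skeleton (verify the crystal axioms directly, with the no-odd-cycle hypothesis entering through the parity function $s$ and the stability of $\mathcal{M}$ under multiplication by $A_{i,l}^{\pm 1}$) is exactly the standard argument of the cited references, and your reduction of axioms (c), (d) to the $\pm$-word combinatorics of $\U_q(sl_2)$-strings is the route those papers take. Your computation of (a) and of the parity bookkeeping is correct.

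The one concrete slip is in the step you flag as the ``main obstacle.'' You guess $p_i(\tilde{f}_i\cdot m)=q_i(m)+1$, observe that this would make $\tilde{e}_i\tilde{f}_i\cdot m$ multiply by $A_{i,q_i(m)}$ rather than $A_{i,q_i(m)+1}$, and conclude that the cancellation is not direct. In fact the correct value is $p_i(\tilde{f}_i\cdot m)=q_i(m)+2$, and with it the cancellation is immediate: $\tilde{e}_i\cdot(\tilde{f}_i\cdot m)=mA_{i,q_i(m)+1}^{-1}A_{i,q_i(m)+1}=m$. This follows from the profile computation you already carried out (and then distrusted): writing $T_L=\sum_{l\le L}u_{i,l}(m)$, multiplication by $A_{i,q_i(m)+1}^{-1}$ lowers $T_L$ by $1$ for $L\in\{q_i(m),q_i(m)+1\}$ and by $2$ for $L\ge q_i(m)+2$; since $q_i(m)\equiv s_i$ forces $u_{i,q_i(m)+1}(m)=0$, one has $T_{q_i(m)+1}=T_{q_i(m)}=\varphi_i(m)$, hence the new profile attains its maximum $\varphi_i(m)-1$ at $L=q_i(m)+1$ and nowhere to the right of it, giving $p_i(\tilde{f}_i\cdot m)=q_i(m)+2$ (and dually $q_i(\tilde{e}_i\cdot m)=p_i(m)-2$). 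So the ``non-obvious cancellation'' is an artifact of the miscomputed index, not a genuine obstruction; the signed-word detour you propose instead is a valid alternative and is what the references do, so the overall proof stands, but the direct verification closes more cleanly than you suggest.
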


\begin{rem}
When $n$ is even, the Dynkin diagram of type $A_n^{(1)}$ is not bipartite. In this case, $(\mathcal{M}, \mathrm{wt}, \varepsilon_{i}, \varphi_{i}, \tilde{e}_{i}, \tilde{f}_{i})$ does not satisfy the axioms of crystal (see \cite{kashiwara_realizations_2003}). Another crystal structures are defined on (a subset of) $A$ in \cite{kashiwara_realizations_2003}. But the monomials used are different with those occurring in the theory of $q$--characters of $\U_q(sl_{n+1}^{tor})$-modules and it is not useful for what we will do in the next sections.
\end{rem}

For $ m \in \mathcal{M} $ denote by $ \mathcal{M}(m) $ the connected subcrystal of $ \mathcal{M} $ generated by $m$. As it is explained above, the weight of a monomial $m' \in \mathcal{M}(m)$ is determined by $\omega(m)$ and $u_{i,l}(m')$ (Remark \ref{remweight}). So we will omit the term $e^{\omega(m')}$ and we just specify the weight of the monomial $m$. For $J \subset I$ and $m \in \mathcal{M}$, denote by $\mathcal{M}_J(m)$ the set of monomials obtained from $m$ by applying the Kashiwara operators $\tilde{e}_i, \tilde{f}_i$ for $i \in J$. It is a connected sub-$J$-crystal of $\mathcal{M}(m)$ generated by $m$.

For $p\in\ZZ$ and $\alpha\in \ZZ\delta$, let $\tau_{2p, \alpha}$ be the map $\tau_{2p,\alpha}\colon \mathcal{M}\rightarrow\mathcal{M}$ defined by
$$\tau_{2p,\alpha}(e^\lambda \prod Y_{i,n}^{u_{i,n}}) = e^{\lambda+\alpha} \prod Y_{i,n+2p}^{u_{i,n}}.$$
This is a $P_{\mathrm{cl}}$-crystal automorphism of the crystal $\mathcal{M}$, also called shift automorphism in the following.\\

The following result was proved in \cite{kashiwara_realizations_2003, nakajima_$t$-analogs_2003} when $m$ is a dominant monomial and is generalized in \cite{hernandez_level_2006} for all $m \in \mathcal{M}$.

\begin{thm} For $m\in\mathcal{M}$, the crystal $\mathcal{M}(m)$ is isomorphic to a connected component of the crystal
$\mathcal{B}(\lambda)$ of an extremal weight module for some $\lambda\in P$.\end{thm}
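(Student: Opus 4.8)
The statement to prove is that for any $m \in \mathcal{M}$, the connected monomial crystal $\mathcal{M}(m)$ is isomorphic to a connected component of $\mathcal{B}(\lambda)$ for some $\lambda \in P$. The natural approach is to reduce to the dominant case, which is already established in \cite{kashiwara_realizations_2003, nakajima_$t$-analogs_2003}, and transport it via the structural features of crystals and the monomial formalism. First I would recall that for a dominant monomial $m_+$, the crystal $\mathcal{M}(m_+)$ is isomorphic to $\mathcal{B}(\mathrm{wt}(m_+))$. The task is therefore to connect an arbitrary $m$ to the dominant situation. Since $m$ lies in a connected crystal, either $m$ itself is dominant, or one can try to move toward a dominant monomial by applying $\tilde e_i$'s; but in the level-$0$ (or general $P$) setting there need not be a dominant vertex in $\mathcal{M}(m)$, so this naive strategy fails and is exactly why one must invoke extremal weight modules rather than highest-weight ones.

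The key idea I would use is the following. Write $\mathrm{wt}(m) = \lambda \in P$. Consider the extremal weight module $V(\lambda)$ with its crystal $\mathcal{B}(\lambda)$, which by Kashiwara's theorem (quoted in the excerpt) exists and is integrable. One wants to produce an embedding of $\mathcal{M}(m)$ into $\mathcal{B}(\lambda)$ onto a connected component. The cleanest route is to use that $\mathcal{M}(m)$, being a connected subcrystal of the monomial crystal $\mathcal{M}$, is a \emph{normal} (or at least seminormal) $\U_q(\hat{sl}_{n+1})$-crystal: each $\varepsilon_i, \varphi_i$ behaves correctly with respect to $\tilde e_i, \tilde f_i$, which is immediate from the explicit formulas for $p_i, q_i$ and the operators $\tilde e_i = m A_{i,p_i(m)-1}$, $\tilde f_i = m A_{i,q_i(m)+1}^{-1}$. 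Then one can restrict attention to each $\U_i \simeq \U_q(sl_2)$-string and check that the local $sl_2$-structure matches that of a crystal coming from an integrable module. The heart of the argument is Kashiwara's characterization: a connected normal crystal with weights in $P$ that arises inside the monomial crystal and contains a vertex $m$ with $\mathrm{wt}(m) = \lambda$, and in which the $W$-orbit of $m$ consists of $i$-extremal vertices, embeds as a connected component of $\mathcal{B}(\lambda)$. So the steps are: (1) verify $m$ is an extremal vertex of $\mathcal{M}(m)$ — equivalently that all $S_{i_1}\cdots S_{i_k}(m)$ are $i$-extremal — which in monomial terms means each such vertex has, for each $i$, either no upward $i$-arrow or no downward $i$-arrow issuing from it; (2) invoke the universal property of $\mathcal{B}(\lambda)$ (Kashiwara, \cite{kashiwara_crystal_1994, kashiwara_level-zero_2002}): there is a unique crystal morphism $\mathcal{B}(\lambda) \to \mathcal{M}(m) \sqcup \{0\}$ or in the reverse direction sending the extremal vertex $u_\lambda$ to $m$, and by connectedness and normality it is an isomorphism onto a connected component.

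The remaining point, step (1), is where the real work lies. One must show every monomial in the $W$-orbit $W \cdot m$ inside $\mathcal{M}$ is $i$-extremal for all $i$. A direct way: note $m$ and all its $W$-translates are obtained from $m$ by products of $A_{i,l}^{\pm 1}$, and that in the monomial crystal the $\U_i$-string through a monomial $m'$ is governed entirely by $\varepsilon_i(m')$ and $\varphi_i(m')$; extremality at $m'$ for the index $i$ means $\varepsilon_i(m') = 0$ or $\varphi_i(m') = 0$, i.e. the partial sums $\sum_{l} u_{i,l}(m')$ are monotone. One then argues by induction on the length of the Weyl group element: applying $S_j$ to an extremal monomial produces another monomial whose $i$-partial sums are again monotone, using the defining commutation of the $A$'s and the fact that $A_{j,l}$ affects $u_{i,\cdot}$ only for $i$ adjacent to $j$ and in a controlled two-term way. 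The subtle case is when $i$ and $j$ are adjacent in the Dynkin diagram; there one uses the explicit form $A_{j,l} = e^{\alpha_j}Y_{j,l-1}Y_{j,l+1}Y_{j-1,l}^{-1}Y_{j+1,l}^{-1}$ together with the bipartiteness hypothesis ($n$ odd, so $C$ has no odd cycle), which ensures the parity constraint $u_{i,l}(m')=0$ for $l \equiv s_i+1 \bmod 2$ and forces the relevant partial sums to stay monotone. I expect this inductive verification of extremality — essentially a bookkeeping argument about how $S_j$ reorganizes the exponent sequences $(u_{i,l})_l$ — to be the main obstacle; everything else is a formal appeal to Kashiwara's theory of extremal weight modules and the already-known dominant case. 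Alternatively, if one prefers to avoid the hands-on check, one can cite that this theorem is precisely the content of \cite[Theorem]{hernandez_level_2006} for the monomial realizations $\mathcal{M}_\ell$ and that the general-$m$ case follows by the same argument, reducing step (1) to the observation that $\mathcal{M}(m) = \tau_{2p,\alpha}(\mathcal{M}(m_0))$ for a suitable shift automorphism and a monomial $m_0$ lying in a $W$-orbit already treated there, since shift automorphisms are $P_\cl$-crystal isomorphisms and commute with all $\tilde e_i, \tilde f_i$.
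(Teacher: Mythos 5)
There is a genuine gap, and it sits exactly where you locate ``the real work'': your step (1) is false as stated. You take $\lambda=\mathrm{wt}(m)$ and propose to show that $m$ itself is an extremal vertex of $\mathcal{M}(m)$, but an arbitrary monomial need not be extremal, and its weight need not lie in the Weyl orbit of the correct $\lambda$. For instance $m=Y_{1,0}Y_{1,4}^{-1}$ has $\varepsilon_1(m)=\varphi_1(m)=1$, so both $\tilde e_1\cdot m$ and $\tilde f_1\cdot m$ are nonzero and $m$ is not even $1$-extremal; its weight is a multiple of $\delta$, so $\mathcal{B}(\mathrm{wt}(m))$ is a single point while $\mathcal{M}(m)$ is not. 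No amount of bookkeeping on the exponent sequences $(u_{i,l})_l$ can rescue this, because the statement being verified is simply wrong for non-extremal $m$. The honest difficulty of the theorem is precisely that a connected level-$0$ crystal generated by an arbitrary vertex is not a priori generated by, or even seen to contain, an extremal vertex of any weight; your proposal assumes this away. Your fallback is also circular: the general-$m$ statement \emph{is} the result of \cite{hernandez_level_2006} being quoted, and a general $\mathcal{M}(m)$ is not a shift $\tau_{2p,\alpha}$ of one of the fundamental crystals $\mathcal{M}_\ell$ treated there.

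For comparison: the paper gives no proof at all --- it cites \cite{kashiwara_realizations_2003, nakajima_$t$-analogs_2003} for dominant $m$ and \cite{hernandez_level_2006} for the general case. The argument in \cite{hernandez_level_2006} proceeds quite differently from your plan: one factors $m=m_+m_-$ into its dominant part $m_+=\prod_{u_{i,l}(m)>0}Y_{i,l}^{u_{i,l}(m)}$ and antidominant part $m_-$, embeds $\mathcal{M}(m)$ as a subcrystal of the tensor product $\mathcal{M}(m_+)\otimes\mathcal{M}(m_-)\simeq\mathcal{B}(\mathrm{wt}(m_+))\otimes\mathcal{B}(\mathrm{wt}(m_-))$ (the dominant and antidominant cases being the already-known ones), and then invokes Kashiwara's theorem that every connected component of such a tensor product of a highest-weight and a lowest-weight crystal is isomorphic to a connected component of some $\mathcal{B}(\lambda)$. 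That reduction to the tensor product is the missing idea; if you want to salvage your write-up, replacing step (1) by this factorization-and-embedding argument is the way to do it.
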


It was shown in \cite{kashiwara_level-zero_2002} that the fundamental extremal crystals $\mathcal{B}(\varpi_\ell)$ are connected for all $\ell \in I_0$. Let $$d_\ell = \min(\ell, n+1-\ell)$$ be the distance between the nodes $0$ and $\ell$ in the Dynkin diagram of type $A_n^{(1)}$. We have the following monomial realization of $\mathcal{B}(\varpi_\ell)$.

\begin{thm}\label{monrzt} \cite{hernandez_level_2006}
Set $M = e^{\varpi_\ell}Y_{\ell, 0}Y_{0,d_\ell}^{-1}$ for $\ell \in I_0$. Then $M$ is extremal in $\mathcal{M}$ and $\mathcal{M}(M)\simeq \mathcal{B}(\varpi_{\ell})$ as $P$-crystals.\end{thm}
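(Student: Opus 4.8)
The plan is to show two things: first, that $M = e^{\varpi_\ell}Y_{\ell,0}Y_{0,d_\ell}^{-1}$ is an extremal element of the monomial crystal $\mathcal{M}$, and second, that the connected subcrystal $\mathcal{M}(M)$ it generates is isomorphic, as a $P$-crystal, to $\mathcal{B}(\varpi_\ell)$. For the first point, I would compute $\mathrm{wt}(M) = \varpi_\ell = \Lambda_\ell - \Lambda_0$ directly from the definition $\mathrm{wt}(m) = \omega(m)$, check that this is a level $0$ weight, and then verify that $M$ lies in the subgroup $\mathcal{M} \subset A$ (i.e.\ that the exponents $u_{i,l}(M)$ vanish for $l \equiv s_i + 1 \bmod 2$), which amounts to choosing the parity function $s$ so that $s_\ell \equiv 0$ and $s_0 \equiv d_\ell \bmod 2$; since $d_\ell$ is the distance from $0$ to $\ell$ in the cycle $A_n^{(1)}$ and $s$ alternates along edges, this is automatic. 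To prove $M$ is extremal, I would argue along the $W$-orbit: starting from $M$, apply the operators $S_i$ (the crystal analogues of the $S_i$ from the Remark on extremal vectors) and show by induction that at each stage the resulting monomial is $i$-extremal for every $i$, i.e.\ that $\tilde e_i$ or $\tilde f_i$ annihilates it. Concretely, for a monomial $m$ with $\varepsilon_i(m) = 0$ or $\varphi_i(m) = 0$, the $i$-string through $m$ is a single point on one side; I would track how $p_i, q_i, \varepsilon_i, \varphi_i$ transform under multiplication by the $A_{j,l}^{\pm 1}$ appearing in $S_j$ and check the $i$-extremality condition survives. The monomials in the $W$-orbit of $M$ are explicit (they are the $e^{w\varpi_\ell} Y_{j,a} Y_{k,b}^{-1}$ type monomials coming from the "vector representation" combinatorics), and on each of them at most one of $\varepsilon_i, \varphi_i$ is nonzero for each $i$, giving $i$-extremality.

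Once $M$ is known to be extremal, Theorem~\ref{monrzt}'s second claim follows from the general structure theory already recalled in the excerpt. By the theorem stating that $\mathcal{M}(m)$ is isomorphic to a connected component of $\mathcal{B}(\lambda)$ for some $\lambda \in P$ (the generalization from \cite{hernandez_level_2006} of the Kashiwara--Nakajima result), the crystal $\mathcal{M}(M)$ embeds as a connected component of $\mathcal{B}(\lambda)$ for some $\lambda$; since $M$ is extremal of weight $\varpi_\ell$, one can take $\lambda = \varpi_\ell$ and the component containing the image of $M$ is the component of the extremal element $u_{\varpi_\ell}$. But by Theorem~\ref{thmkasem}, $\mathcal{B}(\varpi_\ell)$ is connected (this was also noted separately via \cite{kashiwara_level-zero_2002}), so that single component is all of $\mathcal{B}(\varpi_\ell)$. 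More precisely, I would invoke part (ii) of Theorem~\ref{thmkasem}: the $\U_q(\hat{sl}_{n+1})$-module generated by an extremal weight vector of weight $\varpi_\ell$ is isomorphic to $V(\varpi_\ell)$, and the corresponding statement at the level of crystals shows that the subcrystal generated by an extremal element of weight $\varpi_\ell$ in any crystal of the relevant type is isomorphic to $\mathcal{B}(\varpi_\ell)$. Applying this to the extremal element $M \in \mathcal{M}$ yields $\mathcal{M}(M) \simeq \mathcal{B}(\varpi_\ell)$ as $P$-crystals.

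The main obstacle is the verification that $M$ is extremal, i.e.\ that every element of its "$W$-orbit" $W\cdot M$ obtained by iterated $S_i$ is $i$-extremal for all $i$. This requires an explicit understanding of the monomials in $W\cdot M$ and a careful bookkeeping of the quantities $\varepsilon_{i,L}, \varphi_{i,L}, p_i, q_i$ under the shifts by $A_{j,l}^{\pm 1}$. The cleanest route is probably to exhibit the $W$-orbit explicitly: the weights $w\varpi_\ell$ for $w \in W$ are (in the level $0$, $\mathrm{cl}$ picture) the $\mathrm{cl}(\varepsilon_{j} - \varepsilon_{k})$ with $j \neq k$ together with their affine shifts, and the corresponding monomials are of the form $Y_{j,a}Y_{k,b}^{-1}$ (times $e^{w\varpi_\ell}$) with $a, b$ constrained by the parity condition and the requirement that the pair lie in $\mathcal{M}$. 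On such a monomial, for each node $i$ the exponent vector $(u_{i,l})_l$ has at most one $+1$ and at most one $-1$, so $\varepsilon_i$ and $\varphi_i$ are each $0$ or $1$ and never both nonzero — which is exactly $i$-extremality. Then one checks that $S_i$ applied to such a monomial again produces a monomial of this shape, closing the induction. I would present this as a lemma computing $W\cdot M$ explicitly and then read off extremality, rather than doing the abstract $p_i/q_i$ chase.
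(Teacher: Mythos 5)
First, a point of bookkeeping: the paper does not prove Theorem \ref{monrzt} at all --- it is imported from \cite{hernandez_level_2006} --- so there is no in-paper argument to measure your proposal against. Your two-step architecture (verify that $M$ is extremal, then combine the general theorem that $\mathcal{M}(m)$ is a connected component of some $\mathcal{B}(\lambda)$ with the connectedness of $\mathcal{B}(\varpi_\ell)$ and the uniqueness statement of Theorem \ref{thmkasem}) is the natural one. The second step is acceptable in outline, though the ``crystal analogue of Theorem \ref{thmkasem}(ii)'' you invoke is nowhere stated and should be pinned down; in \cite{hernandez_level_2006} the identification is instead made by matching the explicit tableau description of $\mathcal{M}(M)$ against Kashiwara's isomorphism $\mathcal{B}(\varpi_\ell)\simeq \mathcal{B}_0(\Lambda_\ell)_{\mathrm{aff}}$.

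The genuine gap is in your extremality check, and it is twofold. First, your description of $W\cdot M$ as the two-factor monomials $e^{w\varpi_\ell}Y_{j,a}Y_{k,b}^{-1}$ is correct only for $\ell=1,n$: since $\mathrm{cl}(\varpi_\ell)$ is minuscule for $sl_{n+1}$ and all weight spaces of $V(\varpi_\ell)$ are one-dimensional, every vertex of $\mathcal{B}(\varpi_\ell)$ is extremal, so $W\cdot M$ is all of $\mathcal{M}(M)$, i.e.\ all the $m_{T;j}$ of (\ref{montab}), which are products of $\ell$ boxes and generically involve $2\ell$ variables (e.g.\ $Y_{1,1}Y_{2,2}^{-1}Y_{3,1}Y_{0,2}^{-1}$ for $n=3$, $\ell=2$). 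Second, and more seriously, the inference ``at most one $+1$ and at most one $-1$ in $(u_{i,l})_l$, hence $\varepsilon_i$ and $\varphi_i$ are never both nonzero'' is simply false: the monomial $Y_{i,0}Y_{i,4}^{-1}$ has exactly one $+1$ and one $-1$ at node $i$, yet $\varepsilon_i=\varphi_i=1$, so it is not $i$-extremal. What actually saves the $m_{T;j}$ is an ordering phenomenon: when both $i$ and $i+1$ occur in $T$ and the resulting $Y_{i,\ast}^{+1}$ and $Y_{i,\ast}^{-1}$ do not cancel (which happens precisely when they straddle the break between the two products in (\ref{montab}) and $2\ell<n+1$), the $-1$ sits at spectral parameter $\ell+i$, strictly smaller than the parameter $n+1-\ell+i$ of the $+1$; with the $-1$ to the left of the $+1$ all partial sums $\varphi_{i,L}$ and $\varepsilon_{i,L}$ are $\le 0$, forcing $\varepsilon_i=\varphi_i=0$. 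Verifying this inequality --- which is exactly where $d_\ell=\min(\ell,n+1-\ell)$ and the reduction to $\ell\le r+1$ via $\psi$ enter --- is the real content of the extremality proof, and it is precisely the step your argument replaces by an invalid shortcut.
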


One can define in the same way the monomial crystal $\mathcal{M}_0$ associated to $\U_q(sl_{n+1})$. It can be done for all $n \geq 2$, the Cartan matrix of type $A_n$ being without cycle. As it is said above, the weights of monomials are completely determined by the powers of variables $Y_{i,l}^{\pm 1}$ in this case. So they can be safely omitted. For $m \in \mathcal{M}_0$, we denote by $\mathcal{M}_0(m)$ the subcrystal of $\mathcal{M}_0$ generated by $m$. We have

\begin{prop}\cite{kashiwara_realizations_2003, nakajima_$t$-analogs_2003}\label{propcrystruct} The $\U_q(sl_{n+1})$-crystals $\mathcal{M}_{0}(Y_{i, k})$ and $\mathcal{B}_0(\Lambda_i)$ are isomorphic for all $i \in I_0$ and $k \in \ZZ$.
\end{prop}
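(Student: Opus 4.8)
The plan is to reduce the statement to the known monomial realization of crystals of finite type $A_n$, which is essentially already contained in the cited works. First I would recall that, by the very definition of the monomial crystal $\mathcal{M}_0$ for $\U_q(sl_{n+1})$, the crystal operators $\tilde e_i, \tilde f_i$ act on a monomial $m$ by multiplication by $A_{i,p_i(m)-1}$ or $A_{i,q_i(m)+1}^{-1}$ respectively, where $A_{i,l} = Y_{i,l-1}Y_{i,l+1}Y_{i-1,l}^{-1}Y_{i+1,l}^{-1}$ (with the convention $Y_{0,l} = Y_{n+1,l} = 1$ since we are in finite type). Starting from the monomial $Y_{i,k}$, which is dominant and satisfies $\varepsilon_j(Y_{i,k}) = 0$ for all $j \in I_0$ and $\varphi_j(Y_{i,k}) = \delta_{i,j}$, one sees that $Y_{i,k}$ is a highest weight vertex of $\mathcal{M}_0(Y_{i,k})$ of weight $\omega(Y_{i,k}) = \Lambda_i$.

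Next I would invoke the general principle, due to Kashiwara--Nakajima and Kashiwara's realization results, that a connected monomial crystal generated by a dominant monomial $m$ of weight $\lambda \in P_0^+$ is isomorphic, as a $\U_q(sl_{n+1})$-crystal, to the connected component of $\mathcal{B}_0(\lambda)$ containing its highest weight vertex, i.e.\ to $\mathcal{B}_0(\lambda)$ itself when the latter is connected. Since $\mathcal{B}_0(\Lambda_i)$ is connected (it is the crystal of the finite-dimensional simple module $V_0(\Lambda_i)$, which is highest weight and generated by its highest weight vector), we get $\mathcal{M}_0(Y_{i,k}) \simeq \mathcal{B}_0(\Lambda_i)$. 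Concretely, one constructs the isomorphism by sending the highest weight vertex of $\mathcal{B}_0(\Lambda_i)$ to $Y_{i,k}$ and extending along the crystal operators; one must check this is well defined, i.e.\ that a word $\tilde f_{i_1}\cdots \tilde f_{i_l}$ annihilates the highest weight vertex of $\mathcal{B}_0(\Lambda_i)$ if and only if it annihilates $Y_{i,k}$, which follows from the fact that both crystals are normal (seminormal) of the same highest weight and that a normal connected $\U_q(sl_{n+1})$-crystal with a highest weight vertex of weight $\Lambda_i$ is uniquely determined up to isomorphism — a consequence of the uniqueness of crystal bases for $V_0(\Lambda_i)$.

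The only point requiring genuine care — and the one I expect to be the main obstacle — is verifying that $\mathcal{M}_0(Y_{i,k})$ is indeed \emph{normal} (equivalently that the functions $\varepsilon_j, \varphi_j$ computed from the monomial formulas agree with the string lengths in the abstract crystal), so that the uniqueness statement applies; this is where one uses that the Cartan matrix of type $A_n$ has no cycles at all, so the relevant sign function $s: I_0 \to \{0,1\}$ exists unconditionally and the monomial crystal axioms hold. Granting normality, the classification of normal highest weight crystals finishes the argument, and the independence of $k$ is immediate since the shift $Y_{i,l} \mapsto Y_{i,l+2}$ (or the finite-type analogue of $\tau_{2p,\alpha}$) gives a crystal isomorphism $\mathcal{M}_0(Y_{i,k}) \xrightarrow{\sim} \mathcal{M}_0(Y_{i,k+2})$, and parity-shifted cases reduce similarly after adjusting the sign function. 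Alternatively, and perhaps more cleanly, one can simply cite that this is exactly the content of the relevant proposition in \cite{kashiwara_realizations_2003, nakajima_$t$-analogs_2003}, in which case the ``proof'' is a one-line reference together with the observation that the hypothesis on the Cartan matrix (no odd cycle) is automatic in finite type $A$.
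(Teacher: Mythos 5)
Your proposal is correct and matches the paper's treatment: the paper states this proposition as a cited result of Kashiwara and Nakajima with no proof of its own, and your sketch (dominant monomial $Y_{i,k}$ of weight $\Lambda_i$ generates a connected normal highest-weight crystal, then uniqueness of normal highest-weight $\U_q(sl_{n+1})$-crystals) is exactly the standard argument underlying that citation. The one point you flag as delicate --- normality of the monomial crystal, which rests on the Cartan matrix of type $A_n$ having no odd cycle --- is precisely the content of the cited theorems, so deferring it to the references is appropriate.
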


For $i \in I$, set $\Xi_i : \mathcal{M} \rightarrow \mathcal{M}$ the map sending the variables $ Y_{j,*}^{\pm 1}, e^{\nu} $ to $ 1 $ for all $j \neq i$ and $\nu \in P$, and $ Y_{i, *}^{\pm 1} $ to themselves. Another map will be used below: $\Xi^i : \mathcal{M} \rightarrow \mathcal{M}$ which sends the variables $ Y_{j,*}^{\pm 1} $ to themselves if $j \neq i$ and $ Y_{i, *}^{\pm 1}, e^{\nu} $ to $1$ for all $\nu \in P$. These two maps are also defined in \cite{frenkel_combinatorics_2001} and denoted by $\beta_{\{i\}}$ and $\beta_{I_i}$ respectively.

\begin{defi}\label{defclocry}
\begin{enumerate}
\item[(i)] A set of monomials $\mathcal{S} \subset \mathcal{M}$ is said to be $q$-closed in the direction $i$ $(i \in I)$ if for all $m \in \mathcal{S}$ there exists a finite subset $$\mathcal{S}_m \subset \mathcal{S} \cap \left(m \cdot \prod_{l \in \ZZ} A_{i,l}^{\ZZ} \right)$$ containing $m$ and a sequence $(n_{s})_{s \in \mathcal{S}_m}$ of positive integers such that \linebreak $\Xi_{i}\left( \sum_{s \in \mathcal{S}_m} n_{s} \cdot s \right)$ is the $q$--character of a representation of $\hat{\mathcal{U}}_i$.
\item[(ii)] A set of monomials $\mathcal{S}$ is said to be $J$-$q$-closed ($J \subset I$), or simply $q$-closed if $J=I$, if $\mathcal{S}$ is $q$-closed in the direction $i$ for all $i \in J$.
\item[(iii)] A set of monomials $\mathcal{S} \subset \mathcal{M}$ is said to be $J$-closed by the Kashiwara operators ($J \subset I$), or simply closed by the Kashiwara operators if $J=I$, if the operators $\tilde{e}_i$, $\tilde{f}_i$ preserve $\mathcal{S}$ for all $i \in J$.
\item[(iv)] A set of monomials $\mathcal{S} \subset \mathcal{M}$ which is $J$-$q$-closed and $J$-closed by the Kashiwara operators ($J \subset I$), is called a $J$-closed set. If $J = I$, it is simply called a closed monomial set.
\end{enumerate}
\end{defi}

\begin{rem} \begin{enumerate}
\item[(i)] The definition of $q$-closed set is inspired by the theory of $q$--characters and the Frenkel-Mukhin algorithm \cite{frenkel_combinatorics_2001}. In particular, it involves $q$-characters of $\U_q(\hat{sl}_2)'$-modules. Let us recall that in this case, the image of $\chi_q : \mathcal{R}_{l, \ZZ} \rightarrow \ZZ[Y_{1, l}^{\pm 1}]_{l \in \ZZ}$ is known (see \cite{frenkel_$q$-characters_1999}): it is equal to the subring $\ZZ[(Y_{1,l} + Y_{1,l+2}^{-1})]_{l \in \ZZ}$ of $\ZZ[Y_{1, l}^{\pm 1}]_{l \in \ZZ}$ generated by the $Y_{1,l} + Y_{1,l+2}^{-1}$ ($l \in \ZZ$).
\item[(ii)] The notion of $q$-closed set holds also for the monomial $\U_q(sl_{n+1})$-crystal $\mathcal{M}_0$. Further it extends naturally when $q$ is specialized at roots of unity, by using the theory of $q$--characters at roots of unity \cite{frenkel_$q$-characters_2002}.
\end{enumerate}
\end{rem}

Let $V$ be an integrable $\U_q(sl_{n+1}^{tor})$-module such that for all $\ell$-weight $(\nu, \gamma)$ of $V$, $V_{(\nu, \gamma)}$ is finite-dimensional and the roots of the associated polynomials  $Q_i(z)$ and $R_i(z)$ are in $q^{\ZZ}$ for all $i \in I$. Then the monomial set $\mathcal{M}(V)$ is $q$-closed. Note that it is not necessary that the Frenkel-Mukhin algorithm holds for $V$: for example it does not work for the simple finite-dimensional $\U_q(\hat{sl}_3)'$-module $V_0(Y_{1,0}^{2}Y_{2,3}) \simeq V_0(Y_{1,0}Y_{2,3}) \otimes V_0(Y_{1,0})$ considered in \cite{hernandez_cluster_2010}, but $\mathcal{M}(V_0(Y_{1,0}^{2}Y_{2,3}))$ is $q$-closed.

In general, $\mathcal{M}(V)$ is not closed by the Kashiwara operators: for example the $q$--character of the $\U_q(\hat{sl}_2)'$-module $V_0(Y_{1,2}Y_{1,0}^{2})$ contains the monomial $Y_{1,0}$ but does not contain $Y_{1,2}^{-1}$. However, it holds for the fundamental $\U_q(\hat{sl}_{n+1})'$-modules. In fact by using the tableaux sum expressions of their $q$--characters given in \cite{nakajima_$t$-analogs_2003}, we have

%
%
%
%

\begin{prop}\cite{nakajima_$t$-analogs_2003}
Let $V_0(Y_{i,k})$ be a fundamental representation of $\U_q(\hat{sl}_{n+1})'$ \linebreak ($i \in I_0$, $k \in \ZZ$). Then the monomial sets $\mathcal{M}_{0}(Y_{i,k})$ and $\mathcal{M}(V_0(Y_{i,k}))$ are equal.
\end{prop}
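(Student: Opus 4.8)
The plan is to exhibit both sides explicitly enough to see that they coincide, the bridge being that $V_0(Y_{i,k})$ is a minuscule evaluation module whose $q$-character is thin and given by Nakajima's tableau sum. First I would record the two cardinalities. By Proposition \ref{propcrystruct} we have $\mathcal{M}_0(Y_{i,k}) \simeq \mathcal{B}_0(\Lambda_i)$, so $|\mathcal{M}_0(Y_{i,k})| = \dim V_0(\Lambda_i) = \binom{n+1}{i}$. On the other hand, $V_0(Y_{i,k})$ is the evaluation module obtained by pulling back $V_0(\Lambda_i)$ along $\mathrm{ev}_{q^k}$ (the case $k=1$ of the construction recalled just before the statement); hence $\mathrm{Res}(V_0(Y_{i,k}))$ over $\U_q(sl_{n+1})$ is the minuscule module $V_0(\Lambda_i)$, so it is thin, every $\ell$-weight space is one-dimensional, and $|\mathcal{M}(V_0(Y_{i,k}))| = \dim V_0(\Lambda_i) = \binom{n+1}{i}$ as well. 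It therefore suffices to prove one inclusion, say $\mathcal{M}_0(Y_{i,k}) \subseteq \mathcal{M}(V_0(Y_{i,k}))$.

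To get this inclusion I would show that $\mathcal{M}(V_0(Y_{i,k}))$ is stable under the monomial Kashiwara operators $\tilde{e}_j, \tilde{f}_j$ for all $j \in I_0$; since it contains the $\ell$-highest weight monomial $Y_{i,k}$, stability forces it to contain every monomial reachable from $Y_{i,k}$ by these operators, i.e. the whole connected component $\mathcal{M}_0(Y_{i,k})$. For the stability, I would use the explicit tableau-sum expression of $\chi_q(V_0(Y_{i,k}))$ from \cite{nakajima_$t$-analogs_2003}: it writes $\mathcal{M}(V_0(Y_{i,k}))$ as the set of monomials $m_J$ indexed by the $i$-element subsets $J \subseteq \{1, \dots, n+1\}$, and from it one reads off that for every $j \in I_0$ and every $J$ one has $u_{j,l}(m_J) \in \{-1,0,1\}$, with at most one value of $l$ for which it is nonzero. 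Consequently the restriction of $V_0(Y_{i,k})$ to each subalgebra $\hat{\U}_j \simeq \U_q(\hat{sl}_2)'$ is a direct sum of two-dimensional fundamental modules (of $q$-character $Y_{j,l} + Y_{j,l+2}^{-1}$) and trivial modules; equivalently $\Xi_j(\chi_q(V_0(Y_{i,k}))) \in \ZZ[(Y_{j,l} + Y_{j,l+2}^{-1})]_{l \in \ZZ}$. In particular every $j$-string has length at most one, so for $m \in \mathcal{M}(V_0(Y_{i,k}))$: if $\varphi_j(m) = 0$ then $\tilde{f}_j \cdot m = 0$; otherwise there is a unique $l_0$ with $u_{j,l_0}(m) = 1$, whence $q_j(m) = l_0$ and $m A_{j,l_0+1}^{-1}$ is the other monomial of the two-dimensional $\hat{\U}_j$-block containing $m$, so $\tilde{f}_j \cdot m = m A_{j,l_0+1}^{-1} \in \mathcal{M}(V_0(Y_{i,k}))$. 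The argument for $\tilde{e}_j$ is symmetric. This establishes the stability, hence the inclusion, hence, by the cardinality count, the desired equality.

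The delicate point is the claim that the $\hat{\U}_j$-partner of $m$ is obtained from $m$ by the single factor $A_{j,l_0+1}^{-1}$ rather than by a longer product of $A_{j,l}^{\pm 1}$'s; this follows from thinness together with the injectivity of the map $\prod_l A_{j,l}^{\ZZ} \to \ZZ[Y_{j,\ast}^{\pm 1}]$ recording the exponents in direction $j$, so it is not a serious obstacle, and the rest is bookkeeping of the shift indices in Nakajima's formula. Alternatively one can bypass the counting argument: check directly that $m_J$ carries the crystal vertex $J$ of the column-tableau realization of $\mathcal{B}_0(\Lambda_i)$ and that the monomial operators $\tilde{e}_j, \tilde{f}_j$ implement the column-tableau crystal operators, which yields a crystal isomorphism $\mathcal{M}(V_0(Y_{i,k})) \simeq \mathcal{B}_0(\Lambda_i) \simeq \mathcal{M}_0(Y_{i,k})$ fixing $Y_{i,k}$ and hence forcing the two monomial sets to be literally equal.
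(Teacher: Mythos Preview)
Your proposal is correct. The paper does not supply its own proof of this proposition; it merely invokes the tableau-sum expression of $\chi_q(V_0(Y_{i,k}))$ from \cite{nakajima_$t$-analogs_2003}, and later (in the proof of Proposition~\ref{propqcloz}) makes the argument explicit by observing that the monomials in $\mathcal{M}_{I_0}(M_0)$, after applying $\Xi^0$, are literally the monomials $m_T$ appearing in Nakajima's tableau sum for $\chi_q(V_0(Y_{\ell,0}))$. This is precisely the alternative route you sketch in your final paragraph: identify both sets with the column-tableaux realization of $\mathcal{B}_0(\Lambda_i)$ and conclude equality directly.

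Your main argument (equal cardinalities plus stability of $\mathcal{M}(V_0(Y_{i,k}))$ under the monomial Kashiwara operators) is a slightly more structural repackaging of the same input. It has the virtue of isolating exactly which feature of the tableau formula is needed---that each $\Xi_j(m_J)$ is a single $Y_{j,l}^{\pm 1}$ or $1$---rather than appealing to the full explicit description. Either approach is fine, and both rest on the same computation from \cite{nakajima_$t$-analogs_2003}.
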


In particular by Proposition \ref{propcrystruct}, $\mathcal{M}(V_0(Y_{i,k}))$ has a $\U_q(sl_{n+1})$-crystal structure isomorphic to $\mathcal{B}_0(\Lambda_i)$.  As a consequence we have

\begin{cor}
For all $1 \leq i \leq n$ and $k \in \ZZ$, the $\U_q(sl_{n+1})$-crystal $\mathcal{M}_{0}(Y_{i,k})$ is closed.
\end{cor}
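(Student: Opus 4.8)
The plan is to verify separately the two conditions making up ``closed monomial set'' in Definition~\ref{defclocry}(iv). Closedness by the Kashiwara operators is immediate and formal: by construction $\mathcal{M}_0(Y_{i,k})$ is the sub-$\U_q(sl_{n+1})$-crystal of $\mathcal{M}_0$ generated by $Y_{i,k}$, so it is stable under $\tilde{e}_j$ and $\tilde{f}_j$ for all $j \in I_0$; in particular it is $I_0$-closed by the Kashiwara operators. Hence the whole content of the statement is the $q$-closedness, and I would deduce it from the Proposition just stated together with standard facts about $q$--characters of quantum affine $sl_2$.

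Fix a direction $j \in I_0$. By the preceding Proposition, $\mathcal{M}_0(Y_{i,k}) = \mathcal{M}(V)$ with $V = V_0(Y_{i,k})$ the fundamental $\U_q(\hat{sl}_{n+1})'$-module; viewing $V$ through the isomorphism $\U_q(\hat{sl}_{n+1})' \simeq \U_q^{v}(sl_{n+1}^{tor})$, its restriction to $\hat{\U}_j \simeq \U_q(\hat{sl}_2)'$ is a finite-dimensional $\U_q(\hat{sl}_2)'$-module. Given $m \in \mathcal{M}(V)$, put
$$\mathcal{S}_m = \mathcal{M}(V) \cap \Bigl( m \cdot \prod_{l \in \ZZ} A_{j,l}^{\ZZ} \Bigr), \qquad n_{m'} = \dim V_{m'} \quad (m' \in \mathcal{S}_m).$$
Then $\mathcal{S}_m$ is finite because $\dim V < \infty$, and it contains $m$. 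By the Frenkel--Mukhin analysis of $q$--characters \cite{frenkel_combinatorics_2001}, the operators $x_{j,r}^{\pm}$ send an $\ell$-weight space $V_{m'}$ into the span of the spaces $V_{m''}$ with $m'' \in m' \cdot \prod_{l \in \ZZ} A_{j,l}^{\ZZ}$; consequently $\bigoplus_{m' \in \mathcal{S}_m} V_{m'}$ is an $\hat{\U}_j$-submodule of $V$. By the definitions of the $q$--character and of the map $\Xi_j$, which records the exponents of the $Y_{j,\ast}$ only, the element $\Xi_j\bigl( \sum_{m' \in \mathcal{S}_m} n_{m'}\, m' \bigr)$ equals the $q$--character of this $\hat{\U}_j$-submodule, hence is the $q$--character of a representation of $\hat{\U}_j$. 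This is exactly the requirement of Definition~\ref{defclocry}(i); as $j \in I_0$ was arbitrary, $\mathcal{M}_0(Y_{i,k})$ is $q$-closed, and therefore closed.

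The only step that uses more than bookkeeping is the assertion that $\bigoplus_{m' \in \mathcal{S}_m} V_{m'}$ is $\hat{\U}_j$-stable, that is, that $x_{j,r}^{\pm}$ cannot move an $\ell$-weight vector out of a union of $j$-cosets; this is the Frenkel--Mukhin observation that $x_{j,r}^{\pm}$ modifies only the ``$j$-part'' of an $\ell$-weight, multiplying the corresponding monomial by a product of the $A_{j,\ast}^{\pm 1}$. Everything else --- finiteness of $\mathcal{S}_m$, the identity $\Xi_j\bigl(\sum_{m'} n_{m'}\, m'\bigr) = \chi_q\bigl(\bigoplus_{m'} V_{m'}\bigr)$, and the passage from a single direction to all $j \in I_0$ --- is routine. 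I note finally that the roots of the Drinfeld polynomial of $V_0(Y_{i,k})$ do lie in $q^{\ZZ}$ (the variable $Y_{i,k}$ corresponds to the root $q^{k}$), so the $q$--characters occurring above in fact lie in $\mathcal{R}_{l,\ZZ}$ and thus, by Frenkel--Reshetikhin, in the subring $\ZZ[(Y_{1,l} + Y_{1,l+2}^{-1})]_{l \in \ZZ}$; but this refinement is not needed for the corollary.
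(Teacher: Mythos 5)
Your proof is correct and takes essentially the same route as the paper: closure under the Kashiwara operators is automatic for a subcrystal, and $q$-closedness follows from the preceding Proposition identifying $\mathcal{M}_0(Y_{i,k})$ with $\mathcal{M}(V_0(Y_{i,k}))$ combined with the general observation, stated in the paper just after Definition \ref{defclocry}, that $\mathcal{M}(V)$ is $q$-closed for an integrable module $V$ with finite-dimensional $\ell$-weight spaces and spectral parameters in $q^{\ZZ}$. The only difference is that you unwind the proof of that general observation via the Frenkel--Mukhin coset decomposition, which the paper leaves implicit.
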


Finally, let us give an example of monomial crystal which is not $q$-closed. Consider the $\U_q(sl_2)$-crystal $\mathcal{M}_{0}(Y_{1,4}Y_{1,0})$
$$Y_{1,4}Y_{1,0} \rightarrow Y_{1,6}^{-1}Y_{1,0} \rightarrow Y_{1,6}^{-1}Y_{1,2}^{-1}.$$
If $\mathcal{M}_{0}(Y_{1,4}Y_{1,0})$ is $q$-closed, it should contain $\mathcal{M}(V_0(Y_{1,4}Y_{1,0}))$. This is not the case, the $q$-character of $V_0(Y_{1,4}Y_{1,0})$ being
$$\chi_q(V_0(Y_{1,4}Y_{1,0})) = Y_{1,4}Y_{1,0} + Y_{1,6}^{-1}Y_{1,0} + Y_{1,4}Y_{1,2}^{-1} + Y_{1,6}^{-1}Y_{1,2}^{-1}.$$

\subsection{Description of the monomial crystal $ \mathcal{M}(e^{\varpi_\ell}Y_{\ell, 0} Y_{0, d_\ell}^{-1}) $}

Assume that $n = 2r+1$ is odd with $r \geq 1$. The monomial crystals $ \mathcal{M}(e^{\varpi_\ell}Y_{\ell, 0} Y_{0, d_\ell}^{-1}) $ are studied in \cite[Section 4]{hernandez_level_2006}: the monomials occurring in these crystals are explicitly described and the automorphisms $z_\ell$ are given in terms of monomials. We recall these results here.

To describe the monomial crystals $ \mathcal{M}(e^{\varpi_\ell}Y_{\ell, 0} Y_{0, d_\ell}^{-1}) $, we assume in this section that $ \ell \leq r+1 $ (as in \cite{hernandez_level_2006}). Let us begin by explaining why we can do that. We need the notion of twisted isomorphism of crystals (this definition appears in \cite{bandlow_uniqueness_2010}).

\begin{defi} Let $ \mathcal{B} $ and $ \mathcal{B'} $ be crystals over two isomorphic Dynkin diagrams $ D $ and $ D' $ with vertices respectively indexed by $ I $ and $ I' $ and let $ \theta : I \rightarrow I' $ be an isomorphism from $ D $ to $ D' $. Then $ \phi $ is a $ \theta $-twisted isomorphism if $\phi : \mathcal{B} \rightarrow \mathcal{B}'$ is a bijection map and for all $ b \in \mathcal{B} $ and $ i \in I $,
$$ \tilde{f}_{\theta(i)} \cdot \phi(b) = \phi( \tilde{f}_{i} \cdot b)  \: \mathrm{and} \: \tilde{e}_{\theta(i)} \cdot \phi(b) = \phi( \tilde{e}_{i} \cdot b). $$
\end{defi}

Let $\iota$ be the automorphism of the Dynkin diagram of type $A_{n}^{(1)}$ such that $\iota(k) = -k$ ($k \in I$) where $I$ is identified to the set $\ZZ/(n+1)\ZZ$. It defines an automorphism $\iota_\Hlie$ of $\Hlie$ by sending $h_i, d$ to $h_{\iota(i)}, d$ for all $i \in I$. Let $\psi : \mathcal{M} \rightarrow \mathcal{M}$ be the map defined by ($r \in \QQ$)
$$\psi \left(e^{r \delta} \prod (e^{\Lambda_i}Y_{i,n})^{u_{i,n}} \right) = e^{r \delta} \prod (e^{\Lambda_{-i}}Y_{-i,n})^{u_{i,n}}.$$
Then we show easily that $\psi$ is a $\iota$-twisted automorphism of the $P$-crystal $\mathcal{M}$. Furthermore it induces a $\iota$-twisted isomorphism
$$\psi : \mathcal{M}(e^{\varpi_\ell}Y_{\ell, 0} Y_{0, \ell}^{-1}) \longrightarrow \mathcal{M}(e^{\varpi_{n+1-\ell}}Y_{n+1-\ell, 0} Y_{0, \ell}^{-1})$$
between the monomial crystals $\mathcal{M}(e^{\varpi_\ell}Y_{\ell, 0} Y_{0, \ell}^{-1})$ and $\mathcal{M}(e^{\varpi_{n+1-\ell}}Y_{n+1-\ell, 0} Y_{0, \ell}^{-1})$ for all $1 \leq \ell \leq r+1$.

So one can assume that $1 \leq \ell \leq r+1$. In this case, $d_\ell = \ell$ and we study the crystal $\mathcal{M}(e^{\varpi_\ell}Y_{\ell,0}Y_{0,\ell}^{-1})$ (see \cite{hernandez_level_2006}). One defines the monomials

\newcommand{\te}{\tilde e}
\newcommand{\tf}{\tilde f}
\newenvironment{aenume}{%
  \begin{enumerate}%
  \renewcommand{\theenumi}{\alph{enumi}}%
  \renewcommand{\labelenumi}{(\theenumi)}%
  }{\end{enumerate}}

\begin{equation*}
  \ffbox{k}_p = Y_{k-1,p+k}^{-1} Y_{k,p+k-1} \qquad
  \text{for $1\le k\le n+1$, $p\in\ZZ$}.
\end{equation*}
with $Y_{n+1,p}=Y_{0,p}$ by convention. By Remark \ref{remweight}, the terms $e^{\omega(m')}$ can be safely omitted for all $m' \in \mathcal{M}(e^{\varpi_\ell}Y_{\ell,0}Y_{0,\ell}^{-1})$. Set  $M_0 = e^{\varpi_\ell}Y_{\ell,0}Y_{0,\ell}^{-1}$ and
\begin{equation*}
  \begin{split}
  M_j &= Y_{\ell,2j}Y_{0,n-\ell+1+2j}^{-1}
       Y_{j,\ell+j}^{-1}Y_{j,n-\ell+1+j}
\\
   &= \left( \ffbox{1}_{n-\ell+2j}\ffbox{2}_{n-\ell+2j-2}
   \cdots \ffbox{j}_{n-\ell+2}\right)
\times
  \left(\ffbox{j\!+\!1}_{\ell-1}\ffbox{j\!+\!2}_{\ell-3}
    \cdots \ffbox{\ell}_{1-\ell+2j}\right)
\\
   &=
   \prod_{p=1}^j \ffbox{p}_{n-\ell-2p+2j+2} \times
   \prod_{p=j+1}^\ell \ffbox{p}_{\ell+1-2p+2j}
  \end{split}
\end{equation*}
with $0\le j\le \ell$. In particular, $M_\ell =
Y_{\ell,n+1}Y_{0,n+1+\ell}^{-1} = \tau_{n+1, -\ell \delta}(M_0)$ and $M_1
= \tau_{2, -\delta}(M_0)$ for $\ell = r+1$. One defines other monomials in the following way: for $j \in \ZZ$ and a Young tableau of shape $(\ell)$ $T = (1 \leq i_1 < i_2 < \dots < i_\ell \leq n+1)$ we set
\begin{equation}\label{montab}
   m_{T;j} = \prod_{p=1}^j \ffbox{i_p}_{n-\ell-2p+2j+2} \times
   \prod_{p=j+1}^\ell \ffbox{i_p}_{\ell+1-2p+2j}
   \qquad\text{for $0\le j\le \ell-1$},
\end{equation}
and $m_{T;j+\ell} = \tau_{n+1, -\ell \delta}(m_{T;j})$. Note that $M_j = m_{T;j}$ with $T = (1,2, \dots, \ell)$.

By Theorem \ref{monrzt}, $\mathcal M(M_0) $ and $\mathcal B(\varpi_\ell)$ are isomorphic as $P$-crystals. Furthermore

\begin{prop}\cite{hernandez_level_2006}
\begin{enumerate}
\item[(i)] $\mathcal M_{I_0}(M_j)$ consists of $m_{T;j}$ for various
sequences $T$.

\item[(ii)] We have the equality of $I_0$-crystals
\begin{equation}\label{eqcrys}
\mathcal{M}(e^{\varpi_\ell}Y_{\ell, 0} Y_{0, d_\ell}^{-1}) = \bigsqcup_{k \in \ZZ} (\tau_{n+1, -\ell \delta})^{k} \left( \bigsqcup_{j = 0}^{\ell-1} \mathcal{M}_{I_0}(M_j) \right).
\end{equation}

\item[(iii)] The map $\sigma : \mathcal{M}(e^{\varpi_\ell}Y_{\ell, 0} Y_{0, d_\ell}^{-1}) \rightarrow \mathcal{M}(e^{\varpi_\ell}Y_{\ell, 0} Y_{0, d_\ell}^{-1})$ defined by $\sigma(m_{T;j}) = m_{T;j+1}$ is a
$P_{\mathrm{cl}}$-crystal automorphism equal to $z_\ell^{-1}$.

\item[(iv)] The Kashiwara operators $\tilde{e}_i$, $\tilde{f}_i$ are described in terms of tableaux: for $i\neq 0$ we have
$\tilde{e}_i \cdot m_{T;j} = m_{T';j}$ or $0$. Here $T'$ is obtained from
$T$ by replacing $i+1$ by $i$. If it is not possible (i.e. when we
have both $i+1$ and $i$ in $T$ or when $i+1$ does not occur in $T$), then it is zero. Similarly
$\tilde{f}_i \cdot m_{T;j}= m_{T'';j}$ or $0$, where $T''$ is given by replacing
$i$ by $i+1$. For the action of $\tilde{e}_0$, $\tilde{f}_0$, we have
\begin{equation*}
\begin{split}
\tilde{e}_0 \cdot m_{T;j} &=
        \begin{cases}
        0 &\text{ if $i_1\neq 1$ or $i_{\ell}=n+1$},
                \\  m_{(i_2,\dots,i_{\ell},n+1);j-1}&\text{ if $i_1=1$ and $i_{\ell}\neq n+1$,}
  \end{cases}
\\
\tilde{f}_0 \cdot m_{T;j} &=
        \begin{cases}
        0 &\text{ if $i_1 = 1$ or $i_{\ell} \neq n+1$},
                \\  m_{(1,i_1,\dots,i_{\ell-1});j+1}&\text{ if $i_1 \neq 1$ and $i_{\ell} = n+1$.}
  \end{cases} 
\end{split}
\end{equation*}
\end{enumerate}
\end{prop}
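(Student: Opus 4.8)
### Proof proposal for the Proposition

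The plan is to establish the four assertions by reducing everything to the monomial realization of Theorem~\ref{monrzt} together with the explicit description of the Kashiwara operators on the monomial crystal $\mathcal{M}$, and then to organize the combinatorics around the tableau-indexed monomials $m_{T;j}$ introduced in~\eqref{montab}. The crucial observation is that each $m_{T;j}$ is (up to a shift) a product of $\ell$ of the elementary monomials $\ffbox{k}_p$ with \emph{distinct} box-labels $i_1 < \dots < i_\ell$, and that the variables $Y_{i,*}$ appearing in $\ffbox{k}_p$ interact with the Kashiwara operators in a very controlled way. So the first step is a direct computation: for $i \in I_0$, apply the definition of $\tilde e_i$, $\tilde f_i$ on $\mathcal{M}$ (i.e. compute $p_i(m)$, $q_i(m)$, $\varepsilon_i(m)$, $\varphi_i(m)$) to a monomial of the form $m_{T;j}$ and check that multiplication by $A_{i,*}^{\pm1}$ precisely has the effect of swapping a box-label $i{+}1 \leftrightarrow i$, yielding (iv) for $i \neq 0$; the key point to verify is that the relevant sums $\varphi_{i,L}$ and $\varepsilon_{i,L}$ only ever see the two columns $\ffbox{i}_*$ and $\ffbox{i+1}_*$, which is where the ``distinct labels'' and the specific exponents of $\ell+1-2p+2j$ and $n-\ell-2p+2j+2$ enter. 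The same computation done for $i=0$, using the convention $Y_{n+1,p}=Y_{0,p}$ and carefully tracking which monomials have $i_1=1$ or $i_\ell=n+1$, gives the $\tilde e_0,\tilde f_0$ formulas in (iv); this case is more delicate because the node $0$ ``wraps around'' and the shift index $j$ changes.

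Granting (iv), parts (i) and (ii) follow fairly formally. For (i): starting from $M_j = m_{T;j}$ with $T=(1,2,\dots,\ell)$ and applying only $\tilde e_i,\tilde f_i$ with $i \in I_0$, the tableau rule shows that the set of reachable monomials is exactly $\{m_{T';j} \mid T' \text{ a strictly increasing sequence of length } \ell\}$, because the $I_0$-crystal generated this way is nothing but the crystal $\mathcal{B}_0(\Lambda_\ell)$ of the fundamental representation of $\U_q(sl_{n+1})$ realized on column tableaux, and the map $m_{T;j}\mapsto T$ is the obvious isomorphism with that standard realization (Proposition~\ref{propcrystruct}). For (ii): by Theorem~\ref{monrzt} the full crystal $\mathcal{M}(e^{\varpi_\ell}Y_{\ell,0}Y_{0,d_\ell}^{-1})$ is connected and isomorphic to $\mathcal{B}(\varpi_\ell)$; forgetting the arrow $i=0$ decomposes it into $I_0$-connected components, each of which is generated by applying $\tilde e_0,\tilde f_0$ to move between components. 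The $\tilde e_0,\tilde f_0$ formulas in (iv) show that $\tilde f_0$ sends the top of $\mathcal{M}_{I_0}(M_j)$ to the top of $\mathcal{M}_{I_0}(M_{j+1})$ for $0 \le j \le \ell-2$, that $\tilde f_0$ applied at level $j=\ell-1$ lands in $\tau_{n+1,-\ell\delta}(\mathcal{M}_{I_0}(M_0))$, and that nothing else happens; unwinding this gives precisely the decomposition~\eqref{eqcrys}. One should also check the degenerate small cases ($\ell=1$, or $\ell=r+1$ where $M_\ell=\tau_{n+1,-\ell\delta}(M_0)$ and $M_1=\tau_{2,-\delta}(M_0)$) are consistent with this, which is immediate from the displayed identities following the definition of the $M_j$.

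For (iii), the map $\sigma : m_{T;j}\mapsto m_{T;j+1}$ is well-defined on all of $\mathcal{M}(e^{\varpi_\ell}Y_{\ell,0}Y_{0,d_\ell}^{-1})$ by part (ii) (every monomial is uniquely of the form $m_{T;j}$ after the identification~\eqref{eqcrys}, with $m_{T;j+\ell}=\tau_{n+1,-\ell\delta}(m_{T;j})$), and it clearly commutes with $\tilde e_i,\tilde f_i$ for $i\in I_0$ since those operators act on the $T$-index and fix $j$; compatibility with $\tilde e_0,\tilde f_0$ is exactly the statement that the formulas in (iv) for the node $0$ are equivariant under $j\mapsto j+1$, which one reads off directly. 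Hence $\sigma$ is a $P_{\mathrm{cl}}$-crystal automorphism. To identify it with $z_\ell^{-1}$, recall from Section~2.4 that $z_\ell$ is the $P_{\mathrm{cl}}$-automorphism of $V(\varpi_\ell)$ induced by a Weyl group element $w$ with $w(\varpi_\ell)=\varpi_\ell+\delta$, hence of weight $\delta$; computing the weight of $\sigma$ from $m_{T;j}\mapsto m_{T;j+1}$ (equivalently, noting $\sigma^\ell = \tau_{n+1,-\ell\delta}$, a shift by $-\ell\delta$) shows $\sigma$ has weight $-\delta$, so $\sigma$ and $z_\ell^{-1}$ are two $P_{\mathrm{cl}}$-crystal automorphisms of the same connected crystal $\mathcal{B}(\varpi_\ell)$ with the same weight; since such an automorphism of a connected crystal is determined by its weight (it must send the extremal vector $v$ to the unique extremal vector of the appropriate weight), they coincide. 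I expect the main obstacle to be the case $i=0$ of step (iv): one has to be genuinely careful with the cyclic identification $I\simeq\ZZ/(n+1)\ZZ$, the convention $Y_{n+1,p}=Y_{0,p}$, and the bookkeeping of the shift parameter $j$, since a sign or index error there propagates into both the decomposition~\eqref{eqcrys} in (ii) and the identification $\sigma=z_\ell^{-1}$ in (iii).
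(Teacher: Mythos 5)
This proposition is quoted from \cite{hernandez_level_2006} and the paper gives no proof of its own, so the only meaningful comparison is with the argument in that reference; your reconstruction follows essentially the same route (direct computation of $\varepsilon_i,\varphi_i,p_i,q_i$ on the monomials $m_{T;j}$ to get (iv), identification of each $I_0$-component with the column-tableau model of $\mathcal{B}_0(\Lambda_\ell)$ for (i)--(ii), and a weight argument for $\sigma=z_\ell^{-1}$) and is correct. The one step worth making explicit is the final uniqueness claim in (iii): two $P_{\cl}$-crystal automorphisms of the connected crystal $\mathcal{B}(\varpi_\ell)$ with the same weight coincide because the weight spaces of $V(\varpi_\ell)$ are one-dimensional (the crystal is ``thin''), so the image of the extremal element $M_0$ is pinned down by its weight alone.
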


\begin{prop}\label{propqcloz}
There is a bijection given by $\Xi^{0}$ between $\mathcal{M}_{I_0}(M_0)$ and $\mathcal{M}(V)$, where $V=V_0(\Xi^{0}(M_0))$ is the fundamental representation of $\U_q(\hat{sl}_{n+1})'$ associated to $Y_{\ell,0}$. In particular the monomial crystal $\mathcal{M}_{I_0}(M_0)$ is $I_0$-closed.
\end{prop}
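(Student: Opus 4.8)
The plan is to prove Proposition \ref{propqcloz} by combining the combinatorial description of $\mathcal{M}_{I_0}(M_0)$ from the previous proposition with the known tableau-sum formula for the $q$--character of the fundamental module $V_0(Y_{\ell,0})$ of $\U_q(\hat{sl}_{n+1})'$ (from \cite{nakajima_$t$-analogs_2003}). Recall that by part (i) of the previous proposition, $\mathcal{M}_{I_0}(M_0)$ consists exactly of the monomials $m_{T;0}$ for $T = (1 \le i_1 < \dots < i_\ell \le n+1)$ a Young tableau of shape $(\ell)$, with
\begin{equation*}
m_{T;0} = \prod_{p=1}^{\ell} \ffbox{i_p}_{\ell+1-2p}, \qquad \ffbox{k}_p = Y_{k-1,p+k}^{-1}Y_{k,p+k-1}.
\end{equation*}
First I would apply $\Xi^0$, which deletes all variables $Y_{0,*}^{\pm1}$. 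The only factors $\ffbox{k}_p$ that involve a variable $Y_{0,*}$ are those with $k=1$ (which contributes $Y_{0,p+1}^{-1}$, using $Y_{n+1,p}=Y_{0,p}$) or $k=n+1$ (which contributes $Y_{n+1,p+n+1-1}=Y_{0,p+n}$); since in a shape-$(\ell)$ tableau with $\ell \le r+1 < n+1$ at most one of $i_1=1$ or $i_\ell = n+1$ can occur, applying $\Xi^0$ to $m_{T;0}$ deletes at most one variable and keeps the monomial honest. I would then identify $\Xi^0(m_{T;0})$ with the standard tableau-monomial appearing in Nakajima's formula for $\chi_q(V_0(Y_{\ell,0}))$, whose monomials are indexed by the same column-strict tableaux $T=(1 \le i_1 < \dots < i_\ell \le n+1)$ of shape $(\ell)$; this is essentially a bookkeeping check comparing the factor $\ffbox{i_p}_{\ell+1-2p}$ (with the $Y_{0,*}$ part erased) against the entry-$p$ contribution in the tableau-sum expression.

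Second, I would argue that $\Xi^0$ is injective on $\mathcal{M}_{I_0}(M_0)$: two distinct tableaux $T \ne T'$ give distinct monomials $\Xi^0(m_{T;0})$. This follows because the exponents $u_{i,l}$ of the surviving variables still record, for each $p$, which entry $i_p$ was used and at which spectral shift, so the tableau can be recovered from $\Xi^0(m_{T;0})$ (one recovers the multiset of entries $\{i_1,\dots,i_\ell\}$ from the pattern of $Y_{i,l}$'s, the shifts $\ell+1-2p$ pinning down the ordering). Combined with the fact that the $q$--character of the fundamental module is multiplicity-free (each tableau monomial appears with coefficient $1$, as $V_0(Y_{\ell,0})$ is thin), this gives a bijection $\Xi^0 : \mathcal{M}_{I_0}(M_0) \xrightarrow{\sim} \mathcal{M}(V_0(Y_{\ell,0}))$.

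Finally, for the ``$I_0$-closed'' assertion I would use that $\mathcal{M}_{I_0}(M_0)$ is by definition closed by the Kashiwara operators $\tilde e_i, \tilde f_i$ for $i \in I_0$, so it remains only to show $I_0$-$q$-closedness. For this I would invoke the preceding \textbf{Proposition} (attributed to \cite{nakajima_$t$-analogs_2003}) stating $\mathcal{M}_0(Y_{\ell,k}) = \mathcal{M}(V_0(Y_{\ell,k}))$ as $\U_q(sl_{n+1})$-crystals, together with the general fact quoted just before it: for any integrable $\U_q(sl_{n+1}^{tor})$-module (here restricted to the relevant $\hat{\U}_i$'s), the monomial set of its $q$--character is $q$-closed. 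More concretely, for each $i \in I_0$ one has to exhibit, for every $m \in \mathcal{M}_{I_0}(M_0)$, a finite subset inside $m \cdot \prod_l A_{i,l}^{\ZZ}$ whose $\Xi_i$-image is a genuine $\U_q(\hat{sl}_2)'$ $q$--character, i.e.\ a monomial in $\ZZ[(Y_{i,l}+Y_{i,l+2}^{-1})]_l$; the $i$-strings of the crystal $\mathcal{B}_0(\Lambda_\ell)$ (equivalently, the $\tilde e_i$-$\tilde f_i$ orbits in the tableau description, which are single boxes toggling $i \leftrightarrow i+1$) produce exactly such pairs $Y_{i,l}+Y_{i,l+2}^{-1}$ after applying $\Xi_i$. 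Carrying this out amounts to matching the $2$-column tableau pairs with $\U_q(\hat{sl}_2)$ fundamental characters, which the identification with $\mathcal{M}(V_0(Y_{\ell,0}))$ already supplies. I expect the main obstacle to be the careful bookkeeping in the first step --- pinning down the precise spectral shifts so that $\Xi^0(m_{T;0})$ matches Nakajima's tableau monomials on the nose, and checking that no variable collision occurs when $i_1 = 1$ --- rather than any conceptual difficulty; once the bijection with $\mathcal{M}(V_0(Y_{\ell,0}))$ is established, both the $q$-closedness and the closedness under Kashiwara operators follow from cited results.
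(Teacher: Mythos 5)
Your proof follows essentially the same route as the paper's: apply $\Xi^{0}$ to the monomials $m_{T;0}$, identify the images with the tableau monomials in Nakajima's $q$--character formula for the fundamental module $V_0(Y_{\ell,0})$, and deduce $I_0$-$q$-closedness from the fact that the image is the set of $\ell$-weights of an actual representation. One minor slip: your parenthetical claim that at most one of $i_1=1$ or $i_\ell=n+1$ can occur in $T$ is false for $\ell\ge 2$, but this is harmless since $\Xi^{0}$ deletes every $Y_{0,*}^{\pm 1}$ factor regardless of how many appear, and the image of $m_{T;0}$ is still the expected tableau monomial.
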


\begin{proof}
By the previous description, $\mathcal{M}_{I_0}(M_0)$ consists of the monomials $m_{T;0}$ for various sequences $T$. By applying the map $\Xi^0$, they are sent to the monomials $m_T=\prod_{p=1}^\ell \ffbox{i_p}_{\ell+1-2p}$ with $T = (1 \leq i_1 < \cdots < i_\ell \leq n+1)$ and where we set $\ffbox{1}_p = Y_{1,p}$ and $\ffbox{n+1}_p = Y_{n, p+n+1}^{-1}$ for all $p \in \ZZ$. They are exactly the monomials occurring in the tableaux sum expressions of $q$-characters of fundamental modules of $\U_q(\hat{sl}_{n+1})'$ (see \cite{nakajima_$t$-analogs_2003}). So the image of $\mathcal{M}_{I_0}(M_0)$ by $\Xi^0$ is equal to $\mathcal{M}(V_0(Y_{\ell,0}))$. Further this set is $I_0$-$q$-closed by definition, and $\mathcal{M}_{I_0}(M_0)$ is also $I_0$-closed
\end{proof}

Now let us consider the monomial crystal $\mathcal{M}(e^{\varpi_\ell}Y_{\ell, 0} Y_{0, d_\ell}^{-1})$ with $1 \leq \ell \leq n$. We determine in the next proposition when $z_\ell$ has the particular form of a shift.

\begin{prop}\label{shiftzl} 
The automorphism $z_{\ell}$ of $\mathcal{M}(e^{\varpi_\ell}Y_{\ell, 0} Y_{0, d_\ell}^{-1})$ has the special form of a shift $ \tau_{p, \alpha} $ ($p \in \ZZ, \alpha \in \ZZ \delta$) if and only if $ \ell = 1, n$ or $ \ell = r+1 $. Moreover,  we have $ z_{1} = z_{n} = \tau_{-n-1, \delta} $ and $ z_{r+1} = \tau_{-2, \delta} $.
\end{prop}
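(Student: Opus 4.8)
The plan is to reduce everything to the combinatorial description recalled above, namely $z_\ell^{-1} = \sigma$ with $\sigma(m_{T;j}) = m_{T;j+1}$, combined with $M_j = m_{(1,2,\dots,\ell);j}$, so that $\sigma(M_0) = M_1$. I will use two elementary facts throughout. First, since $\mathcal{M}(M_0)\simeq\mathcal{B}(\varpi_\ell)$ is connected, any $P_{\cl}$-crystal automorphism of $\mathcal{M}(M_0)$ is determined by its value at $M_0$. Second, whenever a shift $\tau_{p,\alpha}$ ($p$ even, $\alpha\in\ZZ\delta$) maps $M_0$ into $\mathcal{M}(M_0)$, its restriction is a $P_{\cl}$-crystal automorphism of $\mathcal{M}(M_0)$, because a crystal automorphism of $\mathcal{M}$ permutes connected components and therefore fixes the component of $M_0$. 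It follows that $z_\ell$ is a shift if and only if $M_1=\sigma(M_0)$ is of the form $\tau_{p,\alpha}(M_0)$, in which case $z_\ell^{-1}$ is exactly that shift.

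First I would handle the range $1\le\ell\le r+1$, where $d_\ell=\ell$ and the explicit formula $M_1 = Y_{\ell,2}Y_{0,n-\ell+3}^{-1}Y_{1,\ell+1}^{-1}Y_{1,n-\ell+2}$ is available. Since $\tau_{p,\alpha}(M_0)=\tau_{p,\alpha}(e^{\varpi_\ell}Y_{\ell,0}Y_{0,\ell}^{-1})$ only involves the variables $Y_{\ell,*}$ and $Y_{0,*}$, the factors $Y_{1,\ell+1}^{-1}Y_{1,n-\ell+2}$ of $M_1$ must disappear. For $\ell=1$ they cancel the factor $Y_{\ell,2}=Y_{1,2}$, leaving $M_1 = Y_{1,n+1}Y_{0,n+2}^{-1} = \tau_{n+1,-\delta}(M_0)$; for $2\le\ell\le r+1$ they cancel each other exactly when $\ell+1=n-\ell+2$, i.e. $\ell=r+1$, and then $M_1 = Y_{r+1,2}Y_{0,r+3}^{-1} = \tau_{2,-\delta}(M_0)$, while for $2\le\ell\le r$ the monomial $M_1$ genuinely involves $Y_{1,*}$ and is not a shift of $M_0$. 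Hence in this range $z_\ell$ is a shift iff $\ell\in\{1,r+1\}$, with $z_1=\tau_{-n-1,\delta}$ and $z_{r+1}=\tau_{-2,\delta}$ (the shifts forming a group under composition).

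To cover $r+1<\ell\le n$ I would exploit the $\iota$-twisted isomorphism $\psi : \mathcal{M}(e^{\varpi_{\ell'}}Y_{\ell',0}Y_{0,\ell'}^{-1}) \to \mathcal{M}(e^{\varpi_\ell}Y_{\ell,0}Y_{0,d_\ell}^{-1})$ described above, with $\ell'=n+1-\ell\in\{1,\dots,r\}$, which sends the extremal monomial $e^{\varpi_{\ell'}}Y_{\ell',0}Y_{0,\ell'}^{-1}$ to $e^{\varpi_\ell}Y_{\ell,0}Y_{0,d_\ell}^{-1}$. Because $\iota$ fixes the node $0$ and the imaginary root $\delta$, the element $\iota w\iota\in W$ satisfies $(\iota w\iota)(\varpi_\ell)=\varpi_\ell+\delta$ whenever $w(\varpi_{\ell'})=\varpi_{\ell'}+\delta$; as $z_\bullet$ is independent of this choice, the whole construction is natural under $\psi$, so $z_\ell=\psi\circ z_{\ell'}\circ\psi^{-1}$. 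Moreover $\psi$ commutes with every shift $\tau_{p,\alpha}$ with $\alpha\in\ZZ\delta$ (one relabels the first index of the $Y$-variables, the other acts on the second index and on the $\delta$-grading). Therefore $z_\ell$ is a shift iff $z_{\ell'}$ is, which by the previous paragraph and $\ell'\le r$ happens iff $\ell'=1$, i.e. $\ell=n$; and then $z_n=\psi\circ\tau_{-n-1,\delta}\circ\psi^{-1}=\tau_{-n-1,\delta}$. Combining the two ranges yields the equivalence together with the asserted equalities $z_1=z_n=\tau_{-n-1,\delta}$ and $z_{r+1}=\tau_{-2,\delta}$.

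The steps I expect to require the most care are the bookkeeping of exactly which $Y$-variables cancel in $M_1$ — this is precisely what singles out $\ell=1$ and $\ell=r+1$ — and the verification that $z_\bullet$ is equivariant for the diagram automorphism $\iota$, which is what lets one transport $z_1$ to $z_n$. The remaining steps are formal, using only connectedness of $\mathcal{B}(\varpi_\ell)$ and the group structure of the shift automorphisms.
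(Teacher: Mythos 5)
Your proof is correct and follows essentially the same route as the paper: identify $z_\ell^{-1}$ with $\sigma$, test whether $M_1=\sigma(M_0)$ is a shift of $M_0$, use connectedness of $\mathcal{M}(M_0)$ to upgrade agreement at $M_0$ to equality of automorphisms, and transport the result to $\ell>r+1$ via $\psi$. Your treatment is in fact slightly more explicit than the paper's in the negative cases $2\le\ell\le r$, where you pin down the obstruction as the surviving $Y_{1,*}$-factors of $M_1$, whereas the paper only remarks that $\sigma$ is explicitly known and is not a shift.
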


\begin{proof}
Assume that $\ell \leq r+1$. We have seen that $z_\ell = \sigma^{-1}$. So it suffices to determine when $\sigma$ is a shift. We have the equality $\sigma^{\ell} = \tau_{n+1, -\ell \delta}$. Hence if $\ell = 1$, $\sigma = \tau_{n+1, -\delta}$ is a shift. Assume that $\ell = r+1$. In this case, $M_1 = \tau_{2, -\delta}(M_0) = \sigma(M_0)$. As the crystal $\mathcal{M}(M_0)$ is connected and $\sigma$ and $ \tau_{2, -\delta}$ are automorphisms of crystals, we have $\sigma = \tau_{2, -\delta}$. For the other cases, $\sigma$ is explicitly known and is not a shift. As $\psi$ and shift automorphisms commute, the result follows for $\ell > r+1$.
\end{proof}

\subsection{Affinized promotion operators and monomial crystals $ \mathcal{M}(e^{\varpi_\ell}Y_{\ell, 0} Y_{0, d_\ell}^{-1}) $}

In this section, we introduce promotion operators for the level 0 extremal fundamental weight crystals. We describe them in the monomial realizations of $\mathcal{B}(\varpi_\ell)$ ($1 \leq \ell \leq n$).

Let us begin by some definitions and properties about the promotion operators (see \cite{bandlow_uniqueness_2010, fourier_kirillov-reshetikhin_2009, schilling_combinatorial_2008, shimozono_affine_2002} and references therein for more details). In type $A_n$, the highest weight crystal $\mathcal{B}_0(\lambda)$ of highest weight $\lambda \in P_0^+$ can be realized by the semi-standard Young tableaux of shape ($\lambda$). The weight function $\wt$ is defined by the content of tableaux, i.e. $\wt(T) :=(w_1(T), \dots, w_{n+1}(T))$ where $w_i(T)$ is the number of letters $i$ occurring in the tableau $T$. It can be viewed as an element of $P_0$ in the following way: set $\epsilon_i = \Lambda_i - \Lambda_{i-1}$ for $2\leq i \leq n$, $\epsilon_1 = \Lambda_1$ and $\epsilon_{n+1} = - \epsilon_1 - \cdots -\epsilon_n$. In particular, $\alpha_i = \epsilon_i - \epsilon_{i+1}$, $\Lambda_i = \epsilon_1 + \cdots + \epsilon_i$ ($1 \leq i \leq n$), and we have $P = \ZZ \epsilon_1 + \cdots + \ZZ \epsilon_{n+1}$. Then $\wt(T)$ corresponds to the element
$$w_1(T) \epsilon_1 + \cdots + w_{n+1}(T) \epsilon_{n+1} \in P_0$$
for all Young tableau $T$.

\begin{defi}
Let $\mathcal{B}_0 = \mathcal{B}_0(\lambda)$ be a highest weight $\U_q(sl_{n+1})$-crystal of highest weight $\lambda \in P_0^+$. A promotion operator $\mathrm{pr}$ on $\mathcal{B}_0$ is an operator $\mathrm{pr} : \mathcal{B}_0 \rightarrow \mathcal{B}_0$ such that
\begin{enumerate}
\item[(i)] $\mathrm{pr}$ shifts the content:  if $\wt(T) = (w_1, \dots, w_{n+1})$ is the content of $T \in \mathcal{B}_0$, then $\wt(\mathrm{pr}(T)) = (w_{n+1}, w_1, \dots, w_{n})$,
\item[(ii)] promotion operator has order $n+1$ : $\mathrm{pr}^{n+1} = \mathrm{id}$,
\item[(iii)] $\mathrm{pr} \circ \tilde{e}_i = \tilde{e}_{i+1} \circ \mathrm{pr}$ and $\mathrm{pr} \circ \tilde{f}_i = \tilde{f}_{i+1} \circ \mathrm{pr}$ for $i \in \{1,2, \dots, n-1\}$.
\end{enumerate}
\end{defi}

Given a promotion operator $\mathrm{pr}$ on a highest weight $\U_q(sl_{n+1})$-crystal $\mathcal{B}_0(\lambda)$ ($\lambda \in P_0^+$), one defines an associated affine $P_{\cl}$-crystal by setting
$$\tilde{e}_0 := \mathrm{pr}^{-1} \circ \tilde{e}_1 \circ \mathrm{pr} \text{ and } \tilde{f}_0 := \mathrm{pr}^{-1} \circ \tilde{f}_1 \circ \mathrm{pr}.$$
We denote the $P_{\cl}$-crystal hence obtained by $\mathcal{B}_0(\lambda)_{\mathrm{aff}}'$.

It was shown in \cite{shimozono_affine_2002} that the $\U_q(sl_{n+1})$-crystal $\mathcal{B}_0(\lambda)$ ($\lambda \in P_0$) has a unique promotion operator $\mathrm{pr}$ when $\lambda$ is rectangular (i.e. of the form $k \Lambda_\ell$ with $\ell \in I_0$ and $k\in \NN^{\ast}$), given by the Sch\"utzenberger jeu-de-taquin process. Furthermore the affine $P_{\cl}$-crystal  $\mathcal{B}_0(k \Lambda_\ell)_{\mathrm{aff}}'$ obtained by using the promotion operator $\mathrm{pr}$ is isomorphic to the crystal basis of a Kirillov-Reshetikhin module associated to $\ell \in I_0, k\in \NN^{\ast}$ (for a special choice of the spectral parameter $a \in \CC^{\ast}$ - see \cite{kang_perfect_1992}).

From the affine $P_{\cl}$-crystal $\mathcal{B}_0(k \Lambda_\ell)_{\mathrm{aff}}'$, let us consider its affinization $\mathcal{B}_0(k \Lambda_\ell)_{\mathrm{aff}}$ (see also \cite{kashiwara_level-zero_2002}): this is the $P$-crystal with vertices in  $\{ z^{s}T \vert s \in \ZZ, T \in \mathcal{B}_0(k \Lambda_\ell)_{\mathrm{aff}}' \}$ such that for all $s \in \ZZ$ and $T \in \mathcal{B}_0(k \Lambda_\ell)_{\mathrm{aff}}'$,
\begin{eqnarray*}
\wt(z^{s}T) = \wt(T) + s \delta, \;
\tilde{e}_i \cdot z^{s} T =  z^{s+\delta_{i, 0}} (\tilde{e}_i  \cdot T), \; 
\tilde{f}_i \cdot z^{s} T &=&  z^{s-\delta_{i, 0}} (\tilde{f}_i  \cdot T).
\end{eqnarray*}

Assume in the following that $\ell \leq r+1$ (the case $\ell > r+1$ is studied at the end of this section). We introduce the affinized promotion operator on $\mathcal{B}_0(k \Lambda_\ell)_{\mathrm{aff}}$.

\begin{defi}
Let us consider the crystal of finite type $\mathcal{B}_0(k \Lambda_\ell)$ (with $k \in \NN $ and $\ell \leq r+1 $), $\mathrm{pr}$ its associated promotion operator and $\mathcal{B}_0(k \Lambda_\ell)_{\mathrm{aff}}$ its affinization. The affinized promotion operator on $\mathcal{B}_0(k \Lambda_\ell)_{\mathrm{aff}}$ is the operator $\mathrm{pr}_{\mathrm{aff}} : \mathcal{B}_0(k \Lambda_\ell)_{\mathrm{aff}} \rightarrow \mathcal{B}_0(k \Lambda_\ell)_{\mathrm{aff}}$ such that for all $T \in \mathcal{B}_0(k \Lambda_\ell)_{\mathrm{aff}}'$ and $s\in\ZZ$,
\begin{equation*}
\mathrm{pr}_{\mathrm{aff}}(z^{s}T) = z^{s-w_{n+1}(T)} \mathrm{pr}(T).
\end{equation*}
\end{defi}

One checks easily the following statements.

\begin{lem}
The affinized promotion operator $\mathrm{pr}_{\mathrm{aff}}$ of $\mathcal{B}_0(k\Lambda_\ell)_{\mathrm{aff}}$ shifts the content. It satisfies $$\mathrm{pr}_{\mathrm{aff}} \circ \tilde{e}_i = \tilde{e}_{i+1} \circ \mathrm{pr}_{\mathrm{aff}} \text{ and } \mathrm{pr}_{\mathrm{aff}} \circ \tilde{f}_i = \tilde{f}_{i+1} \circ \mathrm{pr}_{\mathrm{aff}}$$
for $i \in \{0,1, \dots, n\}$ (where $\tilde{e}_{n+1}, \tilde{f}_{n+1}$ are understood to be $\te_{0},\tf_{0}$ respectively). It has infinite order, the weight of $\mathrm{pr}_{\mathrm{aff}}^{n+1}$ being $-k \ell \delta$.
\end{lem}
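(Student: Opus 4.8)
The plan is to check each of the three assertions directly from the definitions, reducing everything to known properties of the (finite-type) promotion operator $\mathrm{pr}$ on $\mathcal{B}_0(k\Lambda_\ell)$ and the obvious behaviour of the shift variable $z$. Recall that a general element of $\mathcal{B}_0(k\Lambda_\ell)_{\mathrm{aff}}$ is $z^sT$ with $s\in\ZZ$ and $T\in\mathcal{B}_0(k\Lambda_\ell)'_{\mathrm{aff}}$, and by definition $\mathrm{pr}_{\mathrm{aff}}(z^sT)=z^{s-w_{n+1}(T)}\mathrm{pr}(T)$.

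First I would verify that $\mathrm{pr}_{\mathrm{aff}}$ shifts the content. We have $\wt(z^sT)=\wt(T)+s\delta$, and if $\wt(T)=(w_1,\dots,w_{n+1})$ then by property (i) of the promotion operator $\wt(\mathrm{pr}(T))=(w_{n+1},w_1,\dots,w_n)$ as a content vector; translating into $P$ via $\wt=\sum w_i\epsilon_i$ one gets $\wt(\mathrm{pr}(T))=\wt(T)-w_{n+1}(\epsilon_1+\cdots+\epsilon_{n+1})+\text{(something proportional to }\delta)$. Since $\epsilon_1+\cdots+\epsilon_{n+1}=0$ in $P_0$ but the cyclic shift of the content introduces a correction term proportional to $\delta$ once we pass to the affine picture, the factor $z^{-w_{n+1}(T)}$ in the definition is precisely what compensates this, so that $\wt(\mathrm{pr}_{\mathrm{aff}}(z^sT))$ is the cyclically-shifted content of $\wt(z^sT)$. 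This is a short bookkeeping computation with the $\epsilon_i$ and $\delta$.

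Next I would check the intertwining relations $\mathrm{pr}_{\mathrm{aff}}\circ\tilde e_i=\tilde e_{i+1}\circ\mathrm{pr}_{\mathrm{aff}}$ (and the $\tilde f$ version) for all $i\in\{0,1,\dots,n\}$. For $i\in\{1,\dots,n-1\}$ this follows from property (iii) of $\mathrm{pr}$ together with the fact that $\tilde e_i,\tilde f_i$ ($i\ne 0$) do not change the power of $z$ and that $w_{n+1}(\tilde e_iT)=w_{n+1}(T)$ for $i\le n-1$ (applying $\tilde e_i$ only changes the number of letters $i$ and $i+1$). The case $i=n$ requires $w_{n+1}$ to change by $1$ under $\tilde e_n$, matched by the $z$-shift in the affine crystal structure $\tilde e_n\cdot z^sT=z^s(\tilde e_n T)$ on one side and the $\mathrm{pr}$–relation $\mathrm{pr}\circ\tilde e_n=\tilde e_0\circ\mathrm{pr}$... — here the subtle point is that $\tilde e_0=\mathrm{pr}^{-1}\circ\tilde e_1\circ\mathrm{pr}$ on $\mathcal{B}_0'$, so one must carefully track how $w_{n+1}$ transforms and how the definition of $\tilde e_0,\tilde f_0$ on the affinization interacts with the $z^{-w_{n+1}(T)}$ factor. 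I would handle $i=0$ and $i=n$ by writing out both sides explicitly on $z^sT$, using $\tilde e_0\cdot z^sT=z^{s+1}(\tilde e_0 T)$ and the definition of $\mathrm{pr}_{\mathrm{aff}}$, and reducing to the finite-type identities; this case analysis is the main obstacle, though it is entirely routine once the correction terms are written down.

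Finally, for the order statement I would compute $\mathrm{pr}_{\mathrm{aff}}^{n+1}$. Since $\mathrm{pr}^{n+1}=\mathrm{id}$ on $\mathcal{B}_0(k\Lambda_\ell)$, iterating the definition gives $\mathrm{pr}_{\mathrm{aff}}^{n+1}(z^sT)=z^{s-N}T$ where $N=\sum_{t=0}^{n}w_{n+1}(\mathrm{pr}^t(T))$. But $\sum_{t=0}^{n}w_{n+1}(\mathrm{pr}^t(T))=\sum_{i=1}^{n+1}w_i(T)=k\ell$, since the orbit of $\mathrm{pr}$ cyclically permutes the content entries and the total content of a tableau of shape $(k\ell)$ — i.e. a single row of length $k$ when $\ell=1$, or more generally shape with $k\ell$ boxes — equals $k\ell$. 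Hence $\mathrm{pr}_{\mathrm{aff}}^{n+1}(z^sT)=z^{s-k\ell}T$, so $\mathrm{pr}_{\mathrm{aff}}$ has infinite order and $\mathrm{pr}_{\mathrm{aff}}^{n+1}$ has weight $-k\ell\,\delta$, as claimed. I expect the content-shift and order computations to be immediate; the genuine work, modest as it is, lies in the $i\in\{0,n\}$ cases of the intertwining relations.
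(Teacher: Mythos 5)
Your proposal is correct: the paper gives no proof (it states that one ``checks easily'' these assertions), and your direct verification from the definition $\mathrm{pr}_{\mathrm{aff}}(z^sT)=z^{s-w_{n+1}(T)}\mathrm{pr}(T)$ is exactly the intended argument, including the correct identification of the only nontrivial points, namely the cases $i=0,n$ where one needs $\mathrm{pr}\circ\tilde e_n=\tilde e_0\circ\mathrm{pr}$ on $\mathcal{B}_0(k\Lambda_\ell)$ (which follows from iterating property (iii) together with $\mathrm{pr}^{n+1}=\mathrm{id}$) and the tracking of $w_{n+1}$ under $\tilde e_n$ and $\tilde e_0$. The order computation $\sum_{t=0}^{n}w_{n+1}(\mathrm{pr}^t(T))=k\ell$ is also right, since the rectangle $k\Lambda_\ell$ has $k\ell$ boxes.
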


Recall that one has defined an automorphism $ \theta $ of the Dynkin diagram of type $ A_{n}^{(1)} $ corresponding to a rotation such that $ \theta(i) = i + 1 $ ($i \in I$). Then by the above Lemma, $\mathrm{pr}_{\mathrm{aff}}$ is a $\theta$-twisted automorphism of $\mathcal{B}_0(k\Lambda_\ell)_{\mathrm{aff}}$. Furthermore as the $P$-crystals $\mathcal{B}(\varpi_\ell)$ and $\mathcal{B}_0(\Lambda_\ell)_{\mathrm{aff}}$ are isomorphic (see \cite{kashiwara_level-zero_2002}), the affinized promotion operator \linebreak $\mathrm{pr}_{\mathrm{aff}} : \mathcal{B}_0(\Lambda_\ell)_{\mathrm{aff}} \rightarrow \mathcal{B}_0(\Lambda_\ell)_{\mathrm{aff}}$ induces a $\theta$-twisted automorphism of the level 0 fundamental extremal weight crystal $\mathcal{B}(\varpi_\ell)$ ($\ell \leq r+1$). We call it promotion operator of $\mathcal{B}(\varpi_\ell)$, also denoted by $\mathrm{pr}_{\mathrm{aff}}$.

We want to describe the promotion operators of the crystals $\mathcal{B}(\varpi_\ell)$ in the monomial realizations when $\ell \leq r+1$. To do that, let $ \phi : \mathcal{M}(e^{\varpi_\ell}Y_{\ell,0} Y_{0, \ell}^{-1}) \rightarrow \mathcal{M}(e^{\varpi_\ell}Y_{\ell,0} Y_{0, \ell}^{-1}) $ be the map such that
$$ \phi \left(\prod Y_{i,l}^{u_{i,l}} \right) = \prod Y_{i+1,l+1}^{u_{i,l}},$$
the terms $e^{\nu}$ being safely omitted in the definition by Remark \ref{remweight}. Denote by $$\varphi : \mathcal{B}(\varpi_\ell) \simeq \mathcal{B}_0(\Lambda_\ell)_{\mathrm{aff}} \rightarrow \mathcal{M}(e^{\varpi_\ell}Y_{\ell,0} Y_{0, \ell}^{-1})$$ the isomorphism of $P$-crystals between $\mathcal{B}_0(\Lambda_\ell)_{\mathrm{aff}}$ and $\mathcal{M}(e^{\varpi_\ell}Y_{\ell,0} Y_{0, \ell}^{-1})$. It is explicitly given by
$$\varphi : z^{s} T \in \mathcal{B}_0(\Lambda_\ell)_{\mathrm{aff}}  \mapsto m_{T ; -s} \in \mathcal{M}(e^{\varpi_\ell}Y_{\ell,0} Y_{0, \ell}^{-1}) \; (s \in \ZZ, T \in \mathcal{B}_0(\Lambda_\ell)).$$
The following result relates the map $\phi : \mathcal{M}(e^{\varpi_\ell}Y_{\ell,0} Y_{0, \ell}^{-1}) \rightarrow \mathcal{M}(e^{\varpi_\ell}Y_{\ell,0} Y_{0, \ell}^{-1})$ to the promotion operator $\mathrm{pr}_{\mathrm{aff}}$ of $\mathcal{B}(\varpi_\ell)$ introduced above.

\begin{prop}
Assume that $\ell \leq r+1$. The following diagram commutes
\begin{equation*}
\xymatrix{
\mathcal{B}(\varpi_\ell) \ar[rr]^\varphi \ar[d]_{\mathrm{pr}_\mathrm{aff}} & & \mathcal{M}(e^{\varpi_\ell}Y_{\ell,0} Y_{0, \ell}^{-1}) \ar[d]^\phi\\
 \mathcal{B}(\varpi_\ell) \ar[rr]^\varphi & & \mathcal{M}(e^{\varpi_\ell}Y_{\ell,0} Y_{0, \ell}^{-1})
}
\end{equation*}
\end{prop}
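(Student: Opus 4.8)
The plan is to recognise both vertical arrows of the square as $\theta$-twisted automorphisms of one and the same connected crystal, and then appeal to rigidity. By construction $\mathrm{pr}_{\mathrm{aff}}$ is a $\theta$-twisted automorphism of $\mathcal{B}(\varpi_\ell)$, so its transport $\varphi\circ\mathrm{pr}_{\mathrm{aff}}\circ\varphi^{-1}$ is a $\theta$-twisted automorphism of $\mathcal{M}(e^{\varpi_\ell}Y_{\ell,0}Y_{0,\ell}^{-1})$. I will show that $\phi$ is another one, and that the two agree at the single vertex $M_0=e^{\varpi_\ell}Y_{\ell,0}Y_{0,\ell}^{-1}$. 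Since a $\theta$-twisted morphism $\psi$ of a connected crystal satisfies $\psi(X_{i_k}\cdots X_{i_1}\cdot b)=X_{\theta(i_k)}\cdots X_{\theta(i_1)}\cdot\psi(b)$ for $X\in\{\tilde e,\tilde f\}$, it is determined by its value at one vertex; and $\mathcal{M}(e^{\varpi_\ell}Y_{\ell,0}Y_{0,\ell}^{-1})$ is connected by definition and isomorphic to $\mathcal{B}(\varpi_\ell)$ (Theorem \ref{monrzt}). Hence the two automorphisms coincide, which is exactly the commutativity to be proved.

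First I would check that $\phi$ is a $\theta$-twisted automorphism. From $\ffbox{k}_p=Y_{k-1,p+k}^{-1}Y_{k,p+k-1}$, together with the convention $Y_{n+1,\ast}=Y_{0,\ast}$ (i.e. indices read modulo $n+1$), a one-line computation gives $\phi(\ffbox{k}_p)=\ffbox{k+1}_p$ for $1\le k\le n$ and $\phi(\ffbox{n+1}_p)=\ffbox{1}_{p+n+1}$, and likewise $\phi(A_{i,l})=A_{i+1,l+1}$ for all $i\in I$, $l\in\ZZ$. Moreover $u_{i,l}(m)=u_{i+1,l+1}(\phi(m))$, whence $\varepsilon_i(m)=\varepsilon_{i+1}(\phi(m))$, $\varphi_i(m)=\varphi_{i+1}(\phi(m))$, $p_i(m)+1=p_{i+1}(\phi(m))$ and $q_i(m)+1=q_{i+1}(\phi(m))$. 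Plugging these into the defining formulas $\tilde e_i\cdot m=mA_{i,p_i(m)-1}$ and $\tilde f_i\cdot m=mA_{i,q_i(m)+1}^{-1}$ yields $\phi\circ\tilde e_i=\tilde e_{i+1}\circ\phi$ and $\phi\circ\tilde f_i=\tilde f_{i+1}\circ\phi$ on all of $\mathcal{M}$ (the vanishing conditions $\varepsilon_i(m)=0$, $\varphi_i(m)=0$ being preserved). The same holds, with $i\mapsto i-1$, for the global inverse $Y_{i,l}\mapsto Y_{i-1,l-1}$ of $\mathcal{M}$; since $\mathcal{M}(e^{\varpi_\ell}Y_{\ell,0}Y_{0,\ell}^{-1})$ is the connected component of $M_0$ and is therefore stable under all $\tilde e_i,\tilde f_i$, it follows — once we know $\phi(M_0)$ lies in it — that $\phi$ restricts to a bijection of this subcrystal, hence to a $\theta$-twisted automorphism of it.

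Then I would check agreement at $M_0$. One has $M_0=m_{T_0;0}=\prod_{p=1}^\ell\ffbox{p}_{\ell+1-2p}$ for the highest weight column $T_0=(1,2,\dots,\ell)$, so $\varphi^{-1}(M_0)=z^0T_0$. Since $\ell\le r+1\le n$, the entry $n+1$ does not occur in $T_0$, hence $w_{n+1}(T_0)=0$ and $\mathrm{pr}(T_0)=(2,3,\dots,\ell+1)$; therefore $\varphi\big(\mathrm{pr}_{\mathrm{aff}}(z^0T_0)\big)=\varphi\big(z^0\,\mathrm{pr}(T_0)\big)=m_{(2,\dots,\ell+1);0}=\prod_{p=1}^\ell\ffbox{p+1}_{\ell+1-2p}$. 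On the other hand, applying $\phi(\ffbox{k}_p)=\ffbox{k+1}_p$ (legitimate since $1\le p\le\ell\le n$) gives $\phi(M_0)=\prod_{p=1}^\ell\ffbox{p+1}_{\ell+1-2p}$, the same monomial — which in particular lies in $\mathcal{M}(e^{\varpi_\ell}Y_{\ell,0}Y_{0,\ell}^{-1})$. By the rigidity recalled above, $\phi=\varphi\circ\mathrm{pr}_{\mathrm{aff}}\circ\varphi^{-1}$, i.e. the square commutes.

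The step I expect to need the most care is the rigidity argument itself: pinning down that a $\theta$-twisted automorphism of a connected crystal is determined by the image of one vertex, that the monomial realization really is the full $\{\tilde e_i,\tilde f_i\}$-orbit of $M_0$ (so that twisted-equivariance propagates), and that the $e^{\nu}$ factors suppressed via Remark \ref{remweight} do not obstruct — which they do not, because $\delta$ is $\theta$-fixed and $\varpi_\ell$ transforms as required under $\theta$. A more computational alternative, if one prefers to avoid rigidity, is to prove directly $\phi(m_{T;j})=m_{\mathrm{pr}(T);\,j+w_{n+1}(T)}$ for every column tableau $T$ and every $j\in\ZZ$, comparing the product formulas \eqref{montab} term by term: when $i_\ell\le n$ the relabeling $i_p\mapsto i_p+1$ leaves the exponent indices untouched, and when $i_\ell=n+1$ the box $\ffbox{n+1}_{1-\ell+2j}$ wraps to $\ffbox{1}_{n-\ell+2j+2}$, which is exactly the position-$1$ box of $m_{\mathrm{pr}(T);j+1}$, the remaining boxes being carried to positions $p\mapsto p+1$; the edge case $j=\ell-1$ uses $m_{T;\ell}=\tau_{n+1,-\ell\delta}(m_{T;0})$ together with the (immediate) fact that $\phi$ commutes with $\tau_{n+1,-\ell\delta}$, which then propagates the identity to all $j\in\ZZ$. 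This route is routine but fiddly in the index bookkeeping, which is why I would favour the rigidity argument.
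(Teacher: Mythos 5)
Your argument is correct, but it takes a genuinely different route from the paper. The paper proves the proposition by direct computation: it evaluates $\phi$ on each box $Y_{k-1,p+k}^{-1}Y_{k,p+k-1}$, then checks term by term on the product formula (\ref{montab}) that $\phi(m_{T;j})=\varphi(\mathrm{pr}_{\mathrm{aff}}(z^{-j}T))$, splitting into the cases $i_\ell\neq n+1$ and $i_\ell=n+1$ — i.e.\ exactly the ``computational alternative'' you describe in your last paragraph. Your primary route instead verifies once and for all, from $u_{i+1,l+1}(\phi(m))=u_{i,l}(m)$ and $\phi(A_{i,l})=A_{i+1,l+1}$, that $\phi$ is a $\theta$-twisted automorphism of $\mathcal{M}$ commuting with all $\tilde e_i,\tilde f_i$ in the twisted sense, and then uses connectedness of $\mathcal{M}(e^{\varpi_\ell}Y_{\ell,0}Y_{0,\ell}^{-1})\simeq\mathcal{B}(\varpi_\ell)$ to reduce the comparison with $\varphi\circ\mathrm{pr}_{\mathrm{aff}}\circ\varphi^{-1}$ to a single vertex, where both maps visibly send $M_0=\prod_{p=1}^\ell Y_{p-1,\ell+1-p}^{-1}Y_{p,\ell-p}$ to $m_{(2,\dots,\ell+1);0}$. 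The rigidity step is sound (a twisted morphism propagates along any word in Kashiwara operators, so on a connected crystal it is pinned by one value), and your treatment of the side conditions — that $\phi(M_0)$ lies in the component so $\phi$ restricts to a bijection of it, and that the suppressed $e^{\nu}$ factors are recovered from Remark \ref{remweight} — closes the gaps that such an argument usually leaves open. One point worth making explicit if you write this up: $\phi$ preserves the parity condition defining $\mathcal{M}$ (adjacent nodes have opposite $s_i$, and $l\mapsto l+1$ flips parity), so $\phi$ really is an endomorphism of $\mathcal{M}$. What your approach buys is the elimination of the index bookkeeping and of the case split on $i_\ell$; what the paper's computation buys is the explicit closed formula for $\phi(m_{T;j})$ in terms of tableaux, which makes the wrap-around behaviour at $i_\ell=n+1$ (the shift $j\mapsto j+1$) visible rather than implicit.
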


\begin{proof}
For $1\leq k \leq n+1$ and $p \in \ZZ$, we have
\begin{eqnarray*}
  \phi(\ffbox{k}_p) &=& \phi(Y_{k-1,p+k}^{-1} Y_{k,p+k-1}) = Y_{k,p+k+1}^{-1} Y_{k+1,p+k}\\
  &=& \begin{cases}
        \ffbox{k+1}_p &\text{ if $k \leq n$},
                \\ \ffbox{1}_{p+n+1} &\text{ if $k=n+1$.}
  \end{cases}
\end{eqnarray*}
\noindent Fix $j \in \ZZ$ and $T=(1 \leq i_1 < i_2 < \cdots < i_\ell \leq n+1)$ a Young tableau of shape ($\Lambda_\ell$). If $i_\ell \neq n+1$, we have
\begin{eqnarray*}
  \phi(m_{T;j}) &=& \phi \left( \prod_{p=1}^j \ffbox{i_p}_{n-\ell-2p+2j+2} \times
   \prod_{p=j+1}^\ell \ffbox{i_p}_{\ell+1-2p+2j} \right) \\
  &=& \prod_{p=1}^j \ffbox{i_p+1}_{n-\ell-2p+2j+2} \times
   \prod_{p=j+1}^\ell \ffbox{i_p+1}_{\ell+1-2p+2j}\\
   & = & m_{\mathrm{pr}(T); j} = \varphi(\mathrm{pr}_\mathrm{aff}(z^{-j}T)).
\end{eqnarray*}
\noindent Assume that $i_\ell = n+1$. Then

\renewcommand{\ffbox}[1]{
\setbox9=\hbox{$\scriptstyle\overline{1}$}
\framebox[30pt][c]{\rule{0mm}{\ht9}${\scriptstyle #1}$}
}

\begin{eqnarray*}
  \phi(m_{T;j}) &=& \prod_{p=1}^j \ffbox{i_p+1}_{n-\ell-2p+2j+2} \times
   \prod_{p=j+1}^{\ell-1} \ffbox{i_p+1}_{\ell+1-2p+2j} \times \ffbox{1}_{n-\ell + 2j + 2}\\
   & = & \prod_{p=2}^{j+1} \ffbox{i_{p-1}+1}_{n-\ell-2p+2(j+1)+2} \times
   \prod_{p=j+2}^{\ell} \ffbox{i_{p-1}+1}_{\ell+1-2p+2(j+1)} \times \ffbox{1}_{n-\ell + 2(j+1)}\\
   & = & m_{\mathrm{pr}(T);j+1} = \phi(z^{-j-1}\mathrm{pr}(T)) = \varphi(\mathrm{pr}_\mathrm{aff}(z^{-j}T)).
\end{eqnarray*}
\end{proof}

\renewcommand{\ffbox}[1]{
\setbox9=\hbox{$\scriptstyle\overline{1}$}
\framebox[20pt][c]{\rule{0mm}{\ht9}${\scriptstyle #1}$}
}

\begin{rem}
It follows in particular that $\phi$ is a $ \theta $-twisted automorphism of $ \mathcal{M}(e^{\varpi_\ell}Y_{\ell,0} Y_{0, \ell}^{-1}) $, since $\mathcal{B}(\varpi_\ell)$ and $\mathcal{M}(e^{\varpi_\ell}Y_{\ell,0} Y_{0, \ell}^{-1})$ are connected and $\mathrm{pr}_\mathrm{aff}$ is a $\theta$-twisted automorphism.
\end{rem}

The case $\ell \geq r+1$ is similar to the previous one. The affinized promotion operator of $\mathcal{B}_0(k\Lambda_\ell)_{\mathrm{aff}}$ ($k \in \NN^{\ast}$) is the operator
$$\tilde{\mathrm{pr}}_{\mathrm{aff}} : \mathcal{B}_0(k\Lambda_\ell)_{\mathrm{aff}} \longrightarrow \mathcal{B}_0(k\Lambda_\ell)_{\mathrm{aff}}$$
such that for all $T \in \mathcal{B}_0(k\Lambda_\ell)_{\mathrm{aff}}$ and $s \in \ZZ$,
$$\tilde{\mathrm{pr}}_{\mathrm{aff}}(z^sT) = z^{s+k-w_{n+1}(T)} \mathrm{pr}(T).$$
Note that the definition of the affinized promotion operator is different to the one when $\ell \leq r+1$. This provides to the automorphism $\iota_\Hlie$ of $\Hlie$.

Let us consider the map $\psi \circ \phi \circ \psi^{-1}: \mathcal{M}(e^{\varpi_\ell}Y_{\ell, 0} Y_{0, n+1-\ell}^{-1}) \rightarrow \mathcal{M}(e^{\varpi_\ell}Y_{\ell, 0} Y_{0, n+1-\ell}^{-1})$. It is a $\theta^{-1}$-twisted automorphism of $\mathcal{M}(e^{\varpi_\ell}Y_{\ell, 0} Y_{0, n+1-\ell}^{-1})$ such that
$$\psi \circ \phi \circ \psi^{-1} \left(\prod Y_{i,l}^{u_{i,l}} \right) = \prod Y_{i-1,l+1}^{u_{i,l}}.$$
It can be related to the promotion operator $\tilde{\mathrm{pr}}_{\mathrm{aff}}$ of $\mathcal{B}_0(k\Lambda_\ell)_{\mathrm{aff}}$: one can check that $\psi \circ \phi \circ \psi^{-1} = \tilde{\mathrm{pr}}_{\mathrm{aff}}^{-1}$.

\subsection{Application of promotion operators to the study of $\mathcal{M}(e^{\varpi_\ell}Y_{\ell, 0} Y_{0, d_\ell}^{-1})$}

In this section, we use promotion operators to obtain a new description of $\mathcal{M}(e^{\varpi_\ell}Y_{\ell, 0} Y_{0, d_\ell}^{-1})$, improving results given in \cite{hernandez_level_2006}. Moreover, we determine the $\ell$ for which the crystals $\mathcal{M}(e^{\varpi_\ell}Y_{\ell, 0} Y_{0, d_\ell}^{-1})$ are closed.

Assume first that $\ell \leq r+1$ (where $n= 2r+1$ is still supposed to be odd). Let us begin by the following remarks. The monomials $ \phi^{j}(Y_{\ell,0} Y_{0, \ell}^{-1}) = Y_{\ell + j, j} Y_{j, \ell + j}^{-1} $ will have a particular importance in the construction of extremal fundamental loop weight modules. One can give them in terms of Young tableaux, thanks to the $\theta$-twisted automorphism $\phi$ of $\mathcal{M}$:
\begin{itemize}
\item if $ j $ is such that $ \ell + j \leq n+1 $, $ Y_{\ell + j, j} Y_{j, \ell + j}^{-1}\in \mathcal{M}_{I_{0}}(M_{0}) $ and is equal to $ m_{T;0} $ with $ T = (j+1, j+2, \ldots , j+ \ell)  $,
\item if $ 1 \leq j \leq \ell - 1 $, then $ Y_{j, n - \ell + j + 1} Y_{n - \ell + j + 1, n + j + 1}^{-1}\in \mathcal{M}_{I_{0}}(M_{j}) $ and is equal to $m_{T;j}$ with $ T = (1, 2, \ldots, j, n-\ell+j+2, \ldots, n+1) $.
\end{itemize}

We will have to consider the finite sub-$I_j$-crystals of $\mathcal{M}(e^{\varpi_\ell}Y_{\ell, 0} Y_{0, \ell}^{-1})$
$$\mathcal{M}_{I_{j}}(Y_{\ell + j, j+k(n+1)} Y_{j, \ell + j+k(n+1)}^{-1})$$
for $j \in I$ and $k \in \ZZ$: this is the sub-$I_j$-crystal of $\mathcal{M}(e^{\varpi_\ell}Y_{\ell, 0} Y_{0, \ell}^{-1})$ generated by the monomial $Y_{\ell + j, j+k(n+1)} Y_{j, \ell + j+k(n+1)}^{-1}$. Note that one of these crystals can be obtained from another one by application of powers of $\phi$.

\begin{prop} Let $\ell \leq r+1$. We have the equality of sets
\begin{eqnarray*}
\mathcal{M}(e^{\varpi_\ell}Y_{\ell, 0} Y_{0, \ell}^{-1}) = \bigcup_{k \in \ZZ} (\tau_{n+1, -\ell \delta})^{k} \left( \bigcup_{j = 0}^{n} \mathcal{M}_{I_{j}}(Y_{\ell + j, j} Y_{j, \ell + j}^{-1}) \right).
\end{eqnarray*}
\end{prop}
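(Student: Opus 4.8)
The plan is to deduce this decomposition from the analogous decomposition of the $I_0$-crystal given in Proposition~(iii)/(ii) above, namely
$$\mathcal{M}(e^{\varpi_\ell}Y_{\ell, 0} Y_{0, d_\ell}^{-1}) = \bigsqcup_{k \in \ZZ} (\tau_{n+1, -\ell \delta})^{k} \left( \bigsqcup_{j = 0}^{\ell-1} \mathcal{M}_{I_0}(M_j) \right),$$
together with the $\theta$-twisted automorphism $\phi$ of $\mathcal{M}(e^{\varpi_\ell}Y_{\ell, 0} Y_{0, \ell}^{-1})$ and the description of the distinguished monomials $\phi^j(Y_{\ell,0}Y_{0,\ell}^{-1}) = Y_{\ell+j,j}Y_{j,\ell+j}^{-1}$ in terms of Young tableaux recorded just before the statement. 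Since $\phi$ intertwines $\tilde e_i, \tilde f_i$ with $\tilde e_{i+1}, \tilde f_{i+1}$ (indices mod $n+1$), it sends the sub-$I_j$-crystal generated by a monomial $m$ to the sub-$I_{j+1}$-crystal generated by $\phi(m)$; iterating, $\mathcal{M}_{I_j}(Y_{\ell+j,j}Y_{j,\ell+j}^{-1}) = \phi^j\big(\mathcal{M}_{I_0}(Y_{\ell,0}Y_{0,\ell}^{-1})\big)$ for every $j \in I$.

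\textbf{Inclusion $\supseteq$.} First I would note that $Y_{\ell,0}Y_{0,\ell}^{-1} = M_0 \in \mathcal{M}_{I_0}(M_0)$, so $\mathcal{M}_{I_0}(Y_{\ell,0}Y_{0,\ell}^{-1}) = \mathcal{M}_{I_0}(M_0)$ is a subset of the left-hand side. Applying $\phi^j$ — which is an automorphism of $\mathcal{M}(e^{\varpi_\ell}Y_{\ell, 0} Y_{0, \ell}^{-1})$, hence preserves it — gives $\mathcal{M}_{I_j}(Y_{\ell+j,j}Y_{j,\ell+j}^{-1}) \subseteq \mathcal{M}(e^{\varpi_\ell}Y_{\ell, 0} Y_{0, \ell}^{-1})$; since $\tau_{n+1,-\ell\delta}$ is also a crystal automorphism of $\mathcal{M}(e^{\varpi_\ell}Y_{\ell, 0} Y_{0, \ell}^{-1})$, the whole right-hand side is contained in the left-hand side.

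\textbf{Inclusion $\subseteq$.} Here I would use the $I_0$-decomposition: it suffices to show each $\mathcal{M}_{I_0}(M_j)$ (for $0 \le j \le \ell-1$) is contained in the right-hand side. Using the tableau descriptions above, $Y_{j, n-\ell+j+1}Y_{n-\ell+j+1, n+j+1}^{-1}$ is the monomial $m_{T;j}$ for $T=(1,\dots,j,n-\ell+j+2,\dots,n+1)$, which lies in $\mathcal{M}_{I_0}(M_j)$; equivalently, by the tableau description of $\phi$ and the identity $\phi^{j}(Y_{\ell,0}Y_{0,\ell}^{-1}) = Y_{\ell+j,j}Y_{j,\ell+j}^{-1}$, one checks $\phi^{-(n+1-\ell)-\cdots}$-type index bookkeeping shows $M_j$ and $Y_{\ell+j,j}Y_{j,\ell+j}^{-1}$ lie in a common $\phi$-orbit up to a shift $\tau_{n+1,-\ell\delta}$. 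Concretely: since $M_0 = Y_{\ell,0}Y_{0,\ell}^{-1}$ and the sub-$I_j$-crystals $\mathcal{M}_{I_j}(Y_{\ell+j,j}Y_{j,\ell+j}^{-1})$ exhaust, via $\phi$, all the connected $I_j$-components containing the distinguished monomials, one argues that $\mathcal{M}_{I_0}(M_j)$ — a connected $I_0$-crystal — is the $I_0$-component of some monomial in $\mathcal{M}_{I_j}(Y_{\ell+j,j}Y_{j,\ell+j}^{-1})$ shifted by a power of $\tau_{n+1,-\ell\delta}$. The cleanest route is to observe that both sides are unions of connected $I_j$-components (for varying $j$), that $\tau_{n+1,-\ell\delta}$ and $\phi$ act transitively enough on these components (by connectedness of $\mathcal{M}(e^{\varpi_\ell}Y_{\ell, 0} Y_{0, \ell}^{-1})$ as a $P$-crystal, any monomial is reached from $M_0$ by a word in $\tilde e_i, \tilde f_i$, and each such application moves within one $\mathcal{M}_{I_j}$ or changes $j$ appropriately), and then a counting/orbit argument closes the gap.

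\textbf{Main obstacle.} The delicate point is the $\subseteq$ direction: one must verify that the finitely many $I_j$-crystals listed (for $j = 0, \dots, n$), together with the shifts $\tau_{n+1,-\ell\delta}$, genuinely cover everything — i.e. that no monomial of $\mathcal{M}(e^{\varpi_\ell}Y_{\ell, 0} Y_{0, \ell}^{-1})$ escapes. This reduces to the bookkeeping identity relating $\tau_{n+1,-\ell\delta}$, $\phi^{n+1}$ (whose weight is $-\ell\delta$ by the Lemma on $\mathrm{pr}_{\mathrm{aff}}$), and the index shifts in the definition of $m_{T;j}$, showing that $\phi^{n+1} = \tau_{n+1,-\ell\delta}$ as automorphisms (both are $\theta^{n+1}=\mathrm{id}$-twisted, i.e. honest automorphisms, of weight $-\ell\delta$, and agree on $M_0$, hence agree by connectedness). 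Granting $\phi^{n+1} = \tau_{n+1,-\ell\delta}$, the union over $j \in \{0,\dots,n\}$ of $\phi^j(\mathcal{M}_{I_0}(M_0))$, closed under $\tau_{n+1,-\ell\delta} = \phi^{n+1}$, is exactly $\bigcup_{j\in\ZZ}\phi^j(\mathcal{M}_{I_0}(M_0))$; and one last check — that this equals the $I_0$-union $\bigsqcup_k \tau_{n+1,-\ell\delta}^k \bigsqcup_{j=0}^{\ell-1}\mathcal{M}_{I_0}(M_j)$ — follows by matching the tableau descriptions of $\phi^j(M_0)$ against the $M_j$'s for $1\le j \le \ell-1$ and invoking connectedness of the relevant $I_0$-components.
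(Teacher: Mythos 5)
Your inclusion $\supseteq$ is exactly the paper's: the distinguished monomials $Y_{\ell+j,j}Y_{j,\ell+j}^{-1}$ lie in the connected crystal $\mathcal{M}(e^{\varpi_\ell}Y_{\ell,0}Y_{0,\ell}^{-1})$, which is stable under the Kashiwara operators and under $\tau_{n+1,-\ell\delta}$. The problem is the inclusion $\subseteq$, which is where all the content of the statement lives, and your proposal does not actually prove it. The claim that each $\mathcal{M}_{I_0}(M_j)$ ``is the $I_0$-component of some monomial in $\mathcal{M}_{I_j}(Y_{\ell+j,j}Y_{j,\ell+j}^{-1})$ shifted by a power of $\tau_{n+1,-\ell\delta}$'' cannot work as stated: for $1\le j\le \ell-1$ no single $\mathcal{M}_{I_s}(Y_{\ell+s,s}Y_{s,\ell+s}^{-1})$ contains the whole $I_0$-crystal $\mathcal{M}_{I_0}(M_j)$, since the latter is generated from $M_j$ using $\tilde{e}_i,\tilde{f}_i$ for \emph{every} $i\in I_0$, while the former is only closed under the operators with $i\neq s$. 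In fact the monomials $m_{T;j}$ of a fixed $\mathcal{M}_{I_0}(M_j)$ get distributed among \emph{different} $\mathcal{M}_{I_s}$'s, with $s$ depending on $T$. The subsequent appeal to ``a counting/orbit argument'' and to $\phi$, $\tau$ acting ``transitively enough'' is not an argument; nothing you wrote rules out a monomial of $\mathcal{M}(e^{\varpi_\ell}Y_{\ell,0}Y_{0,\ell}^{-1})$ lying in none of the listed $I_j$-crystals.

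The paper closes exactly this gap by a pointwise combinatorial computation: given $m=m_{T;j}$ with $1\le j\le\ell-1$ (the case $j=0$ being trivial and the general case reduced to this range by the shift, as you do), it sets $s=i_{j+1}-1$ and exhibits an explicit word in the operators $\tilde{e}_i$ with $i\neq s$ (first $\tilde{e}_1,\dots,\tilde{e}_{s-1},\tilde{e}_{s+2},\dots,\tilde{e}_{n}$, then $\tilde{e}_1,\dots,\tilde{e}_{j-1},\tilde{e}_{s+\ell-j+1},\dots,\tilde{e}_{n}$ and $\tilde{e}_0$) carrying $m_{T;j}$ to $Y_{\ell+s,s}Y_{s,\ell+s}^{-1}=\phi^{s}(M_0)$, whence $m_{T;j}\in\mathcal{M}_{I_s}(Y_{\ell+s,s}Y_{s,\ell+s}^{-1})$. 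Some such explicit verification is unavoidable. Your observations that $\phi$ is $\theta$-twisted and that $\phi^{n+1}=\tau_{n+1,-\ell\delta}$ are correct and do give $\mathcal{M}_{I_j}(Y_{\ell+j,j}Y_{j,\ell+j}^{-1})=\phi^{j}(\mathcal{M}_{I_0}(M_0))$ and the reduction to $j\in\{0,\dots,n\}$, but they do not by themselves show that these finitely many $I_j$-crystals, together with the shifts, exhaust the whole crystal.
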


\begin{proof} As $Y_{\ell + j, j} Y_{j, \ell + j}^{-1} \in \mathcal{M}(e^{\varpi_\ell}Y_{\ell, 0} Y_{0, \ell}^{-1})$ for all $0 \leq j \leq n$ and $\mathcal{M}(e^{\varpi_\ell}Y_{\ell, 0} Y_{0, \ell}^{-1})$ is connected,
$$ \bigcup_{j = 0}^{n} \mathcal{M}_{I_{j}}(Y_{\ell + j, j} Y_{j, \ell + j}^{-1}) \subset \mathcal{M}(e^{\varpi_\ell}Y_{\ell, 0} Y_{0, \ell}^{-1}) $$
as sets.

Let us fix $m \in \mathcal{M}(e^{\varpi_\ell}Y_{\ell, 0} Y_{0, \ell}^{-1})$. The monomial $m$ is of the form $m_{T;j}$ with \linebreak $ T = (1 \leq i_{1} < i_{2} < \ldots < i_{\ell} \leq n + 1) $ and $j \in \ZZ$. By application of the shift automorphism, one can assume that $ 0 \leq j \leq \ell - 1 $. So we have to show that $m_{T;j} \in \bigcup_{j = 0}^{n} \mathcal{M}_{I_{j}}(Y_{\ell + j, j} Y_{j, \ell + j}^{-1})$. If $ j = 0 $, we have $ m_{T;0} \in \mathcal{M}_{I_{0}}(Y_{\ell, 0} Y_{0, \ell}^{-1}) $. Assume that $1 \leq j \leq \ell -1$ and set $s = i_{j+1}-1$. Then $T = (i_{1}< \dots < i_{j}<s+1<i_{j+2}< \dots<i_{\ell})$ and by application of the Kashiwara operators $\tilde{e}_1, \dots, \tilde{e}_{s-1}, \tilde{e}_{s+2}, \dots, \tilde{e}_{n}$ on $m_{T;j}$, we show that
$$m_{T;j} \in \mathcal{M}_{I_s}(m_{T';j}) \text{ with } T' = (1< \dots < j <s+1< \dots < s+\ell-j).$$
By applying $\tilde{e}_1, \dots, \tilde{e}_{j-1}, \tilde{e}_{s+\ell-j+1}, \dots, \tilde{e}_{n}$ and $\tilde{e}_0$ on $m_{T';j}$, it is sent on
\begin{eqnarray*}
        m_{T'';0} \text{ with } T'' = (s+1< \dots < s+\ell) \text{ if $ s+\ell \leq n+1$,}
\end{eqnarray*}
and on
$$m_{T'';u} \text{ with } u=s+\ell-n-1, T'' = (1<\dots< u <s+1< \dots <n+1) \text{ otherwise.}$$
Furthermore $m_{T'';u} = \phi^{s}(Y_{\ell, 0} Y_{0, \ell}^{-1}) = Y_{\ell+s, s} Y_{s, \ell + s}^{-1}$ by the above remark and $m_{T;j}$ is also contained in $ \mathcal{M}_{I_s}(Y_{\ell+s, s} Y_{s, \ell + s}^{-1})$.
\end{proof}

\begin{rem}\label{remprom}
One of the questions treated in \cite[Section 4]{hernandez_level_2006} is to give an explicit description of monomials occurring in $ \mathcal{M}(e^{\varpi_\ell}Y_{\ell, 0} Y_{0, \ell}^{-1}) $ ($\ell \leq r+1$). Actually by the shift automorphism and the description given in \cite{hernandez_level_2006}, all the monomials in $ \mathcal{M}(e^{\varpi_\ell}Y_{\ell, 0} Y_{0, \ell}^{-1}) $ can be obtained from the monomials occurring in $\bigsqcup_{j = 0}^{\ell-1} \mathcal{M}_{I_0}(M_j)$ (see (\ref{eqcrys})). So this description requires to know $ \left[ \ell \cdot \begin{pmatrix} n+1 \\ \ell \end{pmatrix} \right]$ monomials to obtain all the other ones. The preceding proposition improves this result. In fact to determine all the vertices of $ \mathcal{M}(e^{\varpi_\ell}Y_{\ell, 0} Y_{0, \ell}^{-1}) $, it suffices to know the monomials occurring in the $I_{\{0,1\}}$-crystal $ \mathcal{M}_{I_{\{0,1\}}}(Y_{\ell, 0} Y_{0, \ell}^{-1}) $ and to apply $\phi$. Further a monomial $m_{T';0} \in \mathcal{M}_{I_{\{0,1\}}}(Y_{\ell, 0} Y_{0, \ell}^{-1})$ is such that $T'$ has the form $T'=(1<i_2< \cdots < i_{\ell})$. So by the above proposition, only $ \begin{pmatrix} n \\ \ell-1 \end{pmatrix}$ monomials are sufficient to determine all the vertices of $\mathcal{M}(e^{\varpi_\ell}Y_{\ell, 0} Y_{0, \ell}^{-1})$.
\end{rem}

The following lemma will be useful in the following.

\begin{lem}
Assume that $\ell = 1$ or $\ell = r+1$ and set $p = n+1$ or $p = 2$ respectively. We have the equality of $I_j$-crystals ($0 \leq j \leq n$)
\begin{equation}\label{eqcrys2}
\mathcal{M}(e^{\varpi_\ell}Y_{\ell, 0} Y_{0, \ell}^{-1}) = \bigsqcup_{k \in \ZZ} (\tau_{p, -\delta})^{k} \left(  \mathcal{M}_{I_{j}}(Y_{\ell + j, j} Y_{j, \ell + j}^{-1}) \right)
\end{equation}
\end{lem}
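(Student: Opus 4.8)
The idea is to reduce the claimed identity to the already-established description of $\mathcal{M}(e^{\varpi_\ell}Y_{\ell,0}Y_{0,\ell}^{-1})$ as an $I_0$-crystal, namely equation (\ref{eqcrys}), together with the action of the $\theta$-twisted automorphism $\phi$. First I would observe that in the two cases at hand, $\ell=1$ or $\ell=r+1$, Proposition~\ref{shiftzl} gives $z_\ell=\tau_{-p,\delta}$, equivalently $\sigma=z_\ell^{-1}=\tau_{p,-\delta}$ with $p=n+1$ or $p=2$ respectively. Since $\sigma^\ell=\tau_{n+1,-\ell\delta}$, in the case $\ell=1$ we have $p\cdot 1=n+1$ and in the case $\ell=r+1$ we have $p\cdot\ell=2(r+1)=n+1$; so in both cases $(\tau_{p,-\delta})^\ell=\tau_{n+1,-\ell\delta}$, which means that the coarse shift $\tau_{n+1,-\ell\delta}$ appearing in (\ref{eqcrys}) is refined into $\ell$ successive applications of the finer shift $\tau_{p,-\delta}$.

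Next I would combine this with the equality of sets from the preceding Proposition,
$$\mathcal{M}(e^{\varpi_\ell}Y_{\ell,0}Y_{0,\ell}^{-1})=\bigcup_{k\in\ZZ}(\tau_{n+1,-\ell\delta})^k\Bigl(\bigcup_{j=0}^n\mathcal{M}_{I_j}(Y_{\ell+j,j}Y_{j,\ell+j}^{-1})\Bigr),$$
and with the fact (stated just before the Lemma, and following from the Remark after the commuting-diagram Proposition) that $\phi$ is a $\theta$-twisted automorphism of the crystal which sends $\mathcal{M}_{I_j}(Y_{\ell+j,j}Y_{j,\ell+j}^{-1})$ onto $\mathcal{M}_{I_{j+1}}(Y_{\ell+j+1,j+1}Y_{j+1,\ell+j+1}^{-1})$ — this is precisely because $\phi(Y_{\ell+j,j}Y_{j,\ell+j}^{-1})=Y_{\ell+j+1,j+1}Y_{j+1,\ell+j+1}^{-1}$ and $\phi$ intertwines $\tilde e_i,\tilde f_i$ with $\tilde e_{i+1},\tilde f_{i+1}$. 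Thus all the pieces $\mathcal{M}_{I_j}(\cdots)$ for $j=0,\dots,n$ are obtained from the single piece $\mathcal{M}_{I_j}(Y_{\ell+j,j}Y_{j,\ell+j}^{-1})$ (for whatever fixed $j$ we like) by applying powers of $\phi$; and I would identify $\phi$, when restricted appropriately, with the shift $\tau_{p,-\delta}$ up to the Dynkin rotation — concretely, $\phi$ maps an $I_j$-crystal to an $I_{j+1}$-crystal, and iterating $\phi$ enough times to return to the same label set $I_j$ produces exactly a power of $\tau_{n+1,-\ell\delta}$, while the intermediate fractional shift is $\tau_{p,-\delta}$. Unwinding this, the double union over $k\in\ZZ$ and $j=0,\dots,n$ collapses: writing $k(n+1)+\text{(offset from }\phi^j)$ as a single index, the union $\bigcup_{k\in\ZZ}(\tau_{n+1,-\ell\delta})^k\bigl(\bigcup_{j=0}^n\phi^j(\mathcal{M}_{I_{j_0}}(\cdots))\bigr)$ becomes $\bigcup_{k\in\ZZ}(\tau_{p,-\delta})^k(\mathcal{M}_{I_j}(\cdots))$ for any fixed $j$ with $0\le j\le n$.

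Finally I would check that the union on the right is in fact disjoint, as asserted by the $\bigsqcup$: two translates $(\tau_{p,-\delta})^{k}(\mathcal{M}_{I_j}(\cdots))$ and $(\tau_{p,-\delta})^{k'}(\mathcal{M}_{I_j}(\cdots))$ with $k\ne k'$ carry distinct weights (the $\delta$-grading separates them, since $\tau_{p,-\delta}$ shifts the weight by $-\delta$ and the finite crystal $\mathcal{M}_{I_j}(Y_{\ell+j,j}Y_{j,\ell+j}^{-1})$ is $I_j$-dominated, hence its monomials all lie in a single $\delta$-coset — or more carefully, one checks the $Y$-exponents force $k$), so they cannot meet. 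The main obstacle I expect is bookkeeping: making the identification of the iterated $\phi$ with the fractional shift $\tau_{p,-\delta}$ completely precise, in particular tracking how the Dynkin-diagram rotation $\theta$ that $\phi$ carries interacts with the index $j$ of $\mathcal{M}_{I_j}$, and verifying that after $\ell$ steps (for $\ell=r+1$) or $n+1$ steps (for $\ell=1$) one really lands back on $\tau_{n+1,-\ell\delta}$ with the correct label set; the disjointness is then a routine weight computation.
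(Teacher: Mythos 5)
Your proposal assembles the right ingredients ($\sigma=z_\ell^{-1}=\tau_{p,-\delta}$, the relation $(\tau_{p,-\delta})^\ell=\tau_{n+1,-\ell\delta}$, equation (\ref{eqcrys}), and the commutation of $\phi$ with the shifts), and your first paragraph in fact points at the correct argument. But the collapsing step in your second paragraph rests on a claim that is false: $\phi$ (or the appropriate fractional power of $\phi^{n+1}$) is \emph{not} $\tau_{p,-\delta}$, not even on this crystal. For $\ell=r+1$ one has $\phi^2\bigl(\prod Y_{i,l}^{u_{i,l}}\bigr)=\prod Y_{i+2,l+2}^{u_{i,l}}$ whereas $\tau_{2,-\delta}\bigl(\prod Y_{i,l}^{u_{i,l}}\bigr)=\prod Y_{i,l+2}^{u_{i,l}}$; concretely $\phi^2(M_0)=Y_{\ell+2,2}Y_{2,\ell+2}^{-1}$ while $\tau_{2,-\delta}(M_0)=Y_{\ell,2}Y_{0,\ell+2}^{-1}=M_1$, and these are different monomials (the first even lies back inside $\mathcal{M}_{I_0}(M_0)$ when $n=3$). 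So the double union over $k$ and $j'=0,\dots,n$ cannot be reindexed into a single index $k(n+1)+j'$; moreover that union (from the preceding Proposition) is highly non-disjoint — the pieces $\mathcal{M}_{I_{j'}}$ overlap heavily — so "collapsing" it into the asserted disjoint union is not a bookkeeping matter but the actual content to be proved, and your proposed mechanism for it does not exist.

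The argument the paper uses avoids this entirely: it stays with the \emph{disjoint} $I_0$-decomposition (\ref{eqcrys}), $\mathcal{M}=\bigsqcup_{k}(\tau_{n+1,-\ell\delta})^{k}\bigl(\bigsqcup_{j=0}^{\ell-1}\mathcal{M}_{I_0}(M_j)\bigr)$, observes that $M_j=\sigma^j(M_0)=(\tau_{p,-\delta})^j(M_0)$ and that $\tau_{p,-\delta}$ is a $P_{\cl}$-crystal automorphism, hence $\mathcal{M}_{I_0}(M_j)=(\tau_{p,-\delta})^j\bigl(\mathcal{M}_{I_0}(M_0)\bigr)$, and then the exponents $k\ell+j$ with $0\le j\le\ell-1$ run over $\ZZ$ exactly once since $(\tau_{p,-\delta})^\ell=\tau_{n+1,-\ell\delta}$. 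This gives the case $j=0$ of (\ref{eqcrys2}) with disjointness for free. The map $\phi$ enters only afterwards: $\phi^j$ is a $\theta^j$-twisted automorphism of the whole crystal which commutes with $\tau_{p,-\delta}$ and sends $\mathcal{M}_{I_0}(Y_{\ell,0}Y_{0,\ell}^{-1})$ onto $\mathcal{M}_{I_j}(Y_{\ell+j,j}Y_{j,\ell+j}^{-1})$, so applying $\phi^j$ to both sides of the $j=0$ identity yields the general case. If you replace your second paragraph by this use of (\ref{eqcrys}) — which your first paragraph already anticipates — the proof goes through; your disjointness argument at the end is then redundant but correct.
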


\begin{proof}
By Proposition \ref{shiftzl}, the automorphism $z_\ell$ has the special form of a shift in the considered cases. Further we know that $(\tau_{-p, \delta})^\ell = \tau_{-n-1, \ell \delta}$. Using (\ref{eqcrys}), we obtain
\begin{eqnarray*}
\mathcal{M}(e^{\varpi_\ell}Y_{\ell, 0} Y_{0, \ell}^{-1}) = \bigsqcup_{k \in \ZZ} (\tau_{p, -\delta})^{k \ell} \left( \bigsqcup_{j = 0}^{\ell-1} (\tau_{p, -\delta})^{j}\left( \mathcal{M}_{I_0}(M_0) \right) \right).
\end{eqnarray*}
As $\tau_{p, -\delta}$ and $\phi$ commute, (\ref{eqcrys2}) follows.
\end{proof}

Similar equalities of crystals can be given for $\ell \geq r+1$, by using the automorphism $\psi$. Now we are able to determine the $\ell \in I_0$ for which the monomial crystal $\mathcal{M}(e^{\varpi_\ell}Y_{\ell, 0} Y_{0, d_\ell}^{-1})$ is closed.

\begin{thm}\label{proclocry} The monomial crystal $ \mathcal{M}(e^{\varpi_\ell}Y_{\ell, 0} Y_{0, d_\ell}^{-1}) $ is closed if and only if $ \ell = 1, r+1 $ or $n$.\end{thm}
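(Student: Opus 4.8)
Write $\mathcal{M}_\ell=\mathcal{M}(e^{\varpi_\ell}Y_{\ell,0}Y_{0,d_\ell}^{-1})$. The overall plan is three reductions followed by a case split. First, $\mathcal{M}_\ell$ is by construction a connected subcrystal of $\mathcal{M}$, so the Kashiwara operators automatically preserve it; hence ``closed'' is equivalent to ``$q$-closed'', and only the latter has to be analysed. Second, the $\iota$-twisted isomorphism $\psi\colon\mathcal{M}_\ell\xrightarrow{\sim}\mathcal{M}_{n+1-\ell}$ (valid for $\ell\le r+1$) satisfies $\psi(A_{i,l})=A_{\iota(i),l}$ and relabels $Y_{i,l}\mapsto Y_{\iota(i),l}$, hence carries $q$-characters of $\hat{\U}_i$-modules to those of $\hat{\U}_{\iota(i)}$-modules; so $\mathcal{M}_\ell$ is $q$-closed iff $\mathcal{M}_{n+1-\ell}$ is, and it suffices to treat $\ell\le r+1$ and prove that $\mathcal{M}_\ell$ is $q$-closed exactly when $\ell=1$ or $\ell=r+1$. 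Third, the $\theta$-twisted automorphism $\phi$ of $\mathcal{M}_\ell$ satisfies $\phi(A_{i,l})=A_{i+1,l+1}$ and relabels $Y_{i,l}\mapsto Y_{i+1,l+1}$, so it turns ``$q$-closed in the direction $i$'' into ``$q$-closed in the direction $i+1$''; thus $q$-closedness only has to be checked in a single direction.

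For the ``if'' part I would use that $z_\ell$ is a shift (Proposition \ref{shiftzl}), which gives the \emph{disjoint} decomposition (\ref{eqcrys2}) for every $j\in I$. Applying $\phi^{j}$ to Proposition \ref{propqcloz} — together with the identity $\phi^{j}(Y_{\ell,0}Y_{0,\ell}^{-1})=Y_{\ell+j,j}Y_{j,\ell+j}^{-1}$ — shows that every summand $\mathcal{M}_{I_j}(Y_{\ell+j,j}Y_{j,\ell+j}^{-1})$ of (\ref{eqcrys2}) is $I_j$-closed. Now fix a direction $i$ and choose $j\ne i$ in (\ref{eqcrys2}); each summand is then a shift by an even amount of an $I_j$-closed set, and since $\tau_{p,-\delta}$ commutes with multiplication by the $A_{i,l}$ and with $\Xi_i$ and preserves the class of $\hat{\U}_i$-$q$-characters, each summand is $q$-closed in the direction $i$. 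A disjoint union of sets that are $q$-closed in the direction $i$ is $q$-closed in the direction $i$ (for a monomial lying in one summand, take the finite set required by Definition \ref{defclocry}(i) inside that summand). Hence $\mathcal{M}_\ell$ is $q$-closed, i.e.\ closed, when $\ell=1$ or $\ell=r+1$.

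For the ``only if'' part ($2\le\ell\le r$) I would single out one obstructing vertex. Take $m_0:=M_1=Y_{\ell,2}\,Y_{0,n-\ell+3}^{-1}\,Y_{1,\ell+1}^{-1}\,Y_{1,n-\ell+2}$, which is the vertex $m_{(1,2,\dots,\ell);1}\in\mathcal{M}_{I_0}(M_1)\subset\mathcal{M}_\ell$, and claim that $q$-closedness in the direction $1$ fails at $m_0$. This rests on two facts: (a) $\mathcal{M}_\ell\cap\bigl(m_0\cdot\prod_{l\in\ZZ}A_{1,l}^{\ZZ}\bigr)=\{m_0\}$; and (b) since $2\le\ell\le r$ forces $\ell+1<n-\ell+2$, the element $\Xi_1(m_0)=Y_{1,n-\ell+2}Y_{1,\ell+1}^{-1}$ is a single non-constant Laurent monomial, and no positive integer multiple of it is the $q$-character of a $\U_q(\hat{sl}_2)'$-module (such a $q$-character must contain a dominant monomial; equivalently, the image $\ZZ[(Y_{1,l}+Y_{1,l+2}^{-1})]_{l}$ of $\chi_q$ contains no non-constant $\pm$-monomial). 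Granting (a), the only admissible choice for $\mathcal{S}_{m_0}$ in Definition \ref{defclocry}(i) is $\{m_0\}$, whose $\Xi_1$-image is a positive multiple of $\Xi_1(m_0)$; by (b) this is not a $\hat{\U}_1$-$q$-character, so $\mathcal{M}_\ell$ is not $q$-closed in the direction $1$, hence not closed.

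I expect statement (a) to be the main obstacle. The point is that node $1$ is adjacent only to nodes $0$ and $2$ in the cyclic diagram, so the exponents $u_{i,l}$ with $i\ge3$ are invariant under multiplication by the $A_{1,l}$'s; thus any vertex of $\mathcal{M}_\ell$ in the coset of $m_0$ must share these exponents with $m_0$. Using the tableau description (\ref{montab})--(\ref{eqcrys}), I would check that a vertex $m_{T;j}$ with $u_{i,l}(m_{T;j})=u_{i,l}(m_0)$ for all $i\ge3$ forces $T=(1,2,\dots,\ell)$ — otherwise the largest entry of $T$ produces a factor $Y_i^{\pm1}$ with $i\ge3$ that cannot be cancelled — and then $j=1$: directly, for $\ell\ge3$, from the position of the factor $Y_{\ell,2}$; and for $\ell=2$, where one is left with the whole family $\{m_{(1,2);j}\}_{j\in\ZZ}$, by separating its members via the additional $\prod_l A_{1,l}^{\ZZ}$-invariant $\sum_l l\,u_{1,l}(m)+2\sum_l l\,u_{2,l}(m)$, which takes the value at $m_0$ only for $j=1$. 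This bookkeeping, resting on the refined description of $\mathcal{M}_\ell$ developed earlier in this section, is the crux; the remaining ingredient (b) is routine.
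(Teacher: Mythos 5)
Your reduction steps and your ``if'' direction are sound and essentially coincide with the paper's proof: the paper also uses Proposition \ref{shiftzl} to get the disjoint decomposition (\ref{eqcrys2}) for every $j$, transports the $I_0$-closedness of Proposition \ref{propqcloz} along powers of $\phi$ to each summand $\mathcal{M}_{I_j}(Y_{\ell+j,j}Y_{j,\ell+j}^{-1})$, and concludes; and it likewise disposes of $\ell>r+1$ via $\psi$. (The observation that Kashiwara-closedness is automatic for a connected subcrystal $\mathcal{M}(m)$ is correct and implicit in the paper.)

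The ``only if'' direction is where you diverge, and where your argument has a genuine gap. You reduce everything to claim (a), that $\mathcal{M}_\ell\cap\bigl(M_1\cdot\prod_l A_{1,l}^{\ZZ}\bigr)=\{M_1\}$, and you acknowledge that you have not carried out the bookkeeping that would establish it. That claim is plausible (it checks out in small cases), but it is substantially stronger than what is needed, and proving it in general requires controlling \emph{every} tableau monomial $m_{T;j}$ lying in the $A_{1,*}$-coset of $M_1$. The paper avoids this entirely: starting from $\Xi_j(M_j)=Y_{j,\ell+j}^{-1}Y_{j,n-\ell+1+j}$ (for any $j\neq 0$), it invokes the standard propagation property of $\U_q(\hat{sl}_2)'$ $q$--characters — a monomial containing a negative power $Y_{1,a}^{-1}$ forces the presence of the monomial obtained by multiplying by $A_{1,a-1}$ — to conclude that $\Xi_j(M_j)\,\Xi_j(A_{j,\ell+j-1})$ must occur in $\Xi_j(\mathcal{S}_{M_j})$ for \emph{any} admissible $\mathcal{S}_{M_j}$; then the injectivity of $\Xi_j$ on the coset $M_j\prod_l A_{j,l}^{\ZZ}$ (via Frenkel--Mukhin's $\tau_{\{j\}}$) pins down the preimage as the single monomial $M_j\cdot A_{j,\ell+j-1}$, which is visibly not of the form (\ref{montab}). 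Thus only \emph{one} explicit monomial needs to be shown absent from $\mathcal{M}_\ell$, rather than all but one monomial of a coset shown absent. If you want to keep your structure, you should either prove (a) in full or, more economically, replace (a)+(b) by this propagation-plus-injectivity argument; your ingredient (b) and the choice of obstructing vertex are otherwise fine.
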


\begin{proof}Let us begin by the case $\ell \leq r+1$. Assume that the crystal $\mathcal{M}(e^{\varpi_\ell}Y_{\ell, 0} Y_{0, \ell}^{-1})$ is $q$-closed for $ 2 \leq \ell \leq r $. Consider the monomial
$$ M_{j} = Y_{\ell, 2j} Y_{0, n-\ell+1+2j}^{-1}Y_{j, \ell+j}^{-1}Y_{j, n-\ell+1+j} \in \mathcal{M}(e^{\varpi_\ell}Y_{\ell, 0} Y_{0, \ell}^{-1}) $$
with $j \neq 0$. We have $\Xi_j(M_j) = Y_{j, \ell+j}^{-1}Y_{j, n-\ell+1+j}$. By Definition \ref{defclocry}, there exists a subset
$$\mathcal{S}_{M_j} \subset \mathcal{M}(e^{\varpi_\ell}Y_{\ell, 0} Y_{0, \ell}^{-1}) \cap \left(M_j \cdot \prod_{l \in \ZZ} A_{j,l}^{\ZZ} \right)$$
containing $M_j$ such that its image $\Xi_{j}\left( \mathcal{S}_{M_j} \right)$ by $\Xi_j$ is the set of $\ell$-weights of a representation of $\hat{\mathcal{U}}_j$. By the theory of $q$--characters of $\hat{\mathcal{U}}_j$-modules, we should have
$$Y_{j, \ell+j}^{-1}Y_{j, n-\ell+1+j} \Xi_j \left( A_{j, \ell +j-1} \right) \in \Xi_j \left( \mathcal{S}_{M_j} \right).$$
Furthermore, the map $\Xi_j$ is injective when it is restricted on the set of monomials $M_j \cdot \left( \prod_{l \in \ZZ} A_{j,l}^{\ZZ} \right)$: indeed we have for all monomial $M_j \cdot \left( \prod_{l \in \ZZ} A_{j,l}^{u_{j,l}} \right)$,
\begin{align*}
\Xi_j \left(M_j \cdot \prod_{l \in \ZZ} A_{j,l}^{u_{j,l}} \right) = \Xi_j(M_j) \cdot \prod_{l \in \ZZ} \Xi_j(A_{j,l}^{u_{j,l}}) = \Xi_j(M_j) \cdot \prod_{l \in \ZZ} \tau_{\{j\}}(A_{j,l}^{u_{j,l}})
\end{align*}
where $\tau_{\{j\}}$ is the map defined in \cite[Definition 3.2]{frenkel_combinatorics_2001} (the second equality is a consequence of \cite[Lemma 3.5]{frenkel_combinatorics_2001}). The injectivity is a consequence of the injectivity of $\tau_{\{j\}}$ (see \cite[Lemma 3.3]{frenkel_combinatorics_2001}).

So the monomial
$$ m = M_j \cdot A_{j,\ell+j-1} = Y_{\ell, 2j} Y_{0, n-\ell+1+2j}^{-1}Y_{j, \ell+j-2} Y_{j-1, \ell+j-1}^{-1} Y_{j+1, \ell+j-1}^{-1} Y_{j, n-\ell+1+j}$$
should occur in $\mathcal{M}(e^{\varpi_\ell}Y_{\ell, 0} Y_{0, \ell}^{-1})$. But this is not the case, $m$ being not of the form (\ref{montab}). Hence $\mathcal{M}(e^{\varpi_\ell}Y_{\ell, 0} Y_{0, \ell}^{-1})$ is not $q$-closed when $2 \leq \ell \leq r$.

Now assume that $ \ell = 1 $ or $\ell = r+1$. In this case, $ z_{\ell} = \tau_{-p, \delta} $ with $p=n+1$ or $p=2$ respectively and by the above lemma
$$\mathcal{M}(e^{\varpi_\ell}Y_{\ell, 0} Y_{0, \ell}^{-1}) = \bigsqcup_{k \in \ZZ} (\tau_{p, -\delta})^{k} \left(  \mathcal{M}_{I_{j}}(Y_{\ell + j, j} Y_{j, \ell + j}^{-1}) \right)$$
as $I_j$-crystals  ($ 0 \leq j \leq n $). By Proposition \ref{propqcloz}, the finite crystal $\mathcal{M}_{I_{0}}(M_0)$ is $I_0$-closed. As the $I_j$-crystals $\mathcal{M}_{I_{j}}(Y_{\ell + j, j} Y_{j, \ell + j}^{-1})$ can be obtained from $\mathcal{M}_{I_{0}}(M_0)$ by application of powers of $\phi$, they are also $I_j$-closed for all $0 \leq j \leq n$. Then by the above equalities $ \mathcal{M}(e^{\varpi_\ell}Y_{\ell, 0} Y_{0, \ell}^{-1}) $ is closed.

Finally, the result follows for all the $\ell \in I_0$ by using the $\iota$-twisted automorphism $\psi$ (which preserves the notion of $q$--closed monomial set).
\end{proof}

\section{Extremal fundamental loop weight modules for $\U_q(sl_{n+1}^{tor})$ when $ \mathcal{M}(e^{\varpi_\ell}Y_{\ell, 0} Y_{0, d_\ell}^{-1}) $ is closed}

Assume that $n = 2r+1$ ($r \geq 1$) is odd and $\mathcal{M}_\ell = \mathcal{M}(e^{\varpi_\ell}Y_{\ell, 0} Y_{0, d_\ell}^{-1}) $ is closed (it holds if and only if $\ell=1, r+1$ or $n$). In this section, we relate the monomial $\U_q(\hat{sl}_{n+1})$-crystals $ \mathcal{M}_\ell$ with integrable representations of $\U_q(sl_{n+1}^{tor})$.

In the first part, we construct a new infinite family of representations $V_\ell$ of $\U_{q}(sl_{n+1}^{tor})$ (Theorem \ref{thmmod}). We call these representations the extremal fundamental loop weight modules. Let us give the outline of this construction: consider the vector space $V_\ell$ freely generated by the monomials occurring in $\mathcal{M}_\ell$. For all $0 \leq j \leq n$, we define an action of $\U_q^{v,j}(sl_{n+1}^{tor})$ on it, denoted by $V_\ell^{(j)}$, such that 
$$V_\ell^{(j)} = \bigoplus_{k \in \ZZ} V_{k}^{(j)}$$
where $V_{k}^{(j)}$ is a subvector space endowed with a structure of a simple $\ell$-highest weight $\U_q^{v,j}(sl_{n+1}^{tor})$-module. We show that it defines a $\U_{q}(sl_{n+1}^{tor})$-module structure in this way on $V_\ell$, the compatibility between the action of various vertical subalgebras being a consequence of the existence of promotion operators on $\mathcal{M}_\ell$. Furthermore the $q$--character of $V_\ell$ is the sum of monomials occurring in $ \mathcal{M}_\ell $ with multiplicity one.

In the second part, we study these representations: we show that $V_\ell$ is irreducible and it is an extremal loop weight module, generated by an extremal vector of $\ell$-weight $e^{\varpi_\ell} Y_{\ell,0}Y_{0,d_\ell}^{-1}$. Furthermore explicit formulas are given for the action of $\U_{q}(sl_{n+1}^{tor})$ on $V_\ell$. It is remarkable that these formulas are expressed only from the associated monomial crystal and are ``universal'' in the following sense: the action on all the extremal fundamental loop weight modules $V_\ell$ is completely determined by these formulas and by the data of the corresponding monomial crystals $\mathcal{M}_\ell$.  This sheds new light on the link between monomial crystals and the theory of $q$--characters already expected in \cite{hernandez_level_2006}. All these sentences hold for the fundamental $\ell$-highest weight modules $V_0(Y_{\ell, 0})$ of $\U_{q}(\hat{sl}_{n+1})'$ with the corresponding monomial crystals $\mathcal{M}_0(Y_{\ell, 0})$.

In the third part, we specialize $q$ at a root of unity $\epsilon$. We obtain new irreducible finite-dimensional representations of the quantum toroidal algebra $\U_{\epsilon}(sl_{n+1}^{tor})'$.

\subsection{Construction of the extremal fundamental loop weight modules} Let us begin by the main result of this section.

\begin{thm}\label{thmmod}
Assume that $n=2r+1$ is odd and $\ell = 1, r+1$ or $n$. There exists a thin representation of $ \U_{q}(sl_{n+1}^{tor}) $ whose $q$--character is the sum of all monomials occurring in $ \mathcal{M}(e^{\varpi_\ell}Y_{\ell, 0} Y_{0, d_\ell}^{-1}) $ with multiplicity one. It is denoted by $V_\ell = V(e^{\varpi_\ell}Y_{\ell, 0}Y_{0,d_\ell}^{-1})$ and called the extremal fundamental loop weight module of $\ell$-weight $e^{\varpi_\ell}Y_{\ell, 0}Y_{0,d_\ell}^{-1}$.
\end{thm}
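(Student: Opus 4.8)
The strategy is to build $V_\ell$ by pasting together finite-dimensional modules over the various vertical quantum affine subalgebras $\U_q^{v,j}(sl_{n+1}^{tor})$, using the combinatorial description of $\mathcal{M}_\ell = \mathcal{M}(e^{\varpi_\ell}Y_{\ell,0}Y_{0,d_\ell}^{-1})$ obtained in Section 3, and then using the promotion operator $\phi$ to check that these partial actions are mutually compatible. First I would restrict to $\ell \le r+1$, i.e. $\ell = 1$ or $\ell = r+1$, the case $\ell = n$ following by applying the $\iota$-twisted automorphism $\psi$. Let $V_\ell$ be the $\CC$-vector space with basis indexed by the monomials $m$ occurring in $\mathcal{M}_\ell$; we will put a weight $\wt(m)$ on each basis vector (coming from $\omega(m)$) and an $\ell$-weight given by $m$ itself, so that the eventual $q$--character is automatically $\sum_m m$ with multiplicity one, hence the module will be thin by construction.

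\textbf{Construction of the vertical actions.} By the Lemma preceding Theorem \ref{proclocry}, for each $j \in I$ we have the decomposition of $I_j$-crystals
\begin{equation*}
\mathcal{M}_\ell = \bigsqcup_{k \in \ZZ} (\tau_{p,-\delta})^{k}\bigl(\mathcal{M}_{I_j}(Y_{\ell+j,j}Y_{j,\ell+j}^{-1})\bigr),
\end{equation*}
with $p = n+1$ or $2$, and each finite piece $\mathcal{M}_{I_j}(Y_{\ell+j,j}Y_{j,\ell+j}^{-1})$ is $I_j$-closed (Proposition \ref{propqcloz} together with the $\phi$-translation argument). Since $\U_q^{v,j}(sl_{n+1}^{tor}) \simeq \U_q(\hat{sl}_{n+1})'$ via $\theta^{(j)}$, and $\mathcal{M}_{I_j}(\cdots)$ is, after applying $\Xi^j$, precisely the monomial set of a fundamental module $V_0(Y_{\ell+j,*})$, each finite piece carries a simple $\ell$-highest-weight $\U_q^{v,j}(sl_{n+1}^{tor})$-module structure, namely $\theta^{(j)}$ applied to the fundamental module $W(\varpi_\ell)^{(j)}$ affinized appropriately. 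One thus defines $V_\ell^{(j)} = \bigoplus_{k\in\ZZ} V_k^{(j)}$ where $V_k^{(j)}$ is the span of the $k$-th finite piece, with the action of the Drinfeld generators $x_{i,r}^{\pm}, h_{i,m}, k_h$ ($i \in I_j$) read off from the monomial crystal (Kashiwara operators give the $x_{i,0}^\pm$-action and the explicit fundamental-module formulas, e.g. from \cite{nakajima_$t$-analogs_2003}, give the rest). The key point is that for two indices $j, j'$ the subsets of generators overlap only in the common $x_{i,r}^\pm, h_{i,m}$ with $i \in I_j \cap I_{j'}$, and I would check that the two prescriptions agree there.

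\textbf{Compatibility and the toroidal relations.} The heart of the argument is that the promotion operator $\phi : \mathcal{M}_\ell \to \mathcal{M}_\ell$, which by the Proposition of Section 3.3 is the $\theta$-twisted automorphism $\prod Y_{i,l}^{u_{i,l}} \mapsto \prod Y_{i+1,l+1}^{u_{i,l}}$, intertwines the $\U_q^{v,j}$-action and the $\U_q^{v,j+1}$-action: transporting the operators of $V_\ell^{(j)}$ by $\phi$ gives exactly the operators of $V_\ell^{(j+1)}$, because $\theta^{(j+1)} = \theta^{(j)}\circ\theta$ and $\phi$ realizes $\theta$ at the level of the crystal. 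This forces the partial actions to glue into a single action of the subalgebra generated by all the $x_{i,r}^\pm, k_h, h_{i,m}$; one then verifies the defining relations of Definition \ref{defqta}. Relations involving a single $i$ or two indices $i, j$ both lying in some common $I_{j_0}$ hold because they hold in the vertical module $V_\ell^{(j_0)}$. The relations not covered this way — essentially those coupling node $0$ to the others, e.g. \eqref{actcartld} and the Serre-type relations at the "seam" — are handled by applying $\phi$ to rotate them into relations already checked, using that $\phi$ is a bijection of $\mathcal{M}_\ell$ commuting appropriately with the shift $\tau_{p,-\delta}$. \textbf{I expect this gluing/relation-checking step to be the main obstacle:} one must show that the finitely many "new" toroidal relations (those mixing the Kashiwara-operator action at the boundary monomials $\tilde{e}_0, \tilde{f}_0$ with the vertical actions) follow from the combinatorics of $\mathcal{M}_\ell$, and this is exactly where closedness of $\mathcal{M}_\ell$ (Theorem \ref{proclocry}) is used — it guarantees that applying the off-diagonal generators never produces a monomial outside $\mathcal{M}_\ell$, so the prescribed operators are well-defined on $V_\ell$. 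Finally, integrability follows since $\mathrm{Res}(V_\ell)$ restricts on each $\U_i$ to a direct sum of finite-dimensional $\U_q(sl_2)$-modules (the weights being those of $\mathcal{B}(\varpi_\ell)$), and thinness and the stated $q$--character are immediate from the construction, giving the module $V_\ell = V(e^{\varpi_\ell}Y_{\ell,0}Y_{0,d_\ell}^{-1})$.
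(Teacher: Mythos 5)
Your proposal follows essentially the same route as the paper: the vector space on the monomial basis of $\mathcal{M}_\ell$, the decomposition into finite $I_j$-pieces identified (via $\Xi^j$ and Proposition \ref{propqcloz}) with fundamental modules $V_0(Y_{\ell,j+kp})^{(j)}$ over the vertical subalgebras, gluing by the observation that the action of $\hat{\U}_i$ on $v_m$ depends only on $\tilde{e}_i\cdot m$, $\tilde{f}_i\cdot m$ and $\Xi_i(m)$, and reduction of $\ell=n$ to $\ell=1$ by $\psi$. The one place you overcomplicate matters is the relation-checking "seam": there is nothing to rotate away with $\phi$, because every defining relation of $\U_q(sl_{n+1}^{tor})$ involves at most two indices $i_1,i_2$, and these always lie together in some connected $I_j$ (take any $j\notin\{i_1,i_2\}$), so every relation — including those coupling node $0$ to its neighbours — is verified inside a single vertical subalgebra $\U_q^{v,j}(sl_{n+1}^{tor})$, which is exactly how the paper's proof concludes.
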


To construct these representations, let us start with results about the fundamental modules $V_0(Y_{\ell, k})$ of $\U_q(\hat{sl}_{n+1})'$ ($n \in \NN^{\ast}$, $1 \leq \ell \leq n$, $k \in \ZZ$). As it is said above, it is isomorphic to the fundamental highest weight $\U_q(sl_{n+1})$-module $V_0(\Lambda_{\ell})$. So we begin by recalling some well-known facts about $V_0(\Lambda_{\ell})$ which will be useful.

\begin{lem}
All the weight spaces of the fundamental highest weight $\U_q(sl_{n+1})$-module $V_0(\Lambda_{\ell})$ ($1 \leq \ell \leq n$) are of dimension 1. Furthermore the Weyl group of finite type $W_0$ acts transitively on $\wt(V_0(\Lambda_{\ell}))$.
\end{lem}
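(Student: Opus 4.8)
The plan is to reduce the whole statement to the combinatorics of the crystal $\mathcal{B}_0(\Lambda_\ell)$, exploiting the fact that the fundamental weight $\Lambda_\ell$ of $sl_{n+1}$ is minuscule. Concretely, I would read off the weights of $V_0(\Lambda_\ell)$ and their multiplicities from the Young tableaux realization recalled above, and then identify the $W_0$-action with the action of the symmetric group on $\ell$-element subsets.

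First I would pin down $\wt(V_0(\Lambda_\ell))$ together with the multiplicities. Since $V_0(\Lambda_\ell)$ has crystal basis $\mathcal{B}_0(\Lambda_\ell)$, the dimension of a weight space $(V_0(\Lambda_\ell))_\mu$ equals the number of vertices of $\mathcal{B}_0(\Lambda_\ell)$ of weight $\mu$. In the tableaux realization, $\mathcal{B}_0(\Lambda_\ell)$ has as vertices the single-column semistandard tableaux $T = (1 \le i_1 < i_2 < \cdots < i_\ell \le n+1)$, and the content description of $\wt$ gives $\wt(T) = \epsilon_{i_1} + \cdots + \epsilon_{i_\ell}$. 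So it suffices to see that the assignment $\{i_1,\dots,i_\ell\} \mapsto \epsilon_{i_1} + \cdots + \epsilon_{i_\ell}$ is injective on $\ell$-element subsets of $\{1,\dots,n+1\}$. If $S \neq S'$ are two such subsets, then $S\setminus S'$ and $S'\setminus S$ are both nonempty (their common cardinality is $\ell - |S\cap S'| \geq 1$), so the difference $\sum_{i\in S}\epsilon_i - \sum_{i\in S'}\epsilon_i$ has, in the generators $\epsilon_1,\dots,\epsilon_{n+1}$, a coordinate equal to $+1$ and one equal to $-1$; since $\epsilon_1 + \cdots + \epsilon_{n+1} = 0$ is the only relation among the $\epsilon_i$, such a difference is nonzero. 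Hence every weight space of $V_0(\Lambda_\ell)$ is at most $1$-dimensional, $\wt(V_0(\Lambda_\ell))$ is exactly the set of the $\binom{n+1}{\ell}$ distinct weights $\epsilon_{i_1} + \cdots + \epsilon_{i_\ell}$, and each occurs with multiplicity exactly $1$.

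Next I would treat the transitivity of $W_0$. The group $W_0$ is generated by the simple reflections $s_i$ ($i \in I_0$), and from $\alpha_i = \epsilon_i - \epsilon_{i+1}$ together with $s_i(\lambda) = \lambda - \lambda(h_i)\alpha_i$ one checks that $s_i$ interchanges $\epsilon_i$ and $\epsilon_{i+1}$ and fixes the remaining $\epsilon_j$; thus $W_0$ acts on $\{\epsilon_1,\dots,\epsilon_{n+1}\}$ through the full symmetric group $S_{n+1}$. Under the bijection of the previous step between $\wt(V_0(\Lambda_\ell))$ and the $\ell$-element subsets of $\{1,\dots,n+1\}$, this becomes the natural $S_{n+1}$-action on $\ell$-subsets, which is transitive; hence $W_0$ acts transitively on $\wt(V_0(\Lambda_\ell))$.

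I do not anticipate any real obstacle: this is the classical statement that the type-$A$ fundamental representations are minuscule. The only genuine verification is the injectivity of the subset-to-weight map, which is the short computation above. An even quicker alternative would be to invoke minusculeness directly, so that all weights of $V_0(\Lambda_\ell)$ lie in the single orbit $W_0 \cdot \Lambda_\ell$ and therefore all share the multiplicity $1$ of the highest weight, yielding both assertions simultaneously; I would nonetheless present the crystal-theoretic version since the tableaux model is already available in the paper.
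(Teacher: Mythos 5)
Your proof is correct. The paper itself gives no argument for this lemma — it is stated as a recalled well-known fact (that the type-$A$ fundamental representations are minuscule) — so there is nothing to compare against; your verification via the single-column tableaux model of $\mathcal{B}_0(\Lambda_\ell)$, the injectivity of $\{i_1,\dots,i_\ell\}\mapsto \epsilon_{i_1}+\cdots+\epsilon_{i_\ell}$, and the identification of the $W_0$-action with the $S_{n+1}$-action on $\ell$-subsets is exactly the standard argument one would supply, and all the small computations (the unique relation $\epsilon_1+\cdots+\epsilon_{n+1}=0$, the fact that $s_i$ transposes $\epsilon_i$ and $\epsilon_{i+1}$) check out.
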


%

\begin{prop}\label{actfun}
Let $V_0(Y_{\ell, k})$ be a fundamental module of $\U_q(\hat{sl}_{n+1})'$ ($\ell \in I_0, k \in \ZZ$). Then $V_0(Y_{\ell, k})$ is a thin $\U_q(\hat{sl}_{n+1})'$-module which admits a basis $(v_m)$ indexed by the vertices of the monomial crystal $\mathcal{M}_0(Y_{\ell, k})$, such that for all $m\in \mathcal{M}_0(Y_{\ell,k})$ and $i \in I_0$, $v_m$ is of $\ell$-weight $m$ and
$$x_{i,0}^{+} \cdot v_m = v_{\tilde{e}_i \cdot m}, \; x_{i,0}^{-} \cdot v_m = v_{\tilde{f}_i \cdot m}$$
where $v_0 = 0$ by convention.
\end{prop}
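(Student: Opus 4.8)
The plan is to build the module $V_0(Y_{\ell,k})$ explicitly from the finite-type $\U_q(sl_{n+1})$-module $V_0(\Lambda_\ell)$ via pullback by an evaluation morphism $\mathrm{ev}_a$, and then to read off the Drinfeld generator action from the known $q$--character. First I would recall (as stated in the excerpt) that $V_0(Y_{\ell,k})$ is obtained from $V_0(\Lambda_\ell)$ by pulling back along $\mathrm{ev}_a$ for a suitable $a\in\CC^*$, so that as a $\U_q(sl_{n+1})$-module it is irreducible and isomorphic to $V_0(\Lambda_\ell)$; in particular, by the preceding Lemma, all its weight spaces are $1$-dimensional and $W_0$ acts transitively on $\wt(V_0(\Lambda_\ell))$. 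Since the $\ell$-weight spaces refine the weight spaces, each $\ell$-weight space is at most $1$-dimensional, so the module is automatically thin, and the $\ell$-weights in play are exactly the monomials of $\mathcal{M}(V_0(Y_{\ell,k}))$, which by the Proposition cited just before (Nakajima's tableau-sum formula) coincides with $\mathcal{M}_0(Y_{\ell,k})$, and by Proposition \ref{propcrystruct} this set carries a $\U_q(sl_{n+1})$-crystal structure isomorphic to $\mathcal{B}_0(\Lambda_\ell)$. This gives the index set $(v_m)_{m\in\mathcal{M}_0(Y_{\ell,k})}$ of the desired basis, each $v_m$ spanning the (one-dimensional) $\ell$-weight space $V_{m}$.

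Next I would pin down the $\U_i$-action for each fixed $i\in I_0$. Because $C_{i,j}$ for $i,j\in I_0$ is the finite-type Cartan matrix and the relevant Drinfeld generators $x_{i,0}^\pm$ map under $\mathrm{ev}_a$ to the Chevalley generators $x_i^\pm$, it suffices to understand how $V_0(\Lambda_\ell)$ decomposes under the subalgebra $\U_i\cong\U_q(sl_2)$. The key structural fact is that, restricted to $\U_i$, the crystal $\mathcal{B}_0(\Lambda_\ell)\cong\mathcal{M}_0(Y_{\ell,k})$ breaks into $i$-strings of length $1$ or $2$: a monomial $m$ with $u_{i,l}(m)$ recording the power at node $i$ has $\varepsilon_i(m)+\varphi_i(m)\le 1$ (this is exactly the ``thin in the $i$-direction'' property, visible from the tableau description where node $i$ corresponds to whether the pair $\{i,i+1\}$ occupies the tableau). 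Each such $2$-string is a copy of the $2$-dimensional $\U_q(sl_2)$-module and each singleton is the trivial module; on the $2$-dimensional summand spanned by $\{v_m, v_{\tilde f_i\cdot m}\}$ the action of $x_{i,0}^-$ sends the higher vector to the lower one and $x_{i,0}^+$ does the reverse, up to normalization. I would then normalize the basis: starting from the highest-weight vector and propagating along lowering operators, one can rescale each $v_m$ so that the $\U_q(sl_2)$-action on every $2$-string is in the standard form $x_{i,0}^-\cdot v_{\text{high}} = v_{\text{low}}$, $x_{i,0}^+\cdot v_{\text{low}}=v_{\text{high}}$, with $x_{i,0}^\pm$ killing the vector when the move is impossible; the compatibility of these normalizations across different $i$ is guaranteed because $\mathcal{B}_0(\Lambda_\ell)$ is connected and, by Kashiwara's theory, the crystal basis of $V_0(\Lambda_\ell)$ already provides such a globally consistent basis (a global crystal basis), so one may simply take $v_m$ to be the corresponding global basis vector. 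With this choice, $x_{i,0}^+\cdot v_m = v_{\tilde e_i\cdot m}$ and $x_{i,0}^-\cdot v_m = v_{\tilde f_i\cdot m}$ hold for all $i\in I_0$ by construction, with the convention $v_0=0$.

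Finally I would verify that $v_m$ indeed has $\ell$-weight $m$ as claimed, i.e. that the eigenvalues of $\phi_{i}^\pm(z)$ on $v_m$ match the monomial $m$ under the Frenkel--Reshetikhin correspondence. This follows from the general compatibility of the $q$--character formalism with crystal bases: on the one hand $\mathcal{M}(V_0(Y_{\ell,k}))=\mathcal{M}_0(Y_{\ell,k})$ as sets with multiplicity one, and on the other the monomial crystal operators $\tilde e_i,\tilde f_i$ were defined precisely so that multiplication by $A_{i,l}^{\pm1}$ tracks the action of lowering/raising operators on $\ell$-weights; hence the unique basis vector in the one-dimensional space $V_m$ must carry $\ell$-weight $m$. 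The main obstacle I anticipate is the normalization bookkeeping in the middle step: one must be sure that a single rescaling of the $v_m$ simultaneously puts \emph{all} the $\U_i$-actions ($i\in I_0$) into the normalized form without sign or scalar discrepancies. The clean way around this is to invoke the existence of the (lower) global crystal basis of $V_0(\Lambda_\ell)$ from Kashiwara's theory, for which the Chevalley generators act on the basis exactly through the crystal operators up to coefficients that can be absorbed, rather than attempting an ad hoc inductive rescaling; this reduces the whole statement to transporting that basis through $\mathrm{ev}_a$ and through the crystal isomorphism $\mathcal{B}_0(\Lambda_\ell)\simeq\mathcal{M}_0(Y_{\ell,k})$ of Proposition \ref{propcrystruct}.
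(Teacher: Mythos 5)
Your argument is correct, but it takes a genuinely different route from the paper at the one point where the statement has real content, namely the existence of a single basis on which \emph{all} the $x_{i,0}^{\pm}$ ($i\in I_0$) act exactly by the crystal operators. You resolve the normalization problem by invoking Kashiwara's global crystal basis of $V_0(\Lambda_\ell)$ (together with the observation that, $\Lambda_\ell$ being minuscule, every $i$-string has length at most $2$, so the Chevalley generators act on the global basis \emph{exactly}, not just modulo $q\mathcal{L}$, by $\tilde e_i,\tilde f_i$). The paper instead uses the fact that $V_0(\Lambda_\ell)$ is the extremal weight module of extremal weight $\Lambda_\ell$: the defining family $\{v_w\}_{w\in W_0}$ attached to the extremal vector, with $(x_{i,0}^{\mp})^{(\pm w(\Lambda_\ell)(h_i))}\cdot v_w=v_{s_i(w)}$, is already a globally consistent choice of one vector per weight (one checks $v_w=v_{w'}$ iff $w(\Lambda_\ell)=w'(\Lambda_\ell)$ via the stabilizer of $\Lambda_\ell$), and since $w(\Lambda_\ell)(h_i)\in\{0,\pm1\}$ the divided powers are trivial, so the crystal-operator formulas drop out by matching weights, using that each weight of $\mathcal{M}_0(Y_{\ell,k})$ occurs for a unique monomial. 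Both arguments are sound; the paper's is more self-contained relative to what is recalled in Section 2 (it needs only the extremal-vector axioms and transitivity of $W_0$ on the weights), while yours imports the heavier global-basis machinery but makes the $sl_2$-string structure more transparent. Your remaining steps (thinness from one-dimensional weight spaces, identification of the index set via $\mathcal{M}(V_0(Y_{\ell,k}))=\mathcal{M}_0(Y_{\ell,k})$, and the $\ell$-weight of $v_m$ being forced because each weight space is a single $\ell$-weight space and the character is multiplicity-free) coincide with the paper's.
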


\begin{proof}
It is known that $\mathrm{Res}(V_0(Y_{\ell,k}))$ is the fundamental highest weight $\U_q(sl_{n+1})$-module $V_0(\Lambda_\ell)$. By the preceding Lemma, its weight spaces are all of dimension one. In particular its $\ell$-weight spaces are also of dimension one and $V_0(Y_{\ell,k})$ is a thin $\U_q(\hat{sl}_{n+1})'$-module.

Furthermore $\mathrm{Res}(V_0(Y_{\ell,k}))$ is the extremal weight module of extremal weight $\Lambda_\ell$, generated by an extremal vector $v$ of weight $\Lambda_\ell$. Hence, there exists $\{v_w\}_{w \in W_0}$ such that $v_{\text{Id}}=v$ and
$$x_{i,0}^{\pm} \cdot v_w=0 \text{ and }(x_{i,0}^{\mp})^{(\pm w(\Lambda_\ell)(h_i))} \cdot v_w=v_{s_i(w)} \text{ if } \pm w(\Lambda_\ell)(h_i)\geq 0.$$
By the above lemma for all $\nu \in \wt(\mathrm{Res}(V_0(Y_{\ell,k})))$, there exists $w$ such that $\nu = w(\Lambda_\ell)$. Then the corresponding vector $v_w$ is non zero of weight $\nu$. As all the weight spaces of $V_0(Y_{\ell,k})$ are of dimension one, $\{v_w\}_{w \in W_0}$ generates $V_0(Y_{\ell,k})$ as a vector space. Furthermore for all $w, w' \in W_0$,
$$w(\Lambda_\ell) = w'(\Lambda_\ell) \Leftrightarrow v_w = v_{w'}.$$
In fact, we have (see \cite[Ch. V.3.3, Proposition 2]{bourbaki_groupes_1968})
\begin{align*}
w(\Lambda_\ell) = w'(\Lambda_\ell) & \Leftrightarrow w^{-1} w'(\Lambda_\ell) = \Lambda_\ell \Leftrightarrow w^{-1} w' \in \langle s_i, i \in I_0-\{\ell\}\rangle \\
 & \Leftrightarrow w' \in w \cdot \langle s_i, i \in I_0-\{\ell\}\rangle.
\end{align*}
Fix an $\ell$-weight $m \in \mathcal{M}(V_0(Y_{\ell, k})) = \mathcal{M}_0(Y_{\ell, k})$. By that we have said above, one can define $v_m$ as the unique vector $v_w$ ($w \in W_0$) such that $w(\Lambda_\ell) = \wt(m)$. Then $\{v_m \vert m \in \mathcal{M}_0(Y_{\ell, k})\}$ is a basis of $V_0(Y_{\ell, k})$. Furthermore as the weight subspaces and the $\ell$-weight subspaces of $V_0(Y_{\ell, k})$ coincide and are of dimension one, $v_m$ is also an $\ell$-weight vector of $\ell$-weight $m$ for all $m \in \mathcal{M}_0(Y_{\ell, k})$.

We determine the action of $\U_q^{h}(\hat{sl}_{n+1})$ on this basis. Fix $m\in \mathcal{M}_{I_0}(Y_{\ell,k})$. For all $i \in I_0$, we have $\wt(m)(h_i) = 0, \pm 1$. Assume that $\wt(m)(h_i) = 0$. Then on the one hand $x_{i,0}^{\pm} \cdot v_m = 0$ by definition of the family $\{v_w\}_{w \in W_0}$. And on the other hand $\tilde{e}_i \cdot m = 0 $ and $ \tilde{f}_i \cdot m = 0$ by the description of the crystal $\mathcal{M}_0(Y_{\ell,k})$ recalled above. Now assume that $\wt(m)(h_i) = \pm 1$. The vector $S_i(v_m) = x_{i,0}^{\mp} \cdot v_m$ is of the form $v_{m'}$ with $m' \in \mathcal{M}_0(Y_{\ell,k})$ such that 
$$\wt(m') = s_i(\wt(m)) = \wt(m) \mp \alpha_i.$$
But the description of $\mathcal{M}_0(Y_{\ell,k})$ shows that the unique monomial of weight $\wt(m) \mp \alpha_i$ is $\tilde{f}_i \cdot m$ (resp. $\tilde{e}_i \cdot m$). Hence $m' $ is equal to $ \tilde{f}_i \cdot m$ (resp. $\tilde{e}_i \cdot m$). Finally we have shown that for all $i \in I_0$ and $m \in \mathcal{M}_0(Y_{\ell,k})$,
$$x_{i,0}^{+}\cdot v_m = v_{\tilde{e}_i \cdot m} \text{ and } x_{i,0}^{-}\cdot v_m = v_{\tilde{f}_i \cdot m}.$$
\end{proof}

In particular, the action of $\U_q(\hat{sl}_{n+1})'$ on the fundamental modules $V_0(Y_{\ell, k})$ is determined by the combinatorics of monomial crystals $\mathcal{M}_0(Y_{\ell, k})$: in fact, the action of operators $x_{i,r}^{\pm}$ ($1 \leq i \leq n$, $r \in \ZZ$) deduces from the action of the $x_{i,0}^{\pm}$ (given by $\mathcal{M}_0(Y_{\ell, k})$) and the action of $h_{i,r}$ (given by the $\ell$-weights $m \in \mathcal{M}_0(Y_{\ell, k})$) from (\ref{actcartld}).\\

Let us begin the construction of extremal fundamental loop weight modules. Assume that $n = 2r+1$ is odd and $\ell \leq r+1$ (the case $\ell > r+1$ is discussed below at Remark \ref{remellplus}). Consider the monomial $\U_q(\hat{sl}_{n+1})$-crystal $ \mathcal{M}(e^{\varpi_\ell}Y_{\ell, 0} Y_{0, \ell}^{-1}) $, supposed to be closed. This is the case if and only if $\ell = 1$ or $\ell= r+1$ (Theorem \ref{proclocry}). Set $p = n+1$ or $p = 2$ respectively.

Denote by $ \mathcal{E} $ (resp. $ \mathcal{E}_{j,k} $ for $ 0 \leq j \leq n $ and $ k \in \ZZ $) the set of monomials occurring in $ \mathcal{M}(e^{\varpi_\ell}Y_{\ell, 0} Y_{0, \ell}^{-1}) $ (resp. in $ (\tau_{p, -\delta})^{k} \left(  \mathcal{M}_{I_{j}}(Y_{\ell + j, j} Y_{j, \ell + j}^{-1}) \right) = \mathcal{M}_{I_{j}}(Y_{\ell + j, j + kp} Y_{j, \ell + j + kp}^{-1}) $). By (\ref{eqcrys2}), one has $ \mathcal{E} = \bigsqcup_{k \in \ZZ} \mathcal{E}_{j,k} $ for all $ 0 \leq j \leq n $.

Let
\begin{equation}\label{extrvectsp}
V(e^{\varpi_\ell}Y_{\ell, 0} Y_{0, \ell}^{-1}) = \bigoplus_{m \in \mathcal{E}} \CC v_{m}
\end{equation}
be the vector space freely generated by elements of $ \mathcal{E} $. For all $ 0 \leq j \leq n $ and $ k \in \ZZ $ set $ V_{k}^{(j)} = \bigoplus_{m \in \mathcal{E}_{j,k}} \CC v_{m} $ the subspace of $V(e^{\varpi_\ell}Y_{\ell, 0} Y_{0, \ell}^{-1})$ of dimension $ \dim(V_0(\Lambda_{\ell})) $. In particular, we have
$$V(e^{\varpi_\ell}Y_{\ell, 0} Y_{0, \ell}^{-1}) = \bigoplus_{k \in \ZZ} V_{k}^{(j)}.$$
This decomposition can be compared to the equalities of crystals (\ref{eqcrys2}).

We endow the vector space $V(e^{\varpi_\ell}Y_{\ell, 0} Y_{0, \ell}^{-1})$ with a structure of $ \U_{q}^{v,j}(sl_{n+1}^{tor}) $-module as follows ($0 \leq j \leq n$): for all $k \in \ZZ$, let $(v_m)$ be the basis of  the $ \U_{q}^{v,j}(sl_{n+1}^{tor}) $-module $ V_0(Y_{\ell, j+kp})^{(j)} $ defined in Proposition \ref{actfun}, indexed by the set of monomials
$$\Xi^j \left( \mathcal{M}_{I_j}(Y_{\ell + j, j + kp}Y_{j, \ell + j + kp}^{-1}) \right) = \left\lbrace \Xi^j(m) \vert m \in \mathcal{M}_{I_j}(Y_{\ell + j, j + kp}Y_{j, \ell + j + kp}^{-1}) \right\rbrace.$$
Let us define an isomorphism of vector spaces between $V_{k}^{(j)}$ and $V_0(Y_{\ell, j+kp})^{(j)}$ by
\begin{eqnarray*}
V_{k}^{(j)} & \longrightarrow & V_0(Y_{\ell, j+kp})^{(j)} \\
v_m & \mapsto & v_{\Xi^{j}(m)}
\end{eqnarray*}
We endow the vector space $V_{k}^{(j)}$ with a structure of $ \U_{q}^{v,j}(sl_{n+1}^{tor}) $-module by pulling back the action of $ \U_{q}^{v,j}(sl_{n+1}^{tor}) $ on $V_0(Y_{\ell, j+kp})^{(j)}$. By direct sum, $V(e^{\varpi_\ell}Y_{\ell, 0} Y_{0, \ell}^{-1})$ is a $ \U_{q}^{v,j}(sl_{n+1}^{tor}) $-module, denoted by $V(e^{\varpi_\ell}Y_{\ell, 0} Y_{0, \ell}^{-1})^{(j)}$.

\begin{prop}
There exists a structure of $ \U_{q}(sl_{n+1}^{tor}) $-module on $V(e^{\varpi_\ell}Y_{\ell, 0} Y_{0, \ell}^{-1})$ such that for all $j\in I$ the induced $ \U_{q}^{v,j}(sl_{n+1}^{tor}) $-module is isomorphic to $V(e^{\varpi_\ell}Y_{\ell, 0} Y_{0, \ell}^{-1})^{(j)}$. Furthermore the $q$--character of $V(e^{\varpi_\ell}Y_{\ell, 0} Y_{0, \ell}^{-1})$ is
$$\chi_q(V(e^{\varpi_\ell}Y_{\ell, 0} Y_{0, \ell}^{-1})) = \sum_{m \in \mathcal{E}} m,$$
where $\mathcal{E}$ is the set of the monomials occurring in $\mathcal{M}(e^{\varpi_\ell}Y_{\ell, 0} Y_{0, \ell}^{-1})$.
\end{prop}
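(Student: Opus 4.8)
The idea is to build the action of $\U_q(sl_{n+1}^{tor})$ on $V = V(e^{\varpi_\ell}Y_{\ell,0}Y_{0,\ell}^{-1})$ generator by generator, using the $n+1$ vertical actions $V^{(j)}$ that we already have in hand, and to check that the defining relations of Definition \ref{defqta} are satisfied. The point is that every generator $x_{i,r}^\pm$ of $\U_q(sl_{n+1}^{tor})$ lies in one of the vertical subalgebras $\U_q^{v,j}(sl_{n+1}^{tor})$ for each $j \neq i$ (indeed $i \in I_j$ means $x_{i,r}^\pm, h_{i,m} \in \U_q^{v,j}(sl_{n+1}^{tor})$), so a priori we get several candidate definitions of its action; the whole content is that they coincide. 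First I would note that the $\ell$-weight of each basis vector $v_m$ is forced: the description in Proposition \ref{actfun} (applied inside each $V_k^{(j)} \cong V_0(Y_{\ell,j+kp})^{(j)}$ via the isomorphism $v_m \mapsto v_{\Xi^j(m)}$, then transported by the Dynkin rotation $\theta^{(j)}$) shows $v_m$ has $\ell$-weight $m$ for the action of the Cartan part of $\U_q^{v,j}$, hence of $\hat\U_i$ for all $i \in I_j$, and since $\bigcup_j I_j = I$ this pins down the action of the full toroidal Cartan $\U_q(\hat\Hlie)$ on $v_m$ to be the $\ell$-weight $m$. So the definition of $h_{i,m}\cdot v_m$ and $k_h \cdot v_m$ is unambiguous.

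Next I would define $x_{i,r}^{\pm}\cdot v_m$: pick any $j \neq i$, use the $\U_q^{v,j}$-module structure $V^{(j)}$. The key claim is independence of the choice of $j$. For this I would argue as follows. Fix $i$ and two indices $j_1 \neq j_2$, both distinct from $i$. On the subalgebra $\hat\U_i \subset \U_q^{v,j_1} \cap \U_q^{v,j_2}$ (generated by $x_{i,r}^\pm, h_{i,m}, k_{ph_i}$, isomorphic to $\U_q(\hat{sl}_2)'$), both module structures restrict to integrable $\hat\U_i$-modules with the same $\ell$-weights, namely the monomials $m$ graded by $\Xi_i(m)$. Because $\mathcal{M}(e^{\varpi_\ell}Y_{\ell,0}Y_{0,\ell}^{-1})$ is $q$-closed in direction $i$ (Theorem \ref{proclocry}), these $\ell$-weights assemble, $\hat\U_i$-string by $\hat\U_i$-string, into $q$-characters of $\U_q(\hat{sl}_2)'$-modules; since $\mathcal{M}$ is also closed under $\tilde e_i, \tilde f_i$, each such string is a full $\mathcal{B}_0(\Lambda_1)$-type string $Y_{i,l} \to Y_{i,l+2}^{-1}$, i.e. a two-dimensional fundamental $\U_q(\hat{sl}_2)'$-module or a trivial one. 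On such strings the action of $x_{i,r}^\pm$ is rigid up to scalars, and the scalars are fixed once we know the $\ell$-weights. Hence the $\hat\U_i$-module structures coming from $V^{(j_1)}$ and $V^{(j_2)}$ agree on each string, so $x_{i,r}^\pm$ acts the same way. This is the same mechanism Hernandez used in \cite{hernandez_quantum_2009}, and the promotion operator $\phi$ is what guarantees the strings from different $j$'s are genuinely the same: applying $\phi$ intertwines $\U_q^{v,j}$ with $\U_q^{v,j+1}$ by the $\theta$-twist, and $\phi$ preserves $\mathcal{M}(e^{\varpi_\ell}Y_{\ell,0}Y_{0,\ell}^{-1})$ (it is a $\theta$-twisted automorphism), so the whole web of vertical actions is $\phi$-equivariant.

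Then I would verify the toroidal relations. The relations internal to a single $\U_q^{v,j}$ (for indices all lying in $I_j$) hold because $V^{(j)}$ is a module. The only relations to check by hand are those mixing an index $i$ with indices not all in a common $I_j$ — but in type $A_n^{(1)}$ the Serre-type and commutation relations only involve triples $\{i-1,i,i+1\}$ or pairs, and for any such set there is a $j \in I$ avoiding all of them (since $n+1 \geq 4$); for $\ell = 1$ or $r+1$ one has $n = 2r+1 \geq 3$, and the cases $n=3$ need a direct check but still admit such a $j$ for each relation. So every relation of Definition \ref{defqta} involves only generators sitting in a common $\U_q^{v,j}(sl_{n+1}^{tor})$, where it holds. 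The compatibility of the $k_h$'s and $h_{i,m}$'s across different $j$ is exactly the $\ell$-weight statement above. This shows $V$ is a $\U_q(sl_{n+1}^{tor})$-module whose restriction to each $\U_q^{v,j}$ is $V^{(j)}$ by construction.

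Finally, the $q$-character. By construction each $\ell$-weight space $V_{(\nu,\gamma)} = V_m$ is spanned by the single vector $v_m$ — this is the thinness, inherited from thinness of the fundamental modules $V_0(Y_{\ell,j+kp})$ in Proposition \ref{actfun} — and the monomials $m$ occurring are precisely the elements of $\mathcal{E} = \mathcal{M}(e^{\varpi_\ell}Y_{\ell,0}Y_{0,\ell}^{-1})$, each with multiplicity one. Hence $\chi_q(V) = \sum_{m\in\mathcal E} m$. \textbf{The main obstacle} is the well-definedness of $x_{i,r}^\pm$ independently of $j$: it is the heart of the construction and is precisely where $q$-closedness of $\mathcal{M}_\ell$ (hence the restriction $\ell = 1, r+1, n$) and the promotion operator $\phi$ both enter. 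Everything else is bookkeeping with the structure already assembled.
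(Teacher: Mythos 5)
Your proposal is correct and follows essentially the same route as the paper: the action is assembled from the vertical module structures $V^{(j)}$, the independence of the choice of $j$ is reduced to the fact that the $\hat{\U}_i$-action on $v_m$ is determined by data intrinsic to $m$ and $i$ (the Kashiwara operators $\tilde e_i,\tilde f_i$ and the $\ell$-weight $\Xi_i(m)$, via Proposition \ref{actfun} and relation (\ref{actcartld})), the toroidal relations are checked by placing any pair of indices inside a common $I_j$, and thinness gives the $q$--character. The only cosmetic difference is that you phrase the independence step via rigidity of the length-$\leq 1$ $\hat\U_i$-strings, whereas the paper reads it off directly from the explicit formulas of Proposition \ref{actfun}; both amount to the same observation.
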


\begin{proof}
To define an action of $\U_{q}(sl_{n+1}^{tor})$ on $V(e^{\varpi_\ell}Y_{\ell, 0} Y_{0, \ell}^{-1})$, we determine the action of the subalgebras $\hat{\U}_i$ for all $i \in I$. For that, let $j \in I$ be such that $j \neq i$. The action of $\hat{\U}_i$ on $V(e^{\varpi_\ell}Y_{\ell, 0} Y_{0, \ell}^{-1})$ is the restriction of the action of $ \U_{q}^{v,j}(sl_{n+1}^{tor}) $ on $V(e^{\varpi_\ell}Y_{\ell, 0} Y_{0, \ell}^{-1})^{(j)}$. Furthermore we set for all $h \in \Hlie$ and $m \in \mathcal{M}(e^{\varpi_\ell}Y_{\ell, 0} Y_{0, \ell}^{-1})$, $$k_h \cdot v_m = q^{\wt(m)(h)} v_m.$$

The definition of the action of $\hat{\U}_i$ ($i \in I$) is independent of the choice of $j \in I$, $j \neq i$: for $m \in \mathcal{E}$, the action of $\U_{q}^{v,j}(sl_{n+1}^{tor})$ on the vector $v_m$ is determined by the sub-$I_{j}$-crystal $\mathcal{M}_{I_{j}}(m)$ of $\mathcal{M}(e^{\varpi_\ell}Y_{\ell, 0} Y_{0, \ell}^{-1})$ and by the $\ell$-weight $\Xi^{j}(m)$. So the action of $\hat{\U}_{i}$ on $v_m$ is determined by the action of $\tilde{e}_i$ and $\tilde{f}_i$ on $m$ and by the $\ell$-weight $\Xi_i(m)$ which are independent of the choice of $j$.

Let us show that this action endows $V(e^{\varpi_\ell}Y_{\ell, 0} Y_{0, \ell}^{-1})$ with a structure of $ \U_{q}(sl_{n+1}^{tor}) $-module. We fix two indices $i_1, i_2 \in I$ and we check the relations satisfied by $\hat{\U}_{i_1}$ and $\hat{\U}_{i_2}$. The indices $i_1$ and $i_2$ are in the same connected subset $I_j$ of the set of vertices of the Dynkin diagram ($j \in I$). By construction, the action of $\hat{\U}_{i_1}$ and $\hat{\U}_{i_2}$ are restrictions of the action of $\U_q^{v,j}(sl_{n+1}^{tor})$ on $V(e^{\varpi_\ell}Y_{\ell, 0} Y_{0, \ell}^{-1})$. As $V(e^{\varpi_\ell}Y_{\ell, 0} Y_{0, \ell}^{-1})^{(j)}$ is a $\U_q^{v,j}(sl_{n+1}^{tor})$-module, the relations between $\hat{\U}_{i_1}$ and $\hat{\U}_{i_2}$ are satisfied and $V(e^{\varpi_\ell}Y_{\ell, 0} Y_{0, \ell}^{-1})$ is a $\U_q(sl_{n+1}^{tor})$-module.

By construction, the induced $ \U_{q}^{v,j}(sl_{n+1}^{tor}) $-module obtained from $V(e^{\varpi_\ell}Y_{\ell, 0} Y_{0, \ell}^{-1})$ by restriction is isomorphic to $V(e^{\varpi_\ell}Y_{\ell, 0} Y_{0, \ell}^{-1})^{(j)}$ for all $j \in I$. Furthermore the $\ell$-weight of $v_m$ is $\Xi_i(m)$ for the action of $\hat{\U}_i$ ($i \in I$). So $m$ is the $\ell$-weight of  $v_m$ and the $q$--character of $V(e^{\varpi_\ell}Y_{\ell, 0} Y_{0, \ell}^{-1})$ is the sum of monomials occurring in $\mathcal{M}(e^{\varpi_\ell}Y_{\ell, 0} Y_{0, \ell}^{-1})$ with multiplicity one.
\end{proof}

\begin{rem}\label{remellplus}
Let us consider the case $\ell > r+1$. The monomial crystal \linebreak $\mathcal{M}(e^{\varpi_\ell}Y_{\ell, 0} Y_{0, n+1-\ell}^{-1})$ is closed for $\ell = n$. We show in the same way as above that there exists also a $\U_q(sl_{n+1}^{tor})$-module $V(e^{\varpi_n}Y_{n, 0} Y_{0, 1}^{-1})$ whose $q$--character is the sum of monomials occurring in $\mathcal{M}(e^{\varpi_n}Y_{n, 0} Y_{0, 1}^{-1})$ with multiplicity one.

Actually this $\U_q(sl_{n+1}^{tor})$-module is related to the previous one for $\ell = 1$ as follows. We have defined an automorphism $\iota$ of the Dynkin diagram of type $A_{n}^{(1)}$. It induces an algebra automorphism of $\U_q(sl_{n+1}^{tor})$ we still denote $\iota$, which sends $x_{i,r}^{\pm}, h_{i,m}, k_h$ to $x_{\iota(i),r}^{\pm}, h_{\iota(i), m}, k_{\iota_\Hlie(h)}$ ($i \in I, r\in \ZZ, m \in \ZZ-\{0\}, h \in \Hlie$). Let us denote by $V(e^{\varpi_n}Y_{n, 0} Y_{0, 1}^{-1})^{\iota}$ the $\U_q(sl_{n+1}^{tor})$-module obtained from $V(e^{\varpi_n}Y_{n, 0} Y_{0, 1}^{-1})$ by twisting the action by $\iota$. Then we show easily that $V(e^{\varpi_n}Y_{n, 0} Y_{0, 1}^{-1})^{\iota}$ and $V(e^{\varpi_{1}}Y_{1, 0} Y_{0, 1}^{-1})$ are isomorphic.
\end{rem}

\subsection{Study of the extremal fundamental loop weight modules}

In this section, we study the $\U_q(sl_{n+1}^{tor})$-modules $V(e^{\varpi_\ell}Y_{\ell, 0}Y_{0,d_\ell}^{-1})$, where $n=2r+1$ is supposed to be odd and $\ell = 1, n$ or $\ell = r+1$. We set $p = n+1$ or $p=2$ respectively.

\begin{prop}
The $\U_q(sl_{n+1}^{tor})$-module $V(e^{\varpi_\ell}Y_{\ell, 0}Y_{0,d_\ell}^{-1})$ is integrable. Moreover, it satisfies properties $(iii)$ and $(iv)$ of Remark \ref{remintmod} with weight subspaces of dimension one.
\end{prop}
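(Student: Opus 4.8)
The plan is to read everything off the basis $(v_m)_{m \in \mathcal{E}}$ of $V_\ell := V(e^{\varpi_\ell}Y_{\ell, 0} Y_{0, d_\ell}^{-1})$ produced in the preceding proposition, on which $k_h \cdot v_m = q^{\wt(m)(h)} v_m$. I will first assume $\ell \leq r+1$, so that $\ell = 1$ or $\ell = r+1$; the case $\ell = n$ then follows by twisting by the algebra automorphism $\iota$ of $\U_q(sl_{n+1}^{tor})$ of Remark \ref{remellplus}, which transports integrability together with all the finiteness properties. Since each $v_m$ is a $k_h$-eigenvector of weight $\wt(m)$, one obtains at once a weight space decomposition $V_\ell = \bigoplus_{\nu \in P}(V_\ell)_\nu$ with $(V_\ell)_\nu = \bigoplus_{m \in \mathcal{E},\, \wt(m) = \nu}\CC v_m$.

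For local nilpotency of the horizontal generators, fix $i \in I$ and pick $j \in I$ with $j \neq i$, so that $i \in I_j$ and the horizontal generator $x_i^{\pm}$, which equals $x_{i,0}^{\pm}$, lies in $\U_q^{v,j}(sl_{n+1}^{tor})$. By construction $V_\ell^{(j)} = \bigoplus_{k \in \ZZ} V_k^{(j)}$, and $x_{i,0}^{\pm}$ preserves each $V_k^{(j)}$; the latter is finite-dimensional, being isomorphic via $\theta^{(j)}$ to the fundamental module $V_0(Y_{\ell, j+kp})$ of $\U_q(\hat{sl}_{n+1})'$, on which $x_{i,0}^{\pm}$ acts nilpotently. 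Hence every $v_m$, lying in some $V_k^{(j)}$, is annihilated by a power of $x_{i,0}^{\pm}$, so $x_{i,0}^{\pm}$ is locally nilpotent on $V_\ell$. Combined with the weight decomposition, this shows that $\mathrm{Res}(V_\ell)$ is an integrable $\U_q(\hat{sl}_{n+1})$-module, i.e. $V_\ell$ is integrable.

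It remains to establish (iii), (iv) and the dimension-one claim. The key input is that $\mathcal{M}_\ell \simeq \mathcal{B}(\varpi_\ell)$ as $P$-crystals (Theorem \ref{monrzt}) and that $\mathcal{B}(\varpi_\ell)$ has weight multiplicities at most $1$: indeed $V(\varpi_\ell)_\mu \simeq W(\varpi_\ell)_{\cl(\mu)}$, and $W(\varpi_\ell)$ is, as a $\U_q(sl_{n+1})$-module, isomorphic to $V_0(\Lambda_\ell)$, all of whose weight spaces are one-dimensional; since $W(\varpi_\ell)$ is of level $0$ its $P_{\cl}$-weight spaces coincide with these, whence $\dim V(\varpi_\ell)_\mu \leq 1$. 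Consequently $m \mapsto \wt(m)$ is injective on $\mathcal{E}$ and $\dim(V_\ell)_\nu \leq 1$ for all $\nu \in P$, which gives (iii). For (iv), use the equality of crystals (\ref{eqcrys2}): $\mathcal{E} = \bigsqcup_{k \in \ZZ}(\tau_{p, -\delta})^k(\mathcal{E}_{j,0})$ with $\mathcal{E}_{j,0}$ finite and $\tau_{p,-\delta}$ of weight $-\delta$, so $\wt(\mathcal{E}) \subseteq S + \ZZ\delta$ for a finite subset $S \subset P$. Fix $\nu \in P$ and $i \in I$; projecting to $P_{\cl} = P/\QQ\delta$, which is torsion-free, the class $\cl(\alpha_i)$ is nonzero, hence of infinite order, so only finitely many $N \in \ZZ$ satisfy $\cl(\nu) + N\cl(\alpha_i) \in \cl(S)$; therefore $(V_\ell)_{\nu \pm N\alpha_i} = 0$ for $N \gg 0$, proving (iv). The only genuinely delicate point here is the weight multiplicity-one property of $\mathcal{B}(\varpi_\ell)$; granted that, the remaining assertions are immediate consequences of the construction of $V_\ell$.
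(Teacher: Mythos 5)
Your proof is correct and follows essentially the same strategy as the paper: integrability and local nilpotency are read off the decomposition of $V_\ell^{(j)}$ into finite-dimensional fundamental modules over the vertical subalgebras, and the dimension-one claim is reduced to the multiplicity-freeness of the weights of $V_0(\Lambda_\ell)$, with the case $\ell=n$ handled by twisting by $\iota$. The only (harmless) divergence is cosmetic: for (iii) you route through $V(\varpi_\ell)_\mu \simeq W(\varpi_\ell)_{\cl(\mu)}$ rather than the character identity $\sum_m e(\wt(\Xi^0(m))) = \chi(V_0(\Lambda_\ell))$, and for (iv) you give an explicit torsion-freeness argument in $P_{\cl}$ where the paper simply cites that the fundamental modules satisfy (iv).
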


\begin{proof}
Assume first that $\ell = 1, r+1$. The $q$--character of $V(e^{\varpi_\ell}Y_{\ell, 0}Y_{0,\ell}^{-1})$ is known: this is the sum of monomials occurring in $\mathcal{M}(e^{\varpi_\ell}Y_{\ell, 0}Y_{0,\ell}^{-1})$ with multiplicity one. Furthermore one has the equality of $I_0$-crystals
\begin{equation*}
\mathcal{M}(e^{\varpi_\ell}Y_{\ell, 0} Y_{0, \ell}^{-1}) = \bigsqcup_{k \in \ZZ} (\tau_{p, -\delta})^{k} \left(  \mathcal{M}_{I_{0}}(e^{\varpi_\ell}Y_{\ell, 0} Y_{0, \ell}^{-1}) \right).
\end{equation*}
For all $m \in \mathcal{M}_{I_{0}}(e^{\varpi_\ell}Y_{\ell, 0} Y_{0, \ell}^{-1})$ and $k \in \ZZ$, $\wt((\tau_{p, -\delta})^{k}(m)) = \wt(m) - k \delta$. So to prove that the weight spaces of $V(e^{\varpi_\ell}Y_{\ell, 0}Y_{0,\ell}^{-1})$ are of dimension one, we have to show that the weights of monomials occurring in $\mathcal{M}_{I_{0}}(e^{\varpi_\ell}Y_{\ell, 0} Y_{0, \ell}^{-1})$ are different to each other. More precisely, it is sufficient to show that the sum
$$\sum_{m \in \mathcal{M}_{I_{0}}(e^{\varpi_\ell}Y_{\ell, 0} Y_{0, \ell}^{-1})} e\left(\wt(\Xi^{0}(m))\right) \in \bigoplus_{\nu \in P_0} \ZZ e(\nu)$$
is without multiplicity. This follows from the above results: it is the character of the $\U_q(sl_{n+1})$-module $V_0(\Lambda_{\ell})$.

For all $j \in I$, the representation $V(e^{\varpi_\ell}Y_{\ell, 0}Y_{0,\ell}^{-1})$ is completely reducible as a $\U_q^{v,j}(sl_{n+1}^{tor})$-module and we have
\begin{equation}\label{decvmod}
V(e^{\varpi_\ell}Y_{\ell, 0}Y_{0,\ell}^{-1})^{(j)} = \bigoplus_{p \in \ZZ} V_0(Y_{\ell, j + kp})^{(j)}.
\end{equation}
As the representations $V_0(Y_{\ell, j + kp})$ are all integrable, it holds for $V(e^{\varpi_\ell}Y_{\ell, 0}Y_{0,\ell}^{-1})$. Furthermore $V(e^{\varpi_\ell}Y_{\ell, 0}Y_{0,\ell}^{-1})$ satisfies the stronger property $(iv)$ of Remark \ref{remintmod}: in fact the representations $V_0(Y_{\ell, j + kp})$ are all isomorphic as $\U_q(sl_{n+1})$-modules and satisfy property $(iv)$. Hence we have $V(e^{\varpi_\ell}Y_{\ell, 0}Y_{0,\ell}^{-1})_{\nu + N\alpha_i} = \{0\}$ for all $\nu \in P$, $i \in I$, $N >>0$.

Finally, the case $\ell = n$ is deduced from the case $\ell = 1$ by the $\iota$-twisted automorphism $\psi$.
\end{proof}

\begin{thm}\label{thmelm}
The $\U_q(sl_{n+1}^{tor})$-module $V(e^{\varpi_\ell}Y_{\ell, 0}Y_{0,d_\ell}^{-1})$ is an extremal loop weight module generated by the vector $v_{e^{\varpi_\ell}Y_{\ell, 0}Y_{0,d_\ell}^{-1}}$ of $\ell$-weight $e^{\varpi_\ell}Y_{\ell, 0}Y_{0,d_\ell}^{-1}$.
\end{thm}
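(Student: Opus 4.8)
The plan is to verify the three conditions of Definition \ref{defelm} in turn. First I would establish that $v := v_{e^{\varpi_\ell}Y_{\ell,0}Y_{0,d_\ell}^{-1}}$ generates $V(e^{\varpi_\ell}Y_{\ell,0}Y_{0,d_\ell}^{-1})$ over $\U_q(sl_{n+1}^{tor})$. Since the $q$--character is the multiplicity-free sum of the monomials of $\mathcal{M}(e^{\varpi_\ell}Y_{\ell,0}Y_{0,d_\ell}^{-1})$, it suffices to show that every basis vector $v_m$ lies in $\U_q(sl_{n+1}^{tor}) \cdot v$. The monomial $e^{\varpi_\ell}Y_{\ell,0}Y_{0,d_\ell}^{-1}$ equals $Y_{\ell,0}Y_{0,\ell}^{-1}$ (for $\ell\le r+1$), which is the generating monomial of $\mathcal{M}_{I_0}(M_0)$. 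Using the decomposition \eqref{eqcrys2} and the fact that, on each $V_k^{(j)}$, the action of $\U_q^{v,j}(sl_{n+1}^{tor})$ is that of the fundamental module $V_0(Y_{\ell, j+kp})^{(j)}$ which is $\ell$-highest weight hence cyclic, one reaches every $v_m$ with $m$ in a fixed $\mathcal{E}_{j,k}$ from any single vector of that piece; then one uses the operators $x_{i,r}^{\pm}$ with varying $i\in I$ (equivalently, moving between the crystals $\mathcal{M}_{I_j}$ for different $j$, which overlap since $\mathcal{M}(e^{\varpi_\ell}Y_{\ell,0}Y_{0,\ell}^{-1})$ is connected as an $I$-crystal by Theorem \ref{thmkasem}(i) and Theorem \ref{monrzt}) to pass between the $\mathcal{E}_{j,k}$'s and in particular between different levels $k$. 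Concretely, connectedness of the crystal plus Proposition \ref{actfun}-type formulas for the action of each $x_{i,0}^{\pm}$ (which are nonzero precisely when $\tilde e_i$ or $\tilde f_i$ is) show the $\U_q(sl_{n+1}^{tor})$-submodule generated by $v$ contains $v_m$ for every $m\in\mathcal{E}$.

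Second, I would check that $v$ is extremal for $\U_q^h(sl_{n+1}^{tor})$, i.e. extremal as a vector of the restricted $\U_q(\hat{sl}_{n+1})$-module $\mathrm{Res}(V)$, of weight $\varpi_\ell$. By Theorem \ref{monrzt} the $P$-crystal $\mathcal{M}(e^{\varpi_\ell}Y_{\ell,0}Y_{0,\ell}^{-1})$ is isomorphic to $\mathcal{B}(\varpi_\ell)$, in which the vertex $e^{\varpi_\ell}Y_{\ell,0}Y_{0,\ell}^{-1}=M_0$ is extremal (it corresponds to $v_{\varpi_\ell}$). Since in a thin module all weight (hence $\ell$-weight) spaces are one-dimensional, the operators $S_i$ on the relevant vectors are forced — for any $w\in W$, applying $(x_i^\mp)^{(\pm w(\varpi_\ell)(h_i))}$ to $v_w$ must land in the one-dimensional $\ell$-weight space indexed by the corresponding crystal vertex — so the collection $\{v_w\}_{w\in W}$ exists and is read off directly from the crystal isomorphism. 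The weight-space dimensions being one (established in the preceding Proposition) is what makes this work, together with the fact established earlier (via the Proposition on $\chi(\mathrm{Res}(V))=\beta(\chi_q(V))$) that $\mathrm{Res}(V)$ has character that of $V(\varpi_\ell)$. Indeed one gets more: $\mathrm{Res}(V)$ is generated by the extremal vector $v$ of weight $\varpi_\ell$, so by Theorem \ref{thmkasem}(ii) it is isomorphic to $V(\varpi_\ell)$, which both confirms extremality and will be used in later results; but for Definition \ref{defelm}(ii) only extremality of $v$ is needed.

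Third, condition (iii): $\U_q^{v,j}(sl_{n+1}^{tor})\cdot w$ is finite-dimensional for every $w\in V$ and every $j\in I$. This is immediate from the construction: by the Proposition giving the $\U_q(sl_{n+1}^{tor})$-module structure, $V^{(j)} = \bigoplus_{k\in\ZZ} V_k^{(j)}$ as a $\U_q^{v,j}(sl_{n+1}^{tor})$-module, where each $V_k^{(j)} \cong V_0(Y_{\ell, j+kp})^{(j)}$ is a finite-dimensional (fundamental) module; hence any $w$ has finitely many nonzero components $w_k\in V_k^{(j)}$, and $\U_q^{v,j}(sl_{n+1}^{tor})\cdot w \subseteq \bigoplus_k \U_q^{v,j}(sl_{n+1}^{tor})\cdot w_k$ is finite-dimensional. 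The case $\ell=n$ reduces to $\ell=1$ via the $\iota$-twisted automorphism $\psi$ (equivalently the twist $\iota$ of $\U_q(sl_{n+1}^{tor})$, cf. Remark \ref{remellplus}), which preserves integrability, the extremality condition, and the finite-dimensionality in (iii). The main obstacle is the first step: proving cyclicity requires combining the $\ell$-highest-weight cyclicity on each vertical slice with genuine $I$-connectedness of the monomial crystal to move between slices and levels, and keeping track of which $x_{i,r}^\pm$ actually act nontrivially — this is where the explicit crystal description from Section 3 and the formulas for the action do the real work.
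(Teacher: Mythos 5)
Your proposal is correct and follows essentially the same route as the paper: condition (iii) of Definition \ref{defelm} comes from the decomposition of $V^{(j)}$ into finite-dimensional fundamental modules, and conditions (i)--(ii) come from the compatibility of the action with the Kashiwara operators together with the fact that $M_0=e^{\varpi_\ell}Y_{\ell,0}Y_{0,d_\ell}^{-1}$ is extremal in the connected crystal $\mathcal{M}(e^{\varpi_\ell}Y_{\ell,0}Y_{0,d_\ell}^{-1})$. The paper packages the last two points into one clean lemma --- a module with basis $(v_m)$ indexed by a subcrystal of $\mathcal{M}$ and satisfying $(x_i^{\pm})^{(k)}\cdot v_m=v_{\tilde e_i^k\cdot m}$ (resp.\ $v_{\tilde f_i^k\cdot m}$) has $v_m$ extremal whenever $m$ is, and is cyclic over the horizontal subalgebra alone when the crystal is connected --- so be aware that the vanishing half $x_i^{\pm}\cdot v_w=0$ of the extremality condition is not forced by one-dimensionality of weight spaces alone but by these explicit action formulas, which your construction does provide via Proposition \ref{actfun}.
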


\begin{proof}
We treat the case $\ell = 1, r+1$ (the case $\ell = n$ can be deduced from $\ell = 1$ by using $\psi$). The formulas (\ref{decvmod}) imply immediately the third point of Definition \ref{defelm}. The first two points are consequences of the following lemmas.
\end{proof}

\begin{lem}\label{lirepcrylem}
Let $\mathcal{M}'$ be a sub-$\U_q(\hat{sl}_{n+1})$-crystal of $\mathcal{M}$. Assume that $V$ is a $\U_q(\hat{sl}_{n+1})$-module with basis $(v_m)_{m \in \mathcal{M}'}$ satisfying
\begin{eqnarray}\label{lienrepcry}
\wt(v_m) = \wt(m), \ (x_{i}^{+})^{(k)} \cdot v_m = v_{\tilde{e}_i^k \cdot m} \text{ and } (x_{i}^{-})^{(k)} \cdot v_m = v_{\tilde{f}_i^k \cdot m}
\end{eqnarray}
for all $m \in \mathcal{M}', i \in I$ and $k \in \NN$, where $v_0 = 0$ by convention. If the monomial $m$ is extremal of weight $\lambda$, then the vector $v_m$ is an extremal vector of weight $\lambda$. Furthermore if the crystal $\mathcal{M}'$ is connected, then the $\U_q(\hat{sl}_{n+1})$-module $V$ is cyclic generated by any $v_m$ with $m \in \mathcal{M}'$.
\end{lem}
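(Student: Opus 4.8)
The plan is to transfer the crystal-theoretic notion of extremality to $V$ along the bijection $m\mapsto v_m$, using the relations (\ref{lienrepcry}). The starting point is a dictionary between $i$-extremality of a monomial and of the corresponding vector. For $m\in\mathcal{M}'$ and $i\in I$, the case $k=1$ of (\ref{lienrepcry}) gives $x_i^+\cdot v_m=v_{\tilde e_i\cdot m}$ and $x_i^-\cdot v_m=v_{\tilde f_i\cdot m}$, so $x_i^+\cdot v_m=0$ (resp. $x_i^-\cdot v_m=0$) if and only if $\tilde e_i\cdot m=0$, i.e. $\varepsilon_i(m)=0$ (resp. $\tilde f_i\cdot m=0$, i.e. $\varphi_i(m)=0$). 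Hence $v_m$ is $i$-extremal in the sense of the Remark in Section 2.4 if and only if $m$ is $i$-extremal in the crystal $\mathcal{M}'$. Moreover, when this holds, say with $\varepsilon_i(m)=0$ so that $\wt(m)(h_i)=\varphi_i(m)\ge 0$, I would apply (\ref{lienrepcry}) with $k=\varphi_i(m)$ to obtain
\[
S_i(v_m)=(x_i^-)^{(\wt(m)(h_i))}\cdot v_m=v_{\tilde f_i^{\,\varphi_i(m)}\cdot m}=v_{S_i(m)},
\]
and symmetrically $S_i(v_m)=v_{S_i(m)}$ when $\varphi_i(m)=0$. Thus $m\mapsto v_m$ intertwines the operators $S_i$ wherever they are defined.

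Granting this, the first assertion follows by a short induction on $l$. If $m$ is extremal of weight $\lambda$, then $\wt(v_m)=\wt(m)=\lambda$, and for any $i,i_1,\dots,i_l\in I$ the element $b:=S_{i_1}\circ\cdots\circ S_{i_l}(m)$ is defined and $i$-extremal (this is precisely what extremality of $m$ in $\mathcal{M}'$ says); by the intertwining property $S_{i_1}\circ\cdots\circ S_{i_l}(v_m)=v_b$, and by the dictionary above $v_b$ is $i$-extremal. By the characterization of extremal vectors recalled in the Remark, $v_m$ is an extremal vector of weight $\lambda$.

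For the second assertion, I would fix $m\in\mathcal{M}'$, set $V'=\U_q(\hat{sl}_{n+1})\cdot v_m$, and show $V'=V$. If $v_{m'}\in V'$ and $\tilde f_i\cdot m'\neq 0$ (resp. $\tilde e_i\cdot m'\neq 0$), then $v_{\tilde f_i\cdot m'}=x_i^-\cdot v_{m'}\in V'$ (resp. $v_{\tilde e_i\cdot m'}=x_i^+\cdot v_{m'}\in V'$), and since $\mathcal{M}'$ is a sub-$\U_q(\hat{sl}_{n+1})$-crystal these monomials again lie in $\mathcal{M}'$; as $\mathcal{M}'$ is connected, every $m'\in\mathcal{M}'$ is linked to $m$ by a finite chain of $\tilde e_i$'s and $\tilde f_i$'s, hence $v_{m'}\in V'$ for all $m'\in\mathcal{M}'$. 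Since $(v_{m'})_{m'\in\mathcal{M}'}$ is a basis of $V$, this forces $V=V'$, proving that $V$ is cyclic, generated by $v_m$; and $m$ was arbitrary.

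I do not expect a genuine obstacle here: the content is entirely the dictionary set up in the first paragraph, and the rest is formal. The one point demanding care is the sign bookkeeping when invoking (\ref{lienrepcry}) to compute $S_i(v_m)$: one must use $\wt(m)(h_i)=\varphi_i(m)-\varepsilon_i(m)$ to see that the exponent $\pm\wt(m)(h_i)$ is exactly the $k$ for which $\tilde f_i^{\,k}\cdot m$ (resp. $\tilde e_i^{\,k}\cdot m$) equals $S_i(m)$, and to note that the two cases $\varepsilon_i(m)=0$ and $\varphi_i(m)=0$ exhaust the $i$-extremal monomials, overlapping precisely when $\wt(m)(h_i)=0$, where $S_i$ acts as the identity on both sides.
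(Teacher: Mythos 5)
Your proposal is correct and follows essentially the same route as the paper: both transfer extremality along the correspondence $m\mapsto v_m$ using the relations (\ref{lienrepcry}), and both deduce cyclicity from connectedness of $\mathcal{M}'$ by lifting chains of Kashiwara operators to products of (divided powers of) the $x_i^{\pm}$. The only cosmetic difference is that you verify the iterated-$S_i$ characterization of extremal vectors from the Remark in Section 2.4, whereas the paper constructs the Weyl-group-indexed family $\{v_w\}_{w\in W}$ directly from $\{m_w\}_{w\in W}$; these are equivalent formulations.
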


\begin{proof}
Assume that $m$ is extremal of weight $\lambda$: there exists $\{m_w\}_{w \in W}$ such that $m_{Id}= m$ and
\begin{eqnarray}\label{eqinter}
\begin{array}{c}
\tilde{e}_i \cdot m_w=0 \text{ and }(\tilde{f}_i)^{w(\lambda)(h_i)} \cdot m_w= m_{s_i(w)} \text{ if } w(\lambda)(h_i)\geq 0,\\
\tilde{f}_i \cdot m_w=0 \text{ and }(\tilde{e}_i)^{-w(\lambda)(h_i)} \cdot m_w= m_{s_i(w)} \text{ if } w(\lambda)(h_i)\leq 0.
\end{array}
\end{eqnarray}
For all $w \in W$, set $v_w = v_{m_w}$. By (\ref{lienrepcry}) and (\ref{eqinter}), $\{v_w\}_{w \in W}$ satisfies $v_{Id} = v_{m}$ and
 $$x_i^{\pm} \cdot v_w=0 \text{ if } \pm w(\lambda)(h_i)\geq 0 \text{ and }(x_i^{\mp})^{(\pm w(\lambda)(h_i))} \cdot v_w=v_{s_i(w)}.$$
Hence the vector $v_m$ is extremal of weight $\lambda$.

Assume that the crystal $\mathcal{M}'$ is connected and fix $m \in \mathcal{M}'$. For $m' \in \mathcal{M}'$, there exists a product $s$ of Kashiwara operators such that $s(m) = m'$. Consider the corresponding operator $S \in \U_q(\hat{sl}_{n+1})$ at the level of $V$, i.e. $S$ has the same expression as $s$ where the operators $\tilde{e}_i^k$ (resp. $\tilde{f}_i^k$) are replaced by $(x_{i}^{+})^{(k)}$ (resp. $(x_i^{-})^{(k)}$) in the product ($k \in \NN, i \in I$). By (\ref{lienrepcry}), $S(v_m) = v_{s(m)} = v_{m'}$ and the $\U_q(\hat{sl}_{n+1})$-module $V$ is cyclic generated by $v_m$.
\end{proof}

\begin{lem}
Assume that $\ell = 1, r+1$. The $\ell$-weight vector $v_{e^{\varpi_\ell}Y_{\ell, 0}Y_{0,\ell}^{-1}} \in V(e^{\varpi_\ell}Y_{\ell, 0}Y_{0,\ell}^{-1})$ is an extremal vector of weight $\varpi_\ell$ for the action of $\U_q^h(sl_{n+1}^{tor})$. Furthermore
$$V(e^{\varpi_\ell}Y_{\ell, 0}Y_{0,\ell}^{-1}) = \U_q^h(sl_{n+1}^{tor}) \cdot v_{e^{\varpi_\ell}Y_{\ell, 0}Y_{0,\ell}^{-1}}.$$
\end{lem}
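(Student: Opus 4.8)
The plan is to deduce this lemma from Lemma~\ref{lirepcrylem}. I apply that lemma to the module $V=\mathrm{Res}\bigl(V(e^{\varpi_\ell}Y_{\ell,0}Y_{0,\ell}^{-1})\bigr)$, regarded as a $\U_q(\hat{sl}_{n+1})$-module through the horizontal embedding (so $x_i^{\pm}$ acts as $x_{i,0}^{\pm}$ and $k_h$ as $k_h$), and to the sub-$\U_q(\hat{sl}_{n+1})$-crystal $\mathcal{M}'=\mathcal{M}(e^{\varpi_\ell}Y_{\ell,0}Y_{0,\ell}^{-1})$ of $\mathcal{M}$. By Theorem~\ref{monrzt}, $\mathcal{M}'$ is indeed a connected subcrystal of $\mathcal{M}$ isomorphic to $\mathcal{B}(\varpi_\ell)$ (recall $d_\ell=\ell$ for $\ell=1,r+1$), and $M_0=e^{\varpi_\ell}Y_{\ell,0}Y_{0,\ell}^{-1}$ is extremal in $\mathcal{M}$ of weight $\varpi_\ell$. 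Once the hypotheses of Lemma~\ref{lirepcrylem} are checked, that lemma yields immediately that $v_{M_0}$ is an extremal vector of weight $\varpi_\ell$ for $\U_q^h(sl_{n+1}^{tor})$ and, since $\mathcal{M}'$ is connected, that $V(e^{\varpi_\ell}Y_{\ell,0}Y_{0,\ell}^{-1})=\U_q^h(sl_{n+1}^{tor})\cdot v_{M_0}$.

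Thus the actual work is to verify condition~(\ref{lienrepcry}) of Lemma~\ref{lirepcrylem}: $\wt(v_m)=\wt(m)$, $(x_i^{+})^{(k)}\cdot v_m=v_{\tilde{e}_i^{k}\cdot m}$ and $(x_i^{-})^{(k)}\cdot v_m=v_{\tilde{f}_i^{k}\cdot m}$ for all $m\in\mathcal{M}'$, $i\in I$ and $k\in\NN$. The weight identity is the definition $k_h\cdot v_m=q^{\wt(m)(h)}v_m$. The case $k=0$ is trivial, and the case $k=1$ is exactly the description of the $\hat{\U}_i$-action obtained in the construction of the $\U_q(sl_{n+1}^{tor})$-module structure: $x_{i,0}^{+}\cdot v_m=v_{\tilde{e}_i\cdot m}$ and $x_{i,0}^{-}\cdot v_m=v_{\tilde{f}_i\cdot m}$ for every $i\in I$ (for $i\in I_0$ from Proposition~\ref{actfun} applied to the summands $V_0(Y_{\ell,kp})^{(0)}$, for $i=0$ from a vertical subalgebra $\U_q^{v,j}$ with $j\neq 0$, the two recipes agreeing). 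For $k\geq 2$ both sides vanish: since $\mathcal{M}'\simeq\mathcal{B}_0(\Lambda_\ell)_{\mathrm{aff}}$ and $\mathcal{B}_0(\Lambda_\ell)$ is minuscule, every $i$-string in $\mathcal{M}'$ has length at most one, so $\varepsilon_i(m)+\varphi_i(m)\leq 1$ and hence $\tilde{e}_i^{k}\cdot m=\tilde{f}_i^{k}\cdot m=0$; dually $\wt(m)(h_i)=\varphi_i(m)-\varepsilon_i(m)\in\{-1,0,1\}$ and the weight spaces of $V$ are one-dimensional, so $\mathrm{Res}(V)$ restricted to $\U_i\simeq\U_q(sl_2)$ is a direct sum of trivial and two-dimensional modules, on which $(x_i^{\pm})^{(k)}$ acts as $0$ for $k\geq 2$. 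With~(\ref{lienrepcry}) in hand, Lemma~\ref{lirepcrylem} gives both assertions.

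The step that requires the most care is the $k=1$ identity at the node $i=0$: this is the affine node of $\U_q^h(sl_{n+1}^{tor})$, yet it plays the role of a finite node for each vertical subalgebra $\U_q^{v,j}$ ($j\neq 0$) used to define its action, so one must invoke the compatibility established in the construction, namely that the action of $\hat{\U}_0$ on $v_m$ is independent of the auxiliary index $j$ and is governed by $\tilde{e}_0,\tilde{f}_0$ acting on $m$ and by the $\ell$-weight $\Xi_0(m)$. Here one uses that, for $i\neq j$, the map $\Xi^j$ intertwines $\tilde{e}_i,\tilde{f}_i$, which holds because these operators only modify the exponents of the $Y_{i-1,*}$, $Y_{i,*}$, $Y_{i+1,*}$ while $\varepsilon_i,\varphi_i,p_i,q_i$ depend on the $Y_{i,*}$ exponents alone. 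The remaining claims are then formal consequences of Lemma~\ref{lirepcrylem} as indicated in the first paragraph.
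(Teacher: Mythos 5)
Your proposal follows essentially the same route as the paper: verify condition~(\ref{lienrepcry}) for the basis $(v_m)$ by reducing to the vertical subalgebra decomposition~(\ref{decvmod}) and Proposition~\ref{actfun}, then invoke Lemma~\ref{lirepcrylem} together with the extremality of $e^{\varpi_\ell}Y_{\ell,0}Y_{0,\ell}^{-1}$ in the connected crystal (Theorem~\ref{monrzt}). Your treatment is in fact slightly more explicit than the paper's on the divided powers $(x_i^{\pm})^{(k)}$ for $k\geq 2$ (via minusculeness of $\mathcal{B}_0(\Lambda_\ell)$ and one-dimensionality of weight spaces), but the argument is the same.
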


\begin{proof}
Let us begin to show that the basis $(v_m)$ of $V(e^{\varpi_\ell}Y_{\ell, 0}Y_{0,\ell}^{-1})$ introduced in (\ref{extrvectsp}) satisfies properties (\ref{lienrepcry}). For all $m \in \mathcal{M}(e^{\varpi_\ell}Y_{\ell, 0}Y_{0,\ell}^{-1})$, $v_m$ is an $\ell$-weight vector of $\ell$-weight $m$ and $\wt(v_m) = \wt(m)$. Fix $i \in I$ and let $j \in I$ be such that $j \neq i$. As a $\U_q^{v,j}(sl_{n+1}^{tor})$-module, $V(e^{\varpi_\ell}Y_{\ell, 0}Y_{0,\ell}^{-1})$ is completely reducible (see (\ref{decvmod})) and there exists $k \in \ZZ$ such that $v_m \in V_0(Y_{\ell, j + kp})^{(j)}$. As properties (\ref{lienrepcry}) are satisfied in $V_0(Y_{\ell, j + kp})^{(j)}$ (Proposition \ref{actfun}), it holds on $v_m$ for $i \in I$.

From there the result is a direct consequence of the above lemma and the fact that $e^{\varpi_\ell}Y_{\ell, 0}Y_{0,\ell}^{-1}$ is extremal in the connected crystal $\mathcal{M}(e^{\varpi_\ell}Y_{\ell, 0}Y_{0,\ell}^{-1})$ (Theorem \ref{monrzt}).
\end{proof}

\begin{prop}
The $ \U_{q}(sl_{n+1}^{tor}) $-module $V(e^{\varpi_\ell}Y_{\ell, 0}Y_{0,d_\ell}^{-1})$ is irreducible. Furthermore it is simple as a $\U_q^h(sl_{n+1}^{tor})$-module and $\mathrm{Res}(V(e^{\varpi_\ell}Y_{\ell, 0}Y_{0,d_\ell}^{-1}))$ is isomorphic to $V(\varpi_{\ell})$.
\end{prop}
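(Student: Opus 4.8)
The plan is to derive all three assertions from the two preceding lemmas together with Kashiwara's uniqueness result, Theorem~\ref{thmkasem}. I would treat the cases $\ell=1$ and $\ell=r+1$ in detail; the case $\ell=n$ then follows from the case $\ell=1$ by twisting the $\U_q(sl_{n+1}^{tor})$-action by the Dynkin diagram automorphism $\iota$ (equivalently, by transporting everything through the $\iota$-twisted automorphism $\psi$), exactly as in the proofs of the integrability proposition and of Theorem~\ref{thmelm} above, and as already set up in Remark~\ref{remellplus}. So assume $\ell=1$ or $\ell=r+1$ and write $V_\ell=V(e^{\varpi_\ell}Y_{\ell,0}Y_{0,\ell}^{-1})$.

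\emph{Identification of the restriction.} By the last lemma the $\ell$-weight vector $v:=v_{e^{\varpi_\ell}Y_{\ell,0}Y_{0,\ell}^{-1}}$ is extremal of weight $\varpi_\ell$ for $\U_q^h(sl_{n+1}^{tor})$ and generates $V_\ell$ over $\U_q^h(sl_{n+1}^{tor})$; equivalently, $\mathrm{Res}(V_\ell)$ is generated over $\U_q(\hat{sl}_{n+1})$ by the extremal weight vector $v$ of weight $\varpi_\ell$. Moreover $\mathrm{Res}(V_\ell)$ is integrable and non-zero by the integrability proposition above. Hence Theorem~\ref{thmkasem}(ii) applies and yields an isomorphism $\mathrm{Res}(V_\ell)\simeq V(\varpi_\ell)$ of $\U_q(\hat{sl}_{n+1})$-modules.

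\emph{Irreducibility.} By Theorem~\ref{thmkasem}(i), $V(\varpi_\ell)$ is an irreducible $\U_q(\hat{sl}_{n+1})$-module, so $\mathrm{Res}(V_\ell)$ is irreducible as a $\U_q(\hat{sl}_{n+1})$-module. Since, by definition of the horizontal subalgebra, the $\U_q(\hat{sl}_{n+1})$-action on $\mathrm{Res}(V_\ell)$ is exactly the action of $\U_q^h(sl_{n+1}^{tor})$, a subspace of $V_\ell$ is $\U_q(\hat{sl}_{n+1})$-stable if and only if it is $\U_q^h(sl_{n+1}^{tor})$-stable; therefore $V_\ell$ is simple as a $\U_q^h(sl_{n+1}^{tor})$-module. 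Finally any $\U_q(sl_{n+1}^{tor})$-submodule of $V_\ell$ is a fortiori a $\U_q^h(sl_{n+1}^{tor})$-submodule, so $V_\ell$ is irreducible over $\U_q(sl_{n+1}^{tor})$ as well.

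I do not expect a genuine obstacle at this stage: the substantial work — producing the extremal generating vector $v$ and the cyclicity of $V_\ell$ over the horizontal subalgebra — has already been carried out in the two preceding lemmas, and what remains is a direct application of Theorem~\ref{thmkasem}. The only minor points needing care are checking that $\mathrm{Res}$ preserves (ir)reducibility, which is immediate from the description of $\U_q^h(sl_{n+1}^{tor})$ as the image of $\U_q(\hat{sl}_{n+1})$, and spelling out the reduction of the case $\ell=n$ to the case $\ell=1$ via the twist by $\iota$.
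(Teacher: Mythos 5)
Your argument is correct, and it reaches the three claims by a slightly different logical route than the paper. The paper proves simplicity over $\U_q^h(sl_{n+1}^{tor})$ \emph{directly}: a non-trivial horizontal submodule must contain some basis vector $v_m$ because the weight spaces of $V_\ell$ are one-dimensional, and $v_m$ generates all of $V_\ell$ by Lemma \ref{lirepcrylem} (connectedness of the crystal); only afterwards does it invoke Theorem \ref{thmkasem}(ii) to identify $\mathrm{Res}(V_\ell)$ with $V(\varpi_\ell)$. You reverse the order: you first apply Theorem \ref{thmkasem}(ii) to get $\mathrm{Res}(V_\ell)\simeq V(\varpi_\ell)$, then import irreducibility from Theorem \ref{thmkasem}(i) and transport it back through the identification of $\U_q^h(sl_{n+1}^{tor})$-submodules with $\U_q(\hat{sl}_{n+1})$-submodules. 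Both routes rest on the same two preceding lemmas (extremality and horizontal cyclicity of $v_{e^{\varpi_\ell}Y_{\ell,0}Y_{0,d_\ell}^{-1}}$); yours outsources the irreducibility to Kashiwara's theorem and so does not need the thinness of the weight spaces, while the paper's is marginally more self-contained in that it only uses part (ii) of Theorem \ref{thmkasem} and exhibits explicitly that every basis vector is cyclic. Your explicit reduction of the case $\ell=n$ to $\ell=1$ via $\iota$ (resp.\ $\psi$) is consistent with how the surrounding proofs in the paper handle that case.
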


\begin{proof}
Let $V$ be a non trivial sub-$ \U_{q}^{h}(sl_{n+1}^{tor})$-module of $V(e^{\varpi_\ell}Y_{\ell, 0}Y_{0,d_\ell}^{-1})$. As the weight spaces of $V(e^{\varpi_\ell}Y_{\ell, 0}Y_{0,d_\ell}^{-1})$ are all of dimension one, there exists $m \in \mathcal{M}(e^{\varpi_\ell}Y_{\ell, 0}Y_{0,d_\ell}^{-1})$ such that $v_m \in V$. By Lemma \ref{lirepcrylem}, $v_m$ generates $V(e^{\varpi_\ell}Y_{\ell, 0}Y_{0,d_\ell}^{-1})$ and $V = V(e^{\varpi_\ell}Y_{\ell, 0}Y_{0,d_\ell}^{-1})$. Hence $V(e^{\varpi_\ell}Y_{\ell, 0}Y_{0,d_\ell}^{-1})$ is simple as a $ \U_{q}^{h}(sl_{n+1}^{tor})$-module and as a $ \U_{q}(sl_{n+1}^{tor}) $-module. Furthermore, $\mathrm{Res}(V(e^{\varpi_\ell}Y_{\ell, 0}Y_{0,d_\ell}^{-1}))$ is an integrable $\U_q(\hat{sl}_{n+1})$-module generated by the extremal vector $v_{e^{\varpi_\ell}Y_{\ell, 0}Y_{0,d_\ell}^{-1}}$ of weight $\varpi_\ell$. Then by Theorem \ref{thmkasem}, $\mathrm{Res}(V(e^{\varpi_\ell}Y_{\ell, 0}Y_{0,d_\ell}^{-1}))$ is isomorphic to $V(\varpi_{\ell})$.
\end{proof}

Some readers may expect from this result that the $\U_{q}(sl_{n+1}^{tor})$-module $V(e^{\varpi_\ell} Y_{\ell,0}Y_{0,d_\ell}^{-1})$ can be obtained from the extremal weight module $V(\varpi_\ell)$ by an evaluation morphism, but this is not the case for the following reasons (which generalize arguments given in \cite{hernandez_quantum_2009}): $\mathrm{Res}(V(e^{\varpi_\ell} Y_{\ell,0}Y_{0,d_\ell}^{-1}))$ is isomorphic to the $\U_q(\hat{sl}_{n+1})$-module $V(\varpi_\ell)$. In particular, $$\tau_{p, -\delta} : V(\varpi_\ell) \rightarrow V(\varpi_\ell), v_m \mapsto v_{\tau_{p, -\delta}(m)} \text{ for all } m \in \mathcal{M}(e^{\varpi_\ell} Y_{\ell,0}Y_{0,d_\ell}^{-1})$$ is a $\U_q(\hat{sl}_{n+1})'$-automorphism of $V(\varpi_\ell)$ (with $p = n+1$ or $p = 2$ if $\ell= 1, n$ or $\ell = r+1$ respectively). If $V(e^{\varpi_\ell} Y_{\ell,0}Y_{0,d_\ell}^{-1})$ is obtained from an evaluation morphism \linebreak $\U_{q}(sl_{n+1}^{tor}) \rightarrow \U_{q}^{h}(sl_{n+1}^{tor})$, $\tau_{p, -\delta}$ should induce an automorphism of $V(e^{\varpi_\ell} Y_{\ell,0}Y_{0,d_\ell}^{-1})$. But it does not commute with the action of the $x_{i,r}^{\pm}, h_{i,r}$ for $i \in I$ and $r \in \ZZ-\{0\}$. In the same way, $V(e^{\varpi_\ell} Y_{\ell,0}Y_{0,d_\ell}^{-1})$ can not be obtained from an evaluation morphism $\U_{q}(sl_{n+1}^{tor}) \rightarrow \U_{q}^{v,j}(sl_{n+1}^{tor})$ ($j \in I$). In fact, $V(e^{\varpi_\ell} Y_{\ell,0}Y_{0,d_\ell}^{-1})$ is completely reducible as a $\U_{q}^{v,j}(sl_{n+1}^{tor})$-module and is a direct sum of fundamental modules (see (\ref{decvmod})). But it is a simple $\U_{q}(sl_{n+1}^{tor})$-module.

\begin{rem}\label{remnak1}
Let us denote $\U_q(sl_{n+1}^{tor})'$ the quantum toroidal algebra without derivation element, i.e. this is the subalgebra of $\U_q(sl_{n+1}^{tor})$ generated by $x_{i,r}^{\pm} \; (i \in I, r \in \ZZ), h_{i,m} \; (i \in I, m \in \ZZ-\{0\})$ and $k_h \; (h \in \sum \QQ h_i)$. An automorphism $\Psi$ of $\U_q(sl_{n+1}^{tor})'$ which exchanges vertical and horizontal quantum affine subalgebras is defined in \cite{miki_toroidal_1999}. Denote by $V(e^{\varpi_\ell} Y_{\ell,0}Y_{0,d_\ell}^{-1})^{\Psi}$ the $\U_q(sl_{n+1}^{tor})'$-module obtained from $V(e^{\varpi_\ell} Y_{\ell,0}Y_{0,d_\ell}^{-1})$ by twisting the action by $\Psi$. It would be interesting to determine if $V(e^{\varpi_\ell} Y_{\ell,0}Y_{0,d_\ell}^{-1})^{\Psi}$ is already known, for example if it is of $\ell$-highest weight. Actually this is not the case: for the vertical quantum affine subalgebra $\U_q^v(sl_{n+1}^{tor})'$, it is an integrable and cyclic module which is reducible. Further as a $\U_q^h(sl_{n+1}^{tor})'$-module, it is completely reducible, direct sum of irreducible finite-dimensional representations. So $V(e^{\varpi_\ell} Y_{\ell,0}Y_{0,d_\ell}^{-1})^{\Psi}$ cannot be an $\ell$-highest weight module or an $\ell$-lowest weight module.
\end{rem}

From now on, let $\mathcal{M}_0'$ be a subcrystal of $\mathcal{M}_0$ over $\U_q(sl_{n+1})$ (resp. $\mathcal{M}'$ subcrystal of $\mathcal{M}$ over $\U_q(\hat{sl}_{n+1})$). Let us consider the vector space $V$ with basis $(v_m)$ indexed by the vertices of $\mathcal{M}_0'$ (resp. $\mathcal{M}'$). We define an action of $\U_q(\hat{sl}_{n+1})'$ (resp. $\U_q(sl_{n+1}^{tor})$) on $V$ by the following formulas
\begin{eqnarray}\label{actmod}
\begin{array}{rcl}
x_{i,r}^{+} \cdot v_{m} &=& q^{r(p_{i}(m)-1)} v_{\tilde{e_i} \cdot m},\\
x_{i,r}^{-} \cdot v_{m} &=& q^{r(q_{i}(m)+1)} v_{\tilde{f_i} \cdot m},\\
\phi_{i,\pm s}^{\pm} \cdot v_{m} &=& \pm (q-q^{-1}) \left( \varphi_{i}(m) q^{\pm s (q_{i}(m)+1)}-\varepsilon_{i}(m) q^{\pm s (p_{i}(m)-1)} \right) v_{m},\\
k_{h} \cdot v_{m} &=& q^{\wt(m)(h)} v_{m}.
\end{array}
\end{eqnarray}

\noindent with $r \in \ZZ$, $s > 0$, $i \in I_0$ (resp. $i \in I$) and $h \in \Hlie_0$ (resp. $h \in \Hlie$), and where $v_0=0$ by convention. Note that $p_{i}(m)$ is well defined only if $\varepsilon_{i}(m) > 0$ or equivalently if $\tilde{e_i} \cdot m \neq 0$ and $q_{i}(m)$ is well defined only if $\varphi_{i}(m) > 0$ or equivalently if $\tilde{f_i} \cdot m \neq 0$. Then, these expressions make sense.

\begin{thm}\label{thmactmod}
\begin{enumerate}
\item[(i)] Set $n \in \NN^{\ast}$, $1 \leq \ell \leq n$. Assume that $\mathcal{M}_0' = \mathcal{M}_0(Y_{\ell, k})$. Then formulas (\ref{actmod}) endow $V$ with a structure of $\U_q(\hat{sl}_{n+1})'$-module isomorphic to the fundamental module $V_0(Y_{\ell,k})$.
\item[(ii)] Assume that $n=2r+1$ is odd and $\ell=1, r+1, n$. Set $\mathcal{M}' = \mathcal{M}(e^{\varpi_\ell} Y_{\ell,0}Y_{0,d_\ell}^{-1})$. Then formulas (\ref{actmod}) endow $V$ with a structure of $\U_q(sl_{n+1}^{tor})$-module isomorphic to the extremal fundamental loop weight module $V(e^{\varpi_\ell} Y_{\ell,0}Y_{0,d_\ell}^{-1})$.
\end{enumerate}
\end{thm}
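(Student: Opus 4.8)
The plan is to prove both statements by showing that the operators defined by the formulas (\ref{actmod}) coincide with an action we already control, so that the defining relations are satisfied automatically and the isomorphism is immediate.

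For part (i), I would start from Proposition \ref{actfun}: $V_0(Y_{\ell,k})$ is thin, with basis $(v_m)_{m\in\mathcal{M}_0(Y_{\ell,k})}$, where $v_m$ has $\ell$-weight $m$ and $x_{i,0}^{+}\cdot v_m = v_{\tilde{e}_i\cdot m}$, $x_{i,0}^{-}\cdot v_m = v_{\tilde{f}_i\cdot m}$. Since $\mathrm{Res}(V_0(Y_{\ell,k})) = V_0(\Lambda_\ell)$ is minuscule, its restriction to each $\hat{\U}_i\simeq\U_q(\hat{sl}_2)'$ splits as a direct sum of $2$-dimensional fundamental modules (the $i$-strings of length $2$, with $h_i$-weights $\pm 1$) and trivial $1$-dimensional modules (the vectors $v_m$ with $\Xi_i(m)=1$). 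On a length-$2$ string $\{v_m,\, v_{\tilde{f}_i\cdot m}\}$ one checks that $\Xi_i(m)=Y_{i,a}$ and $\Xi_i(\tilde{f}_i\cdot m)=Y_{i,a+2}^{-1}$ for a unique $a\in\ZZ$; the $\U_q(\hat{sl}_2)'$-action is then the standard one on $V_0(Y_{i,a})$, and a direct computation gives $q_i(m)+1=a+1=p_i(\tilde{f}_i\cdot m)-1$, while the Frenkel--Reshetikhin correspondence applied to $Y_{i,a}$ and $Y_{i,a+2}^{-1}$ produces exactly the $\phi_i^{\pm}(z)$-eigenvalues written in (\ref{actmod}); on a trivial string $\varphi_i(m)=\varepsilon_i(m)=0$ and both sides vanish. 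Hence (\ref{actmod}) recomputes, on every $v_m$ and every $i\in I_0$, the genuine $\U_q(\hat{sl}_{n+1})'$-action on $V_0(Y_{\ell,k})$; in particular it satisfies the Drinfeld relations and yields a module isomorphic to $V_0(Y_{\ell,k})$.

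For part (ii), note that $V(e^{\varpi_\ell}Y_{\ell,0}Y_{0,d_\ell}^{-1})$ has already been built, as a $\U_q(sl_{n+1}^{tor})$-module whose restriction to each $\U_q^{v,j}(sl_{n+1}^{tor})$ ($j\in I$) is $\bigoplus_{k\in\ZZ}V_0(Y_{\ell,j+kp})^{(j)}$. I would fix $i\in I$, choose some $j\ne i$ so that $\hat{\U}_i\subset\U_q^{v,j}(sl_{n+1}^{tor})$, and transport part (i) through the isomorphism $\theta^{(j)}:\U_q^{v}\simeq\U_q^{v,j}$: on each summand $V_0(Y_{\ell,j+kp})^{(j)}$ the subalgebra $\hat{\U}_i$ acts by (\ref{actmod}) with $p_i,q_i,\varphi_i,\varepsilon_i$ evaluated on $\Xi^{j}(m)$. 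But these four quantities depend only on the exponents $u_{i,\ast}$, and $\Xi^{j}$ erases only the $Y_{j,\ast}$, so for $i\ne j$ they are unchanged. Combined with $k_h\cdot v_m = q^{\wt(m)(h)}v_m$ from the construction, this shows that (\ref{actmod}) reproduces, on all generators, the action of the already-constructed module; since the outcome does not depend on the auxiliary index $j$, (\ref{actmod}) is a well-defined $\U_q(sl_{n+1}^{tor})$-module structure isomorphic to $V(e^{\varpi_\ell}Y_{\ell,0}Y_{0,d_\ell}^{-1})$. The case $\ell=n$ then follows from $\ell=1$ through the $\iota$-twisted automorphism $\psi$.

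The main obstacle is the spectral-parameter bookkeeping inside part (i): verifying that the rather indirect definitions of $p_i(m)$ and $q_i(m)$ (maxima and minima of partial sums of the $u_{i,l}(m)$) really reproduce the shift $a+1$ attached to the fundamental $\U_q(\hat{sl}_2)'$-module $V_0(Y_{i,a})$, and dealing cleanly with the boundary cases where $\tilde{e}_i\cdot m=0$ or $\tilde{f}_i\cdot m=0$. Once this local $sl_2$-computation is in hand, both parts reduce to the two structural facts that $p_i,q_i,\varphi_i,\varepsilon_i$ are ``local in $i$'' and that the relevant modules have already been constructed.
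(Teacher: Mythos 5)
Your proposal is correct and follows essentially the same route as the paper: both arguments take the already-constructed modules $V_0(Y_{\ell,k})$ and $V(e^{\varpi_\ell}Y_{\ell,0}Y_{0,d_\ell}^{-1})$ as given, use the fact that the zero-mode action $x_{i,0}^{\pm}\cdot v_m=v_{\tilde e_i\cdot m},\,v_{\tilde f_i\cdot m}$ and the $\ell$-weights are known, and observe that on a thin module this pins down all higher Drinfeld modes. The paper phrases the local step as ``deduce $x_{i,r}^{\pm}$ from relation (\ref{actcartld})'' where you instead decompose into $i$-strings and match against the two-dimensional $\U_q(\hat{sl}_2)'$-fundamental modules, but these are the same computation; your reduction of part (ii) to part (i) via the vertical subalgebras and the locality of $p_i,q_i,\varphi_i,\varepsilon_i$ under $\Xi^{j}$ likewise mirrors the paper's construction.
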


\begin{proof}
The action of the horizontal quantum affine subalgebra and the action of the Cartan subalgebra are known on the basis $(v_m)_{m \in \mathcal{M}'}$ for the $\U_q(\hat{sl}_{n+1})'$-module $V_0(Y_{\ell,k})$ and for the $\U_q(sl_{n+1}^{tor})$-module $V(e^{\varpi_\ell} Y_{\ell,0}Y_{0,d_\ell}^{-1})$. From (\ref{actcartld}) it is straightforward to deduce the action of the $x_{i,r}^{\pm}$ on these modules ($r \in \ZZ$). We obtain formulas (\ref{actmod}) given only from the corresponding monomial crystal.
\end{proof}

\begin{rem} In \cite{hernandez_algebra_2011}, the algebra $\U_q(\hat{sl}_\infty)$ is introduced as the quantum affinization of $\U_q(sl_{\infty})$. It is defined by the same generators and relations as in Definition \ref{defqta} with the infinite Cartan matrix $C = (C_{i,j})_{i,j \in \ZZ}$ such that
$$C_{i,i} = 2, \ C_{i,i+1} = -1, \ C_{i+1, i} = -1, C_{i,j} = 0$$
if $i-j \notin \{-1, 0, 1\}$. The representation theory of $\U_q(\hat{sl}_\infty)$ is similar to the one of $\U_q(sl_{n+1}^{tor})$: the simple $\ell$-highest weight modules are parametrized by Drinfeld polynomials. In particular, the fundamental modules can be defined and they are the inductive limit of the fundamental modules for the quantum affine algebra $\U_q(\hat{sl}_{n+1})'$ when $n \rightarrow \infty$ (see \cite[Theorem 3.8]{hernandez_algebra_2011} and \cite[Proposition 3.11]{hernandez_algebra_2011}). So, the previous results about the fundamental modules of $\U_q(\hat{sl}_{n+1})'$ extend directly to the case of the fundamental modules of $\U_q(\hat{sl}_\infty)$.
\end{rem}

\begin{rem}
As we have said, relations between monomial crystals and the set of monomials occurring in the $q$--character of representations are known and have combinatorial origin (see \cite{hernandez_algebra_2011, hernandez_level_2006, nakajima_$t$-analogs_2003}). The above results, in particular Theorem \ref{thmactmod}, give one way to better understand the representation theoretical meaning of this narrow link expected in \cite{hernandez_level_2006}. In fact, formulas (\ref{actmod}) hold for all the fundamental $\U_q(\hat{sl}_{n+1})'$-modules $V_0(Y_{\ell,k})$ and for all the extremal fundamental loop weight $\U_q(sl_{n+1}^{tor})$-modules $V(e^{\varpi_\ell}Y_{\ell,0}Y_{0, d_\ell}^{-1})$. Hence the knowledge of these representations is reduced to the one of the corresponding crystals $\mathcal{M}_0(Y_{\ell,k})$ and $\mathcal{M}(e^{\varpi_\ell}Y_{\ell,0}Y_{0, d_\ell}^{-1})$ respectively, which is totally combinatorial.
\end{rem}

\begin{ex} Assume that $n = 3$ and $\ell = 1$. We study the extremal fundamental loop weight module $V(e^{\varpi_1}Y_{1,0}Y_{0,1}^{-1})$ for $\U_q(sl_{4}^{tor})$. Let us consider the monomial crystal $\mathcal{M}(e^{\varpi_1}Y_{1,0}Y_{0,1}^{-1})$. It is closed and $p = 4$ in this case. Using the notations introduced above, $\mathcal{E} = \sqcup_{k \in \ZZ} \mathcal{E}_{0,k}$ and we have
$$\mathcal{E}_{0,0} = \left\lbrace e^{\varpi_1}Y_{1,0}Y_{0,1}^{-1}, Y_{2,1}Y_{1,2}^{-1}, Y_{3,2}Y_{2,3}^{-1}, Y_{0,3}Y_{3,4}^{-1} \right\rbrace.$$
$\mathcal{E}_{0,k}$ can be obtained from $\mathcal{E}_{0,0}$ by applying $\tau_{4k, -\delta}$. In the same way, we obtain $\mathcal{E}_{j,k}$ by applying $\phi^{j+4k}$ to $\mathcal{E}_{0,0}$ . Then the $q$--character of the extremal fundamental loop weight module $V(e^{\varpi_1}Y_{1,0}Y_{0,1}^{-1})$ is
\begin{align*}
\chi_q(V(e^{\varpi_1}Y_{1,0}Y_{0,1}^{-1})) = \sum_{k \in \ZZ} & e^{\varpi_1 - k\delta} Y_{1,4k}Y_{0,1+4k}^{-1} + Y_{2,1+4k}Y_{1,2+4k}^{-1} + Y_{3,2+4k}Y_{2,3+4k}^{-1} \\
&+ Y_{0,3+4k}Y_{3,4+4k}^{-1}.
\end{align*}
Furthermore the action is explicitly given by the crystal $\mathcal{M}(e^{\varpi_1}Y_{1,0}Y_{0,1}^{-1})$ and by the formulas (\ref{actmod}). This module is already constructed in \cite{hernandez_quantum_2009}.
\end{ex}

\begin{rem}\label{remnak2}
After this paper appeared on the arXiv, the constructions in \cite{feigin_representations_2013} were brought to our
attention by H. Nakajima: some representations over the $d$-deformation $\U_{q,d}(sl_{n+1}^{tor})$ of the quantum toroidal algebra are obtained as quantum version of a module over a Lie algebra of difference operators. They are called vector representations in \cite{feigin_representations_2013}. Our works give another way to define these representations. Actually let $\omega : \U_q(sl_{n+1}^{tor}) \rightarrow \U_{q^{-1}}(sl_{n+1}^{tor})$ be the map sending $x_{i,r}^{\pm}, h_{i,m}, k_{h}$ to $x_{i,r}^{\mp}, h_{i,m}, k_h$ $(i \in I, r \in \ZZ, m \in \ZZ-\{0\}, h \in \Hlie)$. $\omega$ extends to an isomorphism of algebras. For $u \in \CC^{\ast}$, let $[V(e^{\varpi_1}Y_{1,0}Y_{0,1}^{-1})]_u$ be the $\U_q(sl_{n+1}^{tor})$-module obtained from $V(e^{\varpi_1}Y_{1,0}Y_{0,1}^{-1})$ by twisting the action by $t_{uq^{-1}} \circ \omega$. Then $[V(e^{\varpi_1}Y_{1,0}Y_{0,1}^{-1})]_u$ is isomorphic to a vector representation where we specialized the parameter $d$ at $1$ (this representation is denoted by $V^{(2)}(u)$ in \cite{feigin_representations_2013}).
\end{rem}

\begin{ex} Assume that $n=3$ and $ \ell = 2 $. Let us study the extremal fundamental loop weight module $V(e^{\varpi_2}Y_{2,0}Y_{0,2}^{-1})$ of $\U_q(sl_{4}^{tor})$. Consider the closed monomial crystal $\mathcal{M}(e^{\varpi_2}Y_{2,0}Y_{0,2}^{-1})$. In this case, $p=2$ and we have
$$ \mathcal{E}_{0,0} = \left\lbrace \begin{array}{c}
e^{\varpi_2}Y_{2, 0}Y_{0, 2}^{-1}, Y_{1, 1} Y_{2, 2}^{-1} Y_{3, 1} Y_{0, 2}^{-1}, Y_{1, 1}Y_{3, 3}^{-1},\\
Y_{3, 1}Y_{1, 3}^{-1}, Y_{1, 3}^{-1} Y_{2, 2} Y_{3, 3}^{-1} Y_{0, 2}, Y_{2,4}^{-1}Y_{0, 2}
\end{array} \right\rbrace.$$
To describe all the monomials occurring in $\mathcal{M}(e^{\varpi_2}Y_{2,0}Y_{0,2}^{-1})$, it is sufficient to consider only the sub-$I_{\{0,1\}}$-crystal
$$Y_{2, 0}Y_{0, 2}^{-1} \overset{2}{\longrightarrow} Y_{1, 1} Y_{2, 2}^{-1} Y_{3, 1} Y_{0, 2}^{-1} \overset{3}{\longrightarrow} Y_{1, 1} Y_{3, 3}^{-1}$$
and to apply the $\theta$-twisted automorphism $\phi$ (Remark \ref{remprom}). The $q$--character of \linebreak $V(e^{\varpi_2}Y_{2,0}Y_{0,2}^{-1})$ is
\begin{align*}
\chi_q(V(e^{\varpi_2}Y_{2,0}Y_{0,2}^{-1})) = \sum_{k \in \ZZ} & e^{\varpi_2-k\delta} Y_{2, 2k}Y_{0, 2+2k}^{-1} + Y_{1, 1 + 2k} Y_{2, 2+2k}^{-1} + Y_{3, 1 + 2k} Y_{0, 2 + 2k}^{-1}\\
& + Y_{1, 3+2k}^{-1}Y_{3, 1+2k} + Y_{3, 1+2k}Y_{1, 3+2k}^{-1} + Y_{1, 3 + 2k}^{-1} Y_{2, 2+2k}\\
& + Y_{3, 3 + 2k}^{-1} Y_{0, 2 + 2k} + Y_{2,4+2k}^{-1}Y_{0, 2+2k},
\end{align*}
and the action of $ \U_{q}(sl_{4}^{tor}) $ on $V(e^{\varpi_2}Y_{2,0}Y_{0,2}^{-1})$ is explicitly given by the crystal \linebreak $\mathcal{M}(e^{\varpi_2}Y_{2,0}Y_{0,2}^{-1})$ and formulas (\ref{actmod}).
\end{ex}

\subsection{Finite-dimensional representations at roots of unity} \nocite{chari_quantum_1997, frenkel_$q$-characters_2002}

The existence of shift automorphisms for $\mathcal{M}(e^{\varpi_\ell}Y_{\ell,0}Y_{0, d_\ell}^{-1})$ is related to finite-dimensional representations of quantum toroidal algebras at roots of unity. We explain that in this section.

So assume that $n = 2r+1$ is odd ($r \geq 1$) and $\ell = 1, n$ or $\ell = r+1$. In this case $\mathcal{M}(e^{\varpi_\ell}Y_{\ell,0}Y_{0, d_\ell}^{-1})$ is closed and its automorphism $z_\ell$ has the special form of a shift $\tau_{-p, \delta}$ with $p = n+1$ or $p=2$ respectively.

Set $L \geq 1$ and $\epsilon$ a primitive $(p L)$-root of unity (we assume also that $p \neq 2$ or $L >1$ in the following). Let $\U_\epsilon(sl_{n+1}^{tor})'$ be the algebra defined as $\U_q(sl_{n+1}^{tor})$ with $\epsilon$ instead of $q$ (without divided powers and derivation element).

For $N \in \NN^{\ast}$, let $\Gamma_{N} : \ZZ[Y_{i,l}^{\pm 1}]_{i \in I, l \in \ZZ} \rightarrow \ZZ[Y_{i,\overline{l}}^{\pm 1}]_{i \in I, \overline{l} \in \ZZ / N \ZZ }$ be the map defined by sending the variables $Y_{i,l}^{\pm 1}$ to $Y_{i, \overline{l}}^{\pm 1}$ ($i \in I, l \in \ZZ$). Set $\mathcal{S}_\epsilon$ the image of a monomial set $\mathcal{S}$ by $\Gamma_{(pL)}$.

Consider the monomial set $\mathcal{E}_\epsilon$. By the existence of the shift automorphism $\tau_{-p, \delta}$, we have
$$\mathcal{E}_\epsilon =  \bigsqcup_{0 \leq k \leq L-1} (\tau_{p, -\delta})^{k} \left(  (\mathcal{E}_{j,k})_\epsilon \right)$$
with $j \in I$. One checks easily that $\mathcal{E}_\epsilon$ is closed.

By specializing the representations $V(e^{\varpi_\ell}Y_{\ell,0}Y_{0, d_\ell}^{-1})$ at a root of unity $\epsilon$, we obtain

\begin{thm}\label{thmmodunit}
Assume that $\epsilon$ is a primitive $(pL)$-root of unity. There is an irreducible $\U_\epsilon(sl_{n+1}^{tor})'$-module $V(e^{\varpi_\ell}Y_{\ell,0}Y_{0, d_\ell}^{-1})_\epsilon$ of dimension $L \cdot \begin{pmatrix} n+1 \\ \ell \end{pmatrix}$ such that
$$\chi_\epsilon(V(e^{\varpi_\ell}Y_{\ell,0}Y_{0, d_\ell}^{-1})_\epsilon) = \sum_{m \in \mathcal{E}_\epsilon} m.$$
Furthermore there exists a basis $(v_m)$ of $V(e^{\varpi_\ell}Y_{\ell,0}Y_{0, d_\ell}^{-1})_\epsilon$ indexed by $\mathcal{E}_{\epsilon}$ such that the action on it is given by
\begin{eqnarray*}
\begin{array}{rcl}
x_{i,r}^{+} \cdot v_{m} &=& \epsilon^{r(p_{i}(m)-1)} v_{\tilde{e_i} \cdot m},\\
x_{i,r}^{-} \cdot v_{m} &=& \epsilon^{r(q_{i}(m)+1)} v_{\tilde{f_i} \cdot m},\\
\phi_{i,\pm s}^{\pm} \cdot v_{m} &=& \pm (\epsilon-\epsilon^{-1}) \left( \varphi_{i}(m) \epsilon^{\pm s (q_{i}(m)+1)} -\varepsilon_{i}(m) \epsilon^{\pm s (p_{i}(m)-1)} \right) v_{m},\\
k_{i}^{\pm} \cdot v_{m} &=& \epsilon^{\pm (\varphi_{i}(m)-\varepsilon_{i}(m))} v_{m}.
\end{array}
\end{eqnarray*}
\end{thm}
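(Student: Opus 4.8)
The plan is to obtain the root-of-unity module $V(e^{\varpi_\ell}Y_{\ell,0}Y_{0,d_\ell}^{-1})_\epsilon$ as a specialization of the generic module $V_\ell = V(e^{\varpi_\ell}Y_{\ell,0}Y_{0,d_\ell}^{-1})$ constructed in Theorem \ref{thmmod}, using the quotient construction afforded by the shift automorphism $z_\ell = \tau_{-p,\delta}$. Concretely, first I would observe that by Theorem \ref{thmactmod}(ii) the action of $\U_q(sl_{n+1}^{tor})$ on $V_\ell$ is given by the formulas (\ref{actmod}), all of whose coefficients are integer powers of $q$ (for the $x_{i,r}^\pm$) or $\ZZ[q^{\pm1}]$-combinations of such (for the $\phi_{i,\pm s}^\pm$). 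Hence the $\ZZ[q^{\pm 1}]$-span $V_{\ell,\ZZ[q^{\pm1}]}$ of the basis $(v_m)_{m \in \mathcal{E}}$ is a lattice stable under the $\ZZ[q^{\pm1}]$-form $\U_{\ZZ[q^{\pm1}]}$ of the quantum toroidal algebra (the one generated by the $x_{i,r}^\pm$, $h_{i,m}$, $k_h$ without divided powers). Specializing $q \mapsto \epsilon$ we get a $\U_\epsilon(sl_{n+1}^{tor})'$-module $V_{\ell,\ZZ[q^{\pm1}]} \otimes_{\ZZ[q^{\pm1}]} \CC$ on which the action is given by the displayed formulas with $\epsilon$ in place of $q$; note that $k_i^\pm$ acts by $\epsilon^{\pm(\varphi_i(m)-\varepsilon_i(m))}$ because $\wt(m)(h_i) = \varphi_i(m)-\varepsilon_i(m)$ from the definition of the monomial crystal.

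Second I would pass to the finite-dimensional quotient. The shift automorphism $\tau_{p,-\delta}$ acts on the basis by $v_m \mapsto v_{\tau_{p,-\delta}(m)}$; since $z_\ell = \tau_{-p,\delta}$ is a $P_{\cl}$-crystal automorphism of $\mathcal{M}_\ell$ (Proposition \ref{shiftzl}) it follows that $\tau_{p,-\delta}^{L}$, i.e.\ $\tau_{pL,-L\delta}$, commutes with the specialized action of $\U_\epsilon(sl_{n+1}^{tor})'$: indeed in the formulas (\ref{actmod}) the coefficients depend on $m$ only through $p_i(m), q_i(m), \varphi_i(m), \varepsilon_i(m), \wt(m)(h)$, and applying $\tau_{pL,-L\delta}$ shifts the second index of every $Y_{i,l}$ by $pL$, which changes $p_i$ and $q_i$ by $pL$, hence changes $\epsilon^{r(p_i(m)-1)}$ etc.\ by $\epsilon^{r p L} = 1$ since $\epsilon$ is a primitive $(pL)$-th root of unity, and leaves $\varphi_i, \varepsilon_i, \wt$ unchanged. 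Therefore $(\tau_{p,-\delta}^L - 1)V_{\ell,\CC}$ is a submodule, and I would define $V(e^{\varpi_\ell}Y_{\ell,0}Y_{0,d_\ell}^{-1})_\epsilon$ to be the quotient; equivalently, via the reduction map $\Gamma_{pL}$ on monomials, it is the space with basis $(v_m)_{m \in \mathcal{E}_\epsilon}$, whose $q$--character is $\sum_{m \in \mathcal{E}_\epsilon} m$. Its dimension is $|\mathcal{E}_\epsilon| = L \cdot \dim V_0(\Lambda_\ell) = L \binom{n+1}{\ell}$ using the decomposition $\mathcal{E}_\epsilon = \bigsqcup_{0 \leq k \leq L-1}(\tau_{p,-\delta})^k((\mathcal{E}_{j,k})_\epsilon)$ established just before the theorem and that each $\mathcal{E}_{j,k}$ indexes a copy of the fundamental crystal of size $\binom{n+1}{\ell}$.

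For irreducibility I would argue exactly as in the generic case: by the formulas (\ref{actmod}) the operators $x_{i,0}^\pm$ still act on basis vectors by the (now finite, cyclic) monomial crystal $\mathcal{E}_\epsilon$, and $\mathcal{E}_\epsilon$ with the induced Kashiwara operators is a connected crystal (it is the image of the connected crystal $\mathcal{M}_\ell$ under the quotient, and connectedness is preserved), so the analogue of Lemma \ref{lirepcrylem} shows any nonzero submodule, being spanned by a subset of the $v_m$ because the ``weight'' spaces $\CC v_m$ are one-dimensional and are eigenspaces for the $\phi_{i,\pm s}^\pm$ and $k_h$, must be everything. One subtle point: to conclude that a submodule is spanned by basis vectors I need the $\ell$-weights $m \in \mathcal{E}_\epsilon$ to remain pairwise distinct as joint spectra of the $\phi_{i,\pm s}^\pm$ after specialization — this is the step I expect to be the main obstacle, since collisions could a priori occur at a root of unity. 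I would handle it by noting that the joint spectrum of $m$ is recorded by the tuple $(\varphi_i(m)\epsilon^{\pm s(q_i(m)+1)} - \varepsilon_i(m)\epsilon^{\pm s(p_i(m)-1)})_{i,s}$, which for the fundamental-type monomials appearing here (each $m$ has $\varphi_i(m), \varepsilon_i(m) \in \{0,1\}$ for every $i$, since $V_0(\Lambda_\ell)$ is minuscule) already determines the exponent class of $q_i(m)+1$ or $p_i(m)-1$ modulo $pL$ whenever $\varphi_i(m)$ or $\varepsilon_i(m)$ is nonzero, and across all $i$ this pins down the monomial in $\mathcal{E}_\epsilon$; one checks on the explicit list of monomials in $\mathcal{E}_{j,k}$ (which are of the form (\ref{montab})) that this data is separating. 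Since $\U_\epsilon$ acts transitively on the basis up to scalars via the $x_{i,0}^\pm$ and the spectrum is simple, the module is irreducible and the stated formulas for the action are exactly (\ref{actmod}) read with $\epsilon$ for $q$, completing the proof.
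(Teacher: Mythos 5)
Your proposal is correct and follows the same route the paper intends (the paper gives essentially no proof beyond ``by specializing''): specialize the integral formulas (\ref{actmod}) at $q=\epsilon$, use that $\tau_{pL,-L\delta}$ commutes with the specialized action to pass to the quotient with basis indexed by $\mathcal{E}_\epsilon$, and deduce irreducibility from connectedness of the crystal together with simplicity of the joint spectrum. You also correctly isolate the one delicate point — that distinct monomials of $\mathcal{E}_\epsilon$ keep distinct $\ell$-weights after reduction mod $pL$ — which does hold here because for $\ell=1,r+1,n$ each $\Xi_i(m)$ is $1$ or a single $Y_{i,a}^{\pm1}$, so the spectrum recovers $a$ modulo the order $pL$ of $\epsilon$.
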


\section{Extremal loop weight modules for $\U_q(sl_{n+1}^{tor})$ when the considered monomial crystal is not closed}

In this section, we still assume that $n = 2r+1$ is odd and we discuss the case where the considered monomial crystal $\mathcal{M}'$ is not closed. It is not possible here to construct an integrable module whose $q$--character is a sum of monomials occurring in $ \mathcal{M}'$. In fact some monomials miss and we have to consider a larger closed monomial crystal $\overline{\mathcal{M}'}$ containing it. It is obtained from $\mathcal{M}'$ by adding other monomial crystals. But its structure is more complicated than $ \mathcal{M}' $ and it is difficult for us to construct systematically a possible representation of $\U_q(sl_{n+1}^{tor})$ associated to $\overline{\mathcal{M}'}$.

So in this section, we propose to treat an example of such a construction. Assume in the following that $n=3$ and consider the crystal $\mathcal{M}(e^{2\varpi_1}Y_{1,1}Y_{1,-1}Y_{0,2}^{-1}Y_{0,0}^{-1})$ which is not closed. We determine a closed monomial crystal $\overline{\mathcal{M}}(e^{2\varpi_1}Y_{1,1}Y_{1,-1}Y_{0,2}^{-1}Y_{0,0}^{-1})$ containing it and we construct a representation $V(e^{2\varpi_1}Y_{1,1}Y_{1,-1}Y_{0,2}^{-1}Y_{0,0}^{-1})$ of $\U_{q}(sl_{4}^{tor})$ such that its $q$--character is the sum of monomials occurring in $\overline{\mathcal{M}}(e^{2\varpi_1}Y_{1,1}Y_{1,-1}Y_{0,2}^{-1}Y_{0,0}^{-1})$ with multiplicity one (Theorem \ref{thmexmod}). Furthermore we will see that  $V(e^{2\varpi_1}Y_{1,1}Y_{1,-1}Y_{0,2}^{-1}Y_{0,0}^{-1})$ satisfies the definition of extremal loop weight module.

In the first part, we study the crystal $\mathcal{M}(e^{2\varpi_1}Y_{1,1}Y_{1,-1}Y_{0,2}^{-1}Y_{0,0}^{-1})$ and we determine a closed monomial crystal $\overline{\mathcal{M}}(e^{2\varpi_1}Y_{1,1}Y_{1,-1}Y_{0,2}^{-1}Y_{0,0}^{-1})$ containing it.

The construction of the $\U_{q}(sl_{4}^{tor})$-module $V(e^{2\varpi_1}Y_{1,1}Y_{1,-1}Y_{0,2}^{-1}Y_{0,0}^{-1})$ is done in the second part. The process is the same as in the preceding section: we consider the vector space freely generated by the vertices $m$ of $\overline{\mathcal{M}}(e^{2\varpi_1}Y_{1,1}Y_{1,-1}Y_{0,2}^{-1}Y_{0,0}^{-1})$ and we define an action of $\U_{q}(sl_{4}^{tor})$ by pasting together some finite-dimensional representations of the vertical quantum affine subalgebras $\U_{q}^{v,j}(sl_{4}^{tor})$ ($j \in I$).

In the third part, we study the representation $V(e^{2\varpi_1}Y_{1,1}Y_{1,-1}Y_{0,2}^{-1}Y_{0,0}^{-1})$: it is an integrable representation of $\U_{q}(sl_{4}^{tor})$ which is thin and irreducible. Furthermore \linebreak $V(e^{2\varpi_1}Y_{1,1}Y_{1,-1}Y_{0,2}^{-1}Y_{0,0}^{-1})$ is an extremal loop weight module of $\ell$-weight \linebreak $e^{2\varpi_1}Y_{1,1}Y_{1,-1}Y_{0,2}^{-1}Y_{0,0}^{-1}$.

In the last part, we specialize $q$ at roots of unity $\epsilon$. We get finite-dimensional representations of the quantum toroidal algebra $\U_\epsilon(sl_{4}^{tor})'$.

\begin{rem}
It could be interesting to construct other extremal fundamental loop weight modules of $\ell$-weight $e^{\varpi_\ell}Y_{\ell, 0} Y_{0, \ell}^{-1}$ with $2 \leq \ell \leq r$ in the same way. The first crystal $ \mathcal{M}(e^{\varpi_\ell}Y_{\ell, 0} Y_{0, \ell}^{-1}) $ which is not closed is obtained for $n=5$ and $\ell = 2$. We are led to consider the following closed crystal
$$ \overline{\mathcal{M}}(e^{\varpi_2}Y_{2, 0} Y_{0, 2}^{-1}) = \mathcal{M}(e^{\varpi_2}Y_{2, 0} Y_{0, 2}^{-1}) \oplus \bigoplus_{s \in \NN^{\ast}} \mathcal{M}(e^{2\Lambda_1-s\delta}Y_{1,1}Y_{1,-1+6s}Y_{0,2}^{-1}Y_{0,6s}^{-1}). $$
which contains $ \mathcal{M}(e^{\varpi_2}Y_{2, 0} Y_{0, 2}^{-1}) $. The maps $\phi$ and $\tau_{6, -2\delta}$ are automorphisms of it and the $P_{\cl}$-crystals $\mathcal{M}(e^{\varpi_2}Y_{2, 0} Y_{0, 2}^{-1}) / (\tau_{6,-2\delta})$ and $\mathcal{M}(e^{2\Lambda_1-s\delta}Y_{1,1}Y_{1,-1+6s}Y_{0,2}^{-1}Y_{0,6s}^{-1}) / (\tau_{6, -2\delta})$ have 30 vertices and 36 vertices respectively.

The example we propose to treat in this section is simpler than the case of the extremal fundamental loop weight modules and we focus only on this situation for the sake of clarity and simplicity.
\end{rem}

\subsection{Study of the monomial $\U_q(\hat{sl}_{4})$-crystal $\mathcal{M}(e^{2\varpi_1}Y_{1,1}Y_{1,-1}Y_{0,2}^{-1}Y_{0,0}^{-1})$}

We refer to the Appendix for explicit descriptions of all the crystals considered in this section. Let us study the monomial crystal $\mathcal{M}(e^{2\varpi_1}Y_{1,1}Y_{1,-1}Y_{0,2}^{-1}Y_{0,0}^{-1})$: the maps $\phi$ and $\tau_{4, -2\delta}$ are automorphisms of $\mathcal{M}(e^{2\varpi_1}Y_{1,1}Y_{1,-1}Y_{0,2}^{-1}Y_{0,0}^{-1})$. Furthermore straightforward computations lead to the following result.

\begin{prop}
\begin{enumerate}
\item[(i)] We have the equality of sets
\begin{eqnarray*}
\mathcal{M}(e^{2\varpi_1}Y_{1,1}Y_{1,-1}Y_{0,2}^{-1}Y_{0,0}^{-1}) = \bigcup_{k \in \ZZ} (\tau_{4, -2\delta})^{k} \left( \bigcup_{j = 0}^{3} \mathcal{M}_{I_{j}}(Y_{1 + j, 1 + j} Y_{1+j,-1+j} Y_{j, 2 + j}^{-1} Y_{j,j}^{-1}) \right).
\end{eqnarray*}
\item[(ii)] For all $j \in I$, the monomial crystal $\mathcal{M}_{I_{j}}(Y_{1 + j, 1 + j} Y_{1+j,-1+j} Y_{j, 2 + j}^{-1} Y_{j,j}^{-1})$ is $I_{j}$-q-closed. More precisely, we have the bijection of monomial sets
$$\Xi^{j}: \mathcal{M}_{I_{j}}(Y_{1 + j, 1 + j} Y_{1+j,-1+j} Y_{j, 2 + j}^{-1} Y_{j,j}^{-1}) \longrightarrow \mathcal{M}(V_0(Y_{1, 1 + j} Y_{1,-1+j})^{(j)})$$
where $V_0(Y_{1, 1 + j} Y_{1,-1+j})$ is the simple $\ell$-highest weight representation of $\U_{q}(\hat{sl}_{4})'$ of $\ell$-highest weight $Y_{1, 1 + j} Y_{1,-1+j}$.
\item[(iii)] For all $j \in I$, the $I_{j}$-crystal $\mathcal{M}_{I_{j}}(Y_{1 + j, 1 + j} Y_{1+j,-1+j} Y_{j, 2 + j}^{-1} Y_{j,j}^{-1})$ is not q-closed: the monomial $\phi^{j}(Y_{1,-1}Y_{3,5}^{-1}Y_{0,0}^{-1}Y_{0,4})$ occurs in this crystal, but it is not the case of $\phi^{j}(Y_{1,-1}Y_{3,5}^{-1}Y_{0,0}^{-1}Y_{0,4} \cdot A_{0,-1}) = \phi^{j}(Y_{1,5}Y_{1,-1}Y_{0,6}^{-1}Y_{0,0}^{-1})$.
\end{enumerate}
\end{prop}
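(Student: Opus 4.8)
The plan is to reduce everything to the $n=3$ case of the preceding section's results, exploiting the two automorphisms $\phi$ and $\tau_{4,-2\delta}$ of $\mathcal{M}(e^{2\varpi_1}Y_{1,1}Y_{1,-1}Y_{0,2}^{-1}Y_{0,0}^{-1})$ that are asserted here, together with the explicit descriptions promised in the Appendix. For part (i), I would first observe that since $Y_{1+j,1+j}Y_{1+j,-1+j}Y_{j,2+j}^{-1}Y_{j,j}^{-1}=\phi^{j}(Y_{1,1}Y_{1,-1}Y_{0,2}^{-1}Y_{0,0}^{-1})$ lies in the crystal and $\phi$ preserves it, the right-hand side is contained in the left. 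For the reverse inclusion, fix a monomial $m$ in the crystal; using the $\tau_{4,-2\delta}$-automorphism one may normalize $m$ into a fundamental domain, and then — just as in the proof of the analogous Proposition for $\mathcal{M}(e^{\varpi_\ell}Y_{\ell,0}Y_{0,\ell}^{-1})$ — apply a sequence of Kashiwara operators $\tilde e_i$ (avoiding one index $j$) to move $m$ into one of the finite sub-$I_j$-crystals $\mathcal{M}_{I_j}(\phi^j(Y_{1,1}Y_{1,-1}Y_{0,2}^{-1}Y_{0,0}^{-1}))$. The bookkeeping here is a finite check once one has the explicit list of monomials from the Appendix.

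For part (ii), the key point is that $Y_{1,1+j}Y_{1,-1+j}$ is a dominant monomial, so $V_0(Y_{1,1+j}Y_{1,-1+j})$ is the simple $\ell$-highest weight $\U_q(\hat{sl}_4)'$-module with that Drinfeld datum; its $q$--character is known explicitly (it is a quotient/component appearing in the tableaux-type formulas of \cite{nakajima_$t$-analogs_2003}, or alternatively it can be computed directly since $V_0(Y_{1,1}Y_{1,-1})$ is a Kirillov--Reshetikhin module for $\ell=1$, $k=2$, hence irreducible and thin). I would verify by direct inspection, comparing the Appendix description of $\mathcal{M}_{I_j}(\phi^j(Y_{1,1}Y_{1,-1}Y_{0,2}^{-1}Y_{0,0}^{-1}))$ with $\chi_q(V_0(Y_{1,1+j}Y_{1,-1+j}))$, that $\Xi^j$ sets up a bijection between the monomial sets; applying $\phi^j$ reduces this to the case $j=0$. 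The $I_j$-$q$-closedness is then immediate: the image under $\Xi^j$ is the $\ell$-weight set of an actual $\U_q(\hat{sl}_4)'$-module, so it is $I_j$-$q$-closed by definition, and $\Xi^j$ is injective on the relevant $A_{i,l}^{\ZZ}$-coset (as in the injectivity argument via \cite[Lemma 3.3, Lemma 3.5]{frenkel_combinatorics_2001} used in the proof of Theorem \ref{proclocry}), so $q$-closedness transfers back.

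For part (iii), it suffices by applying $\phi^{-j}$ to treat $j=0$. One checks from the explicit Appendix description that $Y_{1,-1}Y_{3,5}^{-1}Y_{0,0}^{-1}Y_{0,4}$ is a vertex of $\mathcal{M}_{I_0}(Y_{1,1}Y_{1,-1}Y_{0,2}^{-1}Y_{0,0}^{-1})$. If this crystal were $q$-closed, then $q$-closedness in the direction $i=0$ would force the monomial $Y_{1,-1}Y_{3,5}^{-1}Y_{0,0}^{-1}Y_{0,4}\cdot A_{0,-1}=Y_{1,5}Y_{1,-1}Y_{0,6}^{-1}Y_{0,0}^{-1}$ to appear as well — this follows from the description of the image of $\chi_q$ for $\hat{\U}_0$-modules, $\ZZ[(Y_{0,l}+Y_{0,l+2}^{-1})]_{l\in\ZZ}$, exactly as in the $2\le\ell\le r$ part of the proof of Theorem \ref{proclocry}, using that $\Xi_0$ is injective on the $A_{0,l}^{\ZZ}$-coset. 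But $Y_{1,5}Y_{1,-1}Y_{0,6}^{-1}Y_{0,0}^{-1}$ is not among the monomials of $\mathcal{M}_{I_0}(Y_{1,1}Y_{1,-1}Y_{0,2}^{-1}Y_{0,0}^{-1})$, a contradiction. The main obstacle throughout is purely computational rather than conceptual: one must have the complete explicit vertex lists of these finite crystals (supplied in the Appendix) and carefully match them against the explicit $q$--characters of the $\U_q(\hat{sl}_4)'$-modules $V_0(Y_{1,1+j}Y_{1,-1+j})$; once those tables are in hand, each assertion is a finite verification.
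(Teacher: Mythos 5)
Your proposal is correct and matches the paper's (largely implicit) argument: the paper itself offers no proof beyond the remark that ``straightforward computations lead to the following result,'' relying on the explicit crystal diagrams in the Appendix, and your plan — reduce to $j=0$ via $\phi$, normalize via $\tau_{4,-2\delta}$, match $\Xi^0$ of the finite $I_0$-crystal against $\chi_q(V_0(Y_{1,1}Y_{1,-1}))$, and run the $\Xi_0$-injectivity argument from the proof of Theorem \ref{proclocry} for the non-$q$-closedness — is exactly the intended verification. (The only wrinkle is in the statement itself, not your proof: with the paper's convention for $A_{i,l}$, the multiplier in (iii) should be $A_{0,5}^{-1}$ rather than $A_{0,-1}$.)
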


Hence, we are led to consider the crystal $\mathcal{M}(e^{2\varpi_1 + \delta}Y_{1,1}Y_{1,-5}Y_{0,2}^{-1}Y_{0,-4}^{-1})$ which is also not closed. More generally we have to deal with all the monomial crystals \linebreak $\mathcal{M}(e^{2\varpi_1 + s\delta}Y_{1,1}Y_{1,-1-4s}Y_{0,2}^{-1}Y_{0,-4s}^{-1})$ with $s \in \NN$. We set
$$\overline{\mathcal{M}}(e^{2\varpi_1}Y_{1,1}Y_{1,-1}Y_{0,2}^{-1}Y_{0,0}^{-1}) = \bigoplus_{s \in \NN} \mathcal{M}(e^{2\varpi_1 + s\delta}Y_{1,1}Y_{1,-1-4s}Y_{0,2}^{-1}Y_{0,-4s}^{-1}).$$

For all $(k,s) \in \ZZ \times \NN$ and $j \in I$, denote by
\begin{itemize}
\item $\mathcal{M}_{j,k,s}^{1}$ the sub-$I_{j}$-crystal  of $\mathcal{M}(e^{2\varpi_1 + s\delta}Y_{1,1}Y_{1,-1-4s}Y_{0,2}^{-1}Y_{0,-4s}^{-1})$ generated by the monomial $\phi^{j+4k}(e^{2\varpi_1 + s\delta}Y_{1,1}Y_{1,-1-4s}Y_{0,2}^{-1}Y_{0,-4s}^{-1})$,
\item $\mathcal{M}_{j,k,s}^{2}$ the sub-$I_{j}$-crystal  of $\mathcal{M}(e^{2\varpi_1 + s\delta}Y_{1,1}Y_{1,-1-4s}Y_{0,2}^{-1}Y_{0,-4s}^{-1})$ generated by the monomial $\phi^{j+4k}(Y_{1,1}Y_{1,-3-4s}^{-1}Y_{2,-4-4s}Y_{0,2}^{-1})$.
\end{itemize}

\begin{prop}
\begin{enumerate}
\item[(i)] For all $s \in \NN$ and $j \in I$, one has the equality of $I_{j}$-crystals
\begin{eqnarray*}
\mathcal{M}(e^{2\varpi_1 + s\delta}Y_{1,1}Y_{1,-1-4s}Y_{0,2}^{-1}Y_{0,-4s}^{-1}) = \bigoplus_{k \in \ZZ} \left(\mathcal{M}_{j,k,s}^{1}\oplus \mathcal{M}_{j,k,s}^{2}\right).
\end{eqnarray*}
\item[(ii)] For all $j \in I$, $k \in \ZZ$ and $s \geq 1$, the monomial crystal $ \mathcal{M}_{j,k,s} = \mathcal{M}_{j,k,s}^{1} \oplus \mathcal{M}_{j,k,s-1}^{2}$ is $I_{j}$-q-closed. More precisely, we have the bijection of monomial sets 
$$ \Xi^{j} : \mathcal{M}_{j,k,s} \longrightarrow \mathcal{M}\left(V_0(Y_{1, 1 + j + 4k}Y_{1,-1+j+4k-4s})^{(j)} \right)$$
where $V_0(Y_{1, 1 + j + 4k}Y_{1,-1+j+4k-4s})$ is the $\ell$-highest weight representation of \linebreak $\U_{q}(\hat{sl}_{4})'$ of $\ell$-highest weight $Y_{1, 1 + j + 4k}Y_{1,-1+j+4k-4s}$.
\end{enumerate}
\end{prop}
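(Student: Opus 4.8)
The plan is to establish both parts by a direct combinatorial analysis of the monomial crystal $\mathcal{M}(e^{2\varpi_1}Y_{1,1}Y_{1,-1}Y_{0,2}^{-1}Y_{0,0}^{-1})$ and its summands, following closely the strategy of Proposition~\ref{propqcloz} and the preceding results in Section~3, but now applied to the ``doubled'' weight $2\varpi_1$ instead of a fundamental weight. First I would record that the $\theta$-twisted automorphism $\phi$ of $\mathcal{M}$ (restricted to the relevant connected components) conjugates the sub-$I_j$-crystal generated by a monomial $m$ to the sub-$I_{j'}$-crystal generated by $\phi^{j'-j}(m)$, so it suffices to treat the case $j=0$ and then transport everything by powers of $\phi$; the commutation of $\phi$ with the shift automorphism $\tau_{4,-2\delta}$ handles the $k$-dependence. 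This reduces part~(i) to showing the equality of $I_0$-crystals $\mathcal{M}(e^{2\varpi_1+s\delta}Y_{1,1}Y_{1,-1-4s}Y_{0,2}^{-1}Y_{0,-4s}^{-1}) = \bigoplus_{k\in\ZZ}(\mathcal{M}^1_{0,k,s}\oplus \mathcal{M}^2_{0,k,s})$.

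For part~(i), I would argue as in the proof of the description of $\mathcal{M}(e^{\varpi_\ell}Y_{\ell,0}Y_{0,\ell}^{-1})$: the right-hand side is visibly contained in the left by connectedness, and for the reverse inclusion I would take an arbitrary monomial $m$ in the crystal, use the shift automorphism to normalize its ``$\delta$-level'', and then by applying a suitable sequence of Kashiwara operators $\tilde{e}_i$ for $i\in I_0$ (and $\tilde{e}_0$ where necessary to change the $j$-index) show that $m$ lies in the $I_0$-subcrystal generated by one of the two distinguished monomials $\phi^{4k}(e^{2\varpi_1+s\delta}Y_{1,1}Y_{1,-1-4s}Y_{0,2}^{-1}Y_{0,-4s}^{-1})$ or $\phi^{4k}(Y_{1,1}Y_{1,-3-4s}^{-1}Y_{2,-4-4s}Y_{0,2}^{-1})$. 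The point is that the weight $2\varpi_1$ crystal decomposes, as an $sl_4$-crystal, into pieces each isomorphic to a tensor-product-like constituent, and the two families $\mathcal{M}^1$, $\mathcal{M}^2$ correspond to the two ``shapes'' that occur; this is exactly the kind of case-by-case tableau bookkeeping done in \cite{hernandez_level_2006} and in the earlier propositions, so I expect it to be routine though somewhat lengthy.

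For part~(ii), the key is the bijection $\Xi^j:\mathcal{M}_{j,k,s}=\mathcal{M}^1_{j,k,s}\oplus\mathcal{M}^2_{j,k,s-1}\longrightarrow \mathcal{M}(V_0(Y_{1,1+j+4k}Y_{1,-1+j+4k-4s})^{(j)})$. Again by applying $\phi$ and $\tau$ it is enough to do $j=0$, $k=0$. I would compute $\Xi^0$ explicitly on the generating monomials of $\mathcal{M}^1_{0,0,s}$ and $\mathcal{M}^2_{0,0,s-1}$, observe that it kills the $Y_{0,*}^{\pm1}$ factors and produces precisely the $\ell$-highest weight monomial $Y_{1,1}Y_{1,-1-4s}$, and then check that $\Xi^0$ intertwines the $I_0$-crystal operators with those of $\mathcal{M}(V_0(Y_{1,1}Y_{1,-1-4s}))$ — this is the content of the formulas for $\tilde{e}_i,\tilde{f}_i$ ($i\neq0$) in Proposition~\ref{propqcloz}'s ambient description, which act on the $Y_{i,*}$ with $i\in I_0$ in a way that $\Xi^0$ respects. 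The injectivity of $\Xi^0$ on a coset $m\cdot\prod A_{j,l}^{\ZZ}$ follows from \cite[Lemma 3.3, Lemma 3.5]{frenkel_combinatorics_2001} exactly as in the proof of Theorem~\ref{proclocry}. Once the bijection is in place, the $I_j$-$q$-closedness is immediate because $\mathcal{M}(V_0(\cdots))$ is the monomial set of an actual $\U_q(\hat{sl}_4)'$-module, hence $I_0$-$q$-closed by definition, and $\Xi^j$ being defined via the map $\Xi_j$ of \cite{frenkel_combinatorics_2001} makes the $q$-closedness transport back.

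The main obstacle I anticipate is verifying that $\mathcal{M}^1_{0,0,s}\oplus\mathcal{M}^2_{0,0,s-1}$ — a sum of one piece from level $s$ and one piece from level $s-1$ — is the \emph{correct} pairing that matches $\mathcal{M}(V_0(Y_{1,1}Y_{1,-1-4s}))$, i.e. that the $I_0$-crystal structure genuinely connects these two pieces into a single connected crystal isomorphic to $\mathcal{B}_0(\Lambda_1)\otimes\mathcal{B}_0(\Lambda_1)$ (or the relevant submodule thereof when the two spectral parameters $q^{1}$ and $q^{-1-4s}$ are in ``special position''). Getting the index shifts ($s$ versus $s-1$, the offset $-4s$ in the spectral parameter, the exponents in $\phi^{j+4k}$) all consistent is the delicate part; I would handle it by carefully tabulating, for small $s$, the monomials produced by $\Xi^0$ and matching them against the known tableaux-sum $q$-character of $V_0(Y_{1,1}Y_{1,-1-4s})$ from \cite{nakajima_$t$-analogs_2003}, then arguing uniformity in $s$. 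The Appendix's explicit crystal descriptions (referenced in the text) should make this bookkeeping a finite check.
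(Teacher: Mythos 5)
The paper gives no argument beyond declaring these statements ``straightforward'' (leaning on the explicit crystal data in the Appendix), and your plan --- reduce to $j=0$, $k=0$ via $\phi$ and $\tau_{4,-2\delta}$, prove (i) by the same Kashiwara-operator bookkeeping as in the earlier decomposition of $\mathcal{M}(e^{\varpi_\ell}Y_{\ell,0}Y_{0,\ell}^{-1})$, and prove (ii) by computing $\Xi^0$ on the listed monomials and matching against the tableaux-sum $q$--character of $V_0(Y_{1,1}Y_{1,-1-4s})$, with the $q$-closedness transported back exactly as in Proposition \ref{propqcloz} --- is precisely the intended direct check, and you correctly isolate the $s$ versus $s-1$ index shift as the only delicate point. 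One small correction: the claim in (ii) is only a bijection of monomial sets, and $\mathcal{M}^1_{0,0,s}\sqcup\mathcal{M}^2_{0,0,s-1}$ is not a single connected $I_0$-crystal but the disjoint union of a $\mathcal{B}_0(2\Lambda_1)$-piece and a $\mathcal{B}_0(\Lambda_2)$-piece (mirroring $\mathrm{Res}(V_0(Y_{1,1}Y_{1,-1-4s}))\simeq V_0(2\Lambda_1)\oplus V_0(\Lambda_2)$); this does not affect your actual verification, which is set-theoretic.
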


\noindent The proof of these statements is straightforward. As a consequence of these results, we have

\begin{cor}
The monomial crystal $\overline{\mathcal{M}}(e^{2\varpi_1}Y_{1,1}Y_{1,-1}Y_{0,2}^{-1}Y_{0,0}^{-1})$
is closed.
\end{cor}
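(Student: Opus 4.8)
The plan is to unwind Definition \ref{defclocry}: writing $\overline{\mathcal{M}} := \overline{\mathcal{M}}(e^{2\varpi_1}Y_{1,1}Y_{1,-1}Y_{0,2}^{-1}Y_{0,0}^{-1})$, I must check that $\overline{\mathcal{M}}$ is closed by the Kashiwara operators and that it is $q$-closed. The first property is free of charge: by construction $\overline{\mathcal{M}} = \bigoplus_{s \in \NN} \mathcal{M}(e^{2\varpi_1 + s\delta}Y_{1,1}Y_{1,-1-4s}Y_{0,2}^{-1}Y_{0,-4s}^{-1})$ is a disjoint union of connected subcrystals of $\mathcal{M}$, and any subcrystal $\mathcal{M}(m)$ of $\mathcal{M}$ is stable under all the $\tilde e_i, \tilde f_i$ ($i \in I$); hence so is $\overline{\mathcal{M}}$. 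Everything therefore reduces to proving $q$-closedness in the direction $i$ for each $i \in I = \{0,1,2,3\}$.

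First I would fix $j \in I$ and, from the two preceding propositions, assemble an $I_j$-crystal decomposition of $\overline{\mathcal{M}}$. Each summand $\mathcal{M}(e^{2\varpi_1 + s\delta}\cdots)$ splits as $\bigoplus_{k \in \ZZ}(\mathcal{M}_{j,k,s}^{1} \oplus \mathcal{M}_{j,k,s}^{2})$, and then I would regroup the pieces across $s$: for each $k$ and each $s \geq 1$ I pair $\mathcal{M}_{j,k,s-1}^{2}$ (living in the level-$(s-1)$ summand) with $\mathcal{M}_{j,k,s}^{1}$ (living in the level-$s$ summand) to form the crystal $\mathcal{M}_{j,k,s} = \mathcal{M}_{j,k,s}^{1} \oplus \mathcal{M}_{j,k,s-1}^{2}$, leaving only $\mathcal{M}_{j,k,0}^{1}$ unpaired. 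This is exactly why one enlarges $\mathcal{M}(e^{2\varpi_1}\cdots)$ to $\overline{\mathcal{M}}$: because $\overline{\mathcal{M}}$ contains the level-$s$ crystal for \emph{every} $s \in \NN$, every $\mathcal{M}_{j,k,s-1}^{2}$ finds its missing partner $\mathcal{M}_{j,k,s}^{1}$ inside $\overline{\mathcal{M}}$. The outcome is an equality of $I_j$-crystals $\overline{\mathcal{M}} = \bigoplus_{k \in \ZZ}\bigl( \mathcal{M}_{j,k,0}^{1} \oplus \bigoplus_{s \geq 1} \mathcal{M}_{j,k,s} \bigr)$ in which every summand is a \emph{finite} $I_j$-crystal that is $I_j$-$q$-closed: this is the second preceding proposition for the $\mathcal{M}_{j,k,s}$ ($s \geq 1$), and for $\mathcal{M}_{j,k,0}^{1}$ it follows because this crystal is obtained from the $I_j$-$q$-closed crystal $\mathcal{M}_{I_j}(Y_{1+j,1+j}Y_{1+j,-1+j}Y_{j,2+j}^{-1}Y_{j,j}^{-1})$ (first preceding proposition) by the automorphisms $\phi^{4k}$ (equivalently, powers of the shift $\tau_{4,-2\delta}$), which preserve $q$-closedness.

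Then I would conclude: given $i \in I$, choose $j \in I$ with $j \neq i$, so that $i \in I_j$. Every monomial $m \in \overline{\mathcal{M}}$ lies in exactly one of the finite pieces $\mathcal{P}$ of the above $I_j$-decomposition, and $\mathcal{P}$ is $q$-closed in the direction $i$ and contained in $\overline{\mathcal{M}}$. Consequently the finite subset $\mathcal{S}_m \subset \mathcal{P} \cap \bigl( m \cdot \prod_{l \in \ZZ} A_{i,l}^{\ZZ} \bigr)$ and the positive integers $(n_s)_{s \in \mathcal{S}_m}$ witnessing the $q$-closedness of $\mathcal{P}$ in the direction $i$ are also witnesses for $\overline{\mathcal{M}}$, since $\mathcal{S}_m \subset \overline{\mathcal{M}}$. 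As for every $i \in I$ there is a $j \neq i$, this proves $\overline{\mathcal{M}}$ is $q$-closed in every direction, and with the first paragraph it follows that $\overline{\mathcal{M}}$ is closed.

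The hard part is in fact already done for us: it is the combinatorial content of the two preceding propositions — the explicit $I_j$-decompositions of the crystals $\mathcal{M}(e^{2\varpi_1 + s\delta}\cdots)$ and the bijective identifications via $\Xi^{j}$ of the pieces $\mathcal{M}_{j,k,s}$ with the $q$-character monomial sets of $\ell$-highest weight $\U_q(\hat{sl}_4)'$-modules. Within the corollary's proof itself, the one point that needs care is verifying that the cross-$s$ regrouping is a genuine equality of $I_j$-crystals realized inside $\overline{\mathcal{M}}$ (each $\mathcal{P}$ being a sub-$I_j$-crystal of $\overline{\mathcal{M}}$), so that $q$-closedness — a local, piece-by-piece condition — transfers from the finite pieces to their infinite union.
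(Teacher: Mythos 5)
Your argument is correct and is exactly the intended one: the paper states the corollary as an immediate consequence of the two preceding propositions without writing out a proof, and your cross-$s$ regrouping of the $I_j$-components into the finite $I_j$-$q$-closed pieces $\mathcal{M}_{j,k,0}^{1}$ and $\mathcal{M}_{j,k,s}=\mathcal{M}_{j,k,s}^{1}\oplus\mathcal{M}_{j,k,s-1}^{2}$ is precisely the mechanism the enlargement $\overline{\mathcal{M}}$ was designed to enable. Your observation that closure under Kashiwara operators is automatic for a union of connected subcrystals, and that $q$-closedness transfers from subsets to the ambient set because the witnesses $\mathcal{S}_m$ stay inside $\overline{\mathcal{M}}$, correctly fills in the remaining details.
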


\begin{prop}
$\overline{\mathcal{M}}(e^{2\varpi_1}Y_{1,1}Y_{1,-1}Y_{0,2}^{-1}Y_{0,0}^{-1})$ is a monomial realization of the $P$-crystal $\mathcal{B}(2 \varpi_{1})$. Furthermore, the monomials $M_s = e^{2\varpi_1 + s\delta}Y_{1,1}Y_{1,-1-4s}Y_{0,2}^{-1}Y_{0,-4s}^{-1}$ are extremal of weight $2 \varpi_1 + s\delta$ $(s\in \NN)$.
\end{prop}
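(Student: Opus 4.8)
The plan is to deduce both assertions from the extremality of the monomials $M_s=e^{2\varpi_1+s\delta}Y_{1,1}Y_{1,-1-4s}Y_{0,2}^{-1}Y_{0,-4s}^{-1}$, which I would establish first. The decisive observation is that $M_s$ is $I_0$-dominant: among the variables $Y_{i,l}$ with $i\in I_0=\{1,2,3\}$, only $Y_{1,1}$ and $Y_{1,-1-4s}$ occur in $M_s$, each to the first power; hence $\varepsilon_i(M_s)=0$ and $\tilde e_i\cdot M_s=0$ for every $i\in I_0$, while $\cl(2\varpi_1+s\delta)=\cl(2\varpi_1)$ lies in $\cl(P^0)$ and satisfies $\langle h_i,2\varpi_1\rangle=2\delta_{1,i}\ge 0$ for all $i\in I_0$. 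From this I would conclude extremality in one of two ways. Either I invoke the crystal counterpart of the Lemma of \cite{nakajima_extremal_2004} recalled in Section 2: in a thin monomial realization the condition there, ``$x_{i,r}^+\cdot v=0$ for all $i\in I_0,\ r\in\ZZ$'', translates into ``$\tilde e_i\cdot m=0$ for all $i\in I_0$'', so $M_s$ is extremal for $\U_q(\hat{sl}_{4})'$ and, lifting the weight to $2\varpi_1+s\delta$, extremal of that weight in the $P$-crystal $\mathcal{M}$. Or, more explicitly, I write $M_s=e^{2\varpi_1+s\delta}\,\ffbox{1}_1\,\ffbox{1}_{-1-4s}$ as a product of two boxes, each of which is, up to a shift automorphism, the extremal monomial of weight $\varpi_1$ furnished by Theorem \ref{monrzt} for $\ell=1$, and paste the two associated $W$-families together, using that the positions $1$ and $-1-4s$ differ by $2+4s$ so that applying the operators $S_i$ never produces a cancellation between the two factors.

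Next I would identify $\overline{\mathcal{M}}(e^{2\varpi_1}Y_{1,1}Y_{1,-1}Y_{0,2}^{-1}Y_{0,0}^{-1})=\bigsqcup_{s\in\NN}\mathcal{M}(M_s)$ with $\mathcal{B}(2\varpi_1)$ as a $P$-crystal. By the theorem generalizing \cite{kashiwara_realizations_2003,nakajima_$t$-analogs_2003} recalled above, each connected subcrystal $\mathcal{M}(M_s)$ is isomorphic to a connected component of $\mathcal{B}(\lambda)$ for some $\lambda\in P$; since it contains the extremal element $M_s$ of weight $2\varpi_1+s\delta$, this component is the one through an extremal element of that weight, and the universal property of $V(2\varpi_1+s\delta)$, together with the fact that $\mathcal{M}(M_s)$ is an honest connected component of a crystal basis (and not merely a quotient crystal), forces it to be the corresponding component of $\mathcal{B}(2\varpi_1+s\delta)$, which the translation by $s\delta$ identifies with $\mathcal{B}(2\varpi_1)$. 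The remaining point is that the $\mathcal{M}(M_s)$, $s\in\NN$, exhaust \emph{all} connected components of $\mathcal{B}(2\varpi_1)$ (which therefore has components indexed by $\NN$). For this I would use the embedding $V(2\varpi_1)\hookrightarrow V(\varpi_1)\otimes V(\varpi_1)$ (the submodule generated by $v_{\varpi_1}\otimes v_{\varpi_1}$), together with $\mathcal{M}_1\simeq\mathcal{B}(\varpi_1)$ and the decomposition of $\mathcal{B}_0(\Lambda_1)^{\otimes 2}$ in type $A_3$, to enumerate the connected components of $\mathcal{B}(2\varpi_1)$, and the bijections $\Xi^j$ of the two preceding propositions, which present each sub-$I_j$-crystal of $\overline{\mathcal{M}}$ as the crystal of an explicit $\U_q(\hat{sl}_{4})'$-module $V_0(Y_{1,a}Y_{1,b})$, to obtain the matching list for $\overline{\mathcal{M}}$; a component-by-component comparison then yields the asserted isomorphism.

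The routine steps are the computation that $\varepsilon_i(M_s)=0$ for $i\in I_0$ and the bookkeeping of the $\delta$-shifts identifying $\mathcal{B}(2\varpi_1+s\delta)$ with $\mathcal{B}(2\varpi_1)$. The main obstacle is the last step above: showing that no connected component of $\mathcal{B}(2\varpi_1)$ is omitted, equivalently that the reducible extremal weight module $V(2\varpi_1)$ decomposes exactly as the family $\{\mathcal{M}(M_s)\}_{s\in\NN}$ predicts. This is precisely what the two preceding propositions and their $\Xi^j$-computations are arranged to supply, so the work should amount to assembling those results rather than to a new structural argument.
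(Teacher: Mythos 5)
Your plan has the right shape but both halves rest on inputs that are not actually available as stated. For extremality, you invoke a ``crystal counterpart'' of the lemma of \cite{nakajima_extremal_2004} recalled in Section~2; but that lemma requires $x_{i,r}^{+}\cdot v=0$ for \emph{all} $r\in\ZZ$, and the operator $\tilde e_i$ is the crystal shadow of $x_{i,0}^{+}$ only, so ``$\tilde e_i\cdot M_s=0$ for $i\in I_0$'' is the analogue of a strictly weaker hypothesis. No crystal-level version of the lemma is stated or proved in the paper, and $I_0$-dominance of a level-zero monomial does not by itself control what happens further down the $W$-orbit under $\tilde f_0$. Your fallback (``paste the two $W$-families of the factors $\ffbox{1}_1$ and $\ffbox{1}_{-1-4s}$'') is also not a proof: a product of extremal monomials need not be extremal, and ``no cancellation'' is not the condition one needs to verify. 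The paper sidesteps all of this by exhibiting an explicit $P_{\cl}$-crystal isomorphism $\mathcal{M}(M_s)\rightarrow\mathcal{M}(e^{2\varpi_1}Y_{1,1}^{2}Y_{0,2}^{-2})$ sending $M_s$ to $e^{2\varpi_1}Y_{1,1}^{2}Y_{0,2}^{-2}$, and citing \cite[Proposition 3.1]{hernandez_level_2006} for the fact that the latter crystal is the connected component of $\mathcal{B}(2\varpi_1)$ through $v_{2\varpi_1}$ with $e^{2\varpi_1}Y_{1,1}^{2}Y_{0,2}^{-2}$ extremal; extremality of $M_s$ (with the weight lifted to $2\varpi_1+s\delta$) is then transported along this isomorphism for free.

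The second half is where you correctly locate the main difficulty, but your proposed substitute does not close it. Decomposing the finite crystal $\mathcal{B}_0(\Lambda_1)^{\otimes 2}\simeq\mathcal{B}_0(2\Lambda_1)\oplus\mathcal{B}_0(\Lambda_2)$ describes the $I_0$-structure, not the set of connected components of the affine crystal $\mathcal{B}(2\varpi_1)$, and the $\Xi^j$-bijections of the preceding propositions only describe $\overline{\mathcal{M}}$ itself --- they give you nothing to compare against unless you already have an independent enumeration of the components of $\mathcal{B}(2\varpi_1)$. That enumeration is precisely the external input the paper uses: by \cite{beck_crystal_2004}, all connected components of $\mathcal{B}(2\varpi_1)$ are isomorphic to one another modulo a shift of weight by $\delta$, and they are indexed in a way that matches the family $\{\mathcal{M}(M_s)\}_{s\in\NN}$. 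If you want to avoid citing Beck--Nakajima you would have to carry out the $V(\varpi_1)\otimes V(\varpi_1)$ analysis in earnest (identifying the extremal vectors of the tensor product and the components they generate), which is a genuine piece of work, not mere assembly of the results already proved in this section.
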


\begin{proof}
The monomial crystal $\mathcal{M}(e^{2\varpi_1}Y_{1,1}^{2}Y_{0,2}^{-2})$ is isomorphic to the connected component of $\mathcal{B}(2 \varpi_{1})$ generated by $v_{2 \varpi_1}$ \cite[Proposition 3.1]{hernandez_level_2006}. One checks that the map
$$\mathcal{M}(e^{2\varpi_1 + s\delta}Y_{1,1}Y_{1,-1-4s}Y_{0,2}^{-1}Y_{0,-4s}^{-1}) \longrightarrow \mathcal{M}(e^{2\varpi_1}Y_{1,1}^{2}Y_{0,2}^{-2})$$ which sends the monomial $M_s$ to the extremal element $e^{2\varpi_1}Y_{1,1}^{2}Y_{0,2}^{-2}$ is an isomorphism of $P_\cl$-crystals for all $s \in \NN$. Then the result is a direct consequence of the description of the crystal $\mathcal{B}(2\varpi_1)$ given in \cite{beck_crystal_2004}: all the connected components of $\mathcal{B}(2\varpi_1)$ are isomorphic to each other modulo shift of weight by $\delta$.
\end{proof}

\subsection{Construction of the $\U_q(sl_{4}^{tor})$-module $V(e^{2\varpi_1}Y_{1,1}Y_{1,-1}Y_{0,2}^{-1}Y_{0,0}^{-1})$.}

Let us give the main result of this section.

\begin{thm}\label{thmexmod}
There exists a thin representation of $\U_q(sl_{4}^{tor})$ whose $q$--character is the sum of monomials occurring in $\overline{\mathcal{M}}(e^{2\varpi_1}Y_{1,1}Y_{1,-1}Y_{0,2}^{-1}Y_{0,0}^{-1})$ with multiplicity one. It is denoted by $V(e^{2\varpi_1}Y_{1,1}Y_{1,-1}Y_{0,2}^{-1}Y_{0,0}^{-1})$.
\end{thm}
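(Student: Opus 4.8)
The plan is to follow exactly the template established in Section 4 for the extremal fundamental loop weight modules, transplanting it to the closed crystal $\overline{\mathcal{M}} = \overline{\mathcal{M}}(e^{2\varpi_1}Y_{1,1}Y_{1,-1}Y_{0,2}^{-1}Y_{0,0}^{-1})$. First I would take $V = V(e^{2\varpi_1}Y_{1,1}Y_{1,-1}Y_{0,2}^{-1}Y_{0,0}^{-1})$ to be the $\CC$-vector space freely generated by a basis $(v_m)$ indexed by the vertices $m$ of $\overline{\mathcal{M}}$. The preceding two propositions give, for each $j \in I$, a decomposition of $\overline{\mathcal{M}}$ as a disjoint union of finite sub-$I_j$-crystals $\mathcal{M}_{j,k,s}$ (for $k \in \ZZ$, $s \geq 1$, together with the $s=0$ pieces $\mathcal{M}_{j,k,0}^1 \oplus \mathcal{M}_{j,k,-1}^{2}$ once one sets up indexing so the bijection $\Xi^j : \mathcal{M}_{j,k,s} \to \mathcal{M}(V_0(Y_{1,1+j+4k}Y_{1,-1+j+4k-4s})^{(j)})$ covers everything), each of which maps bijectively via $\Xi^j$ onto the monomial set of a simple $\ell$-highest weight $\U_q(\hat{sl}_4)'$-module $V_0(Y_{1,a}Y_{1,b})$. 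Pulling back the $\U_q^{v,j}(sl_4^{tor})$-action on each $V_0(\cdots)^{(j)}$ through the vector-space isomorphism $v_m \mapsto v_{\Xi^j(m)}$ and taking the direct sum over $k,s$ endows $V$ with a $\U_q^{v,j}(sl_4^{tor})$-module structure $V^{(j)}$.

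Next I would verify, just as in the proof of the Proposition following Remark \ref{remellplus}, that these actions for varying $j$ glue to a genuine $\U_q(sl_4^{tor})$-module structure on $V$. The key point is that for any $i \in I$ the action of the subalgebra $\hat{\U}_i$ depends only on the Kashiwara operators $\tilde e_i, \tilde f_i$ acting on monomials (which are intrinsic to the crystal $\overline{\mathcal{M}}$, independent of the ambient $I_j \ni i$) and on the $\ell$-weight data $\Xi_i(m)$ (likewise intrinsic); hence the definition of the $\hat{\U}_i$-action is independent of the choice of $j \neq i$. One then fixes two indices $i_1, i_2$, finds a common $j$ with $i_1, i_2 \in I_j$, and observes that all relations of $\U_q(sl_4^{tor})$ involving only $\hat{\U}_{i_1}$ and $\hat{\U}_{i_2}$ hold because they already hold inside the $\U_q^{v,j}(sl_4^{tor})$-module $V^{(j)}$. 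Setting $k_h \cdot v_m = q^{\wt(m)(h)} v_m$ completes the structure. Thinness is immediate: each $V_0(Y_{1,a}Y_{1,b})$ is thin by the results recalled in Section 2 (its $\ell$-weight spaces being one-dimensional, since the $q$-character of a fundamental-type module of $\U_q(\hat{sl}_4)'$ appearing here is multiplicity-free — this uses that $Y_{1,a}Y_{1,b}$ with $a-b \in 2\ZZ + \{$appropriate gap$\}$ gives a simple module whose monomials all occur once), and the gluing does not create coincidences because $\overline{\mathcal{M}}$ is by construction a set of distinct monomials with $v_m$ of $\ell$-weight exactly $m$. Consequently $\chi_q(V) = \sum_{m \in \overline{\mathcal{M}}} m$.

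The main obstacle I anticipate is the well-definedness of the gluing at the overlap between the two types of sub-$I_j$-crystals, i.e. checking that the pieces $\mathcal{M}_{j,k,s}^1$ and $\mathcal{M}_{j,k,s-1}^2$ genuinely assemble into a single $I_j$-crystal isomorphic to $\mathcal{M}(V_0(Y_{1,1+j+4k}Y_{1,-1+j+4k-4s}))$ rather than two separate $\ell$-highest weight components — this is the content of part (ii) of the second Proposition, and it is what distinguishes the non-closed case from Section 4, where each $I_j$-piece was a single fundamental module $\mathcal{M}_{I_j}(\cdots)$. Once the bijection $\Xi^j$ onto a single $\ell$-highest weight module's monomial set is in hand (which the paper asserts is straightforward to check using the tableaux-sum formulas of \cite{nakajima_$t$-analogs_2003} for $q$-characters of $\U_q(\hat{sl}_4)'$-modules), pulling back the action is routine, and the compatibility argument across different $j$ proceeds verbatim as before. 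A secondary point requiring care is that for fixed $j$ the family $\{\mathcal{M}_{j,k,s}\}$ really does partition all of $\overline{\mathcal{M}}$ without repetition, so that the direct-sum action is unambiguous; this follows by combining part (i) of both Propositions with the definition of $\overline{\mathcal{M}}$ as $\bigoplus_{s \in \NN} \mathcal{M}(e^{2\varpi_1 + s\delta}Y_{1,1}Y_{1,-1-4s}Y_{0,2}^{-1}Y_{0,-4s}^{-1})$.
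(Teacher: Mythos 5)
Your overall architecture matches the paper's: take the free vector space on the vertices of $\overline{\mathcal{M}}$, endow it with a $\U_q^{v,j}(sl_4^{tor})$-module structure for each $j$ by pulling back through $\Xi^j$, and glue via the subalgebras $\hat{\U}_i$. But there are genuine gaps at exactly the points you declare routine. First, the pieces $\mathcal{M}_{j,k,s}$ with $s\geq 1$ correspond not to fundamental or Kirillov--Reshetikhin modules but to tensor products $V_0(Y_{1,a})\otimes V_0(Y_{1,b})$, whose restriction to $\U_q(sl_4)$ is $V_0(2\Lambda_1)\oplus V_0(\Lambda_2)$ and therefore has two-dimensional weight spaces. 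Consequently the action of $x_{i,0}^{\pm}$ on a monomial-indexed basis is \emph{not} of the form $v_m\mapsto v_{\tilde f_i\cdot m}$: the paper must invoke the explicit $\U_q(\hat{sl}_2)'$ computation of Example \ref{exprod}, where
$$x_{r}^{-}\cdot v_{Y_{1,a}Y_{1,b}}=\dfrac{q^{b-1}-q^{a+1}}{q^{b}-q^{a}}\,q^{r(a+1)}v_{Y_{1,a+2}^{-1}Y_{1,b}}+\dfrac{q^{b+1}-q^{a-1}}{q^{b}-q^{a}}\,q^{r(b+1)}v_{Y_{1,a}Y_{1,b+2}^{-1}},$$
in order to single out the basis vectors $v_{M_s^1},v_{M_s^2}$ inside a two-dimensional weight space. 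Without this input, ``pulling back the action'' is not yet defined, and the intrinsic argument you borrow from Section 4 for independence of the choice of $j$ (that the $\hat{\U}_i$-action depends only on $\tilde e_i,\tilde f_i$ and $\Xi_i(m)$) no longer applies as stated.

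Second, your assertion that the compatibility argument ``proceeds verbatim as before'' is precisely what fails. The paper separates the monomials into those lying in $\mathcal{E}_{j,k,s}\cap\mathcal{E}_{j',k',s'}$ only for $s=s'$, where the promotion-operator argument of Theorem \ref{thmmod} does yield all defining relations, and those with $s\neq s'$, on which the glued actions come from modules of different types (KR versus TP, or $s$-TP versus $s'$-TP). These are the monomials $\phi^{j+4k}(Y_{1,-1-4s}Y_{3,5}^{-1}Y_{0,4}Y_{0,-4s}^{-1})$, and for them the promotion operator gives only some of the relations; the rest must be checked by direct computation, which the paper acknowledges doing without detailing. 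The obstacle you do flag --- assembling $\mathcal{M}^1_{j,k,s}$ and $\mathcal{M}^2_{j,k,s-1}$ into a single $I_j$-crystal matching $\mathcal{M}(V_0(Y_{1,1+j+4k}Y_{1,-1+j+4k-4s}))$ --- is the easy part, declared straightforward in the paper, while the genuinely delicate step is the one your plan waves through.
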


The construction of $V(e^{2\varpi_1}Y_{1,1}Y_{1,-1}Y_{0,2}^{-1}Y_{0,0}^{-1})$ is analogous to the one of $V(e^{\varpi_\ell}Y_{\ell, 0}Y_{0, \ell}^{-1})$ in Theorem \ref{thmmod}: we paste together the finite-dimensional representations \linebreak $V_0(Y_{1,1+j+4k}Y_{1,-1+j+4k})^{(j)}$ and $V_0(Y_{1,1+j+4k}Y_{1,-1+j+4k-4s})^{(j)}$ of $\U_q^{v,j}(sl_{4}^{tor})$ with $j \in I$, $k \in \ZZ$ and $s \in \NN^{\ast}$.\\

Let us begin by recalling some well-known facts about the Kirillov-Reshetikhin module $V_0(\Xi^{0}(M))$ over $\U_q(\hat{sl}_4)'$ with $M = e^{2\varpi_1}Y_{1,1}Y_{1,-1}Y_{0,2}^{-1}Y_{0,0}^{-1}$. It is irreducible as a $\U_q(sl_4)$-module, isomorphic to $V_0(2 \Lambda_1)$. In particular, $V_0(\Xi^{0}(M))$ is an extremal weight module of extremal weight $2 \Lambda_1$ and there exist vectors $v_{\phi^{j}(M)}$ ($j= 0, \dots, 3$) such that $v_{M}$ is an $\ell$-highest weight vector of $V_0(\Xi^0(M))$ and
\begin{align*}
(x_{i,0}^{-})^{(2)} \cdot v_{\phi^{i-1}(M)} &=  v_{\phi^{i}(M)}  \text{ for } i= 1, \dots, 3,\\
(x_{i,0}^{+})^{(2)}  \cdot v_{\phi^{i}(M)} &= v_{\phi^{i-1}(M)}  \text{ for } i= 1, \dots, 3,\\
x_{i,0}^{\pm}  \cdot v_{\phi^{j}(M)} &= 0 \text{ in the other cases.}
\end{align*}
Set
$$v_{\tilde{f}_1 \cdot M} := x_{1,0}^{-} \cdot v_{M}, \ v_{\tilde{f}_2 \tilde{f}_1 \cdot M} := x_{2,0}^{-} x_{1,0}^{-} \cdot v_{M}, \ v_{\tilde{f}_3 \tilde{f}_2 \tilde{f}_1 \cdot M} := x_{3,0}^{-} x_{2,0}^{-} x_{1,0}^{-} \cdot v_{M},$$
$$v_{\tilde{f}_2 \cdot \phi(M)} := x_{2,0}^{-} \cdot v_{\phi(M)}, \ v_{\tilde{f}_3 \tilde{f}_2 \cdot \phi(M)} := x_{3,0}^{-} x_{2,0}^{-} \cdot v_{\phi(M)},$$
$$v_{\tilde{f}_3 \cdot \phi^2(M)} := x_{3,0}^{-} \cdot v_{\phi^{2}(M)}.$$
These vectors form a basis $(v_m)$ of $V_0(\Xi^{0}(M))$, indexed by the monomials occurring in $\mathcal{M}_{I_0}(M)$. Furthermore for all $m \in \mathcal{M}_{I_0}(M)$, $v_m$ is an $\ell$-weight vector of $\ell$-weight $\Xi^0(m)$.\\

The other finite-dimensional representations of $\U_q(\hat{sl}_{4})'$ we have to consider are $V_0(\Xi^0(M_s))$ with $M_s=e^{2\varpi_1+s\delta}Y_{1,1}Y_{1,-1-4s}Y_{0,2}^{-1}Y_{0,-4s}^{-1}$ and $s \in \NN^{\ast}$. The following two points are well-known:
\begin{enumerate}
\item[(i)] $V_0(\Xi^0(M_s))$ is an irreducible $\U_q(\hat{sl}_{4})'$-module isomorphic to $V_0(Y_{1,1}) \otimes V_0(Y_{1,-1-4s})$,
\item[(ii)] $\mathrm{Res}(V_0(\Xi^0(M_s)))$ is a completely reducible $\U_q(sl_{4})$-module isomorphic to \linebreak $V_0(2\Lambda_1) \oplus V_0(\Lambda_2)$.
\end{enumerate}
Furthermore there exist vectors $v_{\phi^{j}(M_s)}$ ($j= 0, \dots, 3$) such that $v_{M_s}$ is an $\ell$-highest weight vector of $V_0(\Xi^0(M_s))$ and
\begin{align*}
(x_{i,0}^{-})^{(2)} \cdot v_{\phi^{i-1}(M_s)} &=  v_{\phi^{i}(M_s)}  \text{ for } i= 1, \dots, 3,\\
(x_{i,0}^{+})^{(2)} \cdot v_{\phi^{i}(M_s)} &= v_{\phi^{i-1}(M_s)}  \text{ for } i= 1, \dots, 3,\\
x_{i,0}^{\pm} \cdot v_{\phi^{j}(M_s)} &= 0 \text{ in the other cases.}
\end{align*}
To complete this family of vectors to a basis of $V_0(\Xi^0(M_s))$, the following example is used.

\begin{ex}\label{exprod} Let $a, b \in \ZZ$ be such that $a \neq b$ and $a \neq b \pm 2$. Consider the $\U_q(\hat{sl}_2)'$-module $V_0(Y_{1,a}Y_{1,b})$. This module was already studied in \cite{hernandez_simple_2010}. We have
$$\chi_q(V_0(Y_{1,a}Y_{1,b})) = Y_{1,a}Y_{1,b} + Y_{1,a}Y_{1,b+2}^{-1} + Y_{1,a+2}^{-1} Y_{1,b} + Y_{1,a+2}^{-1}Y_{1,b+2}^{-1}.$$
In particular, it was shown that there exists a basis
$$\lbrace v_{Y_{1,a}Y_{1,b}}, v_{Y_{1,a+2}^{-1}Y_{1,b}}, v_{Y_{1,a}Y_{1,b+2}^{-1}}, v_{Y_{1,a+2}^{-1}Y_{1,b+2}^{-1}} \rbrace$$
where the action of the Drinfeld generators on it is given by
\begin{eqnarray*}
x_{r}^{+} \cdot v_{Y_{1,a}Y_{1,b}}& = & 0,\\
x_{r}^{-} \cdot v_{Y_{1,a}Y_{1,b}} & = & \dfrac{q^{b-1} - q^{a+1}}{q^b-q^a} q^{r(a+1)}v_{Y_{1,a+2}^{-1}Y_{1,b}} +\dfrac{q^{b+1} - q^{a-1}}{q^b-q^a} q^{r(b+1)}v_{Y_{1,a}Y_{1,b+2}^{-1}},\\
x_{r}^{+} \cdot  v_{Y_{1,a+2}^{-1}Y_{1,b}} & = & q^{r(a+1)} v_{Y_{1,a}Y_{1,b}},\\
x_{r}^{-} \cdot  v_{Y_{1,a+2}^{-1}Y_{1,b}} & = & q^{r(b+1)} v_{Y_{1,a+2}^{-1}Y_{1,b+2}^{-1}},\\
x_{r}^{+} \cdot  v_{Y_{1,a}Y_{1,b+2}^{-1}} & = & q^{r(b+1)} v_{Y_{1,a}Y_{1,b}},\\
x_{r}^{-} \cdot  v_{Y_{1,a}Y_{1,b+2}^{-1}} & = & q^{r(a+1)} v_{Y_{1,a+2}^{-1}Y_{1,b+2}^{-1}},\\
x_{r}^{+} \cdot v_{Y_{1,a+2}^{-1}Y_{1,b+2}^{-1}} & = & \dfrac{q^{b-1} - q^{a+1}}{q^b-q^a} q^{r(b+1)}v_{Y_{1,a+2}^{-1}Y_{1,b}} + \dfrac{q^{b+1} - q^{a-1}}{q^b-q^a} q^{r(a+1)} v_{Y_{1,a}Y_{1,b+2}^{-1}},\\
x_{r}^{-} \cdot v_{Y_{1,a+2}^{-1}Y_{1,b+2}^{-1}} & = & 0,
\end{eqnarray*}
and with $v_{m}$ of $\ell$-weight $m$ for $m=Y_{1,a}Y_{1,b}, \dots, Y_{1,a+2}^{-1}Y_{1,b+2}^{-1}$. Note that the basis used in \cite{hernandez_simple_2010} is renormalized here and we have
$$(x_0^{-})^{(2)} \cdot v_{Y_{1,a}Y_{1,b}} = v_{Y_{1,a+2}^{-1}Y_{1,b+2}^{-1}}, \ (x_0^{+})^{(2)} \cdot v_{Y_{1,a+2}^{-1}Y_{1,b+2}^{-1}} = v_{Y_{1,a}Y_{1,b}}.$$
As $a \neq b \pm 2$, it is well-known that the $\U_q(\hat{sl}_2)'$-module $V_0(Y_{1,a}Y_{1,b})$ is isomorphic to $V_0(Y_{1,a}) \otimes V_0(Y_{1,b})$. Furthermore the $\U_q(sl_2)$-module $\mathrm{Res}(V_0(Y_{1,a}Y_{1,b}))$ is not irreducible, but it is cyclic generated by one of vectors $v_{Y_{1,a}Y_{1,b+2}^{-1}}$ or $v_{Y_{1,a+2}^{-1}Y_{1,b}}$.
\end{ex}

Set $M_s^1 = Y_{1,3}^{-1}Y_{1,-1-4s}Y_{2,2}Y_{0,-4s}^{-1}$ and $M_s^2 = Y_{1,1}Y_{1,1-4s}^{-1}Y_{2,-4s}Y_{0,2}^{-1}$. Let $v_{M_s^1}$ and $v_{M_s^2} \in V_0(\Xi^0(M))$ of $\ell$-weight $M_s^1$ and $M_s^2$ respectively, be such that
$$x_{1,0}^{-} \cdot v_{M_s} = \dfrac{q^{-2-4s} - q^{4}}{q^{-1-4s}-q^{3}} v_{M_s^1} + \dfrac{q^{-4s} - q^{2}}{q^{-1-4s}-q^{3}} v_{M_s^2}.$$
Set
$$v_{\tilde{f}_2 \cdot M_s^u} := x_{2,0}^{-} \cdot v_{M_s^u}, \ v_{\tilde{f}_3 \tilde{f}_2 \cdot M_s^u} := x_{3,0}^{-} x_{2,0}^{-} \cdot v_{M_s^u}$$
with $u = 1, 2$. In the same way, one can define $v_{\phi(M_s^u)}, v_{\tilde{f}_3 \cdot \phi(M_s^u)}$ and $v_{\phi^{2}(M_s^u)}$ for $u = 1, 2$. We check that these vectors form a basis $(v_m)$ of $V_0(\Xi^0(M_s))$, indexed by the monomials occurring in $\mathcal{M}_{0,0,s}$. Moreover $v_m$ is an $\ell$-weight vector of $\ell$-weight $\Xi^0(m)$ for all $m$.\\

By twisting the action of $\U_q(\hat{sl}_4)'$ on $V_0(Y_{1,1}Y_{1,-1})$ and $V_0(Y_{1,1}Y_{1,-1-4s})$ by $\theta^{(j)}$ and $t_b$ for some $b \in \CC^{\ast}$, we obtain for all $j \in I, k \in \ZZ$ and $s \in \NN^{\ast}$
\begin{itemize}
\item the $\U_q^{v,j}(sl_{4}^{tor})$-modules $V_0(Y_{1,1+j+4k}Y_{1,-1+j+4k})^{(j)}$ which we call modules of type KR below,
\item the $\U_q^{v,j}(sl_{4}^{tor})$-modules $V_0(Y_{1,1+j+4k}Y_{1,-1+j+4k-4s})^{(j)}$ which we call modules of type $s$-TP below. The modules of type $s$-TP for various $s \in \NN^{\ast}$ are called modules of type TP.
\end{itemize}
From the construction done above, we get bases $(v_m)$ of these modules indexed by the monomial crystals $\mathcal{M}_{I_j}(\phi^{j+4k}(Y_{1,1}Y_{1,-1}Y_{0,2}^{-1}Y_{0,0}^{-1}))$ (resp. $\mathcal{M}_{j,k,s}$) with analogous properties as the previous ones. In particular, the action on a vector $v_m$ is completely determined by the action of the horizontal quantum affine subalgebra on it and by its $\ell$-weight $m$.\\

Let us begin the construction of the $\U_q(sl_{4}^{tor})$-module $V(e^{2\varpi_1}Y_{1,1}Y_{1,-1}Y_{0,2}^{-1}Y_{0,0}^{-1})$. Denote by $ \mathcal{E} $ the set of monomials occurring in $\overline{\mathcal{M}}(e^{2\varpi_1}Y_{1,1}Y_{1,-1}Y_{0,2}^{-1}Y_{0,0}^{-1})$ and for all $j \in I$, $k \in \ZZ$ and $s \in \NN^{\ast}$, $ \mathcal{E}_{j,k,0} $ (resp. $ \mathcal{E}_{j,k,s} $) the set of monomials corresponding to $\mathcal{M}_{j,k,0}^{1}$ (resp. $\mathcal{M}_{j,k,s}$). We have for all $ 0 \leq j \leq 3 $,
$$ \mathcal{E} = \bigsqcup_{k \in \ZZ} \mathcal{E}_{j,k,0} \sqcup \bigsqcup_{k \in \ZZ, s \in \NN^{\ast}} \mathcal{E}_{j,k,s}.$$
Let
$$ V(e^{2\varpi_1}Y_{1,1}Y_{1,-1}Y_{0,2}^{-1}Y_{0,0}^{-1}) = \bigoplus_{m \in \mathcal{E}} \CC v_{m} $$
be the vector space freely generated by $ \mathcal{E} $. For $ 0 \leq j \leq 3 $, $ k \in \ZZ $ and $s \in \NN^{\ast}$, set $ V_{k}^{(j)} = \bigoplus_{m \in \mathcal{E}_{j,k,0}} \CC v_{m} $ (resp. $ V_{k,s}^{(j)} = \bigoplus_{m \in \mathcal{E}_{j,k,s}} \CC v_{m} $). Then for all $ 0 \leq j \leq 3 $,
$$V(e^{2\varpi_1}Y_{1,1}Y_{1,-1}Y_{0,2}^{-1}Y_{0,0}^{-1}) = \bigoplus_{k \in \ZZ} V_{k}^{(j)} \oplus \bigoplus_{ k \in \ZZ, s \in \NN^{\ast}} V_{k,s}^{(j)}.$$

For all $j \in I$, we endow $ V(e^{2\varpi_1}Y_{1,1}Y_{1,-1}Y_{0,2}^{-1}Y_{0,0}^{-1})$ with a structure of $ \U_{q}^{v,j}(sl_{4}^{tor}) $-module as follows: for $ k \in \ZZ $ and $s \in \NN^{\ast}$, the vector space $ V_{k}^{(j)}$ (resp. $ V_{k,s}^{(j)}$) is isomorphic to $V_0(Y_{1,1+j+4k}Y_{1,-1+j+4k})^{(j)}$ (resp. $V_0(Y_{1,1+j+4k}Y_{1,-1+j+4k-4s})^{(j)}$)  by identifying the corresponding bases. So $ V_{k}^{(j)}$ (resp. $ V_{k,s}^{(j)}$) is endowed with a structure of $ \U_{q}^{v,j}(sl_{4}^{tor}) $-module, and $V(e^{2\varpi_1}Y_{1,1}Y_{1,-1}Y_{0,2}^{-1}Y_{0,0}^{-1})$ also by direct sum. We denote it by $V(e^{2\varpi_1}Y_{1,1}Y_{1,-1}Y_{0,2}^{-1}Y_{0,0}^{-1})^{(j)}$.

\begin{prop}
There exists a $ \U_{q}(sl_{4}^{tor}) $-module structure on $V(e^{2\varpi_1}Y_{1,1}Y_{1,-1}Y_{0,2}^{-1}Y_{0,0}^{-1})$ such that for all $j\in I$ the induced $ \U_{q}^{v,j}(sl_{n+1}^{tor}) $-module is isomorphic to \linebreak $V(e^{2\varpi_1}Y_{1,1}Y_{1,-1}Y_{0,2}^{-1}Y_{0,0}^{-1})^{(j)}$. Furthermore the $q$--character of $V(e^{2\varpi_1}Y_{1,1}Y_{1,-1}Y_{0,2}^{-1}Y_{0,0}^{-1})$ is
$$\chi_q(V(e^{2\varpi_1}Y_{1,1}Y_{1,-1}Y_{0,2}^{-1}Y_{0,0}^{-1})) = \sum_{m \in \mathcal{E}} m,$$
where $\mathcal{E}$ is the set of monomials occurring in $\overline{\mathcal{M}}(e^{2\varpi_1}Y_{1,1}Y_{1,-1}Y_{0,2}^{-1}Y_{0,0}^{-1})$.
\end{prop}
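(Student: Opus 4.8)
The plan is to build the $\U_q(sl_4^{tor})$-action on $V:=V(e^{2\varpi_1}Y_{1,1}Y_{1,-1}Y_{0,2}^{-1}Y_{0,0}^{-1})$ out of the vertical structures $V^{(j)}:=V(e^{2\varpi_1}Y_{1,1}Y_{1,-1}Y_{0,2}^{-1}Y_{0,0}^{-1})^{(j)}$ exactly as in the proof of the analogous statement in Section 4.1. Concretely: for $h\in\Hlie$ put $k_h\cdot v_m=q^{\wt(m)(h)}v_m$ for every $m\in\mathcal E$, and for each node $i\in I$ let the subalgebra $\hat{\U}_i\simeq\U_q(\hat{sl}_2)'$ act on $V$ by restricting the $\U_q^{v,j}(sl_4^{tor})$-module structure on $V^{(j)}$, where $j$ is any index with $j\neq i$ (so $i\in I_j$ and $\hat{\U}_i\subset\U_q^{v,j}(sl_4^{tor})$). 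Since every generator of $\U_q(sl_4^{tor})$ in Definition \ref{defqta} lies either in some $\hat{\U}_i$ or among the $k_h$, this prescribes a candidate action; the content of the proof is (a) that the $\hat{\U}_i$-action does not depend on the auxiliary index $j$, and (b) that the defining relations of $\U_q(sl_4^{tor})$ hold.

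For (a), the decisive point is that the bases $(v_m)$ of all the vertical summands $V_k^{(j)}$ (type KR) and $V_{k,s}^{(j)}$ (type $s$-TP) were produced in one and the same way: by twisting the explicit $\ell$-weight bases of $V_0(Y_{1,1}Y_{1,-1})$ and of $V_0(Y_{1,1}Y_{1,-1-4s})$ --- whose only non-trivial structure constants are those recorded in Example \ref{exprod} --- by the rotation automorphisms $\theta^{(j)}$ and an automorphism $t_b$, and by re-indexing through the monomial-set bijections $\Xi^j$. Hence all these bases assemble into the single global basis $(v_m)_{m\in\mathcal E}$ of $V$, and for every $i$ the vector $v_m$ is an $\ell$-weight vector of $\ell$-weight $\Xi_i(m)$ under the $\hat{\U}_i$-action. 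Now fix $i$ and two admissible indices $j,j'\neq i$. I would note that the action of $x_{i,r}^{\pm}$ and $\phi_{i,\pm s}^{\pm}$ on $v_m$, computed inside $V^{(j)}$, is given by a formula that involves only the monomial $m$ and its images under $\tilde{e}_i,\tilde{f}_i$ (together with the $\ell$-weights $\Xi_i$ of these neighbours): on the pieces that are irreducible as $\U_q(sl_4)$-modules this is formula (\ref{actmod}), and on the type-TP pieces it is formula (\ref{actmod}) together with the two-term correction of Example \ref{exprod}. This formula makes no reference to $j$, so the same computation inside $V^{(j')}$ gives the identical answer, and the $\hat{\U}_i$-action on $V$ is well defined.

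I expect step (a) to be the main obstacle, precisely because --- unlike the fundamental case of Section 4 --- the type-TP summands are reducible as $\U_q(sl_4)$-modules, so their $\ell$-weight bases are genuinely pinned down by the non-trivial scalars of Example \ref{exprod}, and one must also check the internal compatibility when a single monomial $m$ belongs to several summands of one and the same $V^{(j)}$, which happens at the monomials shared by $\mathcal M_{j,k,s}^{1}$ and $\mathcal M_{j,k,s-1}^{2}$. I would organise this by checking that the whole construction is intertwined by the $\theta$-twisted automorphism $\phi$ and by the shift $\tau_{4,-2\delta}$, which reduces the verification to finitely many overlaps --- for instance between the summands attached to $j=0$ and to $j=1$ --- and there I would use the explicit vectors $v_M,\,v_{\tilde{f}_1\cdot M},\dots$ and $v_{M_s},\,v_{M_s^{1}},\,v_{M_s^{2}},\dots$ listed above.

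Once (a) is in place, (b) is routine. The relations involving only the $k_h$ follow at once from $k_h\cdot v_m=q^{\wt(m)(h)}v_m$; those relating the $k_h$ to the $x_{i,r}^{\pm}$ and $\phi_{i,\pm s}^{\pm}$ follow from the weight grading, since $x_{i,r}^{\pm}$ shifts weights by $\pm\alpha_i$ and $\phi_{i,\pm s}^{\pm}$ preserves them; and for any two nodes $i_1,i_2\in I$ one may pick $j\in I$ with $j\notin\{i_1,i_2\}$ (possible since $I$ has four elements), so that $\hat{\U}_{i_1}$ and $\hat{\U}_{i_2}$ both act through the $\U_q^{v,j}(sl_4^{tor})$-module $V^{(j)}$ and hence all relations of Definition \ref{defqta} among generators indexed by $I_j$ are satisfied. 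This gives the $\U_q(sl_4^{tor})$-module structure on $V$, whose restriction to each $\U_q^{v,j}(sl_4^{tor})$ is $V^{(j)}$ by construction. Finally, since each $v_m$ is a joint $\ell$-weight vector of $\ell$-weight $m$ and the monomials of $\mathcal E$ are pairwise distinct, $\chi_q(V)=\sum_{m\in\mathcal E}m$ (and $V$ is thin, each $\ell$-weight space being spanned by a single $v_m$).
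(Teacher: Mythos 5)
Your proposal follows the paper's proof essentially verbatim: define the $\hat{\U}_i$-action by restriction from any $V^{(j)}$ with $j\neq i$, check independence of the auxiliary index $j$, and deduce the defining relations by choosing, for each pair $i_1,i_2$, a common $j\notin\{i_1,i_2\}$ (possible since $|I|=4$), deferring --- exactly as the paper does with ``we check directly\dots we do not detail the calculations here'' --- the hand verification at the overlap monomials $\phi^{j+4k}(Y_{1,-1-4s}Y_{3,5}^{-1}Y_{0,4}Y_{0,-4s}^{-1})$, where a vector sits in a type-KR summand for one vertical direction and a type-TP summand for another and formula (\ref{actmod}) must be supplemented by the two-term expressions of Example \ref{exprod}. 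One small correction: for a fixed $j$ the sets $\mathcal{E}_{j,k,s}$ genuinely partition $\mathcal{E}$ (each $\mathcal{M}_{j,k,s}=\mathcal{M}^1_{j,k,s}\oplus\mathcal{M}^2_{j,k,s-1}$ is a disjoint union), so there are no monomials ``belonging to several summands of one and the same $V^{(j)}$''; the only genuine overlaps are between summands attached to distinct vertical directions $j\neq j'$, which your argument also covers.
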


\begin{proof}
The process is the same as in Theorem \ref{thmmod}: to define an action of $\U_{q}(sl_{4}^{tor})$, we determine the action of the subalgebras $\hat{\U}_i$ for all $i \in I$. For that, let $j \in I$ be such that $j \neq i$. Then the action of $\hat{\U}_i$ on $V(e^{2\varpi_1}Y_{1,1}Y_{1,-1}Y_{0,2}^{-1}Y_{0,0}^{-1})$ is the restriction of the action of $ \U_{q}^{v,j}(sl_4^{tor}) $ on $V(e^{2\varpi_1}Y_{1,1}Y_{1,-1}Y_{0,2}^{-1}Y_{0,0}^{-1})^{(j)}$. We check that this is independent of the choice of $j \neq i$.

Let us show that this action endows $V(e^{2\varpi_1}Y_{1,1}Y_{1,-1}Y_{0,2}^{-1}Y_{0,0}^{-1})$ with a structure of $ \U_{q}(sl_{4}^{tor}) $-module. For that, we have to distinguish two types of monomials:
\begin{itemize}
\item the $m$ such that there is no $s, s' \in \NN$ with $s \neq s'$ and $m \in \mathcal{E}_{j,k,s} \cap \mathcal{E}_{j',k',s'}$ for some $0 \leq j, j' \leq 3$ and $k, k' \in \ZZ$. For such a monomial, the defined action on $v_m$ comes from the same type of modules, i.e. only of modules of type KR or only on modules of type $s$-TP for one $s \in \NN^{\ast}$,
\item the $m$ such that there is $s, s' \in \NN$ with $s \neq s'$ and $m \in \mathcal{E}_{j,k,s} \cap \mathcal{E}_{j',k',s'}$ for some $0 \leq j, j' \leq 3$ and $k, k' \in \ZZ$. For such a monomial, the defined action on $v_m$ comes from two different types of modules, i.e. of modules of type KR and of type TP or of modules of type $s$-TP and of type $s'$-TP with $s \neq s'$.
\end{itemize}
For the first ones, the same process as in Theorem \ref{thmmod} (using promotion operator) implies that the defining relations of $ \U_{q}(sl_{4}^{tor}) $ hold on it.
For the other ones, this is more complicated. Such a monomial is of the form $m = \phi^{j+4k}(Y_{1,-1-4s}Y_{3,5}^{-1}Y_{0,4}Y_{0,-4s}^{-1})$ with $ 0 \leq j \leq 3 $, $ k \in \ZZ $, $s \in \NN^{\ast}$. Promotion operator implies some relations on $v_m$ but not all and we check directly that they are satisfied. We do not detail the calculations here.
\end{proof}

\subsection{Study of the $\U_q(sl_{4}^{tor})$-module $V(e^{2\varpi_1}Y_{1,1}Y_{1,-1}Y_{0,2}^{-1}Y_{0,0}^{-1})$}

\begin{prop}
The $\U_q(sl_{4}^{tor})$-module $V(e^{2\varpi_1}Y_{1,1}Y_{1,-1}Y_{0,2}^{-1}Y_{0,0}^{-1})$ is integrable. Moreover, it satisfies property $(iv)$ of Remark \ref{remintmod}.
\end{prop}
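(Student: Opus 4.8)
The plan is to follow the proof of the corresponding statement for the extremal fundamental loop weight modules (the Proposition preceding Theorem~\ref{thmelm}), exploiting that over each vertical subalgebra $V := V(e^{2\varpi_1}Y_{1,1}Y_{1,-1}Y_{0,2}^{-1}Y_{0,0}^{-1})$ decomposes into finite-dimensional pieces. First I would record the two structural facts used throughout: by construction $V = \bigoplus_{m \in \mathcal{E}}\CC v_m$ with $v_m$ a weight vector of weight $\wt(m)$, so $V$ has a weight space decomposition; and for every $j \in I$, as a $\U_q^{v,j}(sl_{4}^{tor})$-module $V$ is the direct sum of the modules of type KR, $V_k^{(j)} \simeq V_0(Y_{1,1+j+4k}Y_{1,-1+j+4k})^{(j)}$ ($k \in \ZZ$), and of type $s$-TP, $V_{k,s}^{(j)} \simeq V_0(Y_{1,1+j+4k}Y_{1,-1+j+4k-4s})^{(j)}$ ($k \in \ZZ$, $s \in \NN^{\ast}$), each of which is \emph{finite-dimensional} — Kirillov--Reshetikhin modules and tensor products $V_0(Y_{1,a}) \otimes V_0(Y_{1,b})$ of fundamental $\U_q(\hat{sl}_{4})'$-modules being finite-dimensional.

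For integrability it suffices to show that $x_{i,0}^{\pm}$ is locally nilpotent for each $i \in I$. Given $i$, choose $j \in I$ with $j \neq i$; then $x_{i,0}^{\pm} \in \U_q^{v,j}(sl_{4}^{tor})$, and every vector of $V$ lies in one of the finite-dimensional $\U_q^{v,j}$-summands above, on which $x_{i,0}^{\pm}$ acts nilpotently. Hence $x_{i,0}^{\pm}$ is locally nilpotent on $V$; together with the weight decomposition this says that $\mathrm{Res}(V)$ is an integrable $\U_q(\hat{sl}_{4})$-module, i.e.\ that $V$ is integrable.

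For property $(iv)$, fix $i \in I$ and $\nu \in P$, and again pick $j \neq i$. Every $\U_q^{v,j}$-summand of $V$ is obtained from a $\U_q(\hat{sl}_{4})'$-module of the form $V_0(Y_{1,a}Y_{1,b})$ by the twists $\theta^{(j)}$ and $t_b$, and by the description recalled in Example~\ref{exprod} (using $a-b = 2$ in the KR case and $a-b = 2+4s \neq \pm 2$ in the $s$-TP case) such a module is, as a $\U_q(sl_{4})$-module, isomorphic to $V_0(2\Lambda_1)$ or to $V_0(2\Lambda_1) \oplus V_0(\Lambda_2)$. Since $\theta^{(j)}$ only permutes the $h_k$ and $t_b$ does not affect weights, there is a constant $C > 0$, independent of the summand, with $|\mu(h_i)| \leq C$ for every $\mu \in \wt(V)$. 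As $\alpha_i(h_i) = 2$, we have $(\nu \pm N\alpha_i)(h_i) = \nu(h_i) \pm 2N$, whose absolute value exceeds $C$ once $N$ is large; hence $V_{\nu \pm N\alpha_i} = \{0\}$ for $N \gg 0$, which is property $(iv)$ of Remark~\ref{remintmod}.

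The computations involved are routine; the only point requiring a little care is that all the $\U_q^{v,j}$-summands — there are infinitely many, indexed by $k \in \ZZ$ and $s \in \NN$ — share the \emph{same} $\U_q(sl_{4})$-module type up to the harmless twists $\theta^{(j)}$ and $t_b$, so that the bound $C$ is genuinely uniform in $k$ and $s$; this is exactly what is supplied by the facts on $V_0(Y_{1,a}Y_{1,b})$ recalled before Theorem~\ref{thmexmod}. Note that property $(iii)$ of Remark~\ref{remintmod} is not part of the claim and is not addressed here.
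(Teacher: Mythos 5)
Your proof is correct and takes essentially the same route as the paper: both deduce integrability and property $(iv)$ from the decomposition of $V(e^{2\varpi_1}Y_{1,1}Y_{1,-1}Y_{0,2}^{-1}Y_{0,0}^{-1})$ over each vertical subalgebra $\U_q^{v,j}(sl_4^{tor})$ into finite-dimensional summands that are all of the same $\U_q(sl_4)$-type (KR, resp.\ TP). You merely spell out the local-nilpotency and uniform-weight-bound steps that the paper leaves implicit.
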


\begin{proof}
For all $j \in I$, $V(e^{2\varpi_1}Y_{1,1}Y_{1,-1}Y_{0,2}^{-1}Y_{0,0}^{-1})$ is completely reducible as a $\U_q^{v,j}(sl_{n+1}^{tor})$-module and we have
\begin{equation}\label{decmodex}
V(e^{2\varpi_1}Y_{1,1}Y_{1,-1}Y_{0,2}^{-1}Y_{0,0}^{-1})^{(j)} = \bigoplus_{s \in \NN, k \in \ZZ} V_0(Y_{1,1+j+4k} Y_{1, -1 + j + 4k-4s})^{(j)}.
\end{equation}
The representations occurring in the direct sum at the right hand side are integrable. Hence $V(e^{2\varpi_1}Y_{1,1}Y_{1,-1}Y_{0,2}^{-1}Y_{0,0}^{-1})$ is an integrable $\U_q(sl_{n+1}^{tor})$-module. Furthermore the modules of type KR (resp. of type TP) are all isomorphic as $\U_q(sl_{4})$-modules and satisfy property $(iv)$ of Remark \ref{remintmod}. Then, it holds for $V(e^{2\varpi_1}Y_{1,1}Y_{1,-1}Y_{0,2}^{-1}Y_{0,0}^{-1})$, i.e. $$V(e^{2\varpi_1}Y_{1,1}Y_{1,-1}Y_{0,2}^{-1}Y_{0,0}^{-1})_{\nu + N\alpha_i} = \{0\} \text{ for all } \nu \in P, i \in I, N >>0.$$
\end{proof}

\begin{rem}
The weight spaces of $V(e^{2\varpi_1}Y_{1,1}Y_{1,-1}Y_{0,2}^{-1}Y_{0,0}^{-1})$ are infinite-dimensional and property $(iii)$ does not hold. However its $\ell$-weight spaces are all of dimension one.
\end{rem}

The main result of this section is the following.

\begin{thm}
Set $M = e^{2\varpi_1}Y_{1,1}Y_{1,-1}Y_{0,2}^{-1}Y_{0,0}^{-1}$. The representation $V(M)$ is an extremal loop weight module generated by the vector $v_{M}$ of $\ell$-weight $M$.
\end{thm}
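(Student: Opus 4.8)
The plan is to take $v_M$ as the distinguished $\ell$-weight vector and to check the three conditions of Definition~\ref{defelm} for it; integrability of $V(M)$ has already been established, and $v_M$ has $\ell$-weight $M$ by construction. Condition~(iii) is immediate from the decomposition (\ref{decmodex}): as a $\U_q^{v,j}(sl_4^{tor})$-module, $V(M)$ is a direct sum of the finite-dimensional modules of type KR and of type $s$-TP ($s\in\NN^\ast$), so every $w\in V(M)$ has finite support in this direct sum and generates a finite-dimensional $\U_q^{v,j}(sl_4^{tor})$-submodule, for each $j\in I$. Conditions (i) and (ii) will be derived, as in the proof of Theorem~\ref{thmelm}, from Lemma~\ref{lirepcrylem}, the only extra difficulty being that here the crystal $\overline{\mathcal{M}}(M)$ is the disjoint union of the connected components $\mathcal{M}(M_s)$, $s\in\NN$.

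First I would check that the basis $(v_m)_{m\in\overline{\mathcal{M}}(M)}$ satisfies the hypotheses (\ref{lienrepcry}) of Lemma~\ref{lirepcrylem} for the horizontal subalgebra, i.e. $\wt(v_m)=\wt(m)$ and $(x_i^+)^{(k)}\cdot v_m=v_{\tilde{e}_i^k\cdot m}$, $(x_i^-)^{(k)}\cdot v_m=v_{\tilde{f}_i^k\cdot m}$ for all $m$, $i\in I$, $k\in\NN$, with $v_0=0$. The weight identity and the fact that $v_m$ is of $\ell$-weight $m$ are built into the construction of $V(M)$. For the Chevalley identities I would fix $i\in I$, choose $j\in I$ with $j\neq i$, and use that $v_m$ lies inside one of the $\U_q^{v,j}(sl_4^{tor})$-summands $V_k^{(j)}$ or $V_{k,s}^{(j)}$ of (\ref{decmodex}) --- a vertical module of type KR or of type $s$-TP --- where the action of the horizontal $x_i^\pm$ and its divided powers is known explicitly (from the renormalised bases of $V_0(\Xi^0(M))$ and $V_0(\Xi^0(M_s))$ recalled above and from the formulas of Example~\ref{exprod} for the $\U_q(\hat{sl}_2)'$-factors) and matches $\tilde{e}_i,\tilde{f}_i$ acting on $\mathcal{M}_{I_j}(\,\cdot\,)$. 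The subtle point is the consistency at the monomials $\phi^{j+4k}(Y_{1,-1-4s}Y_{3,5}^{-1}Y_{0,4}Y_{0,-4s}^{-1})$ belonging simultaneously to a type-KR and a type-TP vertical crystal (or to two type-TP crystals); that the prescriptions coming from the two vertical modules agree there is precisely what was verified when the $\U_q(sl_4^{tor})$-action was put on $V(M)$, so (\ref{lienrepcry}) holds globally.

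Granting this, condition~(ii) follows at once: $M=M_0$ is extremal of weight $2\varpi_1$ in $\overline{\mathcal{M}}(M)$ (it is a monomial realization of $\mathcal{B}(2\varpi_1)$, and $M_0$ is extremal of weight $2\varpi_1$), so Lemma~\ref{lirepcrylem} applied to $\mathrm{Res}(V(M))$ shows that $v_M$ is an extremal vector of weight $2\varpi_1$ for $\U_q^h(sl_4^{tor})$. For condition~(i) I would prove $\U_q(sl_4^{tor})\cdot v_M=V(M)$ by induction on $s$, showing that the whole component $\bigoplus_{m\in\mathcal{M}(M_s)}\CC v_m$ is reached. Since $\tilde{e}_i,\tilde{f}_i$ preserve connected components, $\bigoplus_{m\in\mathcal{M}(M_0)}\CC v_m$ is a $\U_q^h(sl_4^{tor})$-submodule; it is connected, so by Lemma~\ref{lirepcrylem} it equals $\U_q^h(sl_4^{tor})\cdot v_M$, which settles $s=0$. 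Assume the component $\mathcal{M}(M_s)$ has been reached and fix $m\in\mathcal{M}_{j,k,s}^2\subset\mathcal{M}(M_s)$; then $v_m$ lies in $V_{k,s+1}^{(j)}$, which by construction is the irreducible $\U_q^{v,j}(sl_4^{tor})$-module $V_0(Y_{1,1+j+4k}Y_{1,-1+j+4k-4(s+1)})^{(j)}$, so $\U_q^{v,j}(sl_4^{tor})\cdot v_m=V_{k,s+1}^{(j)}\ni v_{m'}$ for every $m'\in\mathcal{M}_{j,k,s+1}^1\subset\mathcal{M}(M_{s+1})$; thus a nonzero vector of the component $s+1$ is reached, and applying $\U_q^h(sl_4^{tor})$ and Lemma~\ref{lirepcrylem} to the connected crystal $\mathcal{M}(M_{s+1})$ fills it completely. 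By induction $\U_q(sl_4^{tor})\cdot v_M=V(M)$, which is condition~(i).

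The step I expect to be the main obstacle is the verification of the Chevalley identities (\ref{lienrepcry}) at the overlap monomials $\phi^{j+4k}(Y_{1,-1-4s}Y_{3,5}^{-1}Y_{0,4}Y_{0,-4s}^{-1})$: away from them the module is a direct sum of well-understood finite-dimensional vertical modules and the identities are formal, but at such a monomial one must check by hand, using the explicit actions of Example~\ref{exprod} and the renormalised bases of $V_0(\Xi^0(M))$ and $V_0(\Xi^0(M_s))$, that $(x_i^\pm)^{(k)}v_m$ does not depend on which vertical module it is computed in and equals $v_{\tilde{e}_i^k\cdot m}$ (respectively $v_{\tilde{f}_i^k\cdot m}$). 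This is the same compatibility already invoked (without full details) in the construction of the $\U_q(sl_4^{tor})$-module structure on $V(M)$, and everything else in the argument is formal once it is in place.
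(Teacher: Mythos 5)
Your treatment of condition (iii) matches the paper's, and your idea for condition (i) of jumping from the component $\mathcal{M}(M_s)$ to $\mathcal{M}(M_{s+1})$ through the irreducible vertical module $V_{k,s+1}^{(j)}$ (which straddles $\mathcal{M}_{j,k,s}^{2}$ and $\mathcal{M}_{j,k,s+1}^{1}$) is a legitimate alternative to the paper's jump, which instead uses $x_{0,0}^{-}$ at the overlap monomials and thereby proves the stronger statement that $V(M)$ is cyclic over the \emph{horizontal} subalgebra alone. However, your argument has a genuine gap: the hypotheses (\ref{lienrepcry}) of Lemma \ref{lirepcrylem} do \emph{not} hold globally on $\overline{\mathcal{M}}(e^{2\varpi_1}Y_{1,1}Y_{1,-1}Y_{0,2}^{-1}Y_{0,0}^{-1})$, and the ``consistency at the overlap monomials'' you invoke is not the issue --- the identities simply fail there and elsewhere. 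Two concrete failures: (a) for $s\geq 1$ one has $\tilde{f}_1\cdot M_s=M_s^1$, a single monomial, whereas
$x_{1,0}^{-}\cdot v_{M_s}=\tfrac{q^{-2-4s}-q^{4}}{q^{-1-4s}-q^{3}}\,v_{M_s^1}+\tfrac{q^{-4s}-q^{2}}{q^{-1-4s}-q^{3}}\,v_{M_s^2}$,
a combination of two basis vectors (the weight space of weight $2\Lambda_1-\alpha_1$ in a module of type $s$-TP is two-dimensional); (b) at the overlap monomial $m=Y_{1,-1-4s}Y_{3,5}^{-1}Y_{0,4}Y_{0,-4s}^{-1}$ one computes $\varphi_0(m)=0$, so $\tilde{f}_0\cdot m=0$ in the monomial crystal, yet $x_{0,0}^{-}\cdot v_m=v_{e^{2\varpi_1-(s+1)\delta}Y_{1,5}Y_{1,-1-4s}Y_{0,6}^{-1}Y_{0,-4s}^{-1}}\neq 0$, landing in the next component. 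Point (b) also refutes your claim that $\bigoplus_{m\in\mathcal{M}(M_0)}\CC v_m$ is a $\U_q^{h}(sl_4^{tor})$-submodule --- and it had better be false, since $V(M)$ is in fact cyclic and irreducible over $\U_q^{h}(sl_4^{tor})$.

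Because of this, both your proof of condition (ii) and the ``fill each component via Lemma \ref{lirepcrylem}'' steps in your proof of condition (i) are unjustified as written. The paper repairs exactly this: for extremality it proves a \emph{weaker} variant of Lemma \ref{lirepcrylem} whose hypotheses are required only for monomials in the Weyl orbit $W\cdot M$ (where they do hold, since on that orbit the action reduces to $(x_{i,0}^{\mp})^{(2)}\cdot v_{\phi^{j+4k}(M_s)}=v_{\phi^{j\pm 1+4k}(M_s)}$ and $x_{i,0}^{\pm}\cdot v=0$ otherwise, by the construction of the KR- and TP-type bases); for cyclicity it gives a direct recursion using $x_{0,0}^{-}$ at the overlap monomials together with the explicit formulas of Example \ref{exprod} to sweep out each component, rather than appealing to Lemma \ref{lirepcrylem}. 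Your proof becomes correct if you replace every use of Lemma \ref{lirepcrylem} by these two substitutes.
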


\begin{proof}
The third point of Definition \ref{defelm} is a consequence of (\ref{decmodex}). For the first two points, we use the following results.
\end{proof}

\begin{lem}
Let $V$ be a $\U_q(\hat{sl}_{n+1})$-module with basis $(v_m)_{m \in \mathcal{M}'}$ indexed by a subcrystal $\mathcal{M}'$ of $\mathcal{M}$. Assume that $M \in \mathcal{M}'$ is extremal of weight $\wt(M)$ and for all $i \in I$ and $m \in W \cdot M$,
$$\wt(v_m) = \wt(m), \ x_i^{\pm} \cdot v_m = 0 \text{ and } (x_i^{\mp})^{(\pm \wt(m)(h_i))} \cdot v_m = v_{S_i(m)} \text{ if } \pm \wt(m)(h_i) \geq 0.$$
Then $v_M$ is an extremal vector of weight $\wt(M)$.
\end{lem}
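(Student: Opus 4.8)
The plan is to mimic the proof of Lemma~\ref{lirepcrylem}, the key observation being that the hypotheses here, although weaker (they concern only the monomials of the Weyl orbit $W\cdot M$ and only the ``extremal half'' of the compatibility between crystal operators and divided powers), are exactly what is needed to transport extremality of the crystal element $M$ to the vector $v_M$.

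First I would invoke that $M$ is extremal in the crystal $\mathcal{M}'$: by definition there is a family $\{M_w\}_{w\in W}$ with $M_{\mathrm{Id}}=M$ such that, writing $\lambda=\wt(M)$,
\begin{equation*}
\begin{array}{c}
\tilde{e}_i\cdot M_w=0\ \text{and}\ \tilde{f}_i^{\,w(\lambda)(h_i)}\cdot M_w=M_{s_i(w)}\quad\text{if }w(\lambda)(h_i)\ge 0,\\
\tilde{f}_i\cdot M_w=0\ \text{and}\ \tilde{e}_i^{\,-w(\lambda)(h_i)}\cdot M_w=M_{s_i(w)}\quad\text{if }w(\lambda)(h_i)\le 0.
\end{array}
\end{equation*}
Two facts about this family will be recorded: every $M_w$ lies in $W\cdot M$, and $\wt(M_w)=w(\lambda)$ (the weight of a crystal element transforms under the operator $S_i$ by the simple reflection $s_i$, so the crystal Weyl orbit sits over the $W$-orbit of $\lambda$; this is the crystal analogue of the remark that an extremal vector of weight $\lambda$ has $v_w$ extremal of weight $w(\lambda)$). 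In particular each $M_w$ is $i$-extremal in the crystal for every $i$, and $S_i(M_w)=M_{s_i(w)}$.

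Then I would set $v_w:=v_{M_w}$ for $w\in W$, so that $v_{\mathrm{Id}}=v_M$, and verify the defining relations of an extremal vector, namely $x_i^{\pm}\cdot v_w=0$ and $(x_i^{\mp})^{(\pm w(\lambda)(h_i))}\cdot v_w=v_{s_i(w)}$ whenever $\pm w(\lambda)(h_i)\ge 0$. Fixing $w$ and $i$: since $M_w\in W\cdot M$ the standing hypothesis applies to $m=M_w$, and since $\wt(v_w)=\wt(M_w)=w(\lambda)$ it gives $x_i^{\pm}\cdot v_w=0$ for the relevant sign together with $(x_i^{\mp})^{(\pm w(\lambda)(h_i))}\cdot v_w=v_{S_i(M_w)}$; the crystal relations above identify $S_i(M_w)=M_{s_i(w)}$, so $v_{S_i(M_w)}=v_{M_{s_i(w)}}=v_{s_i(w)}$, which is the required identity. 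Hence $v_M$ is extremal of weight $\lambda=\wt(M)$.

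The only genuine subtlety — the ``hard part'' such as it is — is the matching of the two meanings of $S_i$: one must check that $M_w$ being $i$-extremal in the crystal forces $v_w$ to be $i$-extremal in $V$ (which is exactly what the hypothesis $x_i^{\pm}\cdot v_m=0$ provides), and that the exponent $|w(\lambda)(h_i)|$ of the divided power in the representation-side $S_i(v_w)$ coincides with the exponent of the crystal operator in $S_i(M_w)$, both being read off from $\wt(M_w)=w(\lambda)$. The degenerate case $w(\lambda)(h_i)=0$, where $S_i$ acts as the identity on both sides and $v_{s_i(w)}=v_w$, should be recorded separately but is immediate. Everything else is bookkeeping identical to Lemma~\ref{lirepcrylem}, and unlike there no cyclicity or connectedness of $\mathcal{M}'$ is claimed.
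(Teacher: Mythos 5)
Your proof is correct and is essentially the argument the paper intends: it takes the family $\{M_w\}_{w\in W}$ witnessing extremality of $M$ in the crystal, sets $v_w:=v_{M_w}$, and transports the relations via the hypothesis on $W\cdot M$, exactly as in the paper's Lemma on representations realizing crystals (the paper itself only says the proof is analogous to that lemma). Your explicit matching of the two meanings of $S_i$ and of the weights $\wt(M_w)=w(\lambda)$ is precisely the bookkeeping that argument requires.
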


\begin{proof}
The proof is analogue to the one of Lemma \ref{lirepcrylem}.
\end{proof}

\begin{cor}
Set $M_s = e^{2\varpi_1 + s\delta}Y_{1,1}Y_{1,-1-4s}Y_{0,2}^{-1}Y_{0,-4s}^{-1}$ $(s \in \NN)$. Then $v_{M_s}$ is an extremal vector of $ V(e^{2\varpi_1}Y_{1,1}Y_{1,-1}Y_{0,2}^{-1}Y_{0,0}^{-1})$ of weight $2 \varpi_1+s\delta$ for the horizontal quantum affine subalgebra $\U_q^h(sl_{4}^{tor})$.
\end{cor}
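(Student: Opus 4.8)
The plan is to deduce this Corollary directly from the preceding Lemma (the one characterizing extremal vectors through the affine Weyl orbit $W\cdot M$), in exactly the way Lemma \ref{lirepcrylem} was used for the fundamental loop weight modules. Concretely, I would apply that Lemma to $\mathrm{Res}(V(M))$ viewed as a $\U_q(\hat{sl}_4)$-module, with $M = e^{2\varpi_1}Y_{1,1}Y_{1,-1}Y_{0,2}^{-1}Y_{0,0}^{-1}$, to its basis $(v_m)$ indexed by the subcrystal $\mathcal{M}' = \overline{\mathcal{M}}(M)$ of $\mathcal{M}$, and to the monomial $M_s$. Two of the three hypotheses are essentially free: that $M_s$ is extremal of weight $2\varpi_1+s\delta$ in $\mathcal{M}'$ is precisely the Proposition identifying $\overline{\mathcal{M}}(M)$ with a monomial realization of $\mathcal{B}(2\varpi_1)$; and $\wt(v_m)=\wt(m)$ holds because, by construction of $V(M)$, the vector $v_m$ is an $\ell$-weight vector of $\ell$-weight $m$, so $k_h\cdot v_m = q^{\wt(m)(h)}v_m$ for all $h\in\Hlie$.

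The remaining hypothesis is the action of the horizontal generators: for every $i\in I$ and every $m\in W\cdot M_s$ one needs $x_{i,0}^{\pm}\cdot v_m = 0$ and $(x_{i,0}^{\mp})^{(\pm\wt(m)(h_i))}\cdot v_m = v_{S_i(m)}$ whenever $\pm\wt(m)(h_i)\geq 0$. First I would localize this to a single vertical block: given $i$, choose $j\in I\setminus\{i\}$, so that $v_m$ lies in one of the $\U_q^{v,j}(sl_4^{tor})$-summands of the decomposition (\ref{decmodex}), i.e. a module of type KR, namely $V_0(Y_{1,1}Y_{1,-1})$, or of type $s'$-TP, namely $V_0(Y_{1,1}Y_{1,-1-4s'})$, up to $\theta^{(j)}$ and a twist $t_b$. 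Since $\hat{\U}_i\subset\U_q^{v,j}(sl_4^{tor})$, this submodule is $\hat{\U}_i$-stable, so $S_i(m)$ remains inside the same block and the relations above are meaningful; and, exactly as in the proof of the construction of $V(M)$, the $\hat{\U}_i$-action on $v_m$ does not depend on the choice of $j$.

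Inside such a block I would use that $m\in W\cdot M_s$ forces $v_m$ to be an extremal vector there. Indeed $\mathrm{Res}_{\U_q(sl_4)}$ of a KR block is $V_0(2\Lambda_1)$ and of a TP block is $V_0(2\Lambda_1)\oplus V_0(\Lambda_2)$, and the vectors indexed by $W\cdot M_s$ lie on the $W_0$-orbit of the $\ell$-highest weight vector $v_{M_{s'}}$ inside the $V_0(2\Lambda_1)$-component, whose weights take only the values $0$ and $\pm 2$ on each coroot $h_i$. From the explicit bases of the KR and TP modules recalled just before the construction (and from Example \ref{exprod}) one then reads off that $x_{i,0}^{\pm}\cdot v_m=0$ on the correct side, that the $i$-string through $v_m$ is a singleton when $\wt(m)(h_i)=0$, and that the divided powers $(x_{i,0}^{\mp})^{(\pm\wt(m)(h_i))}$ carry $v_m$ to $v_{S_i(m)}$ when $\wt(m)(h_i)=\pm 2$ — which is exactly what the Lemma requires. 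With all three hypotheses verified, the Lemma gives that $v_{M_s}$ is extremal of weight $\wt(M_s)=2\varpi_1+s\delta$ for $\U_q^h(sl_4^{tor})$, which is the assertion.

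The hard part is the last step: checking that the affine Weyl orbit $W\cdot M_s$ meets each vertical KR/TP block precisely along the extremal vectors of its $V_0(2\Lambda_1)$-summand and nothing from the $V_0(\Lambda_2)$-summand of a TP block. This is the only place where the combinatorics of $\mathcal{B}(2\varpi_1)$ and the explicit decompositions of the blocks are genuinely used; once it is in hand, the rest is the formal bookkeeping of the preceding Lemma.
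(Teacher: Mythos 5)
Your proposal is correct and follows essentially the same route as the paper: the paper also deduces the Corollary from the preceding Lemma by noting that $W\cdot M_s$ consists of the monomials $\phi^{j+4k}(M_s)$, that $\wt(v_m)=\wt(m)$ by construction, and that the required relations $(x_{i,0}^{\mp})^{(\pm\wt(m)(h_i))}\cdot v_m=v_{S_i(m)}$ and $x_{i,0}^{\pm}\cdot v_m=0$ are exactly the ones built into the bases of the KR and TP blocks. Your extra care in localizing each generator $\hat{\U}_i$ to a vertical block with $j\neq i$ just makes explicit what the paper asserts directly from the construction of $(v_m)$.
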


\begin{proof}
By construction of the basis $(v_m)$ of $V(e^{2\varpi_1}Y_{1,1}Y_{1,-1}Y_{0,2}^{-1}Y_{0,0}^{-1})$, we have $\wt(v_m) = \wt(m)$ for all $m$. Furthermore a monomial in $W \cdot M_s$ is of the form $\phi^{j+4k}(M_s)$ with $j \in I_0$ and $k \in \ZZ$ and we have $\wt(\phi^{j+4k}(M_s)) = 2\Lambda_{j+1} - 2\Lambda_j - 2k \delta$,
\begin{eqnarray*}
(x_{j,0}^{+})^{(2)} \cdot v_{\phi^{j+4k}(M_s)} &=& v_{\phi^{j-1+4k}(M_s)} = v_{S_j(\phi^{j+4k}(M_s))}, \\
(x_{j+1,0}^-)^{(2)} \cdot v_{\phi^{j+4k}(M_s)} &=& v_{\phi^{j+1+4k}(M_s)} = v_{S_{j+1}(\phi^{j+4k}(M_s))}, \\
x_{i}^{\pm} \cdot v_{\phi^{j+4k}(M_s)} &=& 0 \text{ in the other cases.}
\end{eqnarray*}
Hence the hypotheses of the above lemma are satisfied and $v_{M_s}$ is extremal of weight $2 \varpi_1+s\delta$ for the horizontal quantum affine subalgebra $\U_q^h(sl_{4}^{tor})$.
\end{proof}

\begin{prop}
The representation $V(e^{2\varpi_1}Y_{1,1}Y_{1,-1}Y_{0,2}^{-1}Y_{0,0}^{-1})$ is cyclic as a $\U_q^h(sl_{4}^{tor})$-module, generated by the vector $v=v_{e^{2\varpi_1}Y_{1,1}Y_{1,-1}Y_{0,2}^{-1}Y_{0,0}^{-1}}$.
\end{prop}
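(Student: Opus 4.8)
The plan is to show that the $\U_q^h(sl_{4}^{tor})$-submodule $N := \U_q^h(sl_{4}^{tor})\cdot v$ generated by $v=v_{M_0}$, where $M_0=e^{2\varpi_1}Y_{1,1}Y_{1,-1}Y_{0,2}^{-1}Y_{0,0}^{-1}$, is all of $V(e^{2\varpi_1}Y_{1,1}Y_{1,-1}Y_{0,2}^{-1}Y_{0,0}^{-1})$; since this module is $\bigoplus_{m\in\mathcal{E}}\CC v_m$, it suffices to prove $v_m\in N$ for every $m\in\mathcal{E}$. Write $\mathcal{E}=\bigsqcup_{s\in\NN}\mathcal{E}^{(s)}$, where $\mathcal{E}^{(s)}$ is the set of monomials occurring in the connected $\U_q(\hat{sl}_4)$-crystal $\mathcal{M}(e^{2\varpi_1+s\delta}Y_{1,1}Y_{1,-1-4s}Y_{0,2}^{-1}Y_{0,-4s}^{-1})$, whose extremal vertex is $M_s=e^{2\varpi_1+s\delta}Y_{1,1}Y_{1,-1-4s}Y_{0,2}^{-1}Y_{0,-4s}^{-1}$. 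I would then argue by induction on $s$, proving simultaneously that $v_{M_s}\in N$ and that $v_m\in N$ for all $m\in\mathcal{E}^{(s)}$; the base case $s=0$ is immediate since $v_{M_0}=v\in N$.

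For the ``within a component'' part, note that $M_s$ lies in $\mathcal{M}_{0,0,s}^{1}$, so $v_{M_s}$ is the $\ell$-highest weight vector of the summand of $V(e^{2\varpi_1}\cdots)^{(0)}$ of type KR (if $s=0$) or $s$-TP (if $s\ge1$); in either case $x_{i,0}^{+}\cdot v_{M_s}=0$ for $i\in I_0$, and $v_{M_s}$ has weight $2\Lambda_1$, hence generates under $\U_q(sl_4)\subset\U_q^h(sl_{4}^{tor})$ a copy of $V_0(2\Lambda_1)$, namely the subspace spanned by the $v_m$ with $m$ in the $I_0$-subcrystal $\mathcal{M}_{I_0}(M_s)$. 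I would then propagate through the connected crystal $\mathcal{E}^{(s)}$ using $x_{i,0}^{\pm}$ ($i\in I$): on vectors lying in KR summands, and on the two ``middle'' vectors of any TP summand, the operators $x_{i,0}^{\pm}$ act up to a nonzero scalar by a single Kashiwara move (compare Proposition~\ref{actfun} and the $x_r^{\pm}\cdot v_{Y_{1,a+2}^{-1}Y_{1,b}}$, $x_r^{\pm}\cdot v_{Y_{1,a}Y_{1,b+2}^{-1}}$ formulas of Example~\ref{exprod}), so such $v_m$ are reached directly; on the two extremal vectors of a TP summand, where $x_{i,0}^{\pm}$ produces a genuine two-term combination, all terms but the desired one already belong to $N$ — they lie in $\mathcal{M}_{I_0}(M_s)$ or in the components $\mathcal{E}^{(s')}$ with $s'<s$, handled earlier — so the remaining vector can be solved for.

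For the ``bridging'' part, assuming $v_m\in N$ for all $m\in\mathcal{E}^{(0)}\sqcup\cdots\sqcup\mathcal{E}^{(s-1)}$, I would use that the monomial $M_s^{2}=Y_{1,1}Y_{1,1-4s}^{-1}Y_{2,-4s}Y_{0,2}^{-1}$ is precisely the generator of $\mathcal{M}_{0,0,s-1}^{2}$, hence lies in $\mathcal{E}^{(s-1)}$, while inside the $s$-TP summand $V_{0,s}^{(0)}\simeq V_0(Y_{1,1}Y_{1,-1-4s})$ (whose crystal is $\mathcal{M}_{0,0,s}^{1}\sqcup\mathcal{M}_{0,0,s-1}^{2}$) it is one of the two middle vectors, with $M_s$ the $\ell$-highest one. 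By the formula $x_r^{+}\cdot v_{Y_{1,a}Y_{1,b+2}^{-1}}=(\text{scalar})\,v_{Y_{1,a}Y_{1,b}}$ of Example~\ref{exprod}, an appropriate $x_{i,0}^{+}$ sends $v_{M_s^{2}}$ to a nonzero multiple of $v_{M_s}$, so $v_{M_s}\in N$; combined with the previous paragraph this closes the induction and gives $N=V(e^{2\varpi_1}Y_{1,1}Y_{1,-1}Y_{0,2}^{-1}Y_{0,0}^{-1})$.

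The main obstacle is the bookkeeping in the two inductive steps: one has to check, using the explicit descriptions of the crystals $\mathcal{M}_{j,k,s}^{1}$, $\mathcal{M}_{j,k,s}^{2}$, $\mathcal{M}_{j,k,s}$ and the Appendix, that at every $x_{i,0}^{\pm}$-application the ``unknown'' basis vector is unique once $\mathcal{M}_{I_0}(M_s)$ and the earlier components are available, and that repeated use of $x_{0,0}^{\pm}$ together with the various vertical decompositions $V(e^{2\varpi_1}\cdots)^{(j)}$ connects $\mathcal{M}_{I_0}(M_s)$ to the shifted copies and to the $\mathcal{M}_{0,k,s}^{2}$ pieces that make up the rest of $\mathcal{E}^{(s)}$. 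Note also that Lemma~\ref{lirepcrylem} cannot be invoked directly here: $\overline{\mathcal{M}}(e^{2\varpi_1}Y_{1,1}Y_{1,-1}Y_{0,2}^{-1}Y_{0,0}^{-1})$ is disconnected as a $\U_q(\hat{sl}_4)$-crystal and the basis $(v_m)$ is not a $\U_q^h(sl_{4}^{tor})$-crystal basis on the TP summands, which is exactly why the linear-combination formulas of Example~\ref{exprod} — and the ``solve for one term'' mechanism — are needed.
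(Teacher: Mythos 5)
Your proposal is correct and follows essentially the same route as the paper's proof: induction over the components $\mathcal{E}^{(s)}$, with the $s=0$ component obtained from the very construction of the basis $(v_m)$ by the horizontal generators, and the passage between consecutive components effected inside a TP summand via the two-term formulas of Example \ref{exprod} (the paper bridges from $\mathcal{E}^{(s)}$ to $\mathcal{E}^{(s+1)}$ by applying $x_{0,0}^{-}$ to $v_{Y_{1,-1-4s}Y_{3,5}^{-1}Y_{0,4}Y_{0,-4s}^{-1}}$, you bridge from $\mathcal{E}^{(s-1)}$ to $\mathcal{E}^{(s)}$ by applying $x_{1,0}^{+}$ to $v_{M_s^2}$ --- the same mechanism). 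One parenthetical claim is imprecise --- for $s\geq 1$ the $\U_q(sl_4)$-submodule generated by $v_{M_s}$ is \emph{not} the span of the $v_m$ with $m\in\mathcal{M}_{I_0}(M_s)$, since $x_{1,0}^{-}\cdot v_{M_s}$ is a genuine combination $c_1v_{M_s^1}+c_2v_{M_s^2}$ with $M_s^2\notin\mathcal{M}_{I_0}(M_s)$ --- but your ``solve for the unknown term'' step, together with the inductive hypothesis $v_{M_s^2}\in N$, already supplies exactly the needed correction, so the argument stands.
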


\begin{proof}
Consider the sub-$\U_q^h(sl_4^{tor})$-module $V$ generated by $v$. By construction of the basis $(v_m)$ of $V(e^{2\varpi_1}Y_{1,1}Y_{1,-1}Y_{0,2}^{-1}Y_{0,0}^{-1})$, $v_m \in V$ for all $m \in \mathcal{M}(e^{2\varpi_1}Y_{1,1}Y_{1,-1}Y_{0,2}^{-1}Y_{0,0}^{-1})$.
We proceed by a recursive argument: assume that for one $s \in \NN$, we have $v_m \in V$ for all $m \in \mathcal{M}(e^{2\varpi_1-t\delta}Y_{1,1}Y_{1,-1-4t}Y_{0,2}^{-1}Y_{0,-4t}^{-1})$ with $0 \leq t \leq s$. In particular $v_{Y_{1,-1-4s}Y_{3,5}^{-1}Y_{0,4}Y_{0,-4s}^{-1}}$ is in $V$ and by Example \ref{exprod},
$$x_{0,0}^- \cdot v_{Y_{1,-1-4s}Y_{3,5}^{-1}Y_{0,4}Y_{0,-4s}^{-1}} = v_{e^{2\varpi_1-(s+1)\delta}Y_{1,5}Y_{1,-1-4s}Y_{0,6}^{-1}Y_{0,-4s}^{-1}} \in V.$$
In the same way $v_{\phi^k(Y_{1,-1-4s}Y_{3,5}^{-1}Y_{0,4}Y_{0,-4s}^{-1})}$ and $v_{\phi^k(e^{2\varpi_1-(s+1)\delta}Y_{1,5}Y_{1,-1-4s}Y_{0,6}^{-1}Y_{0,-4s}^{-1})}$ are in $V$ for any $k \in \ZZ$. But all the $v_m$ with $m \in \mathcal{M}(e^{2\varpi_1-(s+1)\delta}Y_{1,5}Y_{1,-1-4s}Y_{0,6}^{-1}Y_{0,-4s}^{-1})$ can be obtained from these vectors by action of $\U_q^h(sl_4^{tor})$: this is straightforward from Example \ref{exprod} and the construction of the basis $(v_m)$.
\end{proof}

\begin{prop}
The $\U_q(sl_{4}^{tor})$-module $V(e^{2\varpi_1}Y_{1,1}Y_{1,-1}Y_{0,2}^{-1}Y_{0,0}^{-1})$ is irreducible.
\end{prop}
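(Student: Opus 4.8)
The plan is to mimic the irreducibility argument used for the extremal fundamental loop weight modules, the new feature being that here the weight spaces are infinite-dimensional, so one must argue with $\ell$-weight spaces instead of weight spaces. First I would recall (Theorem \ref{thmexmod}) that $V := V(e^{2\varpi_1}Y_{1,1}Y_{1,-1}Y_{0,2}^{-1}Y_{0,0}^{-1})$ is thin: $\U_q(\hat{\Hlie})$ acts diagonalizably via the formulas \eqref{actmod} and the $\ell$-weight spaces $\CC v_m$ ($m \in \mathcal{E}$) are one-dimensional with pairwise distinct joint eigenvalues. Hence any nonzero $\U_q(sl_{4}^{tor})$-submodule $N \subseteq V$ is $\U_q(\hat{\Hlie})$-stable and therefore of the form $N = \bigoplus_{m \in S} \CC v_m$ for some nonempty $S \subseteq \mathcal{E}$. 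Since $V$ is cyclic over $\U_q^h(sl_{4}^{tor})$, generated by $v := v_{e^{2\varpi_1}Y_{1,1}Y_{1,-1}Y_{0,2}^{-1}Y_{0,0}^{-1}}$ (the Proposition above), it suffices to show $v \in \U_q(sl_{4}^{tor}) \cdot v_m$ for every $m \in \mathcal{E}$; then $N \supseteq \U_q(sl_{4}^{tor}) \cdot v = V$.

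The second step uses a vertical subalgebra to reach an extremal vector from an arbitrary $v_m$. By \eqref{decmodex} with $j = 0$, $V$ decomposes over $\U_q^{v,0}(sl_{4}^{tor})$ as a direct sum of modules of type KR and of type $s$-TP ($s \ge 1$); a module of type KR is a Kirillov--Reshetikhin module, hence irreducible over $\U_q^{v,0}(sl_{4}^{tor})$, and a module of type $s$-TP is isomorphic to $V_0(Y_{1,a}) \otimes V_0(Y_{1,b})$ with $a - b \notin \{0, \pm 2\}$, hence irreducible over $\U_q(\hat{sl}_4)'$ and a fortiori over $\U_q^{v,0}(sl_{4}^{tor})$. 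Thus $\U_q^{v,0}(sl_{4}^{tor}) \cdot v_m$ is the whole summand containing $v_m$, and this summand contains a vector $v_{M'}$ with $M' = \phi^{4k}(M_{s'})$ for some $k \in \ZZ$ and $s' \in \NN$, which is extremal for $\U_q^h(sl_{4}^{tor})$ and whose monomial lies in the orbit $W \cdot M_{s'}$. Because the $\U_q^h(sl_{4}^{tor})$-action is invertible on extremal vectors (from $v_{s_i(w)}$ one recovers $v_w$ by a divided power of $x_{i,0}^{\mp}$, by the defining relations of an extremal vector), and $M_{s'} \in W \cdot M'$ (the $W$-orbit of $M_{s'}$ consists of all $\phi^p(M_{s'})$, as follows from the displayed relations in the proof of the Corollary above), we get $v_{M_{s'}} \in \U_q(sl_{4}^{tor}) \cdot v_m$.

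It then remains to prove $v = v_{M_0} \in \U_q(sl_{4}^{tor}) \cdot v_{M_{s'}}$, i.e. to move ``downward'' from the component $\mathcal{M}(e^{2\varpi_1 + s'\delta}Y_{1,1}Y_{1,-1-4s'}Y_{0,2}^{-1}Y_{0,-4s'}^{-1})$ to the component indexed by $0$. The cyclicity proof already supplies the opposite (``upward'') step, $v_{M_{s+1}} \in \U_q^h(sl_{4}^{tor}) \cdot v_{M_s}$ together with the whole component of index $s+1$, by applying $x_{0,0}^{-}$ to a suitable vector. Running the symmetric argument, for $s \ge 1$ one passes from the component of index $s$ to that of index $s-1$ by applying suitable operators $x_{0,r}^{+}$ ($r \in \ZZ$): in the relevant modules of type TP these vectors and the corresponding matrix coefficients are the explicit $\U_q(\hat{sl}_2)'$-data of Example \ref{exprod} (for instance $x_{r}^{+} \cdot v_{Y_{1,a}Y_{1,b+2}^{-1}} = q^{r(b+1)} v_{Y_{1,a}Y_{1,b}} \neq 0$), so no matrix coefficient involved vanishes, and iterating $s'$ times brings $v_{M_0}$ into $\U_q(sl_{4}^{tor}) \cdot v_{M_{s'}}$. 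Hence $v \in \U_q(sl_{4}^{tor}) \cdot v_m$ and $N = V$. The main obstacle is precisely this last step: since $\overline{\mathcal{M}}(e^{2\varpi_1}Y_{1,1}Y_{1,-1}Y_{0,2}^{-1}Y_{0,0}^{-1})$ is a semi-infinite direct sum $\bigoplus_{s \ge 0}$ of $\U_q(\hat{sl}_4)$-connected components, there is no $\delta$-shift automorphism carrying $v_{M_s}$ to $v_{M_{s-1}}$, so one must genuinely travel down through the components via the loop generators $x_{0,r}^{\pm}$, and the non-vanishing along the way relies on the explicit construction of the bases of the type-TP modules and the formulas of Example \ref{exprod}.
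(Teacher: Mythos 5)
Your overall architecture coincides with the paper's: thinness forces any nonzero submodule to contain some $v_m$; from $v_m$ one climbs to an extremal vector $v_{M_{s'}}$; one must then descend through the components $\mathcal{M}(M_s)$ down to $v_{M_0}$, after which the cyclicity proposition finishes. The preliminary steps are sound (the paper reaches $v_{M_s}$ with the horizontal subalgebra directly, whereas you use irreducibility of the $\U_q^{v,0}(sl_4^{tor})$-summands plus invertibility of the extremal $W$-action; both work). The problem is the descent, which you correctly single out as the crux but do not actually carry out.

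Concretely: in each $\hat{\U}_0$-string of tensor-product type that straddles two components, exactly one of the four $\ell$-weight vectors lies in the \emph{lower} component, namely the ``middle'' vector of type $v_{Y_{1,a}Y_{1,b+2}^{-1}}$ — precisely the monomial that is absent from the Kashiwara $0$-string, which is why the crystal fails to be $q$-closed there (e.g. $Y_{1,-1}Y_{3,5}^{-1}Y_{0,4}Y_{0,0}^{-1}\in\mathcal{M}(M_0)$ while the top, bottom and other middle monomials of its $\hat{\U}_0$-string lie in $\mathcal{M}(M_1)$). The formula you invoke, $x_{r}^{+}\cdot v_{Y_{1,a}Y_{1,b+2}^{-1}}=q^{r(b+1)}v_{Y_{1,a}Y_{1,b}}$, sends that distinguished vector to the top vector, hence moves from component $s-1$ to component $s$: it is the ascent already used for cyclicity, read in the wrong direction. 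To descend one must \emph{land on} the distinguished middle vector, i.e. apply $x_{0,r}^{+}$ to the bottom vector (or $x_{0,r}^{-}$ to the top one) and project onto the appropriate $\ell$-weight space; the matrix coefficient involved is then one of the mixing coefficients $\frac{q^{b+1}-q^{a-1}}{q^{b}-q^{a}}$ of Example \ref{exprod}, not a power of $q$, and its non-vanishing (here guaranteed because $a-b=4s+2\neq 2$ and $q$ is not a root of unity) is exactly the point that must be checked. This is what the paper does, in direction $1$ rather than $0$: $x_{1,0}^{-}\cdot v_{M_s}$ has a nonzero component on $v_{M_s^2}$ with $M_s^2=Y_{1,1}Y_{1,1-4s}^{-1}Y_{2,-4s}Y_{0,2}^{-1}\in\mathcal{M}(M_{s-1})$. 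Your direction-$0$ variant can be repaired the same way (for instance $x_{0,r}^{+}\cdot v_{M_s}$ has a nonzero component on $v_{M_sA_{0,1}}$ with $M_sA_{0,1}\in\mathcal{M}(M_{s-1})$), but as written the key step does not go through.
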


\begin{proof}
Let $V$ be a non trivial sub-$\U_q(sl_{4}^{tor})$-module of $V(e^{2\varpi_1}Y_{1,1}Y_{1,-1}Y_{0,2}^{-1}Y_{0,0}^{-1})$.
As the $\ell$-weight spaces are of dimension one, there exists $s \in \NN$ and a monomial \linebreak  $m \in \mathcal{M}(e^{2\varpi_1 + s\delta}Y_{1,1}Y_{1,-1-4s}Y_{0,2}^{-1}Y_{0,-4s}^{-1})$ such that $v_m \in V$.

If $s=0$, we have already shown in the above proof that $V(e^{2\varpi_1}Y_{1,1}Y_{1,-1}Y_{0,2}^{-1}Y_{0,0}^{-1})$ is cyclic generated by $v_m$ and $V = V(e^{2\varpi_1}Y_{1,1}Y_{1,-1}Y_{0,2}^{-1}Y_{0,0}^{-1})$. Assume that $s \in \NN^{\ast}$. By Example \ref{exprod} and the construction of $(v_m)$, there exists $x \in \U_q^{h}(sl_4^{tor})$ such that
$$x \cdot v_m = v_{e^{2\varpi_1 + s\delta}Y_{1,1}Y_{1,-1-4s}Y_{0,2}^{-1}Y_{0,-4s}^{-1}}.$$
Furthermore $\hat{\U}_{1} \cdot v_{e^{2\varpi_1 + s\delta}Y_{1,1}Y_{1,-1-4s}Y_{0,2}^{-1}Y_{0,-4s}^{-1}}$ is the simple $\ell$-highest weight $\hat{\U}_{1}$-module of $\ell$-highest weight $Y_{1,1}Y_{1,-1-4s}$ and there exists $y \in \U_q(sl_4^{tor})$ such that $$y \cdot v_{m}=v_{Y_{1,1}Y_{1,1-4s}^{-1}Y_{2,-4s}Y_{0,2}^{-1}}$$ with
$Y_{1,1}Y_{1,1-4s}^{-1}Y_{2,-4s}Y_{0,2}^{-1} \in \mathcal{M}\left(e^{2\varpi_1 + (s-1)\delta}Y_{1,1}Y_{1,-1-4(s-1)}Y_{0,2}^{-1}Y_{0,-4(s-1)}^{-1}\right)$.
Repeating this argument, one shows that the vector $v_{e^{2\varpi_1}Y_{1,1}Y_{1,-1}Y_{0,2}^{-1}Y_{0,0}^{-1}}$ is in $V$. By the above proposition we get $V = V(e^{2\varpi_1}Y_{1,1}Y_{1,-1}Y_{0,2}^{-1}Y_{0,0}^{-1})$.
\end{proof}

\begin{prop}
The $\U_q(\hat{sl}_{4})$-module $\mathrm{Res}(V(e^{2\varpi_1}Y_{1,1}Y_{1,-1}Y_{0,2}^{-1}Y_{0,0}^{-1}))$ has a crystal basis isomorphic to $\mathcal{B}(2 \varpi_1)$.
\end{prop}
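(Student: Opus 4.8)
The plan is to build an explicit crystal basis of $\mathrm{Res}(V(M))$, where $M=e^{2\varpi_1}Y_{1,1}Y_{1,-1}Y_{0,2}^{-1}Y_{0,0}^{-1}$, and to identify the underlying crystal with the monomial crystal $\overline{\mathcal{M}}(M)$, which by the proposition above is a monomial realization of $\mathcal{B}(2\varpi_1)$. As a preliminary observation: by the results of this section $\mathrm{Res}(V(M))$ is an integrable $\U_q(\hat{sl}_4)$-module, cyclic over $\U_q^h(sl_4^{tor})$ (the image of $\U_q(\hat{sl}_4)$) and generated by the extremal vector $v_M$ of weight $2\varpi_1$; hence, by the universal property of the extremal weight module, it is a quotient of $V(2\varpi_1)$, and since $V(2\varpi_1)$ carries a crystal basis with crystal $\mathcal{B}(2\varpi_1)$, it suffices to produce on $\mathrm{Res}(V(M))$ any crystal basis whose crystal is isomorphic to $\overline{\mathcal{M}}(M)$.

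First I would restrict to the finite quantum group $\U_q(sl_4)=\U_q(\hat{sl}_4)_{I_0}$. Taking $j=0$ in the decomposition (\ref{decmodex}) and using that, as $\U_q(sl_4)$-modules, $\mathrm{Res}(V_0(\Xi^0(M)))\simeq V_0(2\Lambda_1)$ and $\mathrm{Res}(V_0(\Xi^0(M_s)))\simeq V_0(2\Lambda_1)\oplus V_0(\Lambda_2)$ for $s\geq 1$, one gets that $\mathrm{Res}(V(M))$ is a (countable) direct sum of copies of the finite-dimensional irreducibles $V_0(2\Lambda_1)$ and $V_0(\Lambda_2)$. Each of these carries a crystal basis, and its monomial realization identifies that crystal with the corresponding $I_0$-connected component of $\overline{\mathcal{M}}(M)$ (via Proposition \ref{propcrystruct} and its analogue for the rectangular weight $2\Lambda_1$). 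Taking the direct sum of these crystal bases yields a crystal basis $(L_0,B_0)$ of $\mathrm{Res}(V(M))$ as a $\U_q(sl_4)$-module with $B_0\simeq\overline{\mathcal{M}}(M)$ as $I_0$-crystals.

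Next I would promote $(L_0,B_0)$ to a crystal basis over all of $\U_q(\hat{sl}_4)$: it remains to check that $L_0$ is stable under $\tilde{e}_0$ and $\tilde{f}_0$ and that these induce on $L_0/qL_0\simeq B_0$ exactly the operators $\tilde{e}_0,\tilde{f}_0$ of the monomial crystal $\overline{\mathcal{M}}(M)$. The operators $\tilde{e}_0,\tilde{f}_0$ come from $\hat{\U}_0\simeq\U_q(\hat{sl}_2)'$, which lies inside $\U_q^{v,j}(sl_4^{tor})$ for every $j\neq 0$; restricted to that copy of $\U_q(\hat{sl}_2)'$, each summand in (\ref{decmodex}) decomposes into two-box modules as in Example \ref{exprod}, into fundamental and Kirillov--Reshetikhin $\U_q(\hat{sl}_2)'$-modules and into trivial modules, each carrying its own crystal basis. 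Using the $\phi$- and $\tau_{4,-2\delta}$-equivariance of the construction to reduce to finitely many monomials, I would verify on the explicit formulas of Example \ref{exprod} (and of the construction of $V(M)$) that these $\hat{\U}_0$-lattices fit together with $L_0$ and that $\tilde{e}_0,\tilde{f}_0$ act on $B_0$ as prescribed by $\overline{\mathcal{M}}(M)$.

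The hard part will be exactly this last verification. It is a finite computation, but it cannot be carried out with the naive lattice spanned by the basis $(v_m)$ of the construction: the formulas of Example \ref{exprod} show that $\tilde{f}_0\,v_m$ can then acquire a pole at $q=0$ (the $v_m$ are normalized so as to carry $\ell$-weight $m$ and to realize divided-power relations, not so as to span a crystal lattice), so one must rescale the relevant basis vectors and propagate those normalizations consistently through the gluing of all the $\hat{\U}_0$-pieces. Once this is settled, $(L_0,B_0)$ is a crystal basis of the $\U_q(\hat{sl}_4)$-module $\mathrm{Res}(V(M))$ with crystal $B_0\simeq\overline{\mathcal{M}}(M)\simeq\mathcal{B}(2\varpi_1)$, which is the assertion; in particular the surjection $V(2\varpi_1)\twoheadrightarrow\mathrm{Res}(V(M))$ of the first step is then an isomorphism.
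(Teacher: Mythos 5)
Your proposal follows essentially the same route as the paper: both proofs produce an explicit crystal lattice by renormalizing the basis $(v_m)$, and both identify the key point that the naive lattice $\bigoplus_m A\, v_m$ fails because the $\ell$-weight normalization of the $v_m$ is incompatible with a crystal lattice. The paper simply records the explicit rescaling (namely $w_m = q^{-1}v_m$ exactly when $m = \phi^{k}(e^{2\varpi_1+s\delta}Y_{1,1}Y_{1,-1-4s}Y_{0,2}^{-1}Y_{0,-4s}^{-1})$ with $s \geq 1$, and $w_m = v_m$ otherwise) and, like you, leaves the remaining finite verification as a direct computation. One caveat: your closing claim that the surjection $V(2\varpi_1)\twoheadrightarrow \mathrm{Res}(V(M))$ is therefore an isomorphism does not follow from equality of crystals, because the weight spaces here are infinite-dimensional; the paper deliberately stops short of this, stating in the subsequent remark only that the results \emph{suggest} such an isomorphism.
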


\begin{proof} Set $K = \CC(q)$ with $q$ an indeterminate and let $A$ be the subring of $K$ consisting of rational functions in $K$ without pole at $q=0$.
We normalize the basis $(v_m)$ of the $\CC(q)$-vector space $V(e^{2\varpi_1}Y_{1,1}Y_{1,-1}Y_{0,2}^{-1}Y_{0,0}^{-1})$ as follows. For all $m \in \overline{\mathcal{M}}(e^{2\varpi_1}Y_{1,1}Y_{1,-1}Y_{0,2}^{-1}Y_{0,0}^{-1})$, let $w_m$ be the vector defined by $w_m = \frac{1}{q} v_m$ if there exists $k \in \ZZ, s \in \NN^{\ast}$ such that $m = \phi^{k}(e^{2\varpi_1 + s\delta}Y_{1,1}Y_{1,-1-4s}Y_{0,2}^{-1}Y_{0,-4s}^{-1})$ and $w_m = v_m$ otherwise. Set $\mathcal{B} = (w_m)_m$ and $\mathcal{L} = \bigoplus_{m} A w_m$.
We check directly that $(\mathcal{L}, \mathcal{B})$ is a crystal basis of the $\U_q(\hat{sl}_{4})$-module $\mathrm{Res}(V(e^{2\varpi_1}Y_{1,1}Y_{1,-1}Y_{0,2}^{-1}Y_{0,0}^{-1}))$, isomorphic to $\mathcal{B}(2 \varpi_1)$. We do not detail the calculations.
\end{proof}

\begin{rem}
All these results suggest that $\mathrm{Res}(V(e^{2\varpi_1}Y_{1,1}Y_{1,-1}Y_{0,2}^{-1}Y_{0,0}^{-1}))$ is isomorphic to the extremal weight $\U_q(\hat{sl}_4)$-module $V(2 \varpi_1)$. One expects to prove such a result for all the extremal loop weight modules constructed by the conjectural process given above.
\end{rem}

\subsection{Finite-dimensional representations at roots of unity}

Set $L \geq 1$ and let $\epsilon$ be a primitive $(4L)$-root of unity.

Denote by $\mathcal{E}_s$ the set of monomials occurring in $\mathcal{M}(e^{2\varpi_1 + s\delta}Y_{1,1}Y_{1,-1-4s}Y_{0,2}^{-1}Y_{0,-4s}^{-1})$ for all $s \in \NN$. Consider $\mathcal{E}'$ the subset of $\mathcal{E}$ defined by
\begin{eqnarray*}
\mathcal{E}' = \bigsqcup_{0 \leq s \leq L-1} \mathcal{E}_{s}.
\end{eqnarray*}

Let $\mathcal{E}_\epsilon$ and $\mathcal{E}'_\epsilon$ be the images of the sets $\mathcal{E}$ and $\mathcal{E}'$ respectively by the map $\Gamma_{(4L)}$: $\mathcal{E}'_\epsilon$ is a finite monomial set of cardinality $16L^{2}$.

\begin{thm}
Assume that $\epsilon$ is a primitive $4L$-root of unity. There exists an irreducible $\U_\epsilon(sl_{4}^{tor})'$-module $V_\epsilon$ of dimension $16 L^{2}$ such that
$$\chi_\epsilon(V_\epsilon) = \sum_{m \in \mathcal{E}'_\epsilon} m.$$
\end{thm}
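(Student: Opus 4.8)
The plan is to obtain $V_\epsilon$ by specializing the $\U_q(sl_4^{tor})$-module $V(e^{2\varpi_1}Y_{1,1}Y_{1,-1}Y_{0,2}^{-1}Y_{0,0}^{-1})$ at the root of unity $\epsilon$, exactly as in the proof of Theorem \ref{thmmodunit}. Concretely, I would first record the analogue of the shift-automorphism structure: the maps $\phi$ and $\tau_{4,-2\delta}$ are automorphisms of $\overline{\mathcal{M}}(e^{2\varpi_1}Y_{1,1}Y_{1,-1}Y_{0,2}^{-1}Y_{0,0}^{-1})$, and because $M_s = e^{2\varpi_1+s\delta}Y_{1,1}Y_{1,-1-4s}Y_{0,2}^{-1}Y_{0,-4s}^{-1}$ is extremal of weight $2\varpi_1+s\delta$, the monomial crystal $\mathcal{M}(e^{2\varpi_1+s\delta}Y_{1,1}Y_{1,-1-4s}Y_{0,2}^{-1}Y_{0,-4s}^{-1})$ realizes $\mathcal{B}(2\varpi_1)$ with weight shifted by $s\delta$. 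In particular the monomial set $\mathcal{E}$ decomposes, for each fixed $0\le j\le 3$, as
\begin{equation*}
\mathcal{E} = \bigsqcup_{k\in\ZZ} \mathcal{E}_{j,k,0} \sqcup \bigsqcup_{k\in\ZZ,\, s\in\NN^{\ast}} \mathcal{E}_{j,k,s},
\end{equation*}
and under the image map $\Gamma_{(4L)}$ the classes of $M_s$ for $s$ and $s+L$ coincide, so that $\mathcal{E}_\epsilon$ is a finite set and $\mathcal{E}'_\epsilon$ (of cardinality $16L^2$) is a set of representatives. One then checks that $\mathcal{E}_\epsilon$ is closed, exactly as $\mathcal{E}_\epsilon$ was shown to be closed before Theorem \ref{thmmodunit}, using that each $I_j$-constituent $\mathcal{M}_{j,k,s}$ is $I_j$-$q$-closed (by the Propositions of Section 5.1) and that $\Gamma_{(4L)}$ transports the $q$-closedness at $q$ to $q$-closedness at the root of unity $\epsilon$ via the theory of $q$-characters at roots of unity \cite{frenkel_$q$-characters_2002}.

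Next I would build the representation. Let $\U_\epsilon(sl_4^{tor})'$ be the algebra of Definition \ref{defqta} with $\epsilon$ instead of $q$, without divided powers and without derivation element. Put
\begin{equation*}
V_\epsilon = \bigoplus_{m\in\mathcal{E}'_\epsilon} \CC v_m,
\end{equation*}
a $16L^2$-dimensional vector space, and define the action of the generators $x_{i,r}^{\pm}$, $\phi_{i,\pm s}^{\pm}$, $k_i^{\pm}$ by the formulas displayed in the statement of the theorem, where the Kashiwara-operator data $\tilde e_i\cdot m$, $\tilde f_i\cdot m$, $\varepsilon_i(m)$, $\varphi_i(m)$, $p_i(m)$, $q_i(m)$ are computed in $\overline{\mathcal{M}}(e^{2\varpi_1}Y_{1,1}Y_{1,-1}Y_{0,2}^{-1}Y_{0,0}^{-1})$ and then read modulo the identification coming from $\Gamma_{(4L)}$ (i.e. one uses a lift of $\overline{m}\in\mathcal{E}'_\epsilon$ to $\overline{\mathcal{M}}$, applies the crystal operators there, and projects back; the exponents $r(p_i(m)-1)$, $r(q_i(m)+1)$, and so on, are well defined modulo $4L$ because $\epsilon^{4L}=1$). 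That these formulas satisfy the defining relations of $\U_\epsilon(sl_4^{tor})'$ is the heart of the argument: I would deduce it from the fact that the analogous formulas (\ref{actmod}) define the $\U_q(sl_4^{tor})$-module $V(e^{2\varpi_1}Y_{1,1}Y_{1,-1}Y_{0,2}^{-1}Y_{0,0}^{-1})$ (Theorem \ref{thmexmod} and the construction preceding it), and that the specialization $q\mapsto\epsilon$ is compatible with those relations: each relation is a polynomial identity in $q^{\pm 1}$ among the structure constants, and passing to the quotient by the shift $\tau_{4L,-2L\delta}$ amounts to reading the $\ZZ$-grading by $\delta$ modulo $L$, which is exactly what $\Gamma_{(4L)}$ does. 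So $V_\epsilon \simeq V(e^{2\varpi_1}Y_{1,1}Y_{1,-1}Y_{0,2}^{-1}Y_{0,0}^{-1})/(\tau_{-4,\delta}^{L}-1)$ as $\U_\epsilon(sl_4^{tor})'$-modules, and from this presentation $\chi_\epsilon(V_\epsilon) = \sum_{m\in\mathcal{E}'_\epsilon} m$ is immediate, each $\ell$-weight space being one-dimensional.

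Irreducibility I would prove by the same cyclicity-and-$\ell$-weight argument as for $V(e^{2\varpi_1}Y_{1,1}Y_{1,-1}Y_{0,2}^{-1}Y_{0,0}^{-1})$: a nonzero submodule, the $\ell$-weight spaces being one-dimensional, contains some $v_m$; using Example \ref{exprod} and the construction of the basis one produces an element of $\U_\epsilon(sl_4^{tor})'$ carrying $v_m$ down, step by step in $s$, to $v_{e^{2\varpi_1}Y_{1,1}Y_{1,-1}Y_{0,2}^{-1}Y_{0,0}^{-1}}$ (the recursion now terminates after finitely many steps because $s$ runs only over $0,\dots,L-1$ modulo $L$), and then the horizontal subalgebra $\U_\epsilon^h(sl_4^{tor})$ generates everything from that vector as in the $q$ generic case. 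I expect the main obstacle to be the bookkeeping in the last two paragraphs: verifying that the exponents in the action formulas genuinely descend to $\ZZ/4L\ZZ$ and that the "mixed" monomials of the form $\phi^{j+4k}(Y_{1,-1-4s}Y_{3,5}^{-1}Y_{0,4}Y_{0,-4s}^{-1})$ — where at $q$ generic the relations were checked by hand rather than via promotion — still satisfy the relations after specialization; this requires re-examining those direct computations and confirming they remain valid identities at $q=\epsilon$.
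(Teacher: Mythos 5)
There is a genuine gap, and it lies exactly at the point where finite-dimensionality is supposed to come from. You model the argument on Theorem \ref{thmmodunit} and assert that $V_\epsilon$ is the quotient of $V(e^{2\varpi_1}Y_{1,1}Y_{1,-1}Y_{0,2}^{-1}Y_{0,0}^{-1})_\epsilon$ by a power of the shift automorphism, ``reading the $\delta$-grading modulo $L$''. But $\tau_{4,-2\delta}$ is an automorphism of \emph{each} connected component $\mathcal{M}(M_s)$ separately; it does not relate $M_s$ to $M_{s+L}$, and $\overline{\mathcal{M}}(e^{2\varpi_1}Y_{1,1}Y_{1,-1}Y_{0,2}^{-1}Y_{0,0}^{-1})$ has infinitely many connected components indexed by $s\in\NN$. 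Quotienting by $(\tau_{4,-2\delta})^{\pm L}-1$ makes each component's contribution finite but leaves infinitely many components, so the resulting space is still infinite-dimensional; and there is no module automorphism identifying the $s$ and $s+L$ blocks that one could quotient by instead, since $s$ runs over $\NN$ and the blocks are glued by the action in a non-periodic way. Your proposed presentation of $V_\epsilon$ therefore does not produce a $16L^2$-dimensional module.

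The actual mechanism is a degeneration at the root of unity, which is why the paper's proof opens with the remark that the type TP constituents ``can be undefined or reducible after specialization''. The structure constants of $V_0(Y_{1,a}Y_{1,b})$ in Example \ref{exprod} have denominators $q^b-q^a$, so one must first check that $a\not\equiv b \pmod{4L}$ for every TP constituent (this holds, since $1\not\equiv -1-4s$); this already undercuts your claim that the relations specialize because they are ``polynomial identities in $q^{\pm1}$''. More importantly, when $a\equiv b+2\pmod{4L}$ — which happens precisely for the constituent $V_0(Y_{1,1}Y_{1,-1-4L})$ attached to $s=L$ — one of the coefficients in $x_r^-\cdot v_{Y_{1,a}Y_{1,b}}$ vanishes and $v_{Y_{1,a}Y_{1,b}}$ generates a proper submodule. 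Consequently, in the specialized module the vector $v_{M_L}$ generates the proper $\U_\epsilon(sl_4^{tor})'$-submodule $\bigoplus_{m\in\mathcal{E}-\mathcal{E}'}\CC v_m$ (one can no longer descend from the $s=L$ block to the $s=L-1$ block), and $V_\epsilon$ is \emph{defined} as the quotient by this submodule; its character and irreducibility then follow from the explicit formulas much as in the generic case. Your closing caveat about re-examining the hand-checked relations points in the right direction, but the essential missing step is the identification of this submodule and the passage to the quotient.
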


\begin{proof}
The main difficulty is to specialize $q$ at $\epsilon$ in the $\U_q^{v,j}(sl_4^{tor})$-modules of type TP. In fact, these modules can be undefined or reducible after specialization. For better understand these phenomena, let us study the specialized $\U_\epsilon(\hat{sl}_2)'$-module $V_0(Y_{1,a}Y_{1,b})_\epsilon$ with $a, b \in \ZZ$. This representation is well defined if $a \notin b + 4L \ZZ$. Assume that in the following and study $V_0(Y_{1,a}Y_{1,b})_\epsilon$. If $a \notin b \pm 2+4L\ZZ $, this representation is irreducible. If $a \in b + 2 + 4L\ZZ$, it is not irreducible: in fact
$$\U_\epsilon(\hat{sl}_2)' \cdot v_{Y_{1,a}Y_{1,b}} = \CC v_{Y_{1,a}Y_{1,b}} \oplus \CC v_{Y_{1,a+2}^{-1}Y_{1,b}} \oplus \CC v_{Y_{1,a+2}^{-1}Y_{1,b+2}^{-1}}$$
is an irreducible submodule of $V_0(Y_{1,a}Y_{1,b})_\epsilon$.

By our study of the $\U_\epsilon(\hat{sl}_2)'$-module $V_0(Y_{1,a}Y_{1,b})_\epsilon$, one can specialize $q$ at $\epsilon$ in the defining relations of the action on the basis $(v_m)$ of $V(e^{2\varpi_1}Y_{1,1}Y_{1,-1}Y_{0,2}^{-1}Y_{0,0}^{-1})$. Moreover one checks that
$$\U_\epsilon(sl_{4}^{tor})' \cdot v_{Y_{1,1}Y_{1,-1-4L}Y_{0,2}^{-1}Y_{0,-4L}^{-1}} = \bigoplus_{m \in \mathcal{E} - \mathcal{E}'} \CC v_m$$
is a sub-$\U_\epsilon(sl_{4}^{tor})'$-module of $V(e^{2\varpi_1}Y_{1,1}Y_{1,-1}Y_{0,2}^{-1}Y_{0,0}^{-1})_\epsilon$. By taking the quotient, we obtain a $\U_\epsilon(sl_{4}^{tor})'$-module
$$V_\epsilon = \bigoplus_{m \in \mathcal{E}'} \CC v_m$$
which is irreducible: this is straightforward with the explicit formulas of the action.
\end{proof}

\section{Further possible developments and applications}

In this last section, we give other promising directions to study the extremal loop weight modules for quantum toroidal algebras of general types. Moreover we give some possible applications of the results obtained in this article. This will be done in further papers.

In our construction of level 0 extremal loop weight modules in type $A$, monomial realizations of crystals and promotion operators on the finite crystals have a crucial role. Let us give some results which suggest that a similar construction is possible in other types. In \cite{hernandez_level_2006}, an explicit description of monomial realizations of level 0 extremal fundamental weight crystals of quantum affine algebras is given for all the non exceptional types. The automorphisms $z_\ell$ are determined in these cases. Furthermore in other types, there exists also symmetry properties for crystals arising from automorphisms of the associated Dynkin diagram (analogue of promotion operators in type $A$). Using that, a combinatorial process allows to obtain Kirillov-Reshetikhin crystals from crystals of finite type (see \cite{fourier_kirillov-reshetikhin_2009, kang_perfect_1992, okado_existence_2008}). These symmetry properties will be useful for a similar construction of extremal loop weight modules in other types.

As we have viewed, the extremal fundamental loop weight modules $V(e^{\varpi_\ell}Y_{\ell,0}Y_{0, d_\ell}^{-1})$ ($n=2r+1$ and $\ell = 1, r+1$ or $n$) are completely reducible as $\U_q^{v,0}(sl_{n+1}^{tor})$-modules: they are direct sums of fundamental modules of $\U_q(\hat{sl}_{n+1})$. Similar vector spaces are considered in \cite{chari_quantum_2003} for the quantum affine algebra $\U_q(\hat{\Glie})$ associated to a simple Lie algebra $\Glie$ over $\CC$. In fact for a finite-dimensional representation $V$ of $\U_q(\hat{\Glie})'$, the vector space $V \otimes_{\CC} \CC[z,z^{-1}]$ is endowed with a structure of $\U_q(\hat{\Glie})$-module by using the grading of this algebra. So the action is very different to the one defined in this article and we do not have a way to extend this action for the quantum toroidal algebra $\U_q(\Glie^{tor})$. But it would be interesting to study an analogous construction for the quantum toroidal algebra $\U_q(\Glie^{tor})$. We can expect to construct other examples of extremal loop weight modules by this process.

Let us explain another approach to construct extremal loop weight modules which could be fruitful. Let $\Glie$ be a Kac-Moody algebra. For an integral weight $\lambda$, one defines $\lambda_{+}= \sum_{\lambda(h_{i})\geq 0} \lambda(h_{i}) \Lambda_{i} $ and $\lambda_{-} = \lambda_{+} - \lambda$. To study the extremal weight module $V(\lambda)$, Kashiwara \cite{kashiwara_crystal_1994} considers the tensor product $V'(\lambda) = V(\lambda_{+}) \otimes V(\lambda_{-})$ of the simple highest weight module $V(\lambda_+)$ and the simple lowest weight module $V(\lambda_-)$. By analogy, it would be interesting to define an action of the quantum affinization $\U_q(\hat{\Glie})$ on the tensor product of simple $\ell$-highest weight modules and simple $\ell$-lowest weight modules, in the spirit of \cite{hernandez_representations_2005, hernandez_drinfeld_2007} and \cite{feigin_quantum_2011, feigin_quantum_2011-1, feigin_quantum_2012, feigin_representations_2013}. This will be studied in a further paper.

Another possible direction is to study the finite-dimensional representations of Double Affine Hecke Algebras (or Cherednick algebras) at roots of unity obtained from the new finite-dimensional representations of $\U_\epsilon(sl_{n+1}^{tor})$ defined above, via Schur-Weyl duality \cite{varagnolo_schur_1996}.

In this article, we have defined promotion operators for the level 0 extremal fundamental weight crystals $\mathcal{B}(\varpi_\ell)$ in type $A_n$ ($n \geq 2$ odd, $1 \leq \ell \leq n$). It will be interesting to discuss the existence of promotion operators for other level 0 extremal weight crystals and the uniqueness of them in the spirit of \cite{shimozono_affine_2002}.

\section*{Appendix}


In this part, we describe the monomial crystal
$$\overline{\mathcal{M}}(e^{2\varpi_1}Y_{1,1}Y_{1,-1}Y_{0,2}^{-1}Y_{0,0}^{-1}) = \bigoplus_{s \in \NN} \mathcal{M}(e^{2\varpi_1 + s\delta}Y_{1,1}Y_{1,-1-4s}Y_{0,2}^{-1}Y_{0,-4s}^{-1}).$$
More precisely, we represent the two connected components $\mathcal{M}(e^{2\varpi_1}Y_{1,1}Y_{1,-1}Y_{0,2}^{-1}Y_{0,0}^{-1})$ and $\mathcal{M}(e^{2\varpi_1+\delta}Y_{1,1}Y_{1,-5}Y_{0,2}^{-1}Y_{0,-4}^{-1})$ of $\overline{\mathcal{M}}(e^{2\varpi_1}Y_{1,1}Y_{1,-1}Y_{0,2}^{-1}Y_{0,0}^{-1}) $. Recall that all its connected components are isomorphic to each other modulo shift of weight by $\delta$. Furthermore the map $\tau_{4, -2\delta}$ is an automorphism of these crystals and we only give a part of them. The full crystals are obtained by applying the automorphism $\tau_{4, -2\delta}$. The sub-$I_0$-crystals
$$\textcolor{blue}{\mathcal{M}_{0,0,0}^{1}} = \mathcal{M}_{I_0}(Y_{1,1}Y_{1,-1}Y_{0,2}^{-1}Y_{0,0}^{-1})$$
and
$$\textcolor{red}{\mathcal{M}_{0,0,1}} = \mathcal{M}_{I_0}(Y_{1,1}Y_{1,-5}Y_{0,2}^{-1}Y_{0,-4}^{-1}) \oplus \mathcal{M}_{I_0}(Y_{1,1}^{-1}Y_{1,5}Y_{2,0}Y_{0,6}^{-1})$$
are explicitly given.

Note that the $\theta$-twisted automorphism $\phi$ of $\overline{\mathcal{M}}(e^{2\varpi_1}Y_{1,1}Y_{1,-1}Y_{0,2}^{-1}Y_{0,0}^{-1})$ can be viewed as a descent of one diagonal in these crystals.

\begin{eqnarray*}
\xymatrix{
 & \ar@{-->}[ld]_0 & & \\
\textcolor{blue}{e^{2\varpi_1}Y_{1,1}Y_{1,-1}Y_{0,2}^{-1}Y_{0,0}^{-1}} \ar[rd]^1  & & \ar@{-->}[ld]_0 & \\
  & \textcolor{blue}{Y_{1,3}^{-1}Y_{1,-1}Y_{2,2}Y_{0,0}^{-1}} \ar[ld]_1 \ar[rd]^2 & & \ar@{-->}[ld]_0 \\
 \textcolor{blue}{Y_{1,3}^{-1}Y_{1,1}^{-1}Y_{2,2}Y_{2,0}} \ar[rd]^2 & & \textcolor{blue}{Y_{1,-1}Y_{2,4}^{-1}Y_{3,3}Y_{0,0}^{-1}} \ar[ld]_1 \ar[rd]^3 & \\
 & \textcolor{blue}{Y_{1,1}^{-1}Y_{2,4}^{-1}Y_{2,0}Y_{3,3}}  \ar[ld]_2 \ar[rd]^3 & & \textcolor{blue}{Y_{1,-1}Y_{3,5}^{-1}Y_{0,4}Y_{0,0}^{-1}} \ar[ld]_1 \\
 \textcolor{blue}{Y_{2,4}^{-1}Y_{2,2}^{-1}Y_{3,3}Y_{3,1}} \ar[rd]^3 & & \textcolor{blue}{Y_{1,1}^{-1}Y_{2,0}Y_{3,5}^{-1}Y_{0,4}}  \ar[ld]_2 \ar[rd]^0 & \\
 & \textcolor{blue}{Y_{2,2}^{-1}Y_{3,5}^{-1}Y_{3,1}Y_{0,4}} \ar[ld]_3 \ar[rd]^0 & & \textcolor{red}{Y_{1,5}Y_{1,1}^{-1}Y_{2,0}Y_{0,6}^{-1}} \ar[ld]_2 \\
\textcolor{blue}{Y_{3,5}^{-1}Y_{3,3}^{-1}Y_{0,4}Y_{0,2}} \ar[rd]^0 & & \textcolor{red}{Y_{1,5}Y_{2,2}^{-1}Y_{3,1}Y_{0,6}^{-1}} \ar[ld]_3 \ar[rd]^1 & \\
 & \textcolor{red}{Y_{1,5}Y_{3,3}^{-1}Y_{0,6}^{-1}Y_{0,2}} \ar@{-->}[ld]_0 \ar[rd]^1 & & \textcolor{red}{Y_{1,7}^{-1}Y_{2,6}Y_{2,2}^{-1}Y_{3,1}} \ar[ld]_3 \\
Y_{1,5}Y_{1,3}Y_{0,6}^{-1}Y_{0,4}^{-1} & & \textcolor{red}{Y_{1,7}^{-1}Y_{2,6}Y_{3,3}^{-1}Y_{0,2}} \ar[rd]^2 \ar@{-->}[ld]_0 &  \\
 & \ \ \ \  & & \textcolor{red}{Y_{2,8}^{-1}Y_{3,7}Y_{3,3}^{-1}Y_{0,2}} \ar@{-->}[ld]_0  \\
  & & & 
 }
\end{eqnarray*}
\begin{center}
The $\U_q(\hat{sl}_4)$-crystal $\mathcal{M}(e^{2\varpi_1}Y_{1,1}Y_{1,-1}Y_{0,2}^{-1}Y_{0,0}^{-1})$.
\end{center}

\begin{eqnarray*}
\xymatrix{
 & \ar@{-->}[ld]_0 & & \\
\textcolor{red}{e^{2\varpi_1+\delta}Y_{1,1}Y_{1,-5}Y_{0,2}^{-1}Y_{0,-4}^{-1}} \ar[rd]^1 & & \ar@{-->}[ld]_0 & \\
  & \textcolor{red}{Y_{1,3}^{-1}Y_{1,-5}Y_{2,2}Y_{0,-4}^{-1}} \ar[ld]_1 \ar[rd]^2 & & \ar@{-->}[ld]_0 \\
 \textcolor{red}{Y_{1,3}^{-1}Y_{1,-3}^{-1}Y_{2,2}Y_{2,-4}} \ar[rd]^2 & & \textcolor{red}{Y_{1,-5}Y_{2,4}^{-1}Y_{3,3}Y_{0,-4}^{-1}} \ar[ld]_1 \ar[rd]^3 & \\
 &\textcolor{red}{ Y_{1,-3}^{-1}Y_{2,4}^{-1}Y_{2,-4}Y_{3,3}}  \ar[ld]_2 \ar[rd]^3 & & \textcolor{red}{Y_{1,-5}Y_{3,5}^{-1}Y_{0,4}Y_{0,-4}^{-1}} \ar[ld]_1 \\
 \textcolor{red}{Y_{2,4}^{-1}Y_{2,-2}^{-1}Y_{3,3}Y_{3,-3}} \ar[rd]^3 & & \textcolor{red}{Y_{1,-3}^{-1}Y_{2,-4}Y_{3,5}^{-1}Y_{0,4}}  \ar[ld]_2 \ar[rd]^0 & \\
 & \textcolor{red}{Y_{2,-2}^{-1}Y_{3,5}^{-1}Y_{3,-3}Y_{0,4}} \ar[ld]_3 \ar[rd]^0 & & Y_{1,5}Y_{1,-3}^{-1}Y_{2,-4}Y_{0,6}^{-1} \ar[ld]_2 \\
\textcolor{red}{Y_{3,5}^{-1}Y_{3,-1}^{-1}Y_{0,4}Y_{0,-2}} \ar[rd]^0 & & Y_{1,5}Y_{2,-2}^{-1}Y_{3,-3}Y_{0,6}^{-1} \ar[ld]_3 \ar[rd]^1 & \\
 & Y_{1,5}Y_{3,-1}^{-1}Y_{0,6}^{-1}Y_{0,-2} \ar@{-->}[ld]_0 \ar[rd]^1 & & Y_{1,7}^{-1}Y_{2,6}Y_{2,-2}^{-1}Y_{3,-3} \ar[ld]_3 \\
Y_{1,5}Y_{1,-1}Y_{0,6}^{-1}Y_{0,0}^{-1} & & Y_{1,7}^{-1}Y_{2,6}Y_{3,-1}^{-1}Y_{0,-2} \ar[rd]^2 \ar@{-->}[ld]_0 &  \\
 & \ \ \ \  & & Y_{2,8}^{-1}Y_{3,7}Y_{3,-1}^{-1}Y_{0,-2} \ar@{-->}[ld]_0  \\
  & & & 
 }
\end{eqnarray*}
\begin{center}
\begin{center}
The $\U_q(\hat{sl}_4)$-crystal $\mathcal{M}(e^{2\varpi_1+\delta}Y_{1,1}Y_{1,-5}Y_{0,2}^{-1}Y_{0,-4}^{-1})$.
\end{center}
\end{center}

\newpage

\bibliographystyle{acm}

\begin{thebibliography}{10}

\bibitem{akasaka_finite-dimensional_1997}
{\sc Akasaka, T., and Kashiwara, M.}
\newblock Finite-dimensional representations of quantum affine algebras.
\newblock {\em Publ. Res. Inst. Math. Sci. 33}, 5 (1997), 839--867.

\bibitem{bandlow_uniqueness_2010}
{\sc Bandlow, J., Schilling, A., and Thi\'ery, N.~M.}
\newblock On the uniqueness of promotion operators on tensor products of type
  {$A$} crystals.
\newblock {\em J. Algebraic Combin. 31}, 2 (2010), 217--251.

\bibitem{beck_braid_1994}
{\sc Beck, J.}
\newblock Braid group action and quantum affine algebras.
\newblock {\em Comm. Math. Phys. 165}, 3 (1994), 555--568.

\bibitem{beck_crystal_2002}
{\sc Beck, J.}
\newblock Crystal structure of level zero extremal weight modules.
\newblock {\em Lett. Math. Phys. 61}, 3 (2002), 221--229.

\bibitem{beck_crystal_2004}
{\sc Beck, J., and Nakajima, H.}
\newblock Crystal bases and two-sided cells of quantum affine algebras.
\newblock {\em Duke Math. J. 123}, 2 (2004), 335--402.

\bibitem{bourbaki_groupes_1968}
{\sc Bourbaki, N.}
\newblock {Groupes} et alg\`ebres de {Lie}. {Chapitre IV-VI}.
\newblock Hermann, 1968.

\bibitem{chari_quantum_2003}
{\sc Chari, V., and Greenstein, J.}
\newblock Quantum loop modules.
\newblock {\em Represent. Theory 7\/} (2003), 56--80.

\bibitem{chari_quantum_1991}
{\sc Chari, V., and Pressley, A.}
\newblock Quantum affine algebras.
\newblock {\em Comm. Math. Phys. 142}, 2 (1991), 261--283.

\bibitem{chari_guide_1994}
{\sc Chari, V., and Pressley, A.}
\newblock {\em A Guide to Quantum Groups}.
\newblock Cambridge Univ. Press, Cambridge, 1994.

\bibitem{chari_quantum_1995}
{\sc Chari, V., and Pressley, A.}
\newblock Quantum affine algebras and their representations.
\newblock In {\em Representations of groups {(Banff}, {AB}, 1994)}, vol.~16 of
  {\em {CMS} Conf. Proc.} Amer. Math. Soc., Providence, {RI}, 1995, pp.~59--78.

\bibitem{chari_quantum_1997}
{\sc Chari, V., and Pressley, A.}
\newblock Quantum affine algebras at roots of unity.
\newblock {\em Represent. Theory 1\/} (1997), 280--328.

\bibitem{drinfeld_new_1988}
{\sc Drinfel'd, V.~G.}
\newblock A new realization of {Yangians} and of quantum affine algebras.
\newblock {\em Soviet Math. Dokl. 36}, 2 (1988), 212--216.

\bibitem{feigin_quantum_2011}
{\sc Feigin, B., Feigin, E., Jimbo, M., Miwa, T., and Mukhin, E.}
\newblock Quantum continuous $\mathfrak{gl}_\infty$ : Semi-infinite
  construction of representations.
\newblock {\em Kyoto J. Math. 51}, 2 (2011), 337--364.

\bibitem{feigin_quantum_2011-1}
{\sc Feigin, B., Feigin, E., Jimbo, M., Miwa, T., and Mukhin, E.}
\newblock Quantum continuous $\mathfrak{gl}_\infty$ : tensor products of {Fock}
  modules and {$\mathcal{W}_n$-characters}.
\newblock {\em Kyoto J. Math. 51}, 2 (2011), 365--392.

\bibitem{feigin_quantum_2012}
{\sc Feigin, B., Jimbo, M., Miwa, T., and Mukhin, E.}
\newblock Quantum toroidal $\mathfrak{gl}_1$ algebra : plane partitions.
\newblock {\em Kyoto J. Math. 52}, 3 (2012), 621--659.

\bibitem{feigin_representations_2013}
{\sc Feigin, B., Jimbo, M., Miwa, T., and Mukhin, E.}
\newblock Representations of quantum toroidal $\mathfrak{gl}_n$.
\newblock {\em J. Algebra 380\/} (2013), 78--108.

\bibitem{fourier_kirillov-reshetikhin_2009}
{\sc Fourier, G., Okado, M., and Schilling, A.}
\newblock {Kirillov-Reshetikhin} crystals for nonexceptional types.
\newblock {\em Adv. Math. 222}, 3 (2009), 1080--1116.

\bibitem{frenkel_combinatorics_2001}
{\sc Frenkel, E., and Mukhin, E.}
\newblock Combinatorics of $q$-characters of finite-dimensional representations
  of quantum affine algebras.
\newblock {\em Communications in Mathematical Physics 216}, 1 (2001), 23--57.

\bibitem{frenkel_$q$-characters_2002}
{\sc Frenkel, E., and Mukhin, E.}
\newblock The $q$-characters at roots of unity.
\newblock {\em Adv. Math. 171}, 1 (2002), 139--167.

\bibitem{frenkel_$q$-characters_1999}
{\sc Frenkel, E., and Reshetikhin, N.}
\newblock The $q$-characters of representations of quantum affine algebras and
  deformations of $\mathcal{W}$-algebras.
\newblock In {\em Recent developments in Quantum Affine Algebras and Related
  Topics}, vol.~248 of {\em Contemp. Math.} Amer. Math. Soc., Providence, {RI},
  1999, pp.~163--205.

\bibitem{ginzburg_langlands_1995}
{\sc Ginzburg, V., Kapranov, M., and Vasserot, E.}
\newblock Langlands reciprocity for algebraic surfaces.
\newblock {\em Math. Res. Lett. 2}, 2 (1995), 147--160.

\bibitem{hernandez_representations_2005}
{\sc Hernandez, D.}
\newblock Representations of quantum affinizations and fusion product.
\newblock {\em Transform. Groups 10}, 2 (2005), 163--200.

\bibitem{hernandez_drinfeld_2007}
{\sc Hernandez, D.}
\newblock Drinfeld coproduct, quantum fusion tensor category and applications.
\newblock {\em Proc. London Math. Soc. 95}, 3 (2007), 567--608.

\bibitem{hernandez_quantum_2009}
{\sc Hernandez, D.}
\newblock Quantum toroidal algebras and their representations.
\newblock {\em Selecta Math. 14}, 3 (2009), 701--725.

\bibitem{hernandez_simple_2010}
{\sc Hernandez, D.}
\newblock Simple tensor products.
\newblock {\em Invent. Math. 181}, 3 (2010), 649--675.

\bibitem{hernandez_algebra_2011}
{\sc Hernandez, D.}
\newblock The algebra {$\mathcal{U}_q(\hat{sl}_\infty)$} and applications.
\newblock {\em J. Algebra 329\/} (2011), 147--162.

\bibitem{hernandez_cluster_2010}
{\sc Hernandez, D., and Leclerc, B.}
\newblock Cluster algebras and quantum affine algebras.
\newblock {\em Duke Math. J. 154}, 2 (2010), 265--341.

\bibitem{hernandez_level_2006}
{\sc Hernandez, D., and Nakajima, H.}
\newblock Level 0 monomial crystals.
\newblock {\em Nagoya Math. J. 184\/} (2006), 85--153.

\bibitem{jing_quantum_1998}
{\sc Jing, N.}
\newblock Quantum {Kac-Moody} algebras and vertex representations.
\newblock {\em Lett. Math. Phys. 44}, 4 (1998), 261--271.

\bibitem{kac_infinite-dimensional_1990}
{\sc Kac, V.~G.}
\newblock {\em {Infinite-Dimensional} Lie Algebras}, third~ed.
\newblock Cambridge Univ. Press, Cambridge, 1990.

\bibitem{kang_perfect_1992}
{\sc Kang, S., Kashiwara, M., Misra, K.~C., Miwa, T., Nakashima, T., and
  Nakayashiki, A.}
\newblock Perfect crystals of quantum affine {Lie} algebras.
\newblock {\em Duke Math. J. 68}, 3 (1992), 499--607.

\bibitem{kashiwara_crystal_1994}
{\sc Kashiwara, M.}
\newblock Crystal bases of modified quantized enveloping algebra.
\newblock {\em Duke Math. J. 73}, 2 (1994), 383--413.

\bibitem{kashiwara_bases_2002}
{\sc Kashiwara, M.}
\newblock {\em Bases cristallines des groupes quantiques}, vol.~9 of {\em Cours
  Sp\'ecialis\'es}.
\newblock Soci\'et\'e Math\'ematique de France, Paris, 2002.
\newblock Edited by Charles Cochet.

\bibitem{kashiwara_level-zero_2002}
{\sc Kashiwara, M.}
\newblock On level-zero representations of quantized affine algebras.
\newblock {\em Duke Math. J. 112}, 1 (2002), 117--175.

\bibitem{kashiwara_realizations_2003}
{\sc Kashiwara, M.}
\newblock Realizations of crystals.
\newblock In {\em Combinatorial and geometric representation theory {(Seoul},
  2001)}, vol.~325 of {\em Contemp. Math.} Amer. Math. Soc., Providence, {RI},
  2003, pp.~133--139.

\bibitem{lusztig_introduction_1993}
{\sc Lusztig, G.}
\newblock {\em Introduction to Quantum Groups}, vol.~110 of {\em Progr. Math.}
\newblock Birkhäuser Boston, Boston, {MA}, 1993.

\bibitem{miki_toroidal_1999}
{\sc Miki, K.}
\newblock Toroidal braid group action and an automorphism of toroidal algebra
  {$U_q(\mathrm{sl}_{n+1},\mathrm{tor}) (n \geq 2)$}.
\newblock {\em Lett. Math. Phys. 47}, 4 (1999), 365--378.

\bibitem{miki_representations_2000}
{\sc Miki, K.}
\newblock Representations of quantum toroidal algebra {$U_q(\mathrm{sl}_{n+1,
  \mathrm{tor}}) (n \geq 2)$}.
\newblock {\em J. Math. Phys. 41}, 10 (2000), 7079--7098.

\bibitem{naito_path_2003}
{\sc Naito, S., and Sagaki, D.}
\newblock Path model for a level-zero extremal weight module over a quantum
  affine algebra.
\newblock {\em Int. Math. Res. Notices}, 32 (2003), 1731--1754.

\bibitem{naito_path_2006}
{\sc Naito, S., and Sagaki, D.}
\newblock Path model for a level-zero extremal weight module over a quantum
  affine algebra. {II}.
\newblock {\em Adv. Math. 200}, 1 (2006), 102--124.

\bibitem{nakajima_quiver_2001}
{\sc Nakajima, H.}
\newblock Quiver varieties and finite-dimensional representations of quantum
  affine algebras.
\newblock {\em J. Amer. Math. Soc. 14}, 1 (2001), 145--238.

\bibitem{nakajima_$t$-analogs_2003}
{\sc Nakajima, H.}
\newblock $t$-analogs of $q$-characters of quantum affine algebras of type
  {$A_n, D_n$}.
\newblock In {\em Combinatorial and geometric representation theory {(Seoul},
  2001)}, vol.~325 of {\em Contemp. Math.} Amer. Math. Soc., Providence, {RI},
  2003, pp.~141--160.

\bibitem{nakajima_extremal_2004}
{\sc Nakajima, H.}
\newblock Extremal weight modules of quantum affine algebras.
\newblock In {\em Representation theory of algebraic groups and quantum
  groups}, vol.~40 of {\em Adv. Stud. Pure Math.} Math. Soc. Japan, Tokyo,
  2004, pp.~343--369.

\bibitem{okado_existence_2008}
{\sc Okado, M., and Schilling, A.}
\newblock Existence of {Kirillov-Reshetikhin} crystals for nonexceptional
  types.
\newblock {\em Represent. Theory 12\/} (2008), 186--207.

\bibitem{rosso_representations_1991}
{\sc Rosso, M.}
\newblock Repr\'esentations des groupes quantiques.
\newblock {\em Ast\'erisque}, 201-203 (1991), Exp. No. 744, 443--483 (1992).
\newblock S\'eminaire Bourbaki, Vol. 1990/91.

\bibitem{schilling_combinatorial_2008}
{\sc Schilling, A.}
\newblock Combinatorial structure of {Kirillov-Reshetikhin} crystals of type
  {$D_n^{(1)}, B_n^{(1)}, A^{(2)}_{2n-1}$}.
\newblock {\em J. Algebra 319}, 7 (2008), 2938--2962.

\bibitem{shimozono_affine_2002}
{\sc Shimozono, M.}
\newblock Affine type {A} crystal structure on tensor products of rectangles,
  {Demazure} characters, and nilpotent varieties.
\newblock {\em J. Algebraic Combin. 15}, 2 (2002), 151--187.

\bibitem{varagnolo_schur_1996}
{\sc Varagnolo, M., and Vasserot, E.}
\newblock Schur duality in the toroidal setting.
\newblock {\em Comm. Math. Phys. 182}, 2 (1996), 469--483.

\bibitem{varagnolo_double-loop_1998}
{\sc Varagnolo, M., and Vasserot, E.}
\newblock Double-loop algebras and the {Fock} space.
\newblock {\em Invent. Math. 133}, 1 (1998), 133--159.

\end{thebibliography}

\end{document}